\tikzstyle{densely dotted}=[dash pattern=on \pgflinewidth off 0.5pt]
\tikzset{anchorbase/.style={baseline={([yshift=-0.5ex]current bounding box.center)}},
tinynodes/.style={font=\tiny,text height=0.25ex,text depth=0.05ex},
smallnodes/.style={font=\scriptsize,text height=0.75ex,text depth=0.15ex},
usual/.style={line width=0.9,color=black},
dusual/.style={line width=0.9,color=spinach,densely dashed},
pole/.style={line width=3.0,color=specialgray},
crossline/.style={preaction={draw=white,line width=5.0pt,-},preaction={draw=black,line width=0.9pt,-}},
crosspole/.style={preaction={draw=white,line width=6.0pt,-},preaction={draw=specialgray,line width=3.0pt,-}},
mor/.style={line width=0.75,color=black,fill=cream},
blob/.style={circle,fill,minimum size=5.0pt,inner sep=0pt,outer sep=0pt},
blobed/.style n args={3}{decoration={markings,post length=0.5mm,pre length=0.5mm,
mark=at position #1 with {\node[blob,#3,label=left:$#2\!$]at (0,0){};}
},postaction={decorate}},
rblobed/.style n args={3}{decoration={markings,post length=0.5mm,pre length=0.5mm,
mark=at position #1 with {\node[blob,#3,label=right:$\!#2$]at (0,0){};}
},postaction={decorate}},
cutline/.style={line width=2.25,color=purple,densely dotted},
}
\tikzstyle directed=[postaction={decorate,decoration={markings,mark=at position #1 with {\arrow[line width=0.25mm, black]{>}}}}]
\tikzstyle rdirected=[postaction={decorate,decoration={markings,mark=at position #1 with {\arrow[line width=0.25mm, black]{<}}}}]
\tikzstyle{tikzdot}=[fill, circle, inner sep=2pt]
\tikzstyle{smoltikzdot}=[fill=white, draw=black, circle, inner sep=1pt]
\tikzset{
    partial ellipse/.style args={#1:#2:#3}{
        insert path={+ (#1:#3) arc (#1:#2:#3)}
    }
}
\newcommand{\Red}{\mathrm{Red}}
\newcommand{\Dyck}{\mathrm{Dyck}}
\newcommand{\dbDyck}{\widetilde{\Dyck}}
\newcommand{\Sym}{\mathrm{S}} 
\newcommand{\Br}{\mathrm{B}} 
\newcommand{\LBr}{\mathrm{LB}} 
\newcommand{\LBurau}{\mathbf{F}} 
\newcommand{\Hecke}{\eH} 
\newcommand{\LH}{\eL\eH} 
\newcommand{\varLH}{\widetilde{\eL\eH}} 
\newcommand{\glone}{\mathfrak{gl}_{1\vert 1}}
\newcommand{\uglone}{U_q(\glone)}
\newcommand{\neguglone}{U_q^{\leq 0}(\glone)}
\newcommand{\Uq}{U_q}
\newcommand{\Uqle}{U_q^{\leq 0}}
\newcommand{\sX}{\mathsf{X}}
\newcommand{\sR}{\mathsf{R}}
\DeclareMathOperator{\supp}{supp}
\definecolor{myblue}{rgb}{0,.5,1}
\definecolor{myred}{rgb}{0.9,0,0}
\definecolor{mygreen}{rgb}{0,0.7,0}
\newcommand{\brak}[1]{\langle #1\rangle}
\DeclareMathOperator{\Hom}{Hom}
\DeclareMathOperator{\im}{im}
\DeclareMathOperator{\id}{id}
\DeclareMathOperator{\End}{End}
\DeclareMathOperator{\Ext}{Ext}
\newtheorem{thm}{Theorem}[section]
\newtheorem{lem}[thm]{Lemma}
\newtheorem{cor}[thm]{Corollary}
\newtheorem{prop}[thm]{Proposition}
\newtheorem{conj}[thm]{Conjecture}
\theoremstyle{definition}
\newtheorem{defn}[thm]{Definition}
\newtheorem{rem}[thm]{Remark}
\newtheorem{quest}{Question}
\newtheorem{exe}[thm]{Example}
 \newtheorem{maintheorem}{Theorem}
\newtheorem{maincorollary}[maintheorem]{Corollary}
\newtheorem{mainquestion}{Question}
\def\Q{{\sf{Q}}}
\newcommand{\bN}{\mathbb{N}}
\newcommand{\bZ}{\mathbb{Z}}
\newcommand{\bQ}{\mathbb{Q}}
\newcommand{\bR}{\mathbb{R}}
\newcommand{\bC}{\mathbb{C}}
\newcommand{\eH}{\EuScript{H}}
\newcommand{\eL}{\EuScript{L}}
\newcommand{\cC}{\mathcal{C}}
\newcommand{\cS}{\mathcal{S}}
\long\def\@makecaption#1#2{%
    \vskip 10pt
    \setbox\@tempboxa\hbox{%
\small{#1: }\ignorespaces #2}%
    \ifdim \wd\@tempboxa >\captionwidth {%
        \rightskip=\@captionmargin\leftskip=\@captionmargin
        \unhbox\@tempboxa\par}%
      \else
        \hbox to\hsize{\hfil\box\@tempboxa\hfil}%
    \fi}
\newdimen\@captionmargin\@captionmargin=2\parindent
\newdimen\captionwidth\captionwidth=\hsize
\def\makeautorefname#1#2{\expandafter\def\csname#1autorefname\endcsname{#2}}
\newcommand{\diagvshift}{.1}
\newcommand{\diaghshift}{.3}
\newcommand{\diagxscale}{.4*.8}
\newcommand{\diagyscale}{.7*.8}
\tikzset{styleLH/.style={
  xscale=\diagxscale,
  yscale=\diagyscale,
  thick
}}
\NewDocumentCommand{\diagLH}{mO{}}{
  \begin{tikzpicture}[
    styleLH,
    baseline={([yshift=-0.5ex]current bounding box.center)},
    #2
  ]
    #1
  \end{tikzpicture}
}
\NewDocumentCommand{\diagD}{O{0}O{0}}{
    \draw[pattern={Lines[angle=45,distance={3pt/sqrt(2)}]}] ({#1-\diaghshift},{#2+\diagvshift}) rectangle ({#1+1+\diaghshift},{#2+1-\diagvshift});
    %
    \draw ({#1},{#2}) to ({#1},{#2+\diagvshift});
    \draw ({#1+1},{#2}) to ({#1+1},{#2+\diagvshift});
    \draw ({#1},{#2+1-\diagvshift}) to ({#1},{#2+1});
    \draw ({#1+1},{#2+1-\diagvshift}) to ({#1+1},{#2+1});
}
\NewDocumentCommand{\diagU}{O{0}O{0}}{
    \draw ({#1-\diaghshift},{#2+\diagvshift}) rectangle ({#1+1+\diaghshift},{#2+1-\diagvshift});
    %
    \draw ({#1},{#2}) to ({#1},{#2+\diagvshift});
    \draw ({#1+1},{#2}) to ({#1+1},{#2+\diagvshift});
    \draw ({#1},{#2+1-\diagvshift}) to ({#1},{#2+1});
    \draw ({#1+1},{#2+1-\diagvshift}) to ({#1+1},{#2+1});
}
\NewDocumentCommand{\diagSTR}{O{0}O{0}}{
    \draw ({#1},{#2}) to ({#1},{#2+1});
}
\NewDocumentCommand{\diagDm}{O{0}O{0}}{
    \diagD[#1][#2]\diagSTR[#1+2][#2]
}
\NewDocumentCommand{\diagDp}{O{0}O{0}}{
    \diagSTR[#1][#2]\diagD[#1+1][#2]
}
\NewDocumentCommand{\diagUm}{O{0}O{0}}{
    \diagU[#1][#2]\diagSTR[#1+2][#2]
}
\NewDocumentCommand{\diagUp}{O{0}O{0}}{
    \diagSTR[#1][#2]\diagU[#1+1][#2]
}
\NewDocumentCommand{\diagLBOX}{O{0}O{0}}{
    \draw[dashed] ({#1-\diaghshift},{#2+\diagvshift}) rectangle ({#1+1+\diaghshift},{#2+1-\diagvshift});
    \node at (#1+.5,#2+.5) {\scriptsize ?};
    \draw ({#1},{#2}) to ({#1},{#2+\diagvshift});
    \draw ({#1},{#2+1-\diagvshift}) to ({#1},{#2+1});
}
\NewDocumentCommand{\diagRBOX}{O{0}O{0}}{
    \draw[dashed] ({#1-\diaghshift},{#2+\diagvshift}) rectangle ({#1+1+\diaghshift},{#2+1-\diagvshift});
    \node at (#1+.5,#2+.5) {\scriptsize ?};
    \draw ({#1+1},{#2}) to ({#1+1},{#2+\diagvshift});
    \draw ({#1+1},{#2+1-\diagvshift}) to ({#1+1},{#2+1});
}
\NewDocumentCommand{\diagDI}{O{0}O{0}}{
    \diagD[#1][#2]\diagSTR[#1+2][#2]
}
\NewDocumentCommand{\diagID}{O{0}O{0}}{
    \diagSTR[#1][#2]\diagD[#1+1][#2]
}
\NewDocumentCommand{\diagUI}{O{0}O{0}}{
    \diagU[#1][#2]\diagSTR[#1+2][#2]
}
\NewDocumentCommand{\diagIU}{O{0}O{0}}{
    \diagSTR[#1][#2]\diagU[#1+1][#2]
}
\NewDocumentCommand{\diagDII}{O{0}O{0}}{
    \diagD[#1][#2]\diagSTR[#1+2][#2]\diagSTR[#1+3][#2]
}
\NewDocumentCommand{\diagIDI}{O{0}O{0}}{
    \diagSTR[#1][#2]\diagD[#1+1][#2]\diagSTR[#1+3][#2]
}
\NewDocumentCommand{\diagIID}{O{0}O{0}}{
    \diagSTR[#1][#2]\diagSTR[#1+1][#2]\diagD[#1+2][#2]
}
\NewDocumentCommand{\diagUII}{O{0}O{0}}{
    \diagU[#1][#2]\diagSTR[#1+2][#2]\diagSTR[#1+3][#2]
}
\NewDocumentCommand{\diagIUI}{O{0}O{0}}{
    \diagSTR[#1][#2]\diagU[#1+1][#2]\diagSTR[#1+3][#2]
}
\NewDocumentCommand{\diagIIU}{O{0}O{0}}{
    \diagSTR[#1][#2]\diagSTR[#1+1][#2]\diagU[#1+2][#2]
}
\NewDocumentCommand{\diagI}{O{0}O{0}}{
    \diagSTR[#1][#2]
}
\NewDocumentCommand{\diagII}{O{0}O{0}}{
    \diagSTR[#1][#2]\diagSTR[#1+1][#2]
}
\NewDocumentCommand{\diagIII}{O{0}O{0}}{
    \diagSTR[#1][#2]\diagSTR[#1+1][#2]\diagSTR[#1+2][#2]
}
\let\oldtocsubsection=\tocsubsection
\renewcommand{\tocsubsection}[2]{\hspace{2em}\oldtocsubsection{#1}{#2}}
\title{A basis and Schur--Weyl duality for the loop Hecke algebra}
\author[G. Janssens, A. Lacabanne, L. Schelstraete and P. Vaz]{Geoffrey Janssens, Abel Lacabanne, Léo Schelstraete and Pedro Vaz}
\address{G.J.: Institut de Recherche en Math{\'e}matique et Physique, 
Universit{\'e} Catholique de Louvain, Chemin du Cyclotron 2,  
1348 Louvain-la-Neuve, Belgium \newline 
\& Department of Mathematics and Data Science, Vrije Universiteit Brussel,
Pleinlaan $2$, 1050 Elsene,
\href{https://geoffreyjanssens.github.io/}{geoffreyjanssens.github.io},
\href{https://orcid.org/0000-0001-5540-3171}{ORCID 0000-0001-5540-3171}
}
\email{geoffrey.janssens@uclouvain.be}
\address{A.L.: Laboratoire de Math{\'e}matiques Blaise Pascal (UMR 6620), Universit{\'e} Clermont Auvergne, Campus Universitaire des C{\'e}zeaux, 3 place Vasarely, 63178 Aubi{\`e}re Cedex, France, \newline\href{http://www.normalesup.org/~lacabanne} {www.normalesup.org/$\sim$lacabanne}, \href{https://orcid.org/0000-0001-8691-3270}{ORCID 0000-0001-8691-3270}}
\email{abel.lacabanne@uca.fr}
\address{L.S.:
    Max Planck Institüt für Mathematik,
    Vivatsgasse 7, 53111 Bonn, Germany,
    \newline  \href{https://leo-schelstraete.github.io/}{leo-schelstraete.github.io},
    \href{https://orcid.org/0000-0001-7167-3964}{ORCID 0000-0001-7167-3964}
}
\email{leoschelstraete@gmail.com}
\address{P.V.:
    Institut de Recherche en Math{\'e}matique et Physique, 
    Universit{\'e} Catholique de Louvain, Chemin du Cyclotron 2,  
    1348 Louvain-la-Neuve, Belgium,
    \newline \href{https://perso.uclouvain.be/pedro.vaz}{https://perso.uclouvain.be/pedro.vaz},
    \href{https://orcid.org/0000-0001-9422-4707}{ORCID 0000-0001-9422-4707}
}
\email{pedro.vaz@uclouvain.be}
\subjclass{20C08,20F36,17B37,16T99,16S15}
\keywords{loop braid group, loop Hecke algebra, Schur--Weyl duality, R-matrix, quantum groups, rewriting theory, Gröbner basis, diamond lemma, Dyck paths}
\begin{document}
%
%
\newdimen\captionwidth\captionwidth=\hsize
%

\begin{abstract}
The loop Hecke algebra is a generalization of the Hecke algebra to the loop braid group, introduced by Damiani, Martin and Rowell.
We give a new presentation of the loop Hecke algebra provided a mild condition on the parameter and give a basis. We use higher linear rewriting theory to show linear independence and the combinatorics of Dyck paths to compute the cardinality of the basis. This yields a conjecture of Damiani--Martin--Rowel.
We also give a representation theoretic interpretation of the loop Hecke algebra in terms of (non-semisimple) Schur--Weyl duality involving the negative half of quantum $\mathfrak{gl}_{1|1}$.
\end{abstract}

\maketitle

\vspace{-1cm}
{\hypersetup{hidelinks}
\tableofcontents 
}
\vspace{-.1cm}



\section{Introduction}
  
The classical braid group $\Br_n$ can be identified with the group of motions of $n$ points in the plane $\bR^2$. In a similar spirit, the \emph{loop braid group} $\LBr_n$ is the group of motions of $n$ unlinked circles in the space $\bR^3$.
This definition was given by Dahm in his unpublished PhD thesis \cite{Dahm_GeneralizationBraidTheory_1962}, then published and extended by Goldsmith \cite{Goldsmith_TheoryMotionGroups_1981}. The terminology is due to Baez, Wise and Crans \cite{BWC_ExoticStatisticsStrings_2007}, inspired by physical motivations.
As its classical counterpart, the loop braid group admits many different definitions, reflecting its connections with various fields: as certain automorphisms of the free group on $n$ generators \cite{McCool_BasisconjugatingAutomorphismsFree_1986,Savushkina_GroupConjugatingAutomorphisms_1996};
as certain braid-like objects called \emph{welded braids} \cite{FRR_BraidpermutationGroup_1997}, in connection to virtual knot theory \cite{Kauffman_VirtualKnotTheory_1999} and knotted surfaces \cite{Satoh_VirtualKnotPresentation_2000};
or as the configuration space of $n$ unlinked circles in $\bR^3$ \cite{BH_ConfigurationSpacesRings_2013}.
We refer the reader to \cite{Damiani_JourneyLoopBraid_2017} for an overview.

The loop braid group admits an Artin-like presentation (see e.g.\ \cite{FRR_BraidpermutationGroup_1997}):
\begin{equation}
\label{eq:defn_LB}
    \LBr_n \coloneqq
    \left\langle
        \begin{array}{c}
             \sigma_1,\ldots,\sigma_{n-1},\\
             \rho_1,\ldots,\rho_{n-1}
        \end{array}
        \;\left\vert\;
        \begin{array}{c}
            \sigma_i\sigma_{i+1}\sigma_i = \sigma_{i+1}\sigma_i\sigma_{i+1},
            \mspace{5mu} 
            \rho_i\rho_{i+1}\rho_i = \rho_{i+1}\rho_i\rho_{i+1},
            \mspace{5mu} 
            \rho_i^2 = 1,
            \\
            \rho_i\sigma_{i+1}\sigma_i = \sigma_{i+1}\sigma_i\rho_{i+1},
            \mspace{5mu} 
            \sigma_{i}\rho_{i+1}\rho_{i} = \rho_{i+1}\rho_{i}\sigma_{i+1},
            \\
            \sigma_i\sigma_j = \sigma_j\sigma_i ,
            \mspace{5mu} 
            \rho_i\rho_j = \rho_j\rho_i ,
            \mspace{5mu}  
            \sigma_i\rho_j = \rho_j\sigma_i   
            \mspace{5mu}
            \text{for } \vert i-j\vert> 1
        \end{array}
        \right.
    \right\rangle .
\end{equation}
The generators $\sigma_i$ correspond to the $i$th circle passing through the $(i+1)$th circle; they generate a copy of the braid group $\Br_n\hookrightarrow\LBr_n$.
The generators $\rho_i$ correspond to permuting the $i$th circle and the $(i+1)$th circle; they generate a copy of the symmetric group $\Sym_n\hookrightarrow \LBr_n$.
The remaining relations are mixed relations, capturing how the copy of the braid group $\Br_n$ and the copy of the symmetric group $\Sym_n$ interact inside $\LBr_n$.

To study a group, one looks for interesting representations. 
As the braid group $\Br_n$ sits inside the loop braid group $\LBr_n$, a natural approach is to try to extend a representation of $\Br_n$ to a representation of $\LBr_n$.
Arguably, the most classical representation of $\Br_n$ is the Burau representation \cite{Burau_UberZopfgruppenUnd_1935}.
It has a long history, with connection to the Alexander polynomial and a still-standing faithfulness conjecture for $n=4$.
It is known to factor through (the complexification of) the Hecke algebra $\Hecke_n$, defined as a quotient of the group algebra $\bZ[t][\Br_n]$ by quadratic relations $\sigma_i^2 = (t+1)\sigma_i+t$.

Recently, Damiani, Martin and Rowell \cite{DMR_GeneralisationsHeckeAlgebras_2023} introduced the \emph{loop Hecke algebra} $\LH_n$ as an analogue for $\LBr_n$ of the Hecke algebra\footnote{To the authors' knowledge, there is no connection with loop algebras as appearing in affine Lie theory.}.
This builds on earlier work by Vershinin \cite{Vershinin_HomologyVirtualBraids_2001} extending the Burau representation to the loop braid group.
Their definition is an analogue of the definition of the Hecke algebra: it is a quotient of the group algebra $\bZ[t][\LBr_n]$ by certain quadratic relations (see \autoref{Definition loop heck}).
Although not clear from the definition, it was shown in \cite[Corollary 3.5]{DMR_GeneralisationsHeckeAlgebras_2023} that $\LH_n$ is finite dimensional. Furthermore, rather surprisingly, the dimension should be independent of $t$ under an intriguing condition on the parameter:  

\begin{conj}[Damiani--Martin--Rowell]\label{conj: dim formula}
Let $\bC_z$ be the complex numbers $\bC$ seen as a left $\bZ [t]$-module by evaluating $t$ at $z \in \bC$. Then for $z \neq \pm 1$:
\[\dim_{\bC} \LH_n \otimes_{\bZ [t]} \bC_z = \frac{1}{2} \binom{2n}{n}.\]
\end{conj}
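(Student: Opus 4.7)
The plan is to exhibit an explicit basis of $\LH_n \otimes_{\bZ[t]} \bC_z$ of cardinality $\tfrac{1}{2}\binom{2n}{n}$ whenever $z \neq \pm 1$, and to confirm the count via the combinatorics of Dyck paths. The first step is to find a smaller, more tractable presentation of $\LH_n$. Because $\rho_i^2 = 1$ and $\sigma_i^2 = (t+1)\sigma_i + t$, the combined products $\sigma_i\rho_i$ (or associated idempotents) are natural candidates for new generators. The hypothesis $z \neq \pm 1$ ensures invertibility of $t^2 - 1$ under the specialisation, which should allow elimination of redundant generators and simplification of the mixed relations in \eqref{eq:defn_LB} into a clean, preferably quadratic, form. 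This is precisely the ``mild condition on the parameter'' flagged in the abstract.

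Armed with the new presentation, I would orient its relations as a (higher linear) rewrite system using a monomial order compatible with the natural length on $\LBr_n$. Routine manipulations then produce a spanning family of irreducible normal forms. Linear independence is the crux: by Bergman's diamond lemma (or its polygraphic/higher-categorical analogue), it is enough to enumerate all overlap critical pairs of left-hand sides of the rewrite rules and verify that each resolves to a common normal form. Once confluence is established, the irreducible words are automatically a basis, and the problem reduces to counting them.

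The final step is to count these normal forms. I would encode them as combinatorial objects — a suitable family of Dyck-like lattice paths, possibly pairs or decorated variants — and enumerate them. The identity $\tfrac{1}{2}\binom{2n}{n} = \binom{2n-1}{n}$ (the number of North-East lattice paths of a certain type) together with a standard reflection or generating-function argument should yield the desired formula; as an independent sanity check, the Schur--Weyl duality with the negative half of quantum $\mathfrak{gl}_{1|1}$ announced later in the paper should provide a module of exactly the same dimension. The main obstacle is confluence in the rewriting step: the two families $\{\sigma_i\}$ and $\{\rho_i\}$ together with the mixed relations generate a large number of critical pairs, so the combinatorial bookkeeping is delicate, and the success of the whole argument hinges on choosing the right reformulation of the generators in the first step — an ill-chosen set would produce an unmanageable rewrite system, whereas the right one should make the Dyck-path structure visible from the normal forms themselves.
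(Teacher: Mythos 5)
Your plan mirrors the paper's proof exactly in structure: change generators to idempotents to obtain a parameter-free presentation, prove linear independence of normal forms via a higher linear rewriting system (critical branching analysis), count normal forms via Dyck paths, and use the $\neguglone$ Schur--Weyl duality as an independent cross-check. The one concrete detail you specify is slightly off: the paper's new generators are not the products $\sigma_i\rho_i$ but the linear combinations $D_i=(\sigma_i-\rho_i)/(1-t)$ and $U_i=(\sigma_i-t\rho_i)/(1-t)$ (both idempotent, so your hedge is apt), and you are right that $t^2-1$ invertible is what makes this substitution workable -- finding this particular change of variables, which produces the $t$-independent relations \eqref{eq:D_and_U_same_label}--\eqref{eq:D_and_U_distant_label}, is indeed the step you flag as load-bearing, and the paper devotes its entire Section~\ref{section proof pres are equivalent} to verifying it.
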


The first goal of this paper is to confirm \autoref{conj: dim formula}.
The main object in our proof is an integral form $\varLH_n$ (\autoref{defn:var_loop_hecke}) for the loop Hecke algebra $\LH_n$.
When specializing to $\bC_z$ for $z\neq \pm1$, the integral form $\varLH_n$ gives a new presentation of $\LH_n$, independent of the parameter $t=z$.
Using this new presentation, we are able to describe an explicit basis. Linear independence is shown in \autoref{sec:basis_theorem} using higher linear rewriting theory \cite{Schelstraete_RewritingModuloDiagrammatic_2025}, i.e.\ an analogue of the diamond lemma for linear monoidal categories. We count the cardinality of the basis using the combinatorics of Dyck paths in \autoref{sec:counting_basis}. Finally, we show in \autoref{sec:equivalence_presentations} that the two presentations are equivalent when $z\neq \pm1$, leading to a proof of \autoref{conj: dim formula}.
A more detailed introduction is given in \autoref{subsec:intro_new_presentation} and \autoref{subsec:intro_basis_theorem} below.

\medbreak

The second goal of this paper is to give a representation-theoretic interpretation of the loop Hecke algebra.
The braid group is Schur--Weyl dual to the quantum $\uglone$ via its standard representation $V$.
Furthermore, the Hecke algebra fully describes $\uglone$-intertwiners, in the sense that the algebra morphism
\[
\Hecke_n\otimes_{\bZ[t]} \bQ(q)\to \End_{\uglone}(V^{\otimes n})
\]
is surjective, where $\bQ(q)$ is viewed as a $\bZ[t]$-algebra via the map $t\mapsto q^{-2}$.
In this paper, we show that a similar statement holds for the loop Hecke algebra $\LH_n$. Since $\LH_n$ is ``larger'' than $\Hecke_n$, we must ``shrink'' $\uglone$. It turns out that the right answer is to consider its negative-half $\neguglone$.
In \autoref{sec:background_uglone} we recall some background on $\uglone$ and its representations. \autoref{sec:Schur--Weyl} is the core of this second part of the paper, showing a Schur--Weyl duality between $\LH_n$ and $\neguglone$. Finally, we use this result in \autoref{sec:ring_approach_loop_Hecke} to further study the loop Hecke algebra from a ring-theoretic and representation theory of algebras perspective. A more detailed introduction is given in \autoref{subsec:intro_schur_weyl}.

\medbreak

We now give a more detailed account of the main results and objects. We conclude with some further directions of research in \autoref{subsec:intro_further_questions}.

\medbreak
\noindent \textbf{Acknowledgments.} 
G.J.\ is grateful to Fonds Wetenschappelijk Onderzoek Vlaanderen - FWO (grant 88258), and le Fonds de la Recherche Scientifique - FNRS (grant 1.B.239.22) for financial support. L.S.\ is supported by the Max Planck Institute for Mathematics (Bonn, Germany). The authors were supported by a PHC Tournesol Wallonie Bruxelles grant.

G.J.\ warmly thanks Celeste Damiani and Eric Rowell for explaining their work and telling about \autoref{conj: dim formula} during the problem session of the Banff workshop "Skew Braces, Braids and the Yang-Baxter Equation" (24w5201). Furthermore thank Paul Martin for clarifications on \cite[Section 6]{DMR_GeneralisationsHeckeAlgebras_2023}. A.L. thanks Jacques Darn\'e for discussions around this work.

\subsection{A parameter independent presentation of the loop Hecke algebra}
\label{subsec:intro_new_presentation}

We recall the definition of the loop Hecke algebra:

\begin{defn}[{\cite[section~3B]{DMR_GeneralisationsHeckeAlgebras_2023}}]
\label{Definition loop heck}
The \emph{loop Hecke algebra} $\LH_n$ is the associative unital $\bZ [t]$-algebra generated by 
$\sigma_1,\dotsc,\sigma_{n-1}$ and $\rho_1,\dotsc,\rho_{n-1}$, subject to the loop braid relations
\begin{IEEEeqnarray}{rClcrCl}
    \sigma_i\sigma_{i+1}\sigma_i &=& \sigma_{i+1}\sigma_i\sigma_{i+1},
    &\qquad&
    \rho_i\rho_{i+1}\rho_i &=& \rho_{i+1}\rho_i\rho_{i+1},
    \label{eq:sigma_rho_braid_same}
    \\
    \rho_i\sigma_{i+1}\sigma_i &=& \sigma_{i+1}\sigma_i\rho_{i+1},
    &&
    \sigma_{i}\rho_{i+1}\rho_{i}&=&\rho_{i+1}\rho_{i}\sigma_{i+1},
    \label{eq:sigma_rho_braid_mixed}
\end{IEEEeqnarray}
\begin{equation}
    \sigma_i\sigma_j = \sigma_j\sigma_i ,
    \mspace{40mu} 
    \rho_i\rho_j = \rho_j\rho_i ,
    \mspace{40mu}  
    \sigma_i\rho_j = \rho_j\sigma_i ,   
    \mspace{40mu}
    \text{for } \vert i-j\vert> 1,
    \label{eq:sigma_rho_distant_label}
\end{equation}
and the quadratic relations
\begin{IEEEeqnarray}{rCl.c.rCl}
\label{eq:sigma_rho_same_label_same}
    \rho_i^2  &=& 1, &\qquad&  (\sigma_i-1)(\sigma_i+t) &=& 0, 
    \\
    \label{eq:sigma_rho_same_label_mixed}
    (\rho_i-1)(\sigma_i+t)&=&0, && (\sigma_i-1)(\rho_i+1)&=&0.
\end{IEEEeqnarray}
\end{defn}
Note that relations \eqref{eq:sigma_rho_braid_same}, \eqref{eq:sigma_rho_braid_mixed} and \eqref{eq:sigma_rho_distant_label}, together with the quadratic relation $\rho_i^2=1$, are the defining relations of the loop braid group $\LBr_n$ given in \eqref{eq:defn_LB}.

\begin{rem}\label{rem: coefficients}
   In \cite{DMR_GeneralisationsHeckeAlgebras_2023} the loop Hecke algebra is defined as the $\bZ [t,t^{-1}]$-algebra generated by the relations \eqref{eq:sigma_rho_braid_same}-\eqref{eq:sigma_rho_same_label_mixed}. However, as pointed out around \cite[eq. (3-14)]{DMR_GeneralisationsHeckeAlgebras_2023}, it is sensible to define it over $\bZ [t]$. The extension $\LH_n \otimes_{\bZ [t]} \bZ [t^{\pm 1}]$ has the advantage that $t^{-1}(\sigma_i+t-1)$ is an inverse for $\sigma_i$; however, we will not require this fact. 
\end{rem}

When $t-1$ is invertible one could consider the following alternative generating set for $\LH_n$:
\[
D_i=(\sigma_i-\rho_i)/(1-t) \text{ and } U_i=(\sigma_i-t \rho_i)/(1-t),
\]
for $1\leq i\leq n-1$.
It turns out that for these generators $\LH_n$ has a more symmetric presentation, under some further conditions on the parameter.
This and some experiments for small $n$ using \textsc{Magma} \cite{BSM+_Magma_} motivate considering the following $\bZ$-algebra:

\begin{defn}
\label{defn:var_loop_hecke}
    For each $n\in\bN_{>0}$, the \emph{integral form of the loop Hecke algebra $\varLH_n$} is the $\bZ$-algebra with generators $D_i$ and $U_i$ for each $1\leq i \leq n-1$, subject to the following relations\footnote{In fact, the relations $U_iU_{i+1}U_i = U_{i+1}U_i$ and $D_{i+1}D_iD_{i+1} = D_{i+1}D_i$ are consequences of the other relations; see \autoref{rem:some-braid-rels-follow-from-quadratic-rels}.}:
    \begin{gather}
        \label{eq:D_and_U_same_label}
        D_iD_{i}=D_i
        \qquad
        D_iU_{i}=0
        \qquad
        U_iD_{i}=U_i+D_i-1
        \qquad
        U_iU_{i}=U_i
    \end{gather} 
    for $1\leq i\leq n-1$, and
    \begin{gather}
    \label{eq:D, U length 2}
        D_iU_{i+1}=U_{i+1}D_i
        \qquad
        U_iD_{i+1}=0
        \qquad
        D_{i+1}U_{i}=D_{i+1}+U_{i}-1
        \\ \label{eq: braid D to length 2}
        D_iD_{i+1}D_i=D_{i+1}D_i=D_{i+1}D_{i}D_{i+1}
        \\ \label{eq: braid U to length 2}
        U_iU_{i+1}U_i=U_{i+1}U_{i}=U_{i+1}U_{i}U_{i+1}
    \end{gather}
    for $1\leq i\leq n-2$, and
    \begin{gather}
    \label{eq:D_and_U_distant_label}
    U_iD_j=D_jU_i\qquad U_iU_j=U_jU_i\qquad D_iD_j=D_jD_i\qquad |i-j|>1
    \end{gather}
    for $1\leq i,j\leq n-1$.
\end{defn}

Note that the $U_i$'s and $D_i$'s still satisfy the braid relations \eqref{eq: braid D to length 2}-\eqref{eq: braid U to length 2}, but the quadratic relations \eqref{eq:sigma_rho_same_label_same}-\eqref{eq:sigma_rho_same_label_mixed} combine to the nicer relations \eqref{eq:D_and_U_same_label}, which do not involve the parameter $t$. Note also that the generators of $\varLH_n$ are idempotent elements.

An important step towards confirming \autoref{conj: dim formula} is to prove that the presentation in \autoref{defn:var_loop_hecke} is also one of the loop Hecke algebra.

\begin{maintheorem}[\autoref{cor:equivalence_presentation_over_field}]\label{main thm: eq rel}
    Let $\bC_z$ be the complex numbers $\bC$ seen as a left $\bZ [t]$-module by evaluating $t$ at $z \in \bC$. Then for $z \neq \pm 1$, the loop Hecke algebra $\LH_n$ (\autoref{Definition loop heck}) and its integral form $\varLH_n$ (\autoref{defn:var_loop_hecke}) are isomorphic over $\bC$:
    \[
    \LH_n\otimes_{\bZ[t]}\bC_z
    \cong 
    \varLH_n\otimes_\bZ\bC.
    \]
\end{maintheorem}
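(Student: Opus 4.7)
The plan is to exhibit explicit mutually inverse algebra homomorphisms using the change of generators already hinted at just before \autoref{defn:var_loop_hecke}. Working over $\bC$ with $1-z$ invertible (so $z\neq 1$), set
\[
\Phi\colon \varLH_n \otimes_\bZ \bC \to \LH_n \otimes_{\bZ[t]} \bC_z, \qquad D_i \mapsto \tfrac{\sigma_i - \rho_i}{1-z}, \quad U_i \mapsto \tfrac{\sigma_i - z\rho_i}{1-z},
\]
and
\[
\Psi\colon \LH_n \otimes_{\bZ[t]} \bC_z \to \varLH_n \otimes_\bZ \bC, \qquad \sigma_i \mapsto U_i - zD_i, \quad \rho_i \mapsto U_i - D_i.
\]
On the level of generators these are mutually inverse; e.g.\ $\Psi(\Phi(D_i)) = \tfrac{(U_i - zD_i) - (U_i - D_i)}{1-z} = D_i$, and analogously for $U_i$, $\sigma_i$, $\rho_i$. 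So the heart of the proof is to show that both $\Phi$ and $\Psi$ extend to well-defined algebra homomorphisms, after which the theorem is immediate.

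For $\Phi$ I would verify each defining relation of $\varLH_n$ in the image inside $\LH_n \otimes \bC_z$. The four local quadratic relations \eqref{eq:D_and_U_same_label} follow from short expansions using \eqref{eq:sigma_rho_same_label_same}--\eqref{eq:sigma_rho_same_label_mixed}: one first deduces $\sigma_i^2 = (1-z)\sigma_i + z$, $\rho_i^2 = 1$, $\sigma_i\rho_i = \rho_i - \sigma_i + 1$, and $\rho_i\sigma_i = \sigma_i - z\rho_i + z$, from which $(\sigma_i - \rho_i)^2 = (1-z)(\sigma_i - \rho_i)$, giving $\Phi(D_i)^2 = \Phi(D_i)$, and the other three are analogous. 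The distant commutations \eqref{eq:D_and_U_distant_label} are immediate from \eqref{eq:sigma_rho_distant_label}. The length-two relations \eqref{eq:D, U length 2} and the cubic braid-like relations \eqref{eq: braid D to length 2}--\eqref{eq: braid U to length 2} require combining the quadratic identities with the mixed braid relations \eqref{eq:sigma_rho_braid_mixed}, and involve expanding products of six to eight generators.

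Symmetrically, verifying $\Psi$ amounts to checking that each relation of $\LH_n$ holds after substitution into $\varLH_n$. The quadratic relations are short: for example $(U_i - D_i)^2 = U_i - U_iD_i - D_iU_i + D_i = U_i - (U_i + D_i - 1) - 0 + D_i = 1$ confirms $\rho_i^2 = 1$, and a similar manipulation using both $U_iD_i = U_i + D_i - 1$ and $D_iU_i = 0$ yields $(\sigma_i - 1)(\rho_i + 1) = 0$. The three-term braid and mixed braid relations of $\LH_n$ then unfold from the braid-like relations \eqref{eq: braid D to length 2}--\eqref{eq: braid U to length 2} and the length-two relations \eqref{eq:D, U length 2} of $\varLH_n$.

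The main obstacle is the combinatorial bookkeeping in the braid-type relations. For $\Phi$, deriving e.g.\ $D_iD_{i+1}D_i = D_{i+1}D_i$ in the image requires expanding the eight-term product $(\sigma_i - \rho_i)(\sigma_{i+1}-\rho_{i+1})(\sigma_i-\rho_i)$ and showing that, modulo the mixed braid and quadratic relations, the extraneous terms collapse to $(1-z)^2(\sigma_{i+1}-\rho_{i+1})(\sigma_i-\rho_i)$; the argument for $\Psi$ is analogous. The restriction $z \neq -1$ is not needed to write down the formulas and does not appear in any single local calculation; I expect it to enter in the relation checking because at $z = -1$ the relation $(\sigma_i - 1)(\sigma_i + z) = 0$ degenerates to $(\sigma_i - 1)^2 = 0$, so $\LH_n \otimes \bC_{-1}$ plausibly acquires additional relations not reflected in $\varLH_n$ (consistent with the conjectural dimension formula failing there).
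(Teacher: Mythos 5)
Your setup is the same as the paper's (see \autoref{thm:equivalence_presentations} and its proof in \autoref{section proof pres are equivalent}): the same pair of mutually inverse maps, the same easy verifications of the quadratic and distant-label relations. Your local computations of $(\sigma_i-\rho_i)^2=(1-z)(\sigma_i-\rho_i)$, $(U_i-D_i)^2=1$, etc.\ are correct.

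The genuine gap is the part you defer to ``combinatorial bookkeeping.'' Showing that the mixed length-two relations \eqref{eq:D, U length 2} and the braid-type relations \eqref{eq: braid D to length 2}--\eqref{eq: braid U to length 2} hold in $\LH_n\otimes\bC_z$ after substituting $D_i\mapsto(\sigma_i-\rho_i)/(1-z)$, $U_i\mapsto(\sigma_i-z\rho_i)/(1-z)$ is explicitly flagged in the paper as the main difficulty, and it is not a matter of expanding an eight-term product and watching it collapse. What the paper actually does is the opposite expansion: it writes each $\LBr_n$ braid relation as a single four-parameter family $r_{(a,b)}=0$ with $(a,b)\in\{1,t\}^2$ in the $D$/$U$ variables, and then extracts the $\varLH_n$ relations one by one via carefully chosen $\bZ[t,(t\pm1)^{-1},t^{-1}]$-linear combinations of left- and right-multiplied $r_{(a,b)}$'s (the $s_1,\dots,s_7$ of Steps 1--7 in \autoref{subsubsec:proof_equivalence_t_not_zero}, plus a separate $t=0$ argument in \autoref{subsubsec:proof_equivalence_t_zero}). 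The authors state that finding those combinations required computer assistance. Your sketch neither produces these combinations nor proposes a substitute mechanism, so as written the proof does not go through.

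Two smaller remarks. First, you say $z\neq -1$ ``does not appear in any single local calculation,'' but in the paper's derivation $t+1$ is explicitly inverted in Steps 3, 4, 6 and 7, so this hypothesis does enter at precise points; a correct write-up must track where. Second, the paper separates the cases $z\notin\{0,\pm1\}$ and $z=0$ (Step 6 and Step 4 in case (1) divide by $t$), while your argument over $\bC_z$ for arbitrary $z\neq\pm1$ would need to handle $z=0$ as well, as the paper does in its separate Case (2).
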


A more general statement is given in \autoref{thm:equivalence_presentations}.
The main difficulty in the proof is to verify that the image in $\LH_n$ of the relations \eqref{eq:D, U length 2}-\eqref{eq: braid U to length 2} holds. The full \autoref{section proof pres are equivalent} will be dedicated to that. 

\subsection{A basis for the loop Hecke algebra and consequences}
\label{subsec:intro_basis_theorem}
The relations of $\varLH_n$ have the advantage of saying how to swap any two generators. This allows us to obtain a basis, which we now introduce. First, denote $\Sym_m$ the symmetric group on $m-1$ generators, and recall that a permutation $\tau \in \Sym_m$ is called 321-\emph{avoiding} if there is no $i < j < k$ such that $\tau(i) > \tau (j) > \tau (k).$

\begin{defn}
\label{defn:reduced_words}
    A word in the alphabet $\{U_i,D_i\}_{1\leq i< n}$ is said to be \emph{$\varLH_n$-reduced} if it is of the form
    \[
    \omega = \underline{D}\;\underline{U}
    \]
    for $\underline{D}$ (resp.\ $\underline{U}$) a 321-avoiding reduced word in the alphabet $\{D_i\}_{1\leq i< n}$ (resp.\ $\{U_i\}_{1\leq i< n}$) 
    for each $1\leq i< n$,\footnote{That is, $\underline{D}$ and $\underline{U}$ are 321-avoiding reduced words each in their own alphabet.} such that if $D_i$ is a letter of $\underline{D}$, then $U_i$ and $U_{i-1}$ are \emph{not} letters of $\underline{U}$:
    \begin{gather}
    \label{eq:condition_reduced_word}
    D_i\in\underline{D}\Rightarrow U_i,U_{i-1}\not\in\underline{U}.
    \end{gather}
    We write $\Red(\varLH_n)$ for the set of $\varLH_n$-reduced words.
\end{defn}

\begin{maintheorem}[\autoref{thm:reduced_words_basis}] \label{main thm: basis}
    For each $n\in\bN_{>0}$, the set $\Red(\varLH_n)$ of $\varLH_n$-reduced words defines a basis of $\varLH_n$. 
\end{maintheorem}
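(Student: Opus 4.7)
The plan is to obtain both spanning and linear independence simultaneously by constructing a convergent linear rewriting system on words in $\{D_i, U_i\}_{1\leq i<n}$ whose normal forms are exactly the $\varLH_n$-reduced words, and then applying the higher linear diamond lemma from \cite{Schelstraete_RewritingModuloDiagrammatic_2025}.

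I would first orient each defining relation of \autoref{defn:var_loop_hecke} as a rewrite rule pointing towards normal forms of the shape $\underline{D}\,\underline{U}$. The idempotent and annihilating relations \eqref{eq:D_and_U_same_label} become $D_i^2 \to D_i$, $U_i^2 \to U_i$, $D_iU_i \to 0$ and $U_iD_i \to U_i + D_i - 1$; the mixed relations \eqref{eq:D, U length 2} push $D$'s to the left of $U$'s, giving in particular $U_iD_{i+1} \to 0$, $U_{i+1}D_i \to D_iU_{i+1}$ and $D_{i+1}U_i \to D_{i+1} + U_i - 1$; the distant commutations \eqref{eq:D_and_U_distant_label} are oriented to move $D$'s left of $U$'s and to sort within each of $\underline{D}$ and $\underline{U}$; and the braid-like reductions \eqref{eq: braid D to length 2}--\eqref{eq: braid U to length 2} become $D_iD_{i+1}D_i \to D_{i+1}D_i$, $D_{i+1}D_iD_{i+1} \to D_{i+1}D_i$ and their $U$-analogues. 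The framework of \cite{Schelstraete_RewritingModuloDiagrammatic_2025} allows the commutation subrelations to be handled as a rewriting-modulo congruence, so that each equivalence class of reduced words in $\underline{D}$ or $\underline{U}$ is represented once. The characterisation of normal forms as $\Red(\varLH_n)$ then reduces to two observations: no internal braid or idempotency reduction applies, which gives exactly 321-avoiding reduced words; and no $D_i$ in $\underline{D}$ can be brought adjacent to a $U_i$ or $U_{i-1}$ in $\underline{U}$ via commutations, which corresponds precisely to condition \eqref{eq:condition_reduced_word}.

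Next, I would verify termination via a well-founded measure on words, for instance the lexicographic triple $(\#\{(i,j) : U_i\ \text{lies left of}\ D_j\},\, \#\text{letters},\, \#\text{applicable braid reductions})$. Each rule strictly decreases this triple on every summand produced, in particular the multi-term rules $U_iD_i \to U_i + D_i - 1$ and $D_{i+1}U_i \to D_{i+1} + U_i - 1$ (which decrease the first or second coordinate). The heart of the argument is then the verification of local confluence by inspecting all critical pairs (overlapping left-hand sides). These split into three families: pure $D$- or $U$-overlaps such as $D_iD_{i+1}D_iD_{i+1}$ and $D_iD_{i+1}D_iD_{i+2}$; pure quadratic/mixed overlaps such as $D_iU_iD_i$, $U_iD_iU_i$, $U_iD_{i+1}U_i$; and overlaps of quadratic or $D$-$U$ rules with braid-like rules such as $D_iD_{i+1}D_iU_i$, $U_iD_{i+1}D_i$ and $D_{i+1}U_iU_{i+1}$. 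Each critical pair must close to a common reduct after further rewriting. Once local confluence is established, Newman's lemma yields global confluence, and the higher linear diamond lemma concludes that the set of normal forms $\Red(\varLH_n)$ is a basis of $\varLH_n$.

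The main obstacle will be the critical-pair analysis for the mixed braid/$D$-$U$ overlaps in the third family, since the $-1$ summands appearing in rules such as $U_iD_i \to U_i + D_i - 1$ produce branching rewriting trees whose individual terms must be followed through several further reductions before recombining into a common normal form. Handling these correctly in the presence of the commutation congruences is precisely what the linear rewriting modulo framework of \cite{Schelstraete_RewritingModuloDiagrammatic_2025} is designed for.
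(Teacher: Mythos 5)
Your high-level plan is the same as the paper's: orient the defining relations of $\varLH_n$ as a higher linear rewriting system (in the sense of \cite{Schelstraete_RewritingModuloDiagrammatic_2025}), check termination and local confluence of critical branchings, and read off $\Red(\varLH_n)$ as the set of monomial normal forms. However, your specific rewriting system has a genuine gap: orienting the defining relations alone (with $U_{i+1}D_i \to D_iU_{i+1}$, and the distant commutations treated as interchange/modulo) does \emph{not} produce $\Red(\varLH_n)$ as the set of normal forms. For example, $D_{i+1}D_iU_{i+1}$ avoids every left-hand side you listed — the subword $D_{i+1}D_i$ is the \emph{target} of the braid rules, not a source; $D_iU_{i+1}$ is the target of the oriented commutation — so it is a normal form of your system. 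Yet it equals $0$ in $\varLH_n$ (commute $D_i$ past $U_{i+1}$, then use $D_{i+1}U_{i+1}=0$), and it violates condition \eqref{eq:condition_reduced_word}, so it is \emph{not} $\varLH_n$-reduced. Your informal test (``no $D_i$ in $\underline{D}$ can be brought adjacent to $U_i$ or $U_{i-1}$ via commutations'') correctly flags this word as bad, but your rewriting rules do not reduce it, because the relevant commutation $D_iU_{i+1}=U_{i+1}D_i$ is a genuine (non-interchange) relation that you have oriented one way. This mismatch breaks the ``characterisation of normal forms'' step and with it the whole diamond-lemma argument.

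The paper handles this by adding three \emph{derived} rewrite rules, $D_+DU_+ \to 0$, $DU_+U \to 0$, and $D_+DU_{++}U_+ \to 0$ (see \autoref{fig:higher-Grobner-basis-LH}), precisely so that the spurious normal forms disappear. Correspondingly, \autoref{lem:HF-reduced_are_S-reduced} is not the two casual observations you describe but a careful pattern-avoidance lemma whose avoided-pattern list includes these extra length-three and length-four patterns \eqref{eq:reduced_word_avoided_pattern_C}--\eqref{eq:reduced_word_avoided_pattern_D}; its proof is the delicate part of matching normal forms to $\Red(\varLH_n)$. You would need to add the same compensatory rules (or switch to treating $D_iU_{i+1}=U_{i+1}D_i$ as a genuine modulo congruence, which is a different and nontrivial technical route and would also change which branchings are critical), redo the normal-form characterisation rigorously, and then carry out the critical-branching analysis including the branchings these new rules create (which the paper does in \autoref{app:confluence_critical_branchings}, including indexed branchings). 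As written, the proposal is incomplete at the decisive step.
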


We provide two proofs of linear independence of $\Red(\varLH_n)$.
The first one, given in \autoref{sec:basis_theorem}, works over $\bZ$ and uses \emph{higher linear rewriting theory} \cite{Schelstraete_RewritingModuloDiagrammatic_2025}, a higher analogue of linear rewriting theory, which encompasses both Gröbner--Shirshov bases theory \cite{Buchberger_AlgorithmFindingBasis_2006,Shirshov_AlgorithmicProblemsLie_2009} and Bergman's diamond lemma \cite{Bergman_DiamondLemmaRing_1978}.
Readers familiar with either of them should be able to follow \autoref{sec:basis_theorem} without prior knowledge of \cite{Schelstraete_RewritingModuloDiagrammatic_2025}.
In fact, our proof provides a \emph{monoidal Gröbner basis} for $\varLH_n$, i.e.\ a solution to the word problem as a linear monoidal category (see \autoref{cor:higher_grobner_basis_LH}).

The second one works over $\bC_z$, and is a by-product of the proof in \autoref{sec:Schur--Weyl} of our Schur--Weyl type theorem, see \autoref{Remark other lin.ind proof}.

\medbreak

Next, we count $\varLH_n$-reduced words:

\begin{maintheorem}[\autoref{thm:cardinality_reduced_words}]
\label{main thm: cardinality}
    The cardinality of $\Red(\varLH_n)$ is $\frac{1}{2}\binom{2n}{n}$.
\end{maintheorem}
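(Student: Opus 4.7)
The target $\frac{1}{2}\binom{2n}{n}$ equals $\binom{2n-1}{n-1}$, which counts the total number of peaks across all Dyck paths of semilength $n$: this follows from the Narayana identity $\sum_{k\ge 1} k \cdot N(n,k) = \binom{2n-1}{n-1}$, where $N(n,k) = \frac{1}{n}\binom{n}{k}\binom{n}{k-1}$. My plan is therefore to construct a bijection
\[
\Phi \colon \Red(\varLH_n) \xrightarrow{\sim} \{(d,p) : d \in \Dyck_n,\ p \text{ a peak of } d\}.
\]
Small cases match: for $n=3$ both sides equal $1+2+2+2+3 = 10$, and for $n=4$ one can check by hand that both equal $35 = \binom{7}{3}$.

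As a preliminary reduction, I would first reinterpret $\Red(\varLH_n)$ as the set of pairs $(\sigma_D,\sigma_U)$ of 321-avoiding permutations of $\Sym_n$ satisfying the compatibility condition: for every $i \in \supp(\sigma_D)$, neither $i$ nor $i-1$ lies in $\supp(\sigma_U)$. This uses the fact that 321-avoiding permutations are fully commutative (no braid moves among reduced expressions), so one may canonically select one representative per commutation class; the compatibility condition \eqref{eq:condition_reduced_word} then depends only on supports.

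The core step is the construction of $\Phi$. Using a classical bijection $\Psi$ between 321-avoiding permutations in $\Sym_n$ and $\Dyck_n$ (for instance via left-to-right minima, or induced by RSK into at-most-two-column shapes), the support of a 321-avoiding permutation reads off as a natural statistic of its Dyck path. I would then encode a compatible pair $(\sigma_D,\sigma_U)$ as a single Dyck path $d = \Psi(\sigma_U)$ together with a distinguished peak $p$, the peak being determined from $\sigma_D$ via the compatibility condition. Verifying bijectivity then amounts to checking that every (path, peak) pair arises uniquely in this way.

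The main obstacle is the explicit specification of the marked peak: the offset condition (forbidding both $i$ and $i-1$) is not a clean set-disjointness statement, and one must carefully align the two Dyck pictures so that $\sigma_D$ indeed picks out a genuine peak of $\Psi(\sigma_U)$. Should the direct bijective approach prove too delicate, a fallback strategy is to establish the recurrence $n\,|\Red(\varLH_n)| = 2(2n-1)\,|\Red(\varLH_{n-1})|$, which characterises $\frac{1}{2}\binom{2n}{n}$ starting from $|\Red(\varLH_1)| = 1$, by conditioning on the occurrence and position of $U_{n-1}$ and $D_{n-1}$ in $\underline{D}\,\underline{U}$ and reducing each resulting subclass to (a variant of) $\Red(\varLH_{n-1})$.
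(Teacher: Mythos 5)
Your reinterpretation of $\Red(\varLH_n)$ as compatible pairs $(\sigma_D,\sigma_U)$ of $321$-avoiding permutations, and the numerical observation that $\tfrac12\binom{2n}{n}=\binom{2n-1}{n-1}$ counts peaks over all Dyck paths of semilength $n$, are both correct and in the right spirit. However, what you have written is a plan, not a proof, and you say so yourself: the bijection $\Phi$ to (Dyck path, marked peak) pairs is never constructed, and you explicitly flag ``the explicit specification of the marked peak'' as the main obstacle. That unconstructed step is precisely the content of the theorem; without it, nothing is established. The fallback recurrence $n\,|\Red(\varLH_n)|=2(2n-1)\,|\Red(\varLH_{n-1})|$ is arithmetically consistent with the target, but it too is only stated, not proved by the proposed conditioning argument.

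Two more concrete points where the sketch would need to be filled in. First, not every bijection between $321$-avoiding permutations and Dyck paths translates the support $\supp(\sigma)$ into a clean statistic on the path; the paper deliberately uses the Mansour--Deng--Du bijection because it has the specific property that $s_i\in\mathrm{MDD}(P)$ if and only if $(i,i)\notin P$, which turns your compatibility condition ``$i\in\supp(\sigma_D)\Rightarrow i,i-1\notin\supp(\sigma_U)$'' into a condition on diagonal touch-points of two Dyck paths (the set $\dbDyck_n$). You would need to verify an analogous property for whichever $\Psi$ you choose. Second, the ``offset'' nature of your condition (both $i$ and $i-1$ forbidden) is exactly what the paper handles by an explicit surgery: rather than marking a peak, the paper's Lemma~\ref{lem:bijection_double_Dyck} shifts and splices the two constrained Dyck paths into a single lattice path from $(0,1)$ to $(n,n)$, whose count $\binom{2n-1}{n}$ is immediate. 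Your peak-marking idea may well work, but as written it stops at the point where the real work begins, so the argument has a genuine gap.
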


The proof is the content of \autoref{sec:counting_basis}. It uses the combinatorics of Dyck paths, which may be of independent interest.

\medbreak

Taken together, \autoref{main thm: eq rel}, \autoref{main thm: basis} and \autoref{main thm: cardinality} imply that \autoref{conj: dim formula} indeed holds.

\begin{maincorollary}\label{main coro: dim formula}
Let $\bC_z$ be the complex numbers $\bC$ seen as a left $\bZ [t]$-algebra by evaluating $t$ at $z \in \bC$. Then for $z \neq \pm 1$:
\[\dim_{\bC} \LH_n \otimes_{\bZ [t]} \bC_z = \frac{1}{2} \binom{2n}{n}.\]
\end{maincorollary}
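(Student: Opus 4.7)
The plan is to combine the three main theorems already established in the introduction. Since this is a direct corollary, the proof amounts to unwinding the statements and checking compatibility of base change.

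First, I would apply \autoref{main thm: eq rel} to identify $\LH_n \otimes_{\bZ[t]} \bC_z$ with $\varLH_n \otimes_\bZ \bC$ as $\bC$-algebras, using the hypothesis $z \neq \pm 1$. This reduces the dimension computation to the parameter-free setting, namely to showing that
\[
\dim_\bC \bigl(\varLH_n \otimes_\bZ \bC\bigr) = \tfrac{1}{2}\binom{2n}{n}.
\]

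Next, I would invoke \autoref{main thm: basis}: the set $\Red(\varLH_n)$ is a $\bZ$-basis of $\varLH_n$. Because $\varLH_n$ is therefore a free $\bZ$-module, base change is harmless: the image of $\Red(\varLH_n)$ in $\varLH_n \otimes_\bZ \bC$ is a $\bC$-basis. Hence the dimension of the complexification equals $|\Red(\varLH_n)|$.

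Finally, \autoref{main thm: cardinality} gives $|\Red(\varLH_n)| = \tfrac{1}{2}\binom{2n}{n}$, which combined with the previous two steps yields the desired formula. There is no genuine obstacle here beyond ensuring that the isomorphism of \autoref{main thm: eq rel} is indeed $\bC$-linear (which it is, by construction) and that the basis is free over $\bZ$ (guaranteed by the statement of \autoref{main thm: basis}); the substantive work is entirely contained in the three cited theorems.
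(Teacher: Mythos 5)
Your proof is correct and matches the paper's intended argument exactly: the corollary is obtained by chaining \autoref{main thm: eq rel} (change of presentation over $\bC_z$), \autoref{main thm: basis} (freeness of $\varLH_n$ over $\bZ$ with basis $\Red(\varLH_n)$, so base change preserves the basis), and \autoref{main thm: cardinality} (the count). Your remark that freeness over $\bZ$ is what makes the base change harmless is exactly the right point to flag.
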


A more general statement is given in \autoref{cor:dimension_LH}.

\subsection{On a Schur--Weyl duality for the loop Hecke algebra}
\label{subsec:intro_schur_weyl}

In the recent years there has been quite some interest in describing representations of $\LBr_n$ which are extended from representations of the classical braid group $\Br_n$, e.g. \cite{BFM_RepresentationsLoopBraid_2019,Chang_RepresentationsLoopBraid_2020,MRT_ClassificationChargeconservingLoop_2025}. There has been particular attention to so-called local representations which include those representations associated to a braided vector space. However, in contrast to the symmetric loop braid group\footnote{This is $\LBr_n$ modulo the relation $\rho_i \sigma_{i+1}\sigma_i = \sigma_{i+1}\sigma_i\rho_{i+1}$. It is also called the {\it unrestricted virtual braid group} in \cite{KL_VirtualBraidsLmove_2006}.}, only a single $R$-matrix seems to be known that yields a local representation of $\LBr_n$, see \cite[Remark 5.4]{DMR_GeneralisationsHeckeAlgebras_2023}. The second main goal of this paper is to contribute to this and subsequently to use it to provide a Schur--Weyl duality for $\LH_n$.

A well-known representation of the braid group is the Burau representation \cite{Burau_UberZopfgruppenUnd_1935}, which has been extended to the loop braid group by \cite{Vershinin_HomologyVirtualBraids_2001}. However, as in \cite{DMR_GeneralisationsHeckeAlgebras_2023}, we consider a slight variation on the Burau representation, which is defined on a tensor space. Let $V$ be a two-dimensional vector space, and define a map
\begin{equation*}\label{tensor representation}
\left\{
\begin{array}{lll}
\Br_n &\rightarrow &\End (V^{\otimes n})\\
\sigma_i &\mapsto &\id_{V}^{\otimes (i-1)} \otimes M(t) \otimes \id_V^{\otimes(n-i-1)}
\end{array}
\right.
\end{equation*}
with 
\[
M(t) = 
\begin{pmatrix}
    1 & 0 & 0&0 \\
    0& 1-t & t & 0 \\
    0& 1 & 0 &0 \\
    0 &0 &0 & t
\end{pmatrix}.
\]
This is a well-defined representation of $\Br_n$ and it factors through the Hecke algebra $\Hecke_n$. The tensor space $V$ can be turned into a module for the quantum group $U_q(\mathfrak{sl}_2)$ and, under the map $t\mapsto q^{-2}$, the above action of the Hecke algebra is $U_q(\mathfrak{sl}_2)$-equivariant and generates the endomorphism ring $\End_{U_q(\mathfrak{sl}_2)}(V^{\otimes n})$, see \cite{jimbo-q-analogue}. Moreover, the classical Burau representation can be recovered as a weight space of $V^{\otimes n}$, as well as the more general Lawrence--Krammer--Bigelow representations if one changes the representation $V$ by a generic Verma module \cite{jackson-kerler,ltv-verma-howe}.

However, as noticed in \cite{DMR_GeneralisationsHeckeAlgebras_2023}, this tensor space version of the Burau representation does not extend to the loop braid group $\LBr_n$, one needs to tweak the representation by a sign: we replace the above matrix $M(t)$ by the matrix $M'(t)$ obtained by replacing the $t$ in the bottom right entry by $-t$. This still defines a representation of the braid group $\Br_n$ and the above story is then the same, but with the quantum group $U_q(\mathfrak{gl}_{1|1})$ instead of $U_q(\mathfrak{sl}_2)$. This culminates in a version of the Schur--Weyl duality also known as Schur--Sergeev duality \cite{Moon_HighestWeightVectors_2003,Mitsuhashi_SchurWeylReciprocityQuantum_2006}. As noted in \cite[Theorem 5.2]{DMR_GeneralisationsHeckeAlgebras_2023}, this representation of the braid group does extend to the loop braid group $\LBr_n$ via the map
\begin{equation}\label{DMR repr}
\LBurau_n \colon
\left\{
\begin{array}{lll}
\Br_n &\rightarrow &\End (V^{\otimes n}) \\
\sigma_i &\mapsto &\id_{V}^{\otimes (i-1)} \otimes M'(t) \otimes \id_V^{\otimes(n-i-1)}\\
\rho_i & \mapsto & \id_{V}^{\otimes (i-1)} \otimes M'(1) \otimes \id_V^{\otimes(n-i-1)}
\end{array}
\right..
\end{equation}
The authors call $\LBurau_n$ the extended super representation, denoted $SP$ in \cite{DMR_GeneralisationsHeckeAlgebras_2023}, or the Burau--Rittenberg representation, and show that this representation factors through the loop Hecke algebra $\LH_n$. Damiani--Martin--Rowell moreover conjectured that this representation $\LBurau_n$ is faithful if $t\neq \pm 1$.

To complete the picture, since the action of the braid group $\Br_n$ has been extended to the loop braid group $\LBr_n$, one must restrict the action of $U_q(\mathfrak{gl}_{1|1})$ to obtain a Schur--Weyl duality statement. The correct answer turns out to restrict to the negative half $U_q(\mathfrak{gl}_{1|1})^{\leq 0}$. 

\begin{maintheorem}[\autoref{the rep is iso}]
    The Burau--Rittenberg representation $\LBurau_n$ is $U_q(\mathfrak{gl}_{1|1})^{\leq 0}$-equivariant. Moreover, it realizes an isomorphism $\LBurau_n \colon \LH_n\otimes_{\bZ[t]}\bQ(q) \to \End_{U_q(\mathfrak{gl}_{1|1})^{\leq 0}}(V^{\otimes n})$.
\end{maintheorem}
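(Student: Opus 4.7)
The plan is to proceed in three steps: verify $\Uqle(\glone)$-equivariance, deduce injectivity from the reduced words basis of \autoref{main thm: basis}, and conclude via a dimension count. The negative half $\Uqle(\glone)$ is generated by the Cartan generators $K_i^{\pm 1}$ and a single odd generator $F$. Since $V$ has a weight basis diagonalizing the $K_i$, and both $M'(t)$ and $M'(1)$ preserve the weight grading on $V\otimes V$ (they are block-diagonal in the standard basis), equivariance under the $K_i$ is immediate. For $F$, the first thing I would do is verify by a direct $4\times 4$ computation on $V\otimes V$ that
\[
M'(t)\circ\Delta(F) = \Delta(F)\circ M'(t), \qquad M'(1)\circ\Delta(F) = \Delta(F)\circ M'(1),
\]
taking into account the super sign rules in $\Delta(F)$. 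Once this holds on two adjacent factors, equivariance on $V^{\otimes n}$ follows from coassociativity of $\Delta$ and the fact that operators on disjoint strands commute. This yields a well-defined algebra morphism $\LBurau_n\colon \LH_n\otimes_{\bZ[t]}\bQ(q)\to \End_{\Uqle(\glone)}(V^{\otimes n})$.

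For injectivity, by \autoref{main thm: eq rel} and \autoref{main thm: basis} the reduced words $\Red(\varLH_n)$ descend to a $\bQ(q)$-basis of $\LH_n\otimes_{\bZ[t]}\bQ(q)$. To show that the images $\{\LBurau_n(\omega)\}_{\omega\in\Red(\varLH_n)}$ are linearly independent in $\End(V^{\otimes n})$, I would exhibit, for each reduced word $\omega = \underline{D}\,\underline{U}$, specific source and target standard basis vectors $x_\omega, y_\omega$ of $V^{\otimes n}$ (read off directly from the $U$/$D$-pattern of $\omega$) such that the matrix coefficient of $\LBurau_n(\omega)$ at $(x_\omega, y_\omega)$ is nonzero, while all reduced words strictly above $\omega$ in a suitable partial order have zero coefficient there. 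Such a triangular pattern forces linear independence of the $\LBurau_n(\omega)$, and hence injectivity of $\LBurau_n$. As a by-product, this gives an alternative proof of linear independence of the reduced words over $\bQ(q)$ (cf.\ the remark preceding \autoref{subsec:intro_schur_weyl}).

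For surjectivity, combining the previous step with \autoref{main coro: dim formula} yields
\[
\tfrac12\binom{2n}{n} \;=\; \dim_{\bQ(q)}\bigl(\LH_n\otimes_{\bZ[t]}\bQ(q)\bigr) \;\leq\; \dim_{\bQ(q)}\End_{\Uqle(\glone)}(V^{\otimes n}).
\]
To close the argument, I would establish the reverse inequality by decomposing $V^{\otimes n}$ as a $\Uqle(\glone)$-module. Since the algebra is small, being generated by the toral $K$ and the odd, nilpotent $F$, its finite-dimensional indecomposables are easy to classify: either a one-dimensional weight line on which $F$ acts by zero, or a two-dimensional ``$F$-link'' between two weight lines differing by the weight of $F$. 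A combinatorial count of such summands (indexed by matching-type data on the $\{0,1\}^n$-basis of $V^{\otimes n}$) is expected to reach exactly $\tfrac12\binom{2n}{n}$, matching the lower bound and giving the isomorphism.

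The main obstacle of the whole argument is the injectivity step: one must simultaneously pick a partial order on $\Red(\varLH_n)$ and test vectors $(x_\omega, y_\omega)$ that make the evaluation matrix upper unitriangular. The other steps reduce to explicit $4\times 4$ matrix verifications for equivariance or to a finite combinatorial decomposition for the dimension count, both of which should be routine once the right combinatorial set-up is in place.
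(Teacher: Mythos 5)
Your proposal takes a genuinely different route from the paper in the key second step, so it is worth comparing carefully. For equivariance, the paper obtains the $K_i$- and $F$-intertwining property conceptually, by showing that $\check{S}$ intertwines $\Delta$ and $\Delta^{\mathrm{op}}$ on $\Uqle$ (Proposition~\ref{prop:intertwining-neg}) and that the braiding $\check{R}$ already does so on all of $\Uq$; this packages all $n$ at once via the hexagon identities, whereas your $4\times 4$ verification on $V\otimes V$ plus coassociativity is more elementary but equally valid. The real divergence is in the order of injectivity and surjectivity. You propose to prove injectivity first, by exhibiting an upper-unitriangular evaluation matrix for the reduced-word basis, and then to close with a dimension count of $\End_{\Uqle}(V^{\otimes n})$. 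The paper does the reverse: it establishes surjectivity by matching $\dim\im(\LBurau_n)=\binom{2n-1}{n}$ (quoted from \cite[Theorem~5.8]{DMR_GeneralisationsHeckeAlgebras_2023}) against the explicit computation $\dim\End_{\Uqle}(V^{\otimes n})=\binom{2n-1}{n}$ from Proposition~\ref{decomp End over positive part}, and then deduces injectivity from the spanning set bound $\dim(\LH_n\otimes\bQ(q))\leq|\Red(\varLH_n)|=\binom{2n-1}{n}$. This is a strictly weaker input than you use: the paper only needs that reduced words \emph{generate} (Lemma~\ref{lem:reduced_words_generate}), not that they are linearly independent, and indeed Remark~\ref{Remark other lin.ind proof} notes that the Schur--Weyl theorem gives an independent proof of linear independence. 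Your route necessarily consumes the full basis theorem and so forfeits that by-product.

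Two caveats about your plan. First, the unitriangularity step is the load-bearing claim and you leave it as a sketch; constructing the test vectors $(x_\omega,y_\omega)$ and the compatible order would require real combinatorial work, essentially replacing the citation of the DMR dimension count by an argument of comparable length. Second, your surjectivity paragraph conflates \emph{counting indecomposable $\Uqle$-summands} of $V^{\otimes n}$ with computing $\dim\End_{\Uqle}(V^{\otimes n})$; these are not the same. The correct count, as in Lemma~\ref{L:intertwiner_negative} and Proposition~\ref{decomp End over positive part}, involves $\Hom$-spaces between pairs of summands $L(\ell),L(k)$, which are one-dimensional precisely when $\ell=k$ or $\ell=k+1$, giving $\sum_\ell\binom{n-1}{\ell}^2+\sum_\ell\binom{n-1}{\ell}\binom{n-1}{\ell-1}=\binom{2n-1}{n}$; a naive tally of summands would not land on this formula. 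Neither issue is fatal, but both would need to be filled in for your proposal to become a complete proof.
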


This result is proven in \autoref{sec:Schur--Weyl} and, as an immediate corollary, the Burau--Rittenberg representation of the loop Hecke algebra is faithful, when working over the field $\bQ(q)$. As a final comment on this Schur--Weyl duality statement, we want to emphasize that the action of negative half $U_q(\mathfrak{gl}_{1|1})^{\leq 0}$ is not semisimple anymore, even when working with a field of characteristic $0$ and $q$ not being a root of unity.

\medbreak

In \autoref{sec:ring_approach_loop_Hecke}, we make use of this result to study the loop Hecke algebra over the field $\bQ(q)$. In \cite[Theorem 5.8]{DMR_GeneralisationsHeckeAlgebras_2023}, the authors determine the structure of the algebra $SP_n$, the image of the Burau--Rittenberg representation. Our interpretation of $SP_n$ as the $U_q(\mathfrak{gl}_{1|1})^{\leq 0}$-equivariant endomorphisms of $V^{\otimes n}$ enlights these structural properties. Indeed, we determine explicitly the Wedderburn--Mal'cev decomposition of $\LH_n\otimes_{\bZ[t]}\bQ(t)\simeq SP_n$ in \autoref{W-M decompisition End} and, in particular, the quotient by the Jacobson radical is isomorphic to $\End_{U_q(\mathfrak{gl}_{1|1})}(V^{\otimes n})$. This clarifies the appearance of the hook shaped Young diagrams in the classification of irreducible representations in \cite[Theorem 5.8 (i)]{DMR_GeneralisationsHeckeAlgebras_2023}. We also describe the projective indecomposable representations through the lens of the quantum group action, and recover the Cartan matrix and the Ext-quiver of the algebra $\End_{U_q(\mathfrak{gl}_{1|1})^{\leq 0}}(V^{\otimes n})$ as in \cite[Theorem 5.8 and Corollary 6.1]{DMR_GeneralisationsHeckeAlgebras_2023}.

\newpage
\subsection{Further questions}
\label{subsec:intro_further_questions}

\subsubsection{Extension to other Coxeter groups}

Virtual braids are braid-like objects closely related to loop braids. They have been extended outside type $A$, giving \emph{virtual Artin braid groups} \cite{BPT_VirtualArtinGroups_2023}.
Can one mimick \cite{BPT_VirtualArtinGroups_2023} and define ``loop Artin braid groups'', i.e.\ loop braid groups outside type $A$?
Note that the Burau representation can be defined outside type $A$ (see e.g.\ \cite{BQ_RemarksFaithfulnessBurau_2024}). 

\begin{mainquestion}
    Assuming a notion of ``loop Artin groups'', are there loop Burau representations and loop Hecke algebras outside type $A$?
\end{mainquestion}

\subsubsection{Toward a quantum group approach to representations of the loop braid group}

The classical theory of quantum groups provides a unified approach to produce $R$-matrices, i.e.\ representations of the braid group. In analogy, we may call a ``loop $(R,S)$-matrix pair'' the data of two matrices $R$ and $S$ inducing a representation of the loop braid group.
In this article, we show that the algebraic data of half quantum $\glone$ gives such a loop $(R,S)$-matrix pair, realizing the Burau--Rittenberg $\LBurau$ representation of the loop braid group.

\begin{mainquestion}
    Is there a general theory of (half?)\ quantum groups that produces large families of representations of the loop braid group? Are there other Schur--Weyl dualities involving half quantum groups?
\end{mainquestion}

Let us comment on the specific case of the quantum group $U_q(\mathfrak{sl}_2)$ with the standards generators $K^{\pm 1},E$ and $F$, together with its standard $2$-dimensional representation $W$. We encourage the reader to compare with \autoref{E:2-fold-tensor}. We assume basic knowledge of the representation theory of $U_q(\mathfrak{sl}_2)$ and identify its weights with $\bZ$. As a representation of $U_q(\mathfrak{sl}_2)$, the tensor product $W\otimes W$ decomposes as a sum $W\otimes W\cong L_3\oplus L_1$, where $L_3$ is a $3$-dimensional representation with basis vectors $v_2,v_0$ and $v_{-2}$ of respective weights $2,0$ and $-2$, and $L_1$ is a $1$-dimensional representation with basis vector $w_0$ of weight $0$. Assume that $\varphi \colon L_{3} \to L_{0}$ is a linear map commuting only with the negative part of the quantum group $U_q(\mathfrak{sl}_2)$. For weights reasons, $\varphi(v_{-2}) = \varphi(v_{2}) = 0$. Moreover, since $F\cdot v_{2}$ is a non zero multiple of $v_{0}$, we must have $\varphi(v_{0}) = 0$. This can be pictured as follows, where rightward (resp.\ leftward dashed) arrows denote the action of $F$ (resp.\ $E$), and the vertical arrow is $\varphi$:
\[
    \begin{tikzcd}
        L_{3}= & v_{2} \ar[r,bend left=10pt,"{[2]_q}"] 
        & v_{0} \ar[l,bend left=10pt,"1",dashed] \ar[r,bend left=10pt,"{1}"] \ar[d,"\varphi"] & v_{-2} \ar[l,bend left=10pt,"{[2]_q}",dashed]
        \\
        &
        & w_{0} & 
        & =L_{1}
    \end{tikzcd}
\]
Similarly, the only map $L_{1}\to L_{3}$ commuting with the action of the negative part of the quantum group is the zero map. Therefore, restricting the action of $U_q(\mathfrak{sl}_2)$ on $W\otimes W$ to its negative part does not produce any new intertwiner.

\subsubsection{Categorification}

This article gives two bases of the loop Hecke algebra: the one coming from its integral form in the $U$'s and $D$'s (\autoref{defn:var_loop_hecke}), and the one coming from Schur--Weyl duality.

\begin{mainquestion}
    What is the matrix of change of coefficients between the two bases?
\end{mainquestion}

The Burau representation was categorified by Khovanov and Seidel \cite{KS_QuiversFloerCohomology_2002}. This was followed by extensive work on categorical braid group actions on triangulated categories, in analogy with the classical Teichmüller theory of surfaces.
On a related note, the Hecke algebra is categorified by Soergel bimodules---shadow of this categorification is the Kazhdan--Lusztig basis of the Hecke algebra.

\begin{mainquestion}
    Is there a loop analogue of the Kazhdan--Lusztig basis, Soergel bimodules and the Khovanov--Seidel categorical Burau representation?
\end{mainquestion}

\section{A basis for the integral form of the loop Hecke algebra}
\label{sec:basis_theorem}

In this section, we prove a basis theorem for the integral form of the loop Hecke algebra:

\begin{thm} \label{thm:reduced_words_basis}
    For each $n\in\bN_{>0}$, the set of $\varLH_n$-reduced words (see \autoref{defn:reduced_words}) defines a basis of $\varLH_n$ (see \autoref{defn:var_loop_hecke}).
\end{thm}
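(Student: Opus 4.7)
The plan is to realize $\varLH_n$ as the quotient of the free $\bZ$-algebra on $\{D_i, U_i\}_{1\le i<n}$ by a terminating rewriting system, modulo the same-type distant commutations $U_iU_j = U_jU_i$ and $D_iD_j = D_jD_i$ of \eqref{eq:D_and_U_distant_label}, whose set of normal forms is precisely $\Red(\varLH_n)$. The higher linear diamond lemma of \cite{Schelstraete_RewritingModuloDiagrammatic_2025} will then yield spanning and linear independence at once.

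First, I would orient the remaining relations of \autoref{defn:var_loop_hecke} as reductions steering every word toward the $\underline{D}\,\underline{U}$ shape. The mixed rules become
\[
    U_iD_i \rightsquigarrow D_i+U_i-1, \quad
    U_iD_{i+1} \rightsquigarrow 0, \quad
    U_iD_{i-1} \rightsquigarrow D_{i-1}U_i, \quad
    U_iD_j \rightsquigarrow D_jU_i \ (|i-j|>1);
\]
the idempotencies become $X_i^2 \rightsquigarrow X_i$ for $X \in \{D,U\}$; and the braid relations yield the contractions $X_iX_{i+1}X_i \rightsquigarrow X_{i+1}X_i$ and $X_{i+1}X_iX_{i+1} \rightsquigarrow X_{i+1}X_i$. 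Two further derived rules $D_iU_i \rightsquigarrow 0$ and $D_iU_{i-1} \rightsquigarrow D_i+U_{i-1}-1$ enforce condition \eqref{eq:condition_reduced_word}. A well-founded order refining total word length proves termination.

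A direct case analysis of irreducible words identifies them with the $\varLH_n$-reduced words: the mixed rules move every $D$ to the left of every $U$; idempotency and the braid contractions cut out the 321-avoiding reduced words in each block (the standard Temperley--Lieb-type analysis); and the cleaning rules, combined with braid contractions used to bring a ``guilty'' $D_i$ and $U_i$ or $U_{i-1}$ adjacent, enforce \eqref{eq:condition_reduced_word}. This already proves that $\Red(\varLH_n)$ spans $\varLH_n$.

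The main obstacle, and the heart of the argument, is confluence modulo the same-type distant commutations. I would enumerate critical pairs in three families: (a) overlaps internal to a single block (Temperley--Lieb-type); (b) overlaps between a mixed $UD$-rule and a braid or idempotency rule occurring on either side; (c) overlaps between the cleaning rules and braid or mixed rules. Each pair must converge, often through several rewrite steps and via the non-trivial identities $U_iD_i = D_i+U_i-1$ and $D_{i+1}U_i = D_{i+1}+U_i-1$. With confluence verified, the higher linear diamond lemma yields linear independence and, as a by-product, the monoidal Gröbner basis for $\varLH_n$ promised in \autoref{cor:higher_grobner_basis_LH}.
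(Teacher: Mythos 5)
Your overall strategy — present $\varLH_n$ by a terminating, confluent-modulo-commutations rewriting system whose normal forms are the $\varLH_n$-reduced words and invoke the higher linear diamond lemma — is exactly the approach the paper takes. However, there is a genuine gap: the rule set you propose does not have $\Red(\varLH_n)$ as its set of irreducible words, so the argument cannot close as stated.

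The problem is that your rules enforce condition \eqref{eq:condition_reduced_word} only in the adjacent case. Consider the word $D_{i+1}D_iU_{i+1}$. It is already of the shape $\underline{D}\,\underline{U}$ with both halves $321$-avoiding reduced words, so none of your mixed, idempotency, or braid rules applies, and your two ``cleaning'' rules $D_iU_i\rightsquigarrow 0$, $D_{i+1}U_i\rightsquigarrow D_{i+1}+U_i-1$ match nothing (the interior $D_iU_{i+1}$ is a normal form for your system, since you orient the mixed exchange toward $D\,U$, never $U\,D$). There is likewise no braid pattern to ``bring the guilty pair adjacent'' — $D_{i+1}D_iU_{i+1}$ contains no length-three braid subword. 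So $D_{i+1}D_iU_{i+1}$ is irreducible in your system, yet $D_{i+1}\in\underline{D}$ while $U_{i+1}\in\underline{U}$ violates \eqref{eq:condition_reduced_word}: the word is not $\varLH_n$-reduced (indeed $D_{i+1}D_iU_{i+1}=D_{i+1}U_{i+1}D_i=0$ in $\varLH_n$, using the mixed interchange). The same failure occurs for $D_iU_{i+1}U_i$ and $D_{i+1}D_iU_{i+2}U_{i+1}$. The fix, which is what the paper does, is to add precisely these three derived vanishings as explicit rewriting rules — $D_{i+1}D_iU_{i+1}\rightsquigarrow 0$, $D_iU_{i+1}U_i\rightsquigarrow 0$, $D_{i+1}D_iU_{i+2}U_{i+1}\rightsquigarrow 0$ (the ``additional rewriting steps'' in \autoref{fig:higher-Grobner-basis-LH}) — and to prove a pattern-avoidance characterization (\autoref{lem:HF-reduced_are_S-reduced}) showing these are exactly the patterns needed. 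Without them your normal forms strictly contain $\Red(\varLH_n)$ and linear independence would fail.

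Two smaller remarks. First, you call $D_iU_i\rightsquigarrow 0$ and $D_{i+1}U_i\rightsquigarrow D_{i+1}+U_i-1$ ``derived,'' but these are among the defining relations of \autoref{defn:var_loop_hecke}; the genuinely derived rules you are missing are the three above. Second, ``a well-founded order refining total word length'' does not by itself prove termination: your sorting rules $U_iD_j\rightsquigarrow D_jU_i$ are length-preserving, so the order needs a secondary tiebreaker (e.g.\ number of $UD$ inversions). The paper avoids the mixed distant commutation as a rewrite rule by absorbing all of \eqref{eq:D_and_U_distant_label} into the monoidal interchange (working modulo interchange), which slightly changes the bookkeeping of critical pairs but is otherwise the same calculus.
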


It will be convenient to package the $\bZ$-algebras $\varLH_n$ for each $n$ into one $\bZ$-linear monoidal category: we define the \emph{loop Hecke category} in \autoref{subsec:loop_hecke_category}.
\autoref{subsec:pattern_avoiding_reduced_words} describes reduced words using pattern avoidance.
These two sections are preliminaries for \autoref{subsec:proof_basis_theorem}, where we prove \autoref{thm:reduced_words_basis} using higher linear rewriting theory.
This gives an intrinsic proof of \autoref{thm:reduced_words_basis}. Another proof of linear independence over $\bQ(t)$ will be given in \autoref{sec:Schur--Weyl} (see \autoref{Remark other lin.ind proof}), using Schur--Weyl duality.

As a direct consequence of the theorem, we find:

\begin{cor}
\label{cor:varLH-canonical-map-is-inclusion}
    The canonical algebra morphism $\varLH_n\rightarrow\varLH_{n+1}$, sending $U_i$ to $U_i$ and $D_i$ to $D_i$, is an inclusion.
\end{cor}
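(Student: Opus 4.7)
The plan is to deduce this directly from the basis theorem (\autoref{thm:reduced_words_basis}). First I would verify that the canonical map $\iota\colon \varLH_n \to \varLH_{n+1}$ is well-defined on the generators: every defining relation of $\varLH_n$ from \autoref{defn:var_loop_hecke} involves generators $U_i, D_i$ with indices bounded by $n-1$, and each such relation is literally a relation of $\varLH_{n+1}$ as well. Hence sending each generator to the generator of the same name yields an algebra morphism.

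The main observation is then that $\iota$ carries the basis $\Red(\varLH_n)$ into the basis $\Red(\varLH_{n+1})$, injectively at the level of words. Indeed, comparing the conditions in \autoref{defn:reduced_words}: the alphabet $\{U_i, D_i\}_{1 \leq i < n}$ is contained in $\{U_i, D_i\}_{1 \leq i < n+1}$; the 321-avoiding condition only depends on the underlying permutation $\tau \in \Sym_n$, which remains 321-avoiding after the trivial extension $\tau(n+1) = n+1$ to $\Sym_{n+1}$, and a reduced word for $\tau$ in the small alphabet is still a reduced word for its extension in the larger alphabet; finally, condition \eqref{eq:condition_reduced_word} is a purely syntactic condition on the word $\underline{D}\,\underline{U}$ that does not depend on $n$. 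Therefore $\Red(\varLH_n) \subseteq \Red(\varLH_{n+1})$ as sets of words.

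To conclude, by \autoref{thm:reduced_words_basis} both $\Red(\varLH_n)$ and $\Red(\varLH_{n+1})$ are bases, so $\iota$ sends a basis of $\varLH_n$ into a linearly independent subset of $\varLH_{n+1}$, forcing $\iota$ to be injective. No serious obstacle is expected: the corollary is essentially a bookkeeping consequence of the basis theorem, the only point requiring a moment's thought being the stability of the 321-avoiding and adjacency conditions under enlarging $n$.
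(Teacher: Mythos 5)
Your proof is correct and is exactly the argument the paper has in mind; the paper's one-line proof (``The canonical map induces an inclusion of bases.'') is simply a compressed version of the bookkeeping you carry out explicitly. The checks you perform — well-definedness from inclusion of relations, stability of the 321-avoiding/reduced-word conditions under $\Sym_n \hookrightarrow \Sym_{n+1}$, and the syntactic nature of condition \eqref{eq:condition_reduced_word} — are the right ones and all hold.
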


\begin{proof}
    The canonical map induces an inclusion of bases.
\end{proof}

\subsection{The loop Hecke category}
\label{subsec:loop_hecke_category}

\begin{defn}
\label{defn:loop_hecke_category}
    The \emph{loop Hecke category} $\varLH$ is the $\bZ$-linear monoidal category given by the following presentation:
    \begin{itemize}
        \item one generating object, so that $\mathrm{ob}(\varLH) \cong \bN$;
        \item two generating morphisms
        \[ U\colon 2\to 2\quad\text{ and }\quad D\colon 2\to 2\]
        \item subject to the following relations, where we abuse notation in \eqref{eq:cat D, U length 2}, \eqref{eq:cat braid length 2} and write $U$ for $U\otimes \mathrm{id}_1$, we write ${U_+\coloneqq \mathrm{id}_1\otimes U}$, and similarly for $D$ and $D_+$:
        \begin{gather}
            \label{eq:cat_D_and_U_same_label}
            DD=D
            \qquad
            DU=0
            \qquad
            UD=U+D-1
            \qquad
            UU=U
            \\       
            \label{eq:cat D, U length 2}
            DU_{+}=U_{+}D
            \qquad
            UD_{+}=0
            \qquad
            D_{+}U=D_{+}+U-1
            \\ 
            \label{eq:cat braid length 2}
            DD_{+}D=D_{+}D=D_{+}DD_{+}
            \qquad
            UU_{+}U=U_{+}U=U_{+}UU_{+}
        \end{gather}
    \end{itemize}
\end{defn}
The hom-spaces of $\varLH$ recover the algebras $\varLH_n$:
\[\Hom_{\varLH}(n,n) = \varLH_n,\]
where the identification is given by
\[
U_i = \id_{i-1}\otimes U\otimes \id_{n-i-1}
\qquad\text{ and }\qquad
D_i = \id_{i-1}\otimes D\otimes \id_{n-i-1}.
\]
Note that the relations \eqref{eq:D_and_U_distant_label} are not explicitly part of the above presentation, as they correspond to the interchange law of a monoidal category.
For that reason, we will refer to these relations as \emph{interchange}, even when considered in the algebra $\varLH_n$.

\subsubsection{Symbolics}
\label{subsubsec:abuse_notation_U+}

In what follows, we will often abuse notation and write
\begin{gather*}
U = \id_{i-1}\otimes U\otimes \id_{n-i-1}, \quad 
U_+ \coloneqq \id_{i}\otimes U\otimes \id_{n-i-2},\quad\text{ and }\quad 
U_{++} \coloneqq \id_{i+1}\otimes U\otimes \id_{n-i-3}
\end{gather*}
for some $i$ and $n$ clear from context.

\subsubsection{Diagrammatics}
\label{subsubsec:diagrammatic_definition}

It will be convenient to have a diagrammatic notation for morphisms in the loop Hecke category.
We define:
\begin{gather*}
    U\coloneqq \diagLH{\diagU}
    \quad\text{ and }\quad
    D\coloneqq \diagLH{\diagD}\;.
\end{gather*}
In diagrammatic notation, the definition relations of $\varLH$ become
\begin{gather*}
    \diagLH{\diagD[0][1]\diagD}
    =
    \diagLH{\diagD}
    \mspace{60mu}
    \diagLH{\diagD[0][1]\diagU}
    =0
    \mspace{60mu}
    \diagLH{\diagU[0][1]\diagD}
    =
    \diagLH{\diagU}
    +
    \diagLH{\diagD}
    -
    \diagLH{\diagSTR\diagSTR[1][0]}
    \mspace{60mu}
    \diagLH{\diagU[0][1]\diagU}
    =
    \diagLH{\diagU}
    \\[1ex]
    \diagLH{\diagDm[0][1]\diagUp}
    =
    \diagLH{\diagUp[0][1]\diagDm}
    \mspace{60mu}
    \diagLH{\diagUm[0][1]\diagDp}
    = 0
    \mspace{60mu}
    \diagLH{\diagDp[0][1]\diagUm}
    =
    \diagLH{\diagDp}
    +
    \diagLH{\diagUm}
    -
    \diagLH{\diagSTR\diagSTR[1][0]\diagSTR[2][0]}
    \\[1ex]
    \diagLH{\diagDm[0][2]\diagDp[0][1]\diagDm}
    =
    \diagLH{\diagDp[0][1]\diagDm}
    =
    \diagLH{\diagDp[0][2]\diagDm[0][1]\diagDp}
    \mspace{60mu}
    \diagLH{\diagUm[0][2]\diagUp[0][1]\diagUm}
    =
    \diagLH{\diagUp[0][1]\diagUm}
    =
    \diagLH{\diagUp[0][2]\diagUm[0][1]\diagUp}
\end{gather*}

\subsection{A pattern-avoiding description of reduced words}
\label{subsec:pattern_avoiding_reduced_words}

Recall from \autoref{defn:reduced_words} the notion of word and $\varLH_n$-reduced word.
Say that a word is \emph{$\varLH$-reduced} if it is $\varLH_n$-reduced for some $n\in\bN$.

Recall that we call ``interchange'' the relations \eqref{eq:D_and_U_distant_label}.

\begin{lem}
\label{lem:HF-reduced_are_S-reduced}
    A word is $\varLH$-reduced if and only if it avoids the following patterns, up to interchange:
    \begin{gather}
        \label{eq:reduced_word_avoided_pattern_A}
        UD
        \qquad
        U_{+}D
        \qquad
        UD_{+}
        \\
        \label{eq:reduced_word_avoided_pattern_B}
        DD
        \qquad
        UU
        \qquad
        DD_{+}D\qquad D_{+}DD_{+}
        \qquad
        UU_{+}U\qquad U_{+}UU_{+}
        \\
        \label{eq:reduced_word_avoided_pattern_C}
        DU
        \qquad
        D_{+}U
        \qquad
        DU_{+}U
        \qquad
        D_+DU_+
        \\
        \label{eq:reduced_word_avoided_pattern_D}
        D_+DU_{++}U_+
    \end{gather}
\end{lem}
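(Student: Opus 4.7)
The plan is to prove both directions of the equivalence, relying on the following classical single-alphabet fact: a word in $\{D_i\}$ (respectively $\{U_i\}$) avoids $DD$, $DD_+D$, $D_+DD_+$ (respectively $UU$, $UU_+U$, $U_+UU_+$) up to interchange if and only if it is a $321$-avoiding reduced word in the sense of \autoref{defn:reduced_words}. This follows from the fact that $321$-avoiding elements of $\Sym_n$ are fully commutative, so by Matsumoto--Tits all their reduced expressions are connected by commutations only, while the defining relations of the associated $0$-Hecke-type monoid are exactly $DD$, $DD_+D$ and $D_+DD_+$.

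For the forward direction, I assume $\omega = \underline{D}\,\underline{U}$ is $\varLH$-reduced and check each family of patterns separately. Patterns \eqref{eq:reduced_word_avoided_pattern_A} require a $U$-letter to precede a $D$-letter with index-difference at most $1$; since interchange only commutes letters of index-difference strictly greater than $1$, the relative order of such pairs is preserved, and no such pattern can appear in any interchange-equivalent word of $\omega$. Patterns \eqref{eq:reduced_word_avoided_pattern_B} live in a single alphabet, so the classical fact above, applied to $\underline{D}$ and $\underline{U}$ separately, rules them out. Finally, each pattern in \eqref{eq:reduced_word_avoided_pattern_C}--\eqref{eq:reduced_word_avoided_pattern_D} contains both a letter $D_i$ and a letter $U_i$ or $U_{i-1}$ for some $i$; since interchange preserves the multiset of letters, such a coexistence would contradict condition \eqref{eq:condition_reduced_word}.

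For the backward direction, I assume $\omega$ avoids all listed patterns up to interchange and proceed in three steps. First, avoidance of \eqref{eq:reduced_word_avoided_pattern_A} lets me interchange $\omega$ into the form $\underline{D}\,\underline{U}$: picking an innermost $U$-before-$D$ pair and using the absence of $UD$, $U_+D$, $UD_+$ forces their index-difference to exceed $1$, so interchange applies; iterating gathers all $D$'s on the left. Second, applying the classical fact to $\underline{D}$ and $\underline{U}$ shows both are $321$-avoiding reduced words. Third, I verify condition \eqref{eq:condition_reduced_word} contrapositively: if $D_i \in \underline{D}$ and $U_i \in \underline{U}$ (Case A) or $U_{i-1} \in \underline{U}$ (Case B), then starting from the rightmost $D_i$ in $\underline{D}$ and the leftmost offending $U$ in $\underline{U}$, and sliding all distant letters away via interchange, exhibits one of the listed patterns as a consecutive subword.

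The main obstacle is the third step above. The subtle point is that interchange alone need not bring $D_i$ and the target $U$-letter fully adjacent: they may be structurally blocked by letters of adjacent indices ($D_{i\pm 1}$ or $U_{i\pm 1}$). The claim, which I would establish by a finite case analysis, is that the rigid constraints imposed by $321$-avoiding reducedness of $\underline{D}$ and $\underline{U}$ force any such blockage to fit one of the five listed shapes. Concretely: no near-index blocker yields $DU$ in Case A or $D_+U$ in Case B; a single $U_{i+1}$ blocker in Case A yields $DU_+U$; a single $D_{i+1}$ blocker yields $D_+DU_+$; and simultaneous $D_{i+1}$ and $U_{i+1}$ blockers yield $D_+DU_{++}U_+$. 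Near-index blockers of type $D_{i-1}$ or $U_{i-1}$ are absorbed by re-indexing $i \mapsto i-1$, reducing to one of the previous cases.
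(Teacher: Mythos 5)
Your forward direction is correct and a bit more explicit than the paper (which dismisses it as clear): the observation that interchange never reorders two letters of index-difference at most one cleanly rules out patterns \eqref{eq:reduced_word_avoided_pattern_A}, and the multiset argument handles \eqref{eq:reduced_word_avoided_pattern_C}--\eqref{eq:reduced_word_avoided_pattern_D}. Your first two steps of the backward direction (gathering $D$'s to the left via \eqref{eq:reduced_word_avoided_pattern_A}, then invoking the fully-commutative characterization for \eqref{eq:reduced_word_avoided_pattern_B}) are also sound and essentially match the paper.

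The gap is exactly where you flag it: the third step of the backward direction is not proved, only asserted as ``a finite case analysis,'' and the sketch you give of that analysis is both incomplete and in places wrong. It is incomplete because the near-index obstructions need not be a single letter: after pushing distant letters away, what can separate $D_i$ from the target $U$-letter is an entire zigzag suffix $D_{i}D_{i+1}\cdots D_{i+m_+}D_{i-1}\cdots D_{i-m_-}$ of $\underline{D}$ and a matching zigzag prefix of $\underline{U}$, with $m_\pm$ arbitrarily large, not just ``one $D_{i+1}$'' or ``one $U_{i+1}$.'' It is also wrong in detail: in your Case A a $D_{i+1}$ sitting \emph{after} the rightmost $D_i$ produces $D_i D_{i+1} U_i$, whose witnessed pattern is $D_+U$ (pattern \eqref{eq:reduced_word_avoided_pattern_C}, second entry), not $D_+DU_+$ as you claim; the pattern $D_+DU_+ = D_{i+1}D_iU_{i+1}$ requires $D_{i+1}$ \emph{before} $D_i$, which cannot be a ``blocker'' between $D_i$ and $U_i$ under your setup. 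The paper avoids both problems by parametrizing the boundary explicitly --- writing the tail of $\underline{D}$ as $D_iD_{i+1}\cdots D_{i+m_+}D_{i-1}\cdots D_{i-m_-}$ and the head of $\underline{U}$ as $U_{j-n_-}\cdots U_{j+n_+}\cdots U_j$ --- and then turning each pattern-avoidance hypothesis into an inequality among $i, j, m_\pm, n_\pm$, which it combines to deduce \eqref{eq:condition_reduced_word}. If you want to complete your route, replace the single-blocker enumeration by this kind of quantitative boundary description, since that is what actually makes the ``finite'' in ``finite case analysis'' true.
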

Here we use the abuse of notation from \autoref{subsubsec:abuse_notation_U+}.
For instance, fixing $n\in\bN$, the pattern $D_+U$ covers all patterns of the form $D_{i+1}U_i$ for $1\leq i\leq n-2$.

\begin{proof}
    For the purpose of the proof, we call \emph{$\sR$-reduced}\footnote{The terminology ``$\sR$-reduced'' (or ``$\sR$-normal forms'') is a rewriting terminology, used explicitly in the proof of the basis theorem below. For the purpose of the current proof, this can be taken as an ad-hoc terminology.} any word that avoids patterns \eqref{eq:reduced_word_avoided_pattern_A}, \eqref{eq:reduced_word_avoided_pattern_B}, \eqref{eq:reduced_word_avoided_pattern_C} and \eqref{eq:reduced_word_avoided_pattern_D}, up to interchange. It is clear that if a word is $\varLH_n$-reduced, then is it $\sR$-reduced. Let then $\omega$ be an $\sR$-reduced word.
    Since $\omega$ avoids the patterns \eqref{eq:reduced_word_avoided_pattern_A}, it is of the form $
    \omega = \underline{D}\;\underline{U}
    $
    for $\underline{D}$ and \ $\underline{U}$ words in the alphabets $\{D_i\}_{1\leq i< n}$ and $\{U_i\}_{1\leq i< n}$, respectively.
    Since $\omega$ avoids the patterns \eqref{eq:reduced_word_avoided_pattern_B}, the words $\underline{D}$ and $\underline{U}$ are 321-avoiding and reduced.

    It remains to check that $\omega$ verifies the condition \eqref{eq:condition_reduced_word}.
    Let then $1\leq i\leq n-1$ such that $D_i$ appears as a letter in $\underline{D}$, and denote $d$ the rightmost such letter in $\underline{D}$. Using interchange, move $d$ to the right within $\underline{D}$, as much as possible. (In the process, other letters may move as well.) Since $\underline{D}$ is 321-avoiding and reduced, this expresses $\underline{D}$ as
    \[\underline{D} = \underline{D}'\;D_iD_{i+1}\ldots D_{i+m_+}D_{i-1} D_{i-2}\ldots D_{i-m_-},\]
    where $D_i$ above is the chosen letter $d$, and $m_+,m_-$ are non-negative integers.
    (If $m_- = 0$, then $D_{i-1} D_{i-2}\ldots D_{i-m_-}$ is the empty word.)
    Note that if $U_l$ is the leftmost letter of $\underline{U}$, we must have either $l < i-m_--1$ or $i+m_+ < l$, since $\omega =\underline{D}\;\underline{U}$ avoids the patterns \eqref{eq:reduced_word_avoided_pattern_C}.

    Pick $1\leq j\leq n-1$ such that $U_j$ appears in $\underline{U}$, and let $u$ be the leftmost such letter in $\underline{U}$. Similarly as above, we can express $\underline{U}$ as
    \[\underline{U}=U_{j-n_-}\ldots U_{j-2}U_{j-1}
    U_{j+n_+}\ldots U_{j+1}U_j\;\underline{U}'\]
    where $U_j$ above is the chosen letter $u$, and $n_-,n_+$ are non-negative integers. If $D_{k}$ is the rightmost letter of $\underline{D}$ then we must have either $k < j-n_-$ or $j+n_++1 < k$.

    Let
    \begin{gather*}
        M_\pm \coloneqq
        \begin{cases}
            m_\pm & \text{if } m_\pm \geq 1,\\
            - m_\mp & \text{otherwise}
        \end{cases}
        \quad\text{ and }\quad
        N_\pm \coloneqq
        \begin{cases}
            n_\pm & \text{if } n_\pm \geq 1,\\
            - n_\mp & \text{otherwise}
        \end{cases}.
    \end{gather*}
    The letters $D_{i-M_-}$ and $D_{i+M_+}$ (resp.\ $U_{i-N_-}$ and $U_{i+N_+}$) are, up to interchange, rightmost letters in $D$ (resp.\ leftmost letters in $U$).
    The conditions above give:
    \begin{IEEEeqnarray*}{RrClcrCl}
        &(i-M_- &<& j-n_-\quad&\text{ or }&\quad j+n_++1 &<& i-M_-) \\
        \text{and }\quad&
        (i+M_+ &<& j-n_-\quad&\text{ or }&\quad j+n_++1 &<& i+M_+) \\
        \text{and }\quad&
        (j-N_- &<& i-m_--1\quad&\text{ or }&\quad i+m_+ &<& j-N_-) \\
        \text{and }\quad&
        (j+N_+ &<& i-m_--1\quad&\text{ or }&\quad i+m_+ &<& j+N_+).
    \end{IEEEeqnarray*}
    Using that $M_\pm\leq m_\pm$ and $N_\pm\leq n_\pm$, we see that $i-M_- < j-n_-$ and $j-N_- < i-m_--1$ cannot hold at the same time; similarly, $j+n_++1 < i+M_+$ and $i+m_+ < j+N_+$ cannot hold at the same time.
    It follows that:
    \begin{IEEEeqnarray*}{RrClcrCl}
        &(j+n_++1 &<& i-M_-\quad&\text{ or }&\quad i+m_+ &<& j-N_-) \\
        \text{and }\quad&
        (i+M_+ &<& j-n_-\quad&\text{ or }&\quad j+N_+ &<& i-m_--1).
    \end{IEEEeqnarray*}
    If the first equation holds and $M_-\geq 0$, then $j+1<i$ and the pair $(i,j)$ verifies condition \eqref{eq:condition_reduced_word}; similar statements hold for the three other inequalities.
    Moreover, at least one element of $\{M_-,M_+\}$ is non-negative; and similarly for $\{N_-,N_+\}$.
    Hence, it only remains to check the following two cases:
    \begin{itemize}
        \item $M_-,N_+<0$:
        this implies that $m_- = n_+ = 0$;
        \item $M_+,N_-<0$: 
        this implies that $m_+ = n_- = 0$.
    \end{itemize}
    In both cases, we can use the avoidance of pattern \eqref{eq:reduced_word_avoided_pattern_D} to conclude that we have either $i<j$ or $j+1<i$.
\end{proof}

\subsection{Proof of the basis theorem via rewriting theory}
\label{subsec:proof_basis_theorem}

We wish to show \autoref{thm:reduced_words_basis}.
Recall that given a linear monoidal category, a hom-basis is a basis for each hom-space.
\autoref{thm:reduced_words_basis} equivalently states that $\varLH$-reduced words define a hom-basis of the loop Hecke category $\varLH$ (\autoref{defn:loop_hecke_category}).

With the pattern-avoidance description of reduced words given in \autoref{lem:HF-reduced_are_S-reduced}, it is not difficult to show the following:

\begin{lem}
\label{lem:reduced_words_generate}
    For each $n\in\bN$, the set of $\varLH_n$-reduced words generates $\varLH_n$ as a $\bZ$-module.
\end{lem}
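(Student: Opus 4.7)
The plan is to prove this by well-founded induction on monomials (words) in the generators $\{U_i, D_i\}_i$, using the pattern-avoidance characterization from \autoref{lem:HF-reduced_are_S-reduced}. First, I would equip the set of words, taken modulo the interchange relations \eqref{eq:D_and_U_distant_label} (which will form a symmetry of the argument), with a well-founded monomial order: the primary key is the length of the word, and the secondary key is the number of ``$U$-before-$D$ inversions'', i.e.\ the number of pairs of positions $(p,q)$ with $p<q$ such that the $p$-th letter is some $U_j$ and the $q$-th letter is some $D_k$. Interchange preserves both keys, so the order descends to the set of interchange-equivalence classes of words.

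Next, I would check that each forbidden pattern from \autoref{lem:HF-reduced_are_S-reduced} rewrites, via the defining relations of $\varLH_n$, as a $\bZ$-linear combination of strictly smaller words. The idempotent relations $D_iD_i=D_i$, $U_iU_i=U_i$, the quadratic relations $U_iD_i=U_i+D_i-1$, $U_iD_{i+1}=0$, $D_{i+1}U_i=D_{i+1}+U_i-1$, and the braid-like relations \eqref{eq: braid D to length 2}--\eqref{eq: braid U to length 2} all strictly shorten the word. The swap $U_{i+1}D_i=D_iU_{i+1}$ from \eqref{eq:D, U length 2} preserves length but strictly decreases the inversion count. The composite patterns $DU_+U$, $D_+DU_+$, and $D_+DU_{++}U_+$ from \eqref{eq:reduced_word_avoided_pattern_C}--\eqref{eq:reduced_word_avoided_pattern_D} are handled by first applying interchange to produce an adjacent $D_iU_i$ factor, which is then killed by $D_iU_i=0$.

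By \autoref{lem:HF-reduced_are_S-reduced}, any non-reduced word contains, up to interchange, one of these forbidden patterns; the rewriting above then strictly decreases the order, so well-founded induction concludes. The main expected obstacle is verifying that interchange is simultaneously a symmetry of the rewriting system and of the chosen order, so that the composite rewritings for \eqref{eq:reduced_word_avoided_pattern_C}--\eqref{eq:reduced_word_avoided_pattern_D} are genuinely order-decreasing; this is precisely the type of bookkeeping that the higher linear rewriting framework \cite{Schelstraete_RewritingModuloDiagrammatic_2025} systematizes, and which will be used in full force in the next subsection to prove the harder linear-independence half of the basis theorem.
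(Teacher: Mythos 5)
Your overall strategy matches the paper's: eliminate forbidden patterns from \autoref{lem:HF-reduced_are_S-reduced} by applying defining relations, and close the argument by well-founded induction. In fact you are more careful than the paper about the termination measure: the paper asserts that rewriting a pattern ``strictly decreases the number of letters,'' which is false for the length-preserving swap $U_{i+1}D_i \to D_iU_{i+1}$, and you correctly spot that a secondary key is needed. However, the secondary key you propose---the number of $(U_j,D_k)$ pairs with $U_j$ to the left of $D_k$---is \emph{not} invariant under interchange, contrary to your claim. The commutation $U_iD_j=D_jU_i$ for $|i-j|>1$ is one of the interchange relations \eqref{eq:D_and_U_distant_label}, and it changes that count by one. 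So your order does not descend to interchange classes, and the induction does not close as written: a word $w$ may only exhibit a forbidden pattern after an interchange, and the representative you then rewrite can have a different secondary key than $w$. The repair is easy: count only the \emph{close} $U$-before-$D$ pairs, those with $|j-k|\le 1$. This quantity is invariant under each interchange commutation (whose two labels always differ by more than one), and it still drops by exactly one under $U_{i+1}D_i\to D_iU_{i+1}$, while every other forbidden-pattern rewrite strictly decreases length.

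A smaller terminological slip: the composite patterns $DU_+U$, $D_+DU_+$, $D_+DU_{++}U_+$ are \emph{not} brought to an adjacent $D_iU_i$ by interchange alone. The move that places the two same-label letters next to each other is the defining relation $D_iU_{i+1}=U_{i+1}D_i$ from \eqref{eq:D, U length 2}, whose labels differ by exactly one and which is therefore not an interchange; interchange enters only as an auxiliary step in the four-letter pattern. This does not affect termination (those composites rewrite to zero, so length drops), but it should be stated correctly.
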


\begin{proof}
    Let $\omega$ be any word in the alphabet $\{U_i,D_i\}_{1\leq i< n}$.
    Each pattern described in \autoref{lem:HF-reduced_are_S-reduced} can be rewritten using a relation in $\varLH_n$.
    This follows from the defining relations and the relations
    \[
    D_+ DU_+= 0,
    \quad
    DU_+U= 0
    \quad\text{ and }\quad
    D_+D U_{++}U_+= 0,
    \]
which are easy consequences of the defining relations.
As long as $\omega$ contains one of the patterns in \autoref{lem:HF-reduced_are_S-reduced}, we continue rewriting it. This process will eventually terminate, as rewriting a pattern strictly decreases the number of letters.
This shows that $\omega$ can be expressed as a linear combination of reduced words, using relations in $\varLH_n$.
\end{proof}

To show linear independence, we use \emph{higher linear rewriting theory}, as introduced in \cite{Schelstraete_RewritingModuloDiagrammatic_2025}. In fact, the case of the loop Hecke category is rather simple compared to other monoidal categories, and combining linear rewriting and higher rewriting is relatively straightforward, at least when starting from their modern formulation (see e.g.\ \cite{GHM_ConvergentPresentationsPolygraphic_2019} and e.g.\ \cite{GM_HigherdimensionalCategoriesFinite_2009}, respectively) and once injectivity of contexts is recognized; see \autoref{rem:comparison_rewriting_literature} for a discussion.
In particular, it allows us to phrase our discussion and review of \cite{Schelstraete_RewritingModuloDiagrammatic_2025} in terms that resemble the classical theory of Gröbner--Shirshov bases \cite{Buchberger_AlgorithmFindingBasis_2006,Shirshov_AlgorithmicProblemsLie_2009}, or Bergman's diamond lemma \cite{Bergman_DiamondLemmaRing_1978}, for associative algebras.
We begin with an informal discussion of the main ideas; these ideas are then (semi-)formalized in \autoref{subsubsec:review_rewriting}, which gives a minimal review of the relevant theory from \cite{Schelstraete_RewritingModuloDiagrammatic_2025}.
\autoref{subsubsec:LH_counting_critical_branchings} explains how the theory is applied to the loop Hecke category, the full details being postponed to \autoref{app:confluence_critical_branchings}.

\medbreak

The proof of \autoref{lem:reduced_words_generate} defined a process that rewrites every word as a linear combination of reduced words; it is encapsulated in \autoref{fig:higher-Grobner-basis-LH} (symbolics) and \autoref{fig:higher-Grobner-basis-LH-diagrammatic} (diagrammatics). The idea of rewriting theory is to formalize this process as an algorithm, where each step is called a \emph{rewriting step}; by studying the properties of this algorithm, we will deduce linear independence.
More precisely, we wish to show that not only can we rewrite a word as a linear combination of reduced words, but moreover this linear combination is \emph{unique}.
The latter is highly non-obvious, since a word can have two forbidden patterns at the same time, and our process does not choose which one should be rewritten first (in other words, the algorithm it defines is not deterministic).
For instance the word $DU_+D$ can be rewritten in two different ways:
\begin{gather}
\label{eq:LH-example-branching}
\begin{tikzcd}[ampersand replacement=\&,column sep=small,row sep=.1em]
    \& D_+U+D_+D-D_+ 
    \\
    D_+UD \ar[ur,bend left=15pt] \ar[dr,bend right=15pt]
    \\
    \& D_+D+UD-D
\end{tikzcd}
\end{gather}
Such a pair of rewriting steps is called a \emph{branching}.
While the two rewriting steps have distinct target, one can check that they \emph{confluate}, it the sense that one can use further rewriting steps to reach a common target:
\begin{gather*}
\begin{tikzcd}[ampersand replacement=\&,column sep=small,row sep=.1em]
    \& D_+U+D_+D-D_+ \rar\& D_++U-1+D_+D-D_+ \ar[dr,equals,bend left=10pt]
    \\
    D_+UD \ar[ur,bend left=15pt] \ar[dr,bend right=15pt]
    \&\&\&
    D_+D+U-1
    \\
    \& D_+D+UD-D \rar\& D_+D+U+D-1-D \ar[ur,equals,bend right=10pt]
\end{tikzcd}
\end{gather*}
We say that the algorithm \emph{confluates} if every branching confluates; in this case, a word always rewrites as a linear combination of reduced words \emph{in a unique way}.
In fact, to show confluence it suffices to show confluence of branchings that ``overlap''; they are called \emph{critical branchings}. For instance, the branching in \eqref{eq:LH-example-branching} is a critical branching.
If all critical branchings confluate, we say that the algorithm \emph{critically confluates}.

Linear combinations on which the algorithm terminates are called \emph{normal forms} (or \emph{reduced}); if a word is a normal form, it is called a \emph{monomial normal form}. In our setting, monomial normal forms are precisely $\varLH$-reduced words; this is the content of \autoref{lem:HF-reduced_are_S-reduced}. To sum up:
\begin{quote}
    \textsc{Slogan} (see \autoref{thm:basis-from-rewriting}): If the algorithm in \autoref{fig:higher-Grobner-basis-LH} terminates and critically confluates, then $\varLH$-reduced words define a hom-basis of the loop Hecke category $\varLH$, showing \autoref{thm:reduced_words_basis}.
\end{quote}
As a byproduct, we get a solution to the word problem; that is, an algorithm that decides whether two (linear combination of) words are equal in $\varLH$.
This is the perspective of Gröbner bases. For that reason, the oriented relations underpinning the algorithm may be called a \emph{monoidal Gröbner basis}; that is, a Gröbner basis for a linear monoidal category.

In the terminology defined in \autoref{subsubsec:review_rewriting}:

\begin{cor}
\label{cor:higher_grobner_basis_LH}
    The higher linear rewriting system given in \autoref{fig:higher-Grobner-basis-LH} is a monoidal Gröbner basis for the loop Hecke category.
\end{cor}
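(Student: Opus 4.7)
The plan is to invoke the general criterion from higher linear rewriting theory, namely \autoref{thm:basis-from-rewriting}: once one checks that the oriented system displayed in \autoref{fig:higher-Grobner-basis-LH} is terminating and critically confluent, the resulting monomial normal forms automatically form a hom-basis of $\varLH$. Combined with \autoref{lem:HF-reduced_are_S-reduced}, which identifies these monomial normal forms with the $\varLH$-reduced words, this is exactly the statement that the rewriting system is a monoidal Gröbner basis. Note that \autoref{lem:reduced_words_generate} already handles the spanning half of the corresponding basis theorem, so it is really the uniqueness-of-normal-form aspect of confluence that is being packaged here.

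Termination is the easier ingredient, and I would address it first. I would equip the free $\bZ$-module on words in the alphabet $\{U_i,D_i\}_{1\leq i<n}$ with a well-founded order that refines word length. The only oriented rules whose right-hand side contains a summand of the same length as the source are $UD = U+D-1$ and $D_{+}U = D_{+}+U-1$ (and their strand-shifted variants); the remaining rules strictly shorten words or send them to zero. To handle the two length-preserving cases I would tiebreak by assigning each letter a strictly positive weight refining the length grading, so that every summand produced on the right has strictly smaller total weight than the source. With this order every rewriting step strictly decreases each monomial summand, hence the system terminates.

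The main obstacle is critical confluence. In the monoidal setting critical branchings arise from overlaps between the sources of two oriented rules, either as shared subwords at the same strand height or as adjacent-strand overlaps mediated by the interchange relations in \eqref{eq:D_and_U_distant_label}. Because the sources of the oriented rules have length at most four, the overlaps span at most six letters and can be enumerated by hand; representative critical pairs include $DDU$, $UDU$, $DU_{+}D$ (the case closed explicitly in \eqref{eq:LH-example-branching}), $D_{+}DU_{+}$, $DD_{+}D$, $UU_{+}U$, and the longer configurations extending the rule $D_{+}DU_{++}U_{+}\to 0$. For each such pair the plan is to close the diamond by running the algorithm to a normal form along both branches and check that the two results coincide in $\varLH$; this is mechanical but lengthy, and I would defer the full case analysis to \autoref{app:confluence_critical_branchings}, with the \textsc{Magma} experiments mentioned in the introduction serving as an independent cross-check. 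Once every critical branching confluates, \autoref{thm:basis-from-rewriting} applies and yields the corollary.
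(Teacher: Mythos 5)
Your overall strategy is the right one — the corollary is precisely the assertion that the system in \autoref{fig:higher-Grobner-basis-LH} terminates and critically confluates, and the paper likewise delegates the case analysis to \autoref{app:confluence_critical_branchings} — but there are two points to correct.

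First, the termination argument contains a factual slip. You assert that the rules $UD \to U+D-1$ and $D_{+}U \to D_{+}+U-1$ have a summand on the right of the same length as the source, and therefore you introduce a weighted order to break the tie. But $UD$ has two letters while $U$, $D$, and $1$ have at most one, so every summand on the right is \emph{strictly} shorter. The same is true for every rule in the system; the paper simply observes in the proof of \autoref{lem:reduced_words_generate} that each rewriting step strictly decreases the number of letters, and this already gives termination. Your weighted order would work, but it is solving a problem that is not there.

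Second, and more substantially, your treatment of critical confluence omits the phenomenon of \emph{indexed branchings}, which is the genuine difficulty of higher rewriting as opposed to Bergman-style rewriting of associative algebras. Because monomials are taken modulo the interchange law, an arbitrary diagram can sit ``trapped'' between the two rules involved in an overlap (this is exactly what the dashed boxes marked ``?'' in \eqref{eq:branching_DUp_second} represent), so the set of overlapping configurations is a priori infinite and your claim that ``the overlaps span at most six letters and can be enumerated by hand'' is false as stated. The paper deals with this by first proving a lemma reducing the trapped diagram to a normal form — which, by \autoref{lem:HF-reduced_are_S-reduced}, can only be $\id$, $D$, or $U$ — and only then running the finite case analysis. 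Your outline reads as though the confluence check were a direct transcription of the associative-algebra diamond lemma, and a referee would not accept deferring to the appendix without acknowledging that some argument is needed to make the enumeration finite in the first place.

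Apart from these two points, your identification of the role of \autoref{thm:basis-from-rewriting}, the use of \autoref{lem:HF-reduced_are_S-reduced} to identify monomial normal forms with $\varLH$-reduced words, and the delegation of the actual confluence diagrams to \autoref{app:confluence_critical_branchings} all match the paper's argument.
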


\begin{figure}
    \def\sin{.2ex}
    \def\sout{1ex}
    \centering
    \begin{gather*}
    DD\to D 
    \qquad
    DU\to 0
    \qquad
    UU\to U 
    \qquad
    UD\to U +D -1
    \\[\sin]
    \text{\small same-label rewriting steps}
    \\[\sout]
    U_+ D \to D U_+ 
    \qquad
    U D_+ \to 0
    \qquad
    D_+ U \to D_+ +U -1
    \\
    D D_+ D \to D_+ D \qquad D_+ D D_+ \to D_+ D
    \\
    U U_+ U \to U_+ U \qquad U_+ U U_+ \to U_+ U
    \\[\sin]
    \text{\small distinct-label rewriting steps}
    \\[\sout]
    D_+ D U_+\to 0
    \qquad
    D U_+ U\to 0
    \qquad
    D_+ D U_{++} U_+ \to 0
    \\[\sin]
    \text{\small additional rewriting steps}
\end{gather*}
    \caption{A monoidal Gröbner basis for the loop Hecke category.}
    \label{fig:higher-Grobner-basis-LH}
\end{figure}

\begin{figure}
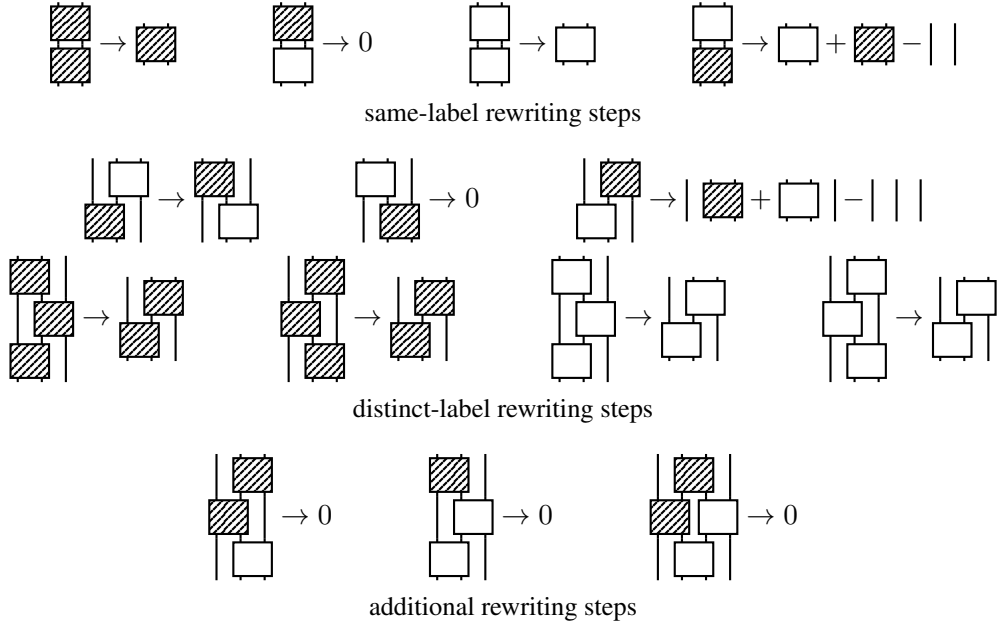

    \def\sin{.2ex}
    \def\sout{1ex}
    \centering
    \begin{gather*}
    \diagLH{\diagD[0][1]\diagD}
    \to
    \diagLH{\diagD}
    \mspace{60mu}
    \diagLH{\diagD[0][1]\diagU}
    \to
    0
    \mspace{60mu}
    \diagLH{\diagU[0][1]\diagU}
    \to
    \diagLH{\diagU}
    \mspace{60mu}
    \diagLH{\diagU[0][1]\diagD}
    \to
    \diagLH{\diagU}
    +
    \diagLH{\diagD}
    -
    \diagLH{\diagSTR\diagSTR[1][0]}
    \\[\sin]
    \text{\small same-label rewriting steps}
    \\[\sout]
    \diagLH{\diagUp[0][1]\diagDm}
    \to
    \diagLH{\diagDm[0][1]\diagUp}
    \mspace{60mu}
    \diagLH{\diagUm[0][1]\diagDp}
    \to 
    0
    \mspace{60mu}
    \diagLH{\diagDp[0][1]\diagUm}
    \to
    \diagLH{\diagDp}
    +
    \diagLH{\diagUm}
    -
    \diagLH{\diagSTR\diagSTR[1][0]\diagSTR[2][0]}
    \\
    \diagLH{\diagDm[0][2]\diagDp[0][1]\diagDm}
    \to
    \diagLH{\diagDp[0][1]\diagDm}
    \mspace{60mu}
    \diagLH{\diagDp[0][2]\diagDm[0][1]\diagDp}
    \to
    \diagLH{\diagDp[0][1]\diagDm}
    \mspace{60mu}
    \diagLH{\diagUm[0][2]\diagUp[0][1]\diagUm}
    \to
    \diagLH{\diagUp[0][1]\diagUm}
    \mspace{60mu}
    \diagLH{\diagUp[0][2]\diagUm[0][1]\diagUp}
    \to
    \diagLH{\diagUp[0][1]\diagUm}
    \\[\sin]
    \text{\small distinct-label rewriting steps}
    \\[\sout]
    \diagLH{\diagDp[0][2]\diagDm[0][1]\diagUp}
    \to 0
    \mspace{60mu}
    \diagLH{\diagDm[0][2]\diagUp[0][1]\diagUm}
    \to 0
    \mspace{60mu}
    \diagLH{
        \diagDp[0][2]\diagSTR[3][2]
        \diagD[0][1]\diagU[2][1]
        \diagUp\diagSTR[3][0]
    }
    \to 0
    \\[\sin]
    \text{\small additional rewriting steps}
\end{gather*}
    \caption{The monoidal Gröbner basis from \autoref{fig:higher-Grobner-basis-LH} in diagrammatic notation.}
    \label{fig:higher-Grobner-basis-LH-diagrammatic}
\end{figure}

\subsubsection{A minimal review of higher linear rewriting theory}
\label{subsubsec:review_rewriting}

We give a minimal review of \cite{Schelstraete_RewritingModuloDiagrammatic_2025} suitable for our purpose.
In the interest of length, we sometimes stay at a semi-formal level of explanation.
Experts are referred to \autoref{rem:comparison_rewriting_literature} for comparison with \cite{Schelstraete_RewritingModuloDiagrammatic_2025}.

Fix $k$ a commutative ring.
Let
\[\cC = \langle \sX_0\mid\sX_1\mid\sR\rangle_{\otimes,k}\]
be a presented linear monoidal category.
This means that:
\begin{itemize}
    \item $\sX_0$ is the set of generating objects. We write $\sX_0^* = \mathrm{ob}(\cC)$ the set of objects generated by $\sX_0$ under the tensor product $\otimes$.
    \item $\sX_1$ is the set of generating morphisms, or \emph{generators}, equipped with source and target maps $s,t\colon \sX_1\to\sX_0^*$.
    A generator $f\in \sX_1$ can be ``extended'' by identities of objects, giving a \emph{whiskered generator} $\id_a\otimes f\otimes\id_b$ for each $a,b\in\sX_0^*$.
    The source and target maps extend to whiskered generators in the natural way, and whiskered generators with matching source and target can be composed.
    
    For each pair of objects $(a,b)$, we write $\sX^*(a,b)$ the set of compositions of whiskered generators with source $a$ and target $b$, regarded up to the interchange law:
    \[(f\otimes \id_{t(g)})\circ (\id_{s(f)}\otimes g) = (\id_{t(f)}\otimes g)\circ (f\otimes\id_{s(g)}).\]
    An element of $\sX^*(a,b)$ is called a \emph{monomial}.

    For each pair of objects $(a,b)$, we write $\sX^l(a,b)\coloneqq\langle\sX^*(a,b)\rangle_k$, the free $k$-module generated by the set $\sX^*(a,b)$. An element of $\sX^l(a,b)$ is called a \emph{vector}.
    We write $\sX^*$ (resp.\ $\sX^l$) the union of all the $\sX^*(a,b)$'s (resp.\ $\sX^l(a,b)$'s).

    \item $\sR$ is a subset $\sR\subset \sX^l(a,b)$.
\end{itemize}
In the case of the loop Hecke Category, we have $\sX_0 = \{1\}$, $\sX_0^* \cong \bN$ and $\sX_1 = \{D,U\}$.
Monomials are the same as words (up to interchange) in symbolic notation, or diagrams (up to interchange) in diagrammatic notation.

\begin{defn}
    In the context of linear monoidal categories,
    a \emph{higher linear rewriting system} is a presentation of a linear monoidal category, such that each relation comes equipped with an orientation, and the source of each relation is a monomial.
\end{defn}

In our notation, this means that $\sR$ is an abstract set equipped with source map $s\colon\sR\to \sX^*(a,b)$ and target map $s\colon\sR\to \sX^l(a,b)$.
We often abuse notation by writing $\sR$ for the data $(\sX_0\mid\sX_1\mid\sR)$, and abuse terminology by calling $\sR$ a higher linear rewriting system.

Recall that a vector is a linear combination of monomials. For a vector $v\in\sX^l$, we write $\supp(v)$ its \emph{support}, that is, the set of monomials appearing in the linear decomposition of $v$.

Let $(\sX_0\mid\sX_1\mid\sR)$ be a higher linear rewriting system.
We define:
\begin{itemize}
    \item A \emph{context} is a ``monomial with a hole''.
    In diagrammatic terms, a context $\Gamma$ is:
    \[
    \Gamma \;=\; 
    \begin{tikzpicture}[scale=.7,
    baseline={(current bounding box.center)},
    ]
        \def\hshift{.2}
        \def\vshift{.1}
        \draw (0,0) to (0,3);
        \draw (1,.5) to (1,2.5);
        \draw (2,.5) to (2,2.5);
        \draw (3,.5) to (3,2.5);
        \draw (4,.5) to (4,2.5);
        \draw (5,0) to (5,3);
        \draw[fill=white,rounded corners=1pt] (0-\hshift,0+\vshift) rectangle (5+\hshift,1-\vshift);
        \draw[fill=white,rounded corners=1pt] (2-\hshift,1+\vshift) rectangle (3+\hshift,2-\vshift);
        \draw[fill=white,rounded corners=1pt] (0-\hshift,2+\vshift) rectangle (5+\hshift,3-\vshift);
        \node at (2.5,.5) {\footnotesize $f$};
        \node at (2.5,2.5) {\footnotesize $g$};
        \node at (.5,1.5) {\footnotesize $a$};
        \node at (4.5,1.5) {\footnotesize $b$};
    \end{tikzpicture}
    \]
    where $a,b\in\mathrm{ob}(\cC)$ are objects and $f,g\in\sX^*$ are monomials, suitably composable.
    Given a context $\Gamma$, we can \emph{contextualize} a generating rewriting step $r\colon s(r)\to t(r)$ as \[{\Gamma[r]\colon \Gamma[s(r)] \to \Gamma[t(r)]},\] provided source and target are compatible.

    \item A \emph{rewriting step} is a rule of the form
    \[\lambda\Gamma[r]+v \colon\lambda\Gamma[s(r)] + v \to \lambda\Gamma[t(r)] + v,\]
    where $\lambda\in k\setminus\{0\}$ is a non-zero scalar, $\Gamma$ is a context, $r\in\sR$ is a generating rewriting step and $v\in\sX^l$ is a vector such that $\Gamma[s(r)]\notin\supp(v)$. Again, it is implicit that $\Gamma[r]$ and $v$ have the same source and target.
    
    We say that $\lambda\Gamma[r]+v$ is a rewriting step \emph{of type $r$}.
    We denote by $\sR^+$ the set of rewriting steps. Source and target maps naturally extend to $\sR^+$.
\end{itemize}
For instance, in our example $r = DD\to D$ is a generating rewriting step, $\Gamma = D_+[-]U$ is a context and $v=UD$, $v'=D_+DDU$ are vectors (in fact, monomials).
We have that $\Gamma[r] = D_+ DD U \to D_+ D U$ is a rewriting step, viewed a contextualization of $r$. The rule
\[\Gamma[r]+v = D_+ DD U + UD \to D_+ D U + UD\]
is a rewriting step, but not the rule $\Gamma[r]+v' = D_+ DD U + D_+ DD U \to D_+ D U + D_+ DD U$, as it does not verify the condition on the support.
This condition is known as the positivity condition, which explains the notation $\sR^+$.

Having defined rewriting steps, we have the following notions:
\begin{itemize}
    \item A \emph{rewriting sequence} is a finite sequence of rewriting steps $(\alpha_i)_{1\leq i\leq N}$ such that we have ${s(\alpha_{i+1}) = t(\alpha_i)}$;
    \item A \emph{branching}\footnote{More precisely, this is the definition of a \emph{local} branching; we abuse terminology in this review.} is a pair of rewriting steps $(\alpha,\beta)$ with the same source;
    \item A \emph{confluence} is a pair of rewriting sequences $(\alpha',\beta')$ with the same target;
    \item A branching $(\alpha,\beta)$ is \emph{confluent} (we say that it \emph{confluates}) if it admits a confluence $(\alpha',\beta')$ such that $t(\alpha) = s(\alpha')$ and $t(\beta) = s(\beta')$.
\end{itemize}

There is an intuitive notion of the multiset of generators in a given monomial; for instance, the monomial $(D\otimes \id_1)\circ (\id_1\otimes D)$ has generators $\{D,D\}$.
Given a rewriting step $\alpha=\Gamma[r]$ for $r\in\sR$ a generating rewriting step, we call the multiset of generators in $s(r)$ the ``generators associated to $\alpha$''; they constitute the pattern on which we apply the rewriting rule.
A branching $(\alpha,\beta)$ is \emph{monomial} if its source $s(\alpha)=s(\beta)$ is a monomial. If further $\alpha$ and $\beta$ have generators in common, we say that $(\alpha,\beta)$ is an \emph{overlapping branching}.
For instance, the branching given in \eqref{eq:LH-example-branching} is an overlapping branching, as the two rewriting steps have the generator ``$U$'' in the middle of the monomial $D_+UD$ in common.

Just like rewriting steps, a branching $(\alpha,\beta)$ can be contextualized as $(\Gamma[\alpha],\Gamma[\beta])$; we say that a branching is \emph{minimal} if it is not the (non-trivial) contextualization of another branching.

\begin{defn}
    In the context of linear monoidal categories, a \emph{critical branching} is a minimal overlapping branching.
\end{defn}

\begin{defn}
    A higher linear rewriting system is said to \emph{terminate} if there is no infinite sequence of rewriting steps, and to \emph{critically confluate} if all critical branchings confluate.
\end{defn}

Now we pause the review to emphasize a special feature of our setting.
Note that for a generic higher linear rewriting system, \emph{contextualization needs not be injective}. That is, if $f,g\in\sX^*$ are monomials and $\Gamma$ is a context, having $f\neq g$ does \emph{not} imply that $\Gamma[f]\neq \Gamma[g]$\footnote{Here the inequality is an inequality as monomials in the free monoidal category $\sX^*$, not an inequality as elements of the monoidal category presented by $\sR$.}. This is because we consider monomials up to the interchange law, and as a context may connect two regions of a diagram, it may allow ``floating morphisms'' to move from one region to another. This fact is in contrast with the classical settings of linear rewriting in associative algebras or commutative algebras, where contexts \emph{are} indeed injective.
This makes the general theory of higher linear rewriting subtler than its classical counterpart; see \cite{Schelstraete_RewritingModuloDiagrammatic_2025}.

However, in the case of the loop Hecke category, contexts \emph{are} injective: if $f,g\in\sX^*$ are monomials such that $f\neq g$ and $\Gamma$ is a context, then $\Gamma[f]\neq\Gamma[g]$. This makes the theory similar to the classical setting, and one finds a statement analogous to (say) Bergman's diamond lemma.

\begin{defn}
    In the context of higher linear rewriting system with injective contexts,
    A \emph{monoidal Gröbner basis} is a higher linear rewriting system that terminates and critically confluates.
\end{defn}

If $v\in\sX^l$ is a vector such that no rewriting step has source $v$, we say that $v$ is a \emph{normal form}; if further $v\in\sX^*$ is a monomial, then $v$ is a \emph{monomial normal form}.

\begin{thm}
\label{thm:basis-from-rewriting}
    In the context of higher linear rewriting system with injective contexts,
    If $\mathsf{R}$ is a monoidal Gröbner basis presenting a linear monoidal category $\mathcal{C}$,
    monomial normal form defines a hom-basis of $\mathcal{C}$.
\end{thm}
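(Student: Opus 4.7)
The plan is to establish the two required properties separately: spanning and linear independence of monomial normal forms in each hom-space.

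\textbf{Spanning.} Fix a hom-space and a vector $v$. By termination, any maximal sequence of rewriting steps starting from $v$ is finite and ends at some vector $\bar v$ on which no rewriting step applies, i.e.\ a normal form. Because every rewriting step $\lambda\Gamma[r]+w\colon \lambda\Gamma[s(r)]+w\to \lambda\Gamma[t(r)]+w$ identifies source and target in $\cC$ (via the relation $r\in\sR$), we have $v=\bar v$ in $\cC$. Since a vector is in normal form precisely when its entire support consists of monomial normal forms, $v$ equals a linear combination of monomial normal forms in $\cC$.

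\textbf{Reduction to confluence.} For linear independence I would show that the normal form is \emph{unique}: if two terminating rewriting sequences from $v$ end at $\bar v$ and $\bar v'$, then $\bar v=\bar v'$. Uniqueness of normal forms makes $v\mapsto \bar v$ a well-defined $k$-linear map from the hom-space of $\cC$ to the free module on monomial normal forms that splits the inclusion, forcing linear independence. Uniqueness is equivalent to confluence, and by the linear version of Newman's lemma, confluence follows from termination (given) together with local confluence of all branchings.

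\textbf{Local confluence via critical branchings.} I would classify a local branching $(\alpha,\beta)$ with common source $v$ into three cases. First, if $\alpha$ and $\beta$ act on two different monomials occurring in the linear decomposition of $v$, or on disjoint regions of a common monomial, the two rewriting steps commute and confluence is immediate by applying the other step after each. Second, if both $\alpha$ and $\beta$ act on the same monomial and their associated patterns share at least one generator, then the \emph{injectivity of contexts} hypothesis allows us to factor $(\alpha,\beta)=(\Gamma[\alpha_0],\Gamma[\beta_0])$ for a unique minimal overlapping, hence critical, branching $(\alpha_0,\beta_0)$; since $\sR$ is assumed to be a monoidal Gröbner basis, $(\alpha_0,\beta_0)$ is confluent, and confluence is preserved under contextualization. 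Third, if $\alpha$ and $\beta$ act on the same monomial $m$ but their patterns are disjoint, one reduces to the first case.

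\textbf{Main obstacle.} The delicate point is not the overlapping case itself---once injectivity of contexts is assumed, it proceeds as in Bergman's classical diamond lemma---but the interplay of the \emph{linear} structure with rewriting steps. One must carefully handle the positivity condition $\Gamma[s(r)]\notin\supp(v)$, ensuring that composing rewriting steps to close a diagram never produces an ill-defined rewriting step (through unintended cancellations of monomials in supports). This bookkeeping, together with a careful statement of Newman's lemma in the linear setting, is the technical heart of the argument; it is precisely the content developed in~\cite{Schelstraete_RewritingModuloDiagrammatic_2025}, which we invoke to conclude.
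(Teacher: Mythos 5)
Your proposal is correct in spirit and takes essentially the same route as the paper: both invoke the cited reference~\cite{Schelstraete_RewritingModuloDiagrammatic_2025} for the technical core, while the surrounding structure (termination $\Rightarrow$ spanning, confluence $\Rightarrow$ linear independence via a retraction, reduction of local confluence to critical branchings) is the standard Bergman-style diamond lemma in a linear setting. Two small points of divergence are worth flagging. First, in your case~1 you write that when $\alpha$ and $\beta$ act on two different monomials of $\supp(v)$, ``the two rewriting steps commute and confluence is immediate''; this is precisely where it is \emph{not} immediate — applying $\alpha$ may cancel or alter the coefficient of the monomial on which $\beta$ acts, possibly violating the positivity condition required for $\beta$ to be a legal rewriting step — so this case is exactly the ``main obstacle'' you identify at the end, not a trivial one. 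Second, the paper's \autoref{rem:comparison_rewriting_literature} uses injectivity of contexts for a different (though related) purpose: to guarantee that a compatible order is automatically \emph{strongly} compatible, so the order-theoretic hypotheses of~\cite{Schelstraete_RewritingModuloDiagrammatic_2025} can be verified from termination alone. Your attribution of injective contexts to the unique factoring of a local overlapping branching through a critical one is a reasonable parallel but not quite what the paper emphasizes; without injectivity, the subtlety is not so much failure of factorization as failure of the order to interact well with contextualization. Since you ultimately defer the bookkeeping to the cited paper, the argument is sound; just be aware that the split between ``easy'' non-overlapping branchings and ``interesting'' critical ones is already nontrivial in linear rewriting, whereas it is genuinely routine in the set-theoretic diamond lemma.
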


\begin{rem}[{comparison with \cite{Schelstraete_RewritingModuloDiagrammatic_2025}}]
    \label{rem:comparison_rewriting_literature}
    In \cite{Schelstraete_RewritingModuloDiagrammatic_2025}, the interchange law is made explicit as a modulo rule; in particular, $\sX^*$ denotes the set of formal compositions of whiskered generators, \emph{not} regarded up to the interchange law.
    Also, the notion of monoidal Gröbner bases is only implicit in \cite{Schelstraete_RewritingModuloDiagrammatic_2025}, and equivalent to the notion of a convergent higher linear rewriting system modulo interchangers.

    To view \autoref{thm:basis-from-rewriting} as a corollary of the results of \cite{Schelstraete_RewritingModuloDiagrammatic_2025}, one requires a strongly compatible terminating order invariant under interchangers \cite[Definition~3.58]{Schelstraete_RewritingModuloDiagrammatic_2025}.
    Since contexts are injective (see the discussion above), an order is strongly compatible if and only if it is compatible, and we can choose our compatible order to be the smallest compatible order $\succ_\sR$ for each linear rewriting system $\sR(a,b)$ \cite[Definition~3.48]{Schelstraete_RewritingModuloDiagrammatic_2025}.
    Thanks to \cite[Lemma~3.41]{Schelstraete_RewritingModuloDiagrammatic_2025}, termination implies that $\succ_\sR$ is terminating.
    In other words, having injective contexts put us in essentially the same setting as associative algebras; see \cite{GHM_ConvergentPresentationsPolygraphic_2019}.
\end{rem}

\subsubsection{A rewriting approach to the loop Hecke category}
\label{subsubsec:LH_counting_critical_branchings}

Let $\sR$ be the higher linear rewriting system defined in \autoref{fig:higher-Grobner-basis-LH}.
We have already argued that $\sR$ terminates (see the proof of  \autoref{lem:reduced_words_generate}) and that monomial normal forms are precisely $\varLH$-reduced words (\autoref{lem:HF-reduced_are_S-reduced}).
To show \autoref{thm:reduced_words_basis} using \autoref{thm:basis-from-rewriting}, it only remains to show that $\sR$ is critically confluent.
This splits in two steps: enumerating all the critical branchings, and showing that they confluate.
Both tasks are cumbersome, but apart from the subtlety of indexed branchings explained below, essentially straightforward.

Enumeration is best done diagrammatically: one tries to match patterns up to rectilinear isotopies. See \autoref{subsubsec:diagrammatic_definition} and \autoref{fig:higher-Grobner-basis-LH-diagrammatic} for the diagrammatics.
We illustrate the process with branchings $(\alpha,\beta)$ where $\alpha$ is of type $U_+D \to DU_+$ and $\beta$ is of type one of the distinct-label rewriting steps. The full analysis is given in \autoref{app:confluence_critical_branchings}.

\begin{lem}
\label{lem:critical_branching_enumeration_example}
The following is a complete list of critical branchings $(\alpha,\beta)$ where $\alpha$ is of type $U_+D \to DU_+$ and $\beta$ is of type one of the distinct-label rewriting steps:
\begin{gather}
    \label{eq:branching_DUp_first}
    \diagLH{\diagIUI[0][1]\diagD\diagD[2][0]}
    \mspace{60mu}
    \diagLH{\diagU[0][1]\diagU[2][1]\diagIDI}
    \mspace{60mu}
    \diagLH{\diagIID[0][2]\diagIUI[0][1]\diagDII}
    \mspace{60mu}
    \diagLH{\diagIIU[0][2]\diagIDI[0][1]\diagUII}
    \\
    \label{eq:branching_DUp_second}
    \diagLH{
        \diagIU[0][3]
        \diagD[0][2]\diagLBOX[2][2]
        \diagID[0][1]\diagDI
    }
    \mspace{60mu}
    \diagLH{
        \diagD[0][2]\diagU[2][2]
        \diagIDI[0][1]\diagDII
    }
    \mspace{60mu}
    \diagLH{
        \diagIIU[0][3]\diagIDI[0][2]
        \diagDII[0][1]\diagIDI
    }
    \mspace{60mu}
    \diagLH{
        \diagIUI[0][3]
        \diagIIU[0][2]
        \diagIUI[0][1]
        \diagDII
    }
    \mspace{60mu}
    \diagLH{
        \diagIU[0][3]
        \diagUI[0][2]
        \diagRBOX[-1][1]\diagU[1][1]
        \diagDI
    }
    \mspace{60mu}
    \diagLH{
        \diagIIU[0][2]
        \diagIUI[0][1]
        \diagD[0][0]\diagU[2][0]
    }
\end{gather}
\end{lem}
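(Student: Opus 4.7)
The proof is by systematic enumeration of the ways the source $U_+D$ of $\alpha$ can overlap with the source of some distinct-label rewriting rule $\beta$. Recall that the source $U_+D$ contains exactly two generators: a $D$-generator on strands $(i,i+1)$ (below) and a $U$-generator on strands $(i+1,i+2)$ (above), sharing strand $i+1$. A critical branching arises when these two generators overlap with those of $\beta$'s source in at least one generator, in such a way that the resulting diagram is minimal (every strand and generator present is required by at least one of the two rewrites). Minimality rules out any configuration that can be contextualized down to a smaller branching.

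The plan is to organize the enumeration by the type of $\beta$. For each candidate rule, I read off the generators appearing in its source and, for each subset of them, check whether it can be identified with a subset of $\{D_\alpha,U_\alpha\}$ in a manner compatible with the planar structure. Concretely:
\begin{itemize}
    \item $\beta$ of type $U_+D\to DU_+$ or $UD_+\to 0$ (both two-generator sources): the overlap shares either $U_\alpha$ or $D_\alpha$, producing the first two diagrams of \eqref{eq:branching_DUp_first} and, for self-overlap, several of the configurations in \eqref{eq:branching_DUp_second}.
    \item $\beta$ of type $D_+U\to D_++U-1$ (two generators): gives the third and fourth diagrams of \eqref{eq:branching_DUp_first}.
    \item $\beta$ of type $DD_+D\to D_+D$ or $D_+DD_+\to D_+D$ (three $D$'s): the shared generator must be $D_\alpha$, and the three positions it can occupy inside $\beta$'s source yield a subset of the diagrams in \eqref{eq:branching_DUp_second}.
    \item $\beta$ of type $UU_+U\to U_+U$ or $U_+UU_+\to U_+U$ (three $U$'s): symmetrically, the shared generator must be $U_\alpha$, yielding the remaining diagrams in \eqref{eq:branching_DUp_second}.
\end{itemize}
For each such pairing I would draw the diagrams obtained by superimposing the two sources in all geometrically coherent ways, discarding non-minimal ones and identifying those related by the interchange law.

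The dashed ``?'' boxes appearing in \eqref{eq:branching_DUp_second} require care: they encode indexed critical branchings, i.e.\ critical branchings whose source is not completely pinned down by the overlap, the remaining position being filled by an auxiliary generator necessary for $\beta$ to apply. Each such dashed region represents an entire family of critical branchings parameterized by the filler, which one should remember to treat as a single entry in the enumeration rather than enumerating filler by filler.

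The main obstacle will not be any single hard calculation but rather the combinatorial bookkeeping: for each pair of rules there are multiple candidate identifications of the shared generator and multiple planar extensions, and one has to verify (a) minimality (the configuration is not a contextualization of a smaller one already counted), (b) distinctness (different configurations are not related by interchange), and (c) completeness (no overlap has been missed). A diagrammatic approach, as in \autoref{subsubsec:diagrammatic_definition}, makes this verification tractable, since each overlap is uniquely determined by the two superimposed diagrams read up to horizontal rectilinear isotopy.
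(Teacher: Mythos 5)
Your organization by $\beta$-type is the natural one and matches the paper's diagrammatic strategy, and the attribution of the four branchings in \eqref{eq:branching_DUp_first} to $\beta=UD_+$ (first two) and $\beta=D_+U$ (last two) is correct. However, your first bullet contains a genuine error: the pattern $U_+D$ has \emph{no} non-trivial overlap with itself. Its source is a two-node monomial in which the $U$ sits adjacently (modulo interchange) on the strands shifted one step to the right of the $D$; if two embeddings of this pattern into a monomial share the $D$-node, each embedding's $U$-node must be the node adjacent to that $D$ on the shifted strands, and that node is unique, so the two embeddings coincide. The same argument applies symmetrically when the $U$-node is shared. Consequently $\beta = U_+D$ contributes nothing, and all six diagrams of \eqref{eq:branching_DUp_second} are produced by the three-term rules $DD_+D$, $D_+DD_+$, $UU_+U$, $U_+UU_+$ alone, not by self-overlap or by $UD_+$. (For instance, the two $DD_+D$ entries arise from taking $D_\alpha$ to be the middle or the top $D$ of the source; the bottom-$D$ position fails because then $U_\alpha$ and $\beta$'s middle $D_+$ would both need to sit on the same shifted strands immediately above $D_\alpha$.)

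This misattribution matters because following your plan as written would not reproduce the stated list: one would search for self-overlaps that do not exist and would then have to re-derive the tally coming from the three-term rules. Beyond this, the write-up remains a plan rather than an executed enumeration --- you explain that you \emph{would} superimpose the sources and prune non-minimal configurations, but do not carry out the case analysis that establishes completeness and minimality of the ten listed branchings, including the two indexed ones. The paper also treats this step as routine, so the missing bookkeeping alone would be a minor gap; the concrete error in the classification is the more substantive one.
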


Here we describe a branching by its source, leaving its branches implicit. (Boxes with ``?'' will be explained below.)
For instance, the first diagram encodes the following branching:
\[
\begin{tikzcd}[row sep=0em]
    & \diagLH{\diagDII[0][2]\diagIUI[0][1]\diagIID}
    \\
    \diagLH{\diagIUI[0][1]\diagD\diagD[2][0]}
    \ar[ur,bend left=15pt] \ar[dr,bend right=15pt]
    \\
    & 0
\end{tikzcd}
\]
Here we remind the reader that we view diagrams modulo the interchange law, so that we can slide the $D$s past each other to be able to apply each of the two rewriting steps. This branching is easily seen to confluate, as the top branch rewrites to zero using the rewriting step $UD_+\to 0$.

Because we work modulo the interchange law, it may happen that an arbitrary diagram is ``stuck'' in between two rewriting rules.
This happens for instance in the first branching of \eqref{eq:branching_DUp_second}, where
\[\diagLH{\diagLBOX}[scale=1.5]\]
denotes an arbitrary diagram. This is known as an \emph{indexed branching} \cite{GM_HigherdimensionalCategoriesFinite_2009}, a phenomenon typical of higher rewriting.
A priori, this leads to an infinite family of branchings.
However, one can always rewrite this diagram into a normal form.
As we already know that normal forms are precisely the reduced words, it is not hard to check the following:

\begin{lem}
    Denote an arbitrary diagram with a dashed box marked with ``?'', different boxes indicating (a priori) different diagrams.
    We have that:
    \begin{gather*}
        \diagLH{\diagSTR[0][2]\diagLBOX[0][1]\diagSTR}
        \quad
        \text{rewrites as a linear combination of}
        \quad
        \diagLH{
            \diagSTR[0][2]\diagSTR[1][2]
            \diagSTR[0][1]\diagLBOX[1][1]
            \diagSTR\diagSTR[1][0]
        }\;,
        \quad
        \diagLH{
            \diagSTR[0][2]\diagLBOX[1][2]
            \diagU[0][1]
            \diagSTR\diagLBOX[1][0]
        }
        \quad\text{ and }\quad
        \diagLH{
            \diagSTR[0][2]\diagLBOX[1][2]
            \diagD[0][1]
            \diagSTR\diagLBOX[1][0]
        }\;,
    \end{gather*}
    We have an analogous statement when flipping all diagrams along the vertical axis.
\end{lem}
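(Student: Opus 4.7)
The plan is to first reduce the arbitrary ``?'' to a linear combination of $\varLH$-reduced words, then show each such word already falls into one of the three target forms.

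By contextualizing the rewriting steps of $\sR$ (\autoref{fig:higher-Grobner-basis-LH}) inside the ``?'' box, \autoref{lem:reduced_words_generate} implies the source rewrites as a linear combination of diagrams whose ``?'' is a reduced word $\omega = \underline{D}\,\underline{U}$. For each such word, I would examine whether the leftmost-position generators $D_0$ or $U_0$ (those acting on strand positions $0$ and $1$) appear in $\omega$. The reduced word condition \eqref{eq:condition_reduced_word} applied with $i=0$ rules out both appearing simultaneously. If neither appears, all generators in $\omega$ act at positions $\geq 1$, so $\omega = \id_1 \otimes \omega'$, matching target 1. If $U_0 \in \underline{U}$, the condition forces $D_0, D_1 \notin \underline{D}$, hence $\underline{D} = \id_2 \otimes \underline{D}''$; splitting $\underline{U} = \underline{U}_{\mathrm{bot}} \cdot U_0 \cdot \underline{U}_{\mathrm{top}}$ at the occurrence of $U_0$ yields a decomposition $\omega = (\id_1 \otimes \beta) \cdot U_0 \cdot (\id_1 \otimes \alpha)$ for appropriate $\alpha, \beta$, matching target 2. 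The case $D_0 \in \underline{D}$ is symmetric and gives target 3.

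The main obstacle is showing that $D_0$ (resp.\ $U_0$) appears at most once in any reduced word $\underline{D}$ (resp.\ $\underline{U}$) for a 321-avoiding permutation, so that the splitting above is well-defined. I would argue by contradiction: suppose the leftmost simple reflection $s_0$ occurs twice in a reduced word for a fully commutative element. Since only commutations $s_i s_j = s_j s_i$ for $|i-j|>1$ are available, and $s_0$ commutes only with $s_j$ for $j \geq 2$, the two occurrences must be separated (up to commutation) by a nonempty run of $s_1$ letters. The minimal such separation $s_0 s_1 s_0$ equals $s_1 s_0 s_1$ by the braid relation, contradicting full commutativity; any longer separation contains a subword $s_1 s_1 = 1$, contradicting reducedness.

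The analogous statement under vertical flipping follows by applying the same argument to the rightmost-position generators, using the left-right symmetry of the defining relations of $\varLH$.
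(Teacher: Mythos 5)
Your overall strategy is exactly the intended one (the paper gives no proof, saying only ``it is not hard to check''): rewrite the box to a linear combination of $\varLH$-reduced words and then verify that each reduced word touches the leftmost strand with at most one generator, yielding one of the three target forms. The bookkeeping via condition \eqref{eq:condition_reduced_word} is also correct, and in fact you prove slightly more than you need (you derive $D_0,D_1\notin\underline{D}$, but only $D_0\notin\underline{D}$ is needed, since $D_1$ does not touch the leftmost strand). However, there are two identifiable gaps in the justification.

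First, the argument that the leftmost generator appears at most once in a $321$-avoiding reduced word is incomplete. You assert that ``the two occurrences must be separated (up to commutation) by a nonempty run of $s_1$ letters,'' but this is false: $s_1$ does not commute with $s_2$, so a word such as $s_0\,s_1 s_2 s_1\, s_0$ cannot be brought into the form $s_0 s_1^k s_0$ by commutations. The conclusion is correct (indeed, in the Billey--Jockusch--Stanley/Stembridge normal form $a_1\cdots a_k$ with $a_i = s_{c_i}s_{c_i-1}\cdots s_{d_i}$, $c_1<\cdots<c_k$, $d_1<\cdots<d_k$, the extremal generator appears in at most one block), but your proof needs either a citation or a genuine recursion: between two consecutive $s_0$'s there is at least one $s_1$; if exactly one, commute to $s_0 s_1 s_0$ and obtain a braid; if more, pass to two consecutive $s_1$'s and recurse on $s_2$, etc., which terminates since the rank is finite.

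Second, the appeal to a ``left-right symmetry of the defining relations of $\varLH$'' for the flipped statement is incorrect: the presentation of $\varLH$ is \emph{not} invariant under $i\mapsto n-i$ (for instance, $D_iU_{i+1}=U_{i+1}D_i$ while $D_{i+1}U_i = D_{i+1}+U_i-1$), nor under the flip combined with $U\leftrightarrow D$. The flipped statement is nonetheless true: one should simply rerun the same argument directly on the rightmost strand. The part of condition \eqref{eq:condition_reduced_word} that you need, namely $D_{n-1}\in\underline{D}\Rightarrow U_{n-1}\notin\underline{U}$, is indeed available, and the ``appears at most once'' fact holds for the rightmost generator by the same normal-form argument. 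So the conclusion stands, but it is not obtained ``by symmetry.''
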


This reduces the analysis of indexed branchings to three cases:
\begin{gather}
\label{eq:indexed_branching_three_cases}
\diagLH{\diagLBOX} = \diagLH{\diagI},\quad
\diagLH{\diagLBOX} = \diagLH{\diagD}\quad\text{ and }\quad
\diagLH{\diagLBOX} = \diagLH{\diagU}.
\end{gather}

\begin{lem}
\label{lem:critical_branching_confluence_example}
The critical branchings of \autoref{lem:critical_branching_enumeration_example} are confluent.
\end{lem}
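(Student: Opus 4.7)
The plan is to verify, for each critical branching in \autoref{lem:critical_branching_enumeration_example}, that the two branches can be extended to a common target; in practice, almost all of them close to $0$. For each branching $(\alpha,\beta)$, I compute the two immediate targets, then apply further rewriting steps (modulo interchange) until both rewrite-sequences reach a common normal form. The three additional rewriting rules $D_+DU_+\to 0$, $DU_+U\to 0$ and $D_+DU_{++}U_+\to 0$ in \autoref{fig:higher-Grobner-basis-LH} were introduced precisely to close the potentially problematic branchings produced by the interaction between $U_+D\to DU_+$ and the other distinct-label rules.

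For the four non-indexed branchings in \eqref{eq:branching_DUp_first}, the verification is direct. In each of them, one branch immediately contains a $UD_+$ (or symmetric) pattern and hence reduces to $0$, while the other, obtained from $U_+D\to DU_+$, produces a monomial in which such a pattern can still be located after sliding a distant generator aside via interchange. For example, on the first source $U_{i+1}D_iD_{i+2}$, the distinct-label branch rewrites $U_{i+1}D_{i+2}\to 0$ and thus gives $0$, while the $U_+D\to DU_+$ branch yields $D_iU_{i+1}D_{i+2}$, which then also rewrites to $0$ via $U_{i+1}D_{i+2}\to 0$ after an interchange. Each of the three other diagrams of \eqref{eq:branching_DUp_first} follows the same two-step template, simply swapping the roles of the labels and of the $U$--$D$ patterns.

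For the indexed branchings in \eqref{eq:branching_DUp_second}, the dashed ``?'' box stands for an arbitrary sub-diagram. By the preceding lemma it rewrites, as a linear combination of reduced words, into one of the three base diagrams listed in \eqref{eq:indexed_branching_three_cases}: identity, $D$, or $U$. By linearity of rewriting, it thus suffices to check confluence when the ``?'' box is replaced in turn by each of these three base diagrams, producing finitely many subcases per indexed branching. Each subcase is then handled exactly as in the non-indexed case, the only new phenomenon being that when the replacement creates a $D_+D$ pattern adjacent to a $U_+$, the additional rule $D_+DU_+\to 0$ (or $D_+DU_{++}U_+\to 0$) is precisely what closes the branching.

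The main obstacle is purely combinatorial bookkeeping: one must correctly enumerate all subcases of each indexed branching and verify that both branches terminate at the same normal form. However, because contexts in $\varLH$ are injective (see the discussion preceding \autoref{thm:basis-from-rewriting} and \autoref{rem:comparison_rewriting_literature}), this analysis is essentially parallel to the classical diamond lemma for associative algebras, so each individual confluence boils down to a short, mechanical computation within a small diagram. The full case-by-case verification is carried out in \autoref{app:confluence_critical_branchings}.
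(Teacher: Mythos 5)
Your overall plan — enumerate the branchings, reduce indexed ones to the three cases of \eqref{eq:indexed_branching_three_cases}, then check each explicitly — is the same as the paper's. However, your description of what the verifications actually look like is wrong in two places, and one of them is a genuine gap.

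First, you assert that all four non-indexed branchings in \eqref{eq:branching_DUp_first} follow the ``two-step template'' of reducing to $0$ via a $UD_+$ pattern. This is only true for the first two. The third and fourth branchings have sources $D_{i+2}U_{i+1}D_i$ and $U_{i+2}D_{i+1}U_i$, where the second overlapping rule is $D_+U\to D_++U-1$, a \emph{linear} rule, not a zeroing one. Neither branch contains a $UD_+$ pattern; instead both branches produce a three-term linear combination, and confluence is obtained by pushing each side all the way to a common nonzero normal form (e.g.\ for the third source, both branches land on $D_iD_{i+2}+D_iU_{i+1}-D_i$). Asserting these close to $0$ is false, and a proof written along those lines would simply get stuck on these two branchings. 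The paper glosses this with ``the three other branchings are similar,'' but ``similar'' there means ``also easy,'' not ``also reduce to zero.''

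Second, you attribute the closure of the indexed branchings to the additional rewriting rules $D_+DU_+\to 0$, $DU_+U\to 0$, $D_+DU_{++}U_+\to 0$. In fact, for the branchings of \autoref{lem:critical_branching_enumeration_example} these rules are never used: the case $\diagLH{\diagLBOX}=\diagLH{\diagD}$ closes via $UD_+\to 0$ alone, the case $\diagLH{\diagLBOX}=\diagLH{\diagU}$ reduces to the identity case by pushing the extra $U$ down with $U_+D\to DU_+$, and the identity case closes via the \emph{same-label} linear rule $UD\to U+D-1$ followed by $DD_+D\to D_+D$. The additional rules do play a role elsewhere in the appendix (in the $DU\to\ldots$ and $UD_+\to\ldots$ families), but not here; presenting them as the mechanism for this lemma misidentifies where the actual work lies, namely in the nontrivial linear confluence computation in the identity case.
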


\begin{proof}
    We have already seen that the first branching of \eqref{eq:branching_DUp_first} is confluent. The three other branchings of \eqref{eq:branching_DUp_first} are similar.

    Consider the first branching of \eqref{eq:branching_DUp_second}. As we argued above, it suffices to consider the three cases in \eqref{eq:indexed_branching_three_cases}.
    In fact, if $\diagLH{\diagLBOX} = \diagLH{\diagD}$ then both branches rewrite to zero using the rewriting step $UD_+\to 0$.
    Moreover, if $\diagLH{\diagLBOX} = \diagLH{\diagU}$ then we can use the rewriting step $U_+D \to DU_+$ on both branches to slide this $U$ downward, therefore reducing to the case $\diagLH{\diagLBOX} = \diagLH{\diagI}$.
    In this latter case, we have:
    \[
    \begin{tikzcd}[row sep=-3em]
        & \diagLH{\diagDI[0][3]\diagIU[0][2]\diagID[0][1]\diagDI}
        \rar
        & 
        \diagLH{\diagDI[0][2]\diagIU[0][1]\diagDI}
        \;+\;
        \diagLH{\diagDI[0][2]\diagID[0][1]\diagDI}
        \;-\;
        \diagLH{\diagDI[0][1]\diagDI}
        \ar[dr,bend left=15pt]
        \\
        \diagLH{\diagIU[0][3]\diagDI[0][2]\diagID[0][1]\diagDI}
        \ar[ur,bend left=15pt] \ar[dr,bend right=15pt] &&&
        \diagLH{\diagDI[0][1]\diagIU}
        \;+\;
        \diagLH{\diagID[0][1]\diagDI}
        \;-\;
        \diagLH{\diagDI}
        \\
        & \diagLH{\diagIU[0][2]\diagID[0][1]\diagDI}
        \rar
        & 
        \diagLH{\diagIU[0][1]\diagDI}
        \;+\;
        \diagLH{\diagID[0][1]\diagDI}
        \;-\;
        \diagLH{\diagDI}
        \ar[ur,bend right=15pt]
    \end{tikzcd}
    \]
    The other indexed branching of \eqref{eq:branching_DUp_second} works similarly.
    The confluence of the remaining branchings is straightforward to check.
\end{proof}

\section{Cardinality of reduced words and Dyck paths}
\label{sec:counting_basis}

In this section, we prove that the set of reduced words $\Red(\varLH_n)$ (see \autoref{defn:reduced_words}), i.e.\ the basis obtained in \autoref{thm:reduced_words_basis}, has the conjectured cardinality.

\begin{thm} \label{thm:cardinality_reduced_words}
    For each $n\in\bN_{>0}$, the cardinality of $\Red(\varLH_n)$ is $\frac{1}{2}\binom{2n}{n}$.
\end{thm}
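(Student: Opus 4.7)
My plan is to establish a bijection between $\Red(\varLH_n)$ and a set of decorated/paired Dyck paths whose cardinality is manifestly $\tfrac{1}{2}\binom{2n}{n}$, exploiting the fact that $321$-avoiding permutations are fully commutative.

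First, I would translate the combinatorial data of a $\varLH_n$-reduced word into permutation-theoretic data. By \autoref{defn:reduced_words}, an element of $\Red(\varLH_n)$ is a concatenation $\omega=\underline{D}\,\underline{U}$ of two $321$-avoiding reduced words, one in each alphabet. Since $321$-avoiding (equivalently, fully commutative) elements of $\Sym_n$ have the property that any two reduced expressions differ only by commutation moves, and since these commutation moves are exactly the identifications made in the loop Hecke category (\autoref{subsec:loop_hecke_category}), modulo interchange the data of $\omega$ is equivalent to a pair $(\pi_D,\pi_U)$ of $321$-avoiding elements of $\Sym_n$ together with the coupling condition
\[
\supp(\pi_D)\cap\bigl(\supp(\pi_U)\cup(\supp(\pi_U)+1)\bigr)=\emptyset,
\]
which is exactly \eqref{eq:condition_reduced_word}. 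Here $\supp(\pi)\subseteq[n-1]$ is the set of simple transpositions appearing in any reduced expression.

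Second, I would invoke a classical bijection (for instance, the Billey--Jockusch--Stanley bijection) between $321$-avoiding permutations of $\Sym_n$ and Dyck paths of semilength $n$, under which the support of the permutation is read off from certain local features of the path (e.g.\ the positions of its valleys/peaks). This transports the pair $(\pi_D,\pi_U)$ to a pair of Dyck paths $(p_D,p_U)\in\Dyck_n\times\Dyck_n$ satisfying a local non-overlap condition on their feature sets. One then packages such a pair as a single decorated Dyck object $\widetilde{p}\in\dbDyck_n$ (matching the paper's notation); the asymmetry that a $D$-index forbids \emph{two} consecutive $U$-indices dictates the decoration rule.

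Third, I would count $|\dbDyck_n|$ directly. The cleanest route is to exhibit a bijection between $\dbDyck_n$ and lattice paths from $(0,0)$ to $(n-1,n)$ with unit east/north steps, whose count is $\binom{2n-1}{n-1}=\tfrac{1}{2}\binom{2n}{n}$; heuristically one interleaves the steps of $p_D$ and $p_U$ into a single path on a doubled lattice, where the coupling condition is automatically satisfied by admissibility of the interleaving. As a cross-check one may verify the recurrence $n\cdot f(n)=2(2n-1)\cdot f(n-1)$ with $f(1)=1$, which determines $f(n)=\tfrac{1}{2}\binom{2n}{n}$, and which matches the small cases $f(1)=1$, $f(2)=3$, $f(3)=10$ that one can verify by hand from \autoref{defn:reduced_words}.

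The main obstacle will be the middle step: correctly translating the asymmetric coupling condition into a local geometric condition on a pair of Dyck paths, and then recognizing the resulting decorated objects as an explicitly enumerable family. A priori the obvious count $|\{(\pi_D,\pi_U)\}|\le C_n^2$ is far larger than $\tfrac{1}{2}\binom{2n}{n}$, so considerable cancellation occurs, and identifying the right ``merged path'' encoding is where the combinatorial work lies.
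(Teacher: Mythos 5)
Your overall strategy coincides with the paper's: translate $\Red(\varLH_n)$ into pairs of Dyck paths satisfying a coupling condition (the set $\dbDyck_n$), then biject that set onto a family of lattice paths counted by $\binom{2n-1}{n} = \tfrac{1}{2}\binom{2n}{n}$. However, the two technical steps you defer are precisely the content of the paper's proof, and your proposed shortcuts for both are either unverified or misleading.

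First, not every bijection between $\Dyck_n$ and $\Sym_n(321)$ reads off the support in the required way. The paper proves (\autoref{lem:MDD_bijection_property}) that the Mansour--Deng--Du bijection satisfies $(i,i)\notin P$ if and only if $s_i\in\mathrm{MDD}(P)$, i.e.\ the diagonal touch points of $P$ are exactly the complement of the support. You propose the Billey--Jockusch--Stanley bijection, which is constructed from exceedance data rather than support, and you do not verify the analogous statement for it. Without a concrete support-reading lemma, the asymmetric coupling condition of \autoref{defn:reduced_words} does not transport cleanly to a local geometric condition on pairs of paths, and the identification $\Red(\varLH_n)\cong\dbDyck_n$ is not established. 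You correctly flag this as the main obstacle, but that acknowledgment does not close the gap.

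Second, the bijection $\dbDyck_n\cong\mathrm{Path}(n,n-1)$ (\autoref{lem:bijection_double_Dyck}) is \emph{not} a straightforward interleaving of the two paths on a doubled lattice. The paper's construction identifies the maximal squiggly strips of $P$, replaces each by a truncated and vertically shifted copy of the corresponding segment of $Q$, and proves reversibility by locating the substituted segments relative to the shifted diagonal $y=x+1$. The coupling condition --- that any diagonal point missed by $P$ must be hit by $Q$ together with its predecessor --- is exactly what makes these substitutions well defined and injective; it is not an ``admissibility of the interleaving'' constraint, and a naive interleaving of two arbitrary Dyck paths into a single path from $(0,0)$ to $(n-1,n)$ is neither injective nor surjective onto the target. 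Finally, the recurrence $n\,f(n)=2(2n-1)\,f(n-1)$ is consistent with the answer but you give no argument that $|\Red(\varLH_n)|$ satisfies it, so it cannot substitute for the bijection.
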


The proof of \autoref{thm:cardinality_reduced_words} will consist of two steps. 

First, we rephrase the problem in terms of Dyck paths (\autoref{lem:MDD_bijection_property}). A \emph{Dyck path of semilength $n$} is a path in the lattice $\bZ^2$ between $(0,0)$ and $(n,n)$, consisting only of steps $u\coloneqq(0,1)$ (\emph{$u$-step}) and $r\coloneqq(1,0)$ (\emph{$r$-step}), and such that the path always lies above\footnote{One could equivalently define them to be paths below the diagonal $d$.} the diagonal $d=\{(x,y)\mid x=y\}$. A Dyck path can be encoded as words in $u$ and $r$, reading from left to right as the path goes from $(0,0)$ to $(n,n)$; see \autoref{fig:MDD-bijection} for an example.

We denote
\[\Dyck_n := \{ \text{Dyck paths of semilength } n \}\] 
and for a path $P\in \Dyck_n$ and $(a,b)\in\bZ^2$ we write $(a,b)\in P$ whenever $(a,b)$ lies on $P$.

In the first step we show that the $\varLH_n$-reduced words correspond to the following set.

\begin{defn}
For $n\in\bN$:
    \[\dbDyck_n\coloneqq\big\{(P,Q)\in \Dyck_n\times \Dyck_n\mid (i,i)\notin P \Rightarrow (i,i)\text{ and }(i-1,i-1)\in Q\big\}.\]
\end{defn}

The second step of the proof of \autoref{thm:cardinality_reduced_words} will consist of relating $\dbDyck_n$ to another set whose cardinality is easily seen to be the desired one, see \autoref{lem:bijection_double_Dyck} below.\vspace{0,2cm}

Recall that $\Sym_n(321)$ denotes the set of 321-avoiding permutations in the symmetric group $\Sym_n$ and recall $\Red(\varLH_n)\subset \Sym_n(321)\times \Sym_n(321)$ from \autoref{defn:reduced_words}.
Both 321-avoiding permutations and Dyck paths count Catalan numbers. There are several bijections realizing that fact; for our purpose, we are interested in the one given by Mansour, Deng and Du \cite{MDD_DyckPathsRestricted_2006}.

Although we do not use them explicitly, the references \cite{Stanley_CatalanNumbers_2015,Callan_BijectionsDyckPaths_2007} have been helpful guides to the literature.

\begin{lem}
    \label{lem:MDD_bijection_property}
    The Mansour--Deng--Du bijection $\mathrm{MDD}\colon \Dyck_n\to \Sym_n(321)$ between Dyck paths and 321-avoiding permutations is such that for $P\in \Dyck_n$, we have $(i,i)\notin P$ if and only if $s_i\in\mathrm{MDD}(P)$.
    In particular, it induces a bijection:
    \[\Red(\varLH_n)\cong\dbDyck_n.\]
\end{lem}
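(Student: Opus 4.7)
The plan is to proceed in two steps. First, I would establish the key property of the MDD bijection: for $P\in\Dyck_n$, one has $(i,i)\notin P$ if and only if $s_i$ appears in some (equivalently, any) reduced expression of $\mathrm{MDD}(P)$. Second, I would use this property to deduce the bijection $\Red(\varLH_n) \cong \dbDyck_n$ almost formally from the defining conditions of both sets.

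For the first step, I would begin by recalling the explicit construction of the Mansour--Deng--Du bijection from \cite{MDD_DyckPathsRestricted_2006}. The key geometric observation is that the point $(i,i)$ lies on $P$ precisely when $P$ decomposes as a concatenation $P = P_1\cdot P_2$ with $P_1\in\Dyck_i$ and $P_2\in\Dyck_{n-i}$ (after a translation of $P_2$). On the permutation side, $s_i$ fails to appear in any reduced expression of a 321-avoiding permutation $\tau$ if and only if $\tau$ stabilizes the set $\{1,\ldots,i\}$, equivalently $\tau = \tau_1\times\tau_2$ with $\tau_1\in\Sym_i(321)$ and $\tau_2\in\Sym_{n-i}(321)$. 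The technical heart of the proof consists in showing that MDD is compatible with these two decompositions, so that $\mathrm{MDD}(P_1\cdot P_2) = \mathrm{MDD}(P_1)\times\mathrm{MDD}(P_2)$; given this, the key property follows immediately by ranging over all diagonal contacts.

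For the second step, I would invoke Matsumoto's theorem to identify a 321-avoiding reduced word $\underline{D}$ (respectively $\underline{U}$) with the 321-avoiding permutation it represents, and recall that the set of simple reflections appearing in a reduced expression depends only on that permutation. Under the bijection $\Dyck_n\times\Dyck_n \xrightarrow{\mathrm{MDD}\times\mathrm{MDD}} \Sym_n(321)\times\Sym_n(321)$, the defining condition of $\Red(\varLH_n)$, namely
\[
D_i\in\underline{D} \;\Longrightarrow\; U_i, U_{i-1}\notin\underline{U},
\]
translates via the key property into
\[
(i,i)\notin P \;\Longrightarrow\; (i,i)\in Q \text{ and } (i-1,i-1)\in Q,
\]
which is exactly the defining condition of $\dbDyck_n$. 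Combining this with \autoref{thm:reduced_words_basis} identifies $\Red(\varLH_n)$ with $\dbDyck_n$.

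The main obstacle I expect is the first step: checking that the specific combinatorial recipe of Mansour--Deng--Du sends diagonal contacts of the path to block-diagonal decompositions of the permutation. A direct verification from the construction should work, but may be cumbersome; as a fallback, one can try a recursive/inductive approach, decomposing $P$ at its first return to the diagonal and arguing by induction on $n$, or appeal to any characterization of MDD in terms of statistics (such as valleys or tunnels) that manifestly respect the concatenation structure.
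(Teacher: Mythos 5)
Your proposal is correct, but it takes a genuinely different route from the paper. Where you factor the key property through a \emph{concatenation-compatibility} claim---namely that $\mathrm{MDD}(P_1\cdot P_2)=\mathrm{MDD}(P_1)\times\mathrm{MDD}(P_2)$, combined with the standard facts that diagonal contacts correspond to Dyck-path concatenations and that $s_i\notin\supp(\tau)$ iff $\tau$ lies in the parabolic $\Sym_i\times\Sym_{n-i}$---the paper argues more directly from the internal mechanics of the MDD construction. In the paper, one unwinds the definition and observes that the generator $\sigma_j$ is inserted into $\mathrm{MDD}(P)$ precisely when some cell between $P$ and the diagonal has $j$ as its $y$-coordinate (each zigzag strip with cell labels $\{i,\ldots,i+k\}$ contributes exactly $\sigma_{i+k}\cdots\sigma_{i+1}\sigma_i$), and then notes geometrically that $(i,i)\in P$ iff no cell has $y$-coordinate $i$. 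This sidesteps the technical heart you flag: there is no need to verify that the whole bijection respects concatenation, only to match cell labels with support, which is immediate from the recipe. Your approach is more structural and arguably cleaner to state (you recover the support as a byproduct of a functorial-looking property of MDD), but it shifts the burden onto proving concatenation compatibility, which---while I believe it to be true and provable by the recursive argument you sketch, since zigzag strips cannot cross a diagonal contact---is precisely the cumbersome verification the paper's argument is designed to avoid. One minor note: your final appeal to \autoref{thm:reduced_words_basis} is unnecessary---the bijection $\Red(\varLH_n)\cong\dbDyck_n$ is a purely combinatorial statement about sets of words/pairs, and follows from the key property plus Matsumoto's theorem alone, independently of whether $\Red(\varLH_n)$ is a basis.
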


\begin{proof}
    We describe the bijection $\mathrm{MDD}\colon \Dyck_n\to  \Sym_n(321)$, following \cite[section~2.1]{MDD_DyckPathsRestricted_2006}.
    An example of the procedure is given in \autoref{fig:MDD-bijection}, following \cite[Fig.~1]{MDD_DyckPathsRestricted_2006}.

    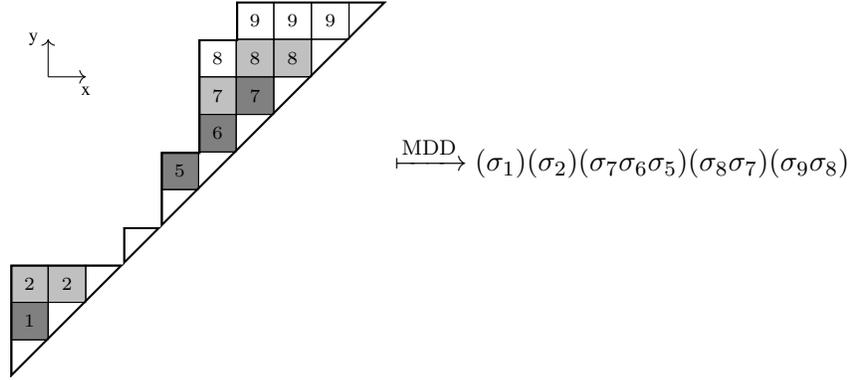
\begin{figure}
        \centering

    \begin{gather*}
        \begin{tikzpicture}[scale=.5,baseline={(base)}]
            \draw[->] (1,8) to (1,9);
            \draw[->] (1,8) to (2,8);
            \node[left] at (1,9) {\tiny y};
            \node[below] at (2,8) {\tiny x};
            \coordinate (base) at (5.5,5.5);
            \clip (0,0) to (0,3) to (3,3) to (3,4) to (4,4) to (4,6) to (5,6) to (5,9) to (6,9) to (6,10) to (10,10) to (0,0);
            \draw (0,0) grid (14,14);
            \draw[line width=1.5pt] (0,0) to (0,3) to (3,3) to (3,4) to (4,4) to (4,6) to (5,6) to (5,9) to (6,9) to (6,10) to (10,10) to (0,0);
            \draw (0,0) grid (10,10);
            \draw (0,0) to (10,10);
            \node at (0+.5,1+.5) {\tiny $1$};
            \node at (0+.5,2+.5) {\tiny $2$};
            \node at (1+.5,2+.5) {\tiny $2$};
            \node at (4+.5,5+.5) {\tiny $5$};
            \node at (5+.5,6+.5) {\tiny $6$};
            \node at (5+.5,7+.5) {\tiny $7$};
            \node at (6+.5,7+.5) {\tiny $7$};
            \node at (5+.5,8+.5) {\tiny $8$};
            \node at (6+.5,8+.5) {\tiny $8$};
            \node at (7+.5,8+.5) {\tiny $8$};
            \node at (6+.5,9+.5) {\tiny $9$};
            \node at (7+.5,9+.5) {\tiny $9$};
            \node at (8+.5,9+.5) {\tiny $9$};
            %
            \fill[opacity=.25] (5,7) rectangle (6,8);
            \fill[opacity=.25] (6,8) rectangle (8,9);
            \fill[opacity=.5] (6,7) rectangle (7,8);
            \fill[opacity=.5] (5,6) rectangle (6,7);
            \fill[opacity=.5] (4,5) rectangle (5,6);
            \fill[opacity=.25] (0,2) rectangle (2,3);
            \fill[opacity=.5] (0,1) rectangle (1,2);
        \end{tikzpicture}
        \xmapsto{\mathrm{MDD}}
        (\sigma_1)(\sigma_2)(\sigma_7\sigma_6\sigma_5)(\sigma_8\sigma_7)(\sigma_9\sigma_8)
    \end{gather*}

        \caption{Example of the Mansour--Deng--Du's bijection, following \cite[Fig.~1]{MDD_DyckPathsRestricted_2006}. The Dyck path is $uuurrruruuruuururrrr$. Each cell is equipped with its label, that is, its y-coordinate. Shadings emphasize the five zigzag strips.}
        \label{fig:MDD-bijection}
    \end{figure}
    
    Let $P\in \Dyck_n$ be a Dyck path. A \emph{cell} is a size-one square in $\bZ^2$ that lies between $P$ and the diagonal $d=\{(x,y)\mid x=y\}$. We identify a cell with the coordinate $(i,j)$ of its bottom-left corner. We label each cell with its y-coordinate $j$. A cell is \emph{essential} if $(i,j-1)\to (i,j)$ and $(i,j)\to (i,j+1)$ are steps in $P$; in other words, if the point $(i,j)$ lies between two ``up'' steps.
    Given an essential step $(i,j)$, its \emph{zigzag strip} is the set of cells adjacent to $P$ between $(i,j)$ and $(n,n)$.

    Let $\Sym_n$ be the symmetric group on $n-1$ generators $\sigma_1,\ldots,\sigma_{n-1}$. We associate an element $\mathrm{MDD}(P)\in \Sym_n$ to the Dyck path $P$, by induction on the number of essential cells:
    \begin{itemize}
        \item If $P$ has no essential cell, then $P=(ru)^n$, and we associate the identity $1\in \Sym_n$;
        \item Assume instead that $P$ has essential cells. Let $(i,j)$ be the rightmost essential cell in $P$ and let $Z$ be its zigzag strip. Let $\{i,i+1,\ldots,i+k\}$ be the set of labels of cells in $Z$ and let $P'\coloneqq P\setminus Z$ be the Dyck path obtained from $P$ by removing the zigzag strip $Z$. Note that $\mathrm{MDD}(P')\in \Sym_n$ is defined by induction. We set $\mathrm{MDD}(P)=\mathrm{MDD}(P')(\sigma_{i+k}\ldots\sigma_{i+1}\sigma_{i}$).
    \end{itemize}
    It was shown in \cite[theorem~3]{MDD_DyckPathsRestricted_2006} that this procedure defines a bijection between Dyck paths and 321-avoiding permutations.
    If $P$ is a Dyck path, then $(i,i)\in P$ if and only if no cell of $P$ has $i$ as its y-coordinate, that is, if and only if $\sigma_i\notin\mathrm{MDD}(P)$. The lemma follows.
\end{proof}

The second and last step of the proof of \autoref{thm:cardinality_reduced_words} consists in relating $\dbDyck_n$ to another set with the expected cardinality:

\begin{lem}
    \label{lem:bijection_double_Dyck}
    Denote by $\mathrm{Path}(n,n-1)$ the set of paths from $(0,1)$ to $(n,n)$ consisting in steps $r=(1,0)$ and $u=(0,1)$.
    There is a bijection:
    \[\dbDyck_n\cong \mathrm{Path}(n,n-1).\]
\end{lem}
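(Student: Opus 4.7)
The plan is to construct an explicit bijection through an intermediate set $\mathrm{Seq}_n$ of finite sequences $(D_1, \ldots, D_K)$ of non-empty Dyck paths with total semilength $\sum_j |D_j| = n$ (where $|D|$ denotes semilength).

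For the first step, I would build a bijection $\dbDyck_n \leftrightarrow \mathrm{Seq}_n$ as follows. Given $(P,Q) \in \dbDyck_n$, decompose $P$ into its primitive arches, distinguishing trivial arches (equal to $ur$, semilength $1$) from non-trivial arches (semilength $\ell_j \ge 2$), and write $P = T^{t_0} N_1 T^{t_1} \cdots N_r T^{t_r}$ with $T = ur$. The $\dbDyck_n$-condition will force the diagonal contacts of $Q$ to include, for each non-trivial arch $N_j$ starting at position $a_j$, the full interval $\{a_j, a_j+1, \ldots, a_j+\ell_j-1\}$; this compels a unique factorization
\[
Q \;=\; Q^{(0)} \cdot (ur)^{\ell_1 - 1} \cdot Q^{(1)} \cdot (ur)^{\ell_2 - 1} \cdots (ur)^{\ell_r - 1} \cdot Q^{(r)},
\]
where $Q^{(0)}$ is a Dyck path of semilength $t_0$ (possibly empty) and each $Q^{(j)}$ for $j \ge 1$ is a non-empty Dyck path of semilength $t_j + 1$. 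Stripping the initial $u$ and the final $r$ from each $N_j$ produces an ``inner'' non-empty Dyck path $N_j'$ of semilength $\ell_j - 1 \ge 1$. I would then associate to $(P,Q)$ the sequence
\[
(Q^{(0)}, N_1', Q^{(1)}, N_2', \ldots, N_r', Q^{(r)}) \in \mathrm{Seq}_n,
\]
omitting $Q^{(0)}$ when empty. The parity of the sequence's length will recover whether $Q^{(0)}$ was kept, yielding a transparent inverse.

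For the second step, I would build a bijection $\mathrm{Seq}_n \leftrightarrow \mathrm{Path}(n, n-1)$ via alternating reflections. Given $(D_1, \ldots, D_K) \in \mathrm{Seq}_n$, set $\tilde D_j := D_j$ for $j$ odd and $\tilde D_j := \bar D_j$ (the reflection swapping $u \leftrightarrow r$) for $j$ even; concatenate into a lattice path from $(0,0)$ to $(n,n)$, and remove the initial step (a $u$-step, since $D_1$ is a non-empty Dyck path) to obtain $\pi \in \mathrm{Path}(n,n-1)$. The inverse will prepend $u$ to $\pi$, decompose at diagonal touches into primitive arches (each lying strictly above or below the diagonal, with the first one necessarily above), group consecutive same-side arches into maximal blocks, and reflect each below block to recover $(D_1, \ldots, D_K)$.

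Composing the two steps will yield the desired bijection. The main technical challenge lies in verifying Step 1: that the $\dbDyck_n$-condition (requiring $(i-1,i-1)$ and $(i,i)$ to be diagonal touches of $Q$ whenever $(i,i) \notin P$) is exactly what permits the unique factorization of $Q$ with the prescribed $(ur)^{\ell_j - 1}$-factors, and conversely that any sequence in $\mathrm{Seq}_n$ reassembles into a valid pair in $\dbDyck_n$. This reduces to a careful bookkeeping of which positions of $\{1,\ldots,n-1\}$ are forced contacts of $Q$ versus free, which is controlled precisely by the placement of the non-trivial arches $N_j$ in $P$.
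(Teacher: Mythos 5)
Your argument is correct, but it constructs a genuinely different bijection from the one in the paper. Both proofs rest on the same core observation: for each non-trivial primitive arch $N_j$ of $P$, starting at $a_j$ with semilength $\ell_j \ge 2$, the defining condition of $\dbDyck_n$ forces $Q$ to touch the diagonal at every point of $\{a_j,\dots,a_j+\ell_j-1\}$, so $Q$ uniquely factors into staircases $(ur)^{\ell_j-1}$ interleaved with free Dyck segments $Q^{(j)}$; these free segments are exactly what the paper attaches as $Q_{i-1,j}$ to its maximal squiggly $P$-lines. The two proofs then diverge in how they reassemble a single lattice path. The paper performs an in-place surgery on $P$: delete the first $u$-step, keep the non-trivial arches, and overwrite each maximal squiggly $P$-line $P_{i,j}$ with a truncated, reflected, and one-step-shifted copy of $Q_{i-1,j}$; the inverse recovers the $P$-parts as the $uTr$ sub-paths lying above the line $y=x+1$. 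You instead pass through the intermediate set $\mathrm{Seq}_n$ of compositions of $n$ into non-empty Dyck paths, record the alternating data $(Q^{(0)},N_1',Q^{(1)},N_2',\dots)$, and apply the classical alternating-reflection bijection between Dyck-path compositions of $n$ and lattice paths from $(0,1)$ to $(n,n)$. One can check on the paper's worked example that the two maps send the same pair $(P,Q)$ to different lattice paths, so they really are distinct bijections. Your route makes $|\dbDyck_n|=\binom{2n-1}{n}$ manifest from the compositional structure of $\mathrm{Seq}_n$, at the cost of chaining two bijections; the paper's gives a single explicit formula whose inverse is read off a single auxiliary line. As you acknowledge, the bookkeeping in your Step~1 (that the length-parity trick makes the map invertible, and that the reconstructed pair actually lands in $\dbDyck_n$) must be spelled out for a complete proof, but it does go through.
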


\begin{proof}
For each pair $(P,Q)\in\dbDyck_n$, we think of $P$ as sitting above the diagonal $d=\{(x,y)\mid x=y\}$, and $Q$ as sitting below the diagonal.
We begin with a few definitions:
\begin{itemize}
    \item If $P$ (resp.\ $Q$) is incident to the diagonal at points $(i,i)$ and $(j,j)$ for $0\leq i,j\leq n$, we write $P_{i,j}$ (resp.\ $Q_{i,j}$) the sub-Dyck path of $P$ (resp.\ $Q$) from $(i,i)$ to $(j,j)$;
    \item A \emph{squiggly $P$-line} is a sub-Dyck path of $P$ of the form $P_{i,i+k}=(ur)^k$ for $i\geq 1$;
    \item A \emph{maximal squiggly $P$-line} is a squiggly $P$-line $P_{i,i+k}$ maximal with respect to $k$, that is, such that neither $P_{i-1,i-1+k}$ nor $P_{i,i+k+1}$ is a squiggly $P$-line;
\end{itemize}
We stress the condition $i\geq 1$ for a squiggly $P$-line: a squiggly $P$-line never contains the first $u$-step of $P$.

Given a maximal squiggly $P$-line $P_{i,j}$, the definition of $\dbDyck_n$ implies that $Q_{i-1,j}$ is a sub-Dyck path of $Q$, from $(i-1,i-1)$ to $(j,j)$. Indeed, on one hand $P_{i,j+1}$ is not a squiggly line, so either $(j,j)=(n,n)$ or $(j+1,j+1)\notin P$, and both situations imply that $(j,j)\in Q$; and on the other hand, $P_{i-1,j}$ is not a squiggly line, so either $(i,i)=(1,1)$ or $(i-1,i-1)\notin P$, and both situations imply that $(i-1,i-1)\in Q$.
Let $\mathrm{tr}(Q_{i-1,j})$ be the \emph{truncation} of $Q_{i-1,j}$ obtained by removing the first $r$-step and last $u$-step of $Q_{i-1,j}$.
Let $\varphi(Q_{i-1,j})$ be the upward shift of $\mathrm{tr}(Q_{i-1,j})$, obtained by shifting each step of $\mathrm{tr}(Q_{i-1,j})$ by one $u$-step.

For $(P,Q)\in\dbDyck_n$, define $\varphi(P,Q)\in\mathrm{Path}(n,n-1)$ by replacing each maximal squiggly $P$-line $P_{i,j}$ by $\varphi(Q_{i-1,j})$ and removing the first $u$ step of $P$.
This defines a function:
\[\varphi\colon\dbDyck_n\to \mathrm{Path}(n,n-1).\]
\begin{figure}
    \centering
    \begin{equation*}
    \def\sh{.2}
    \varphi\colon\quad
    \begin{tikzpicture}[scale=.6,baseline={(0,2)}]
        \draw[line width=.3pt,gray] (0,0) grid (7,7);
        \draw[line width=1.5pt] (0,0) to (0,1) to (1,1) to (1,3) to (3,3) to (3,4) to (4,4) to (4,6) to (6,6) to (6,7) to (7,7);
        \node[left=5pt,inner sep=0pt,fill=white] at (3,3.5) {\footnotesize $P_{3,4}$};
        \draw[rounded corners,fill,opacity=.2] (3-\sh,3.5-\sh) to (3-\sh,4+\sh) to (4+\sh,4+\sh) to (4+\sh,4-\sh) to (3+\sh,4-\sh) to (3+\sh,3-\sh) to (3-\sh,3-\sh)to (3-\sh,3.5+\sh);
        \node[left=5pt,inner sep=0pt,fill=white] at (6,6.5) {\footnotesize $P_{6,7}$};
        \draw[rounded corners,fill,opacity=.2] (6-\sh,6.5-\sh) to (6-\sh,7+\sh) to (7+\sh,7+\sh) to (7+\sh,7-\sh) to (6+\sh,7-\sh) to (6+\sh,6-\sh) to (6-\sh,6-\sh)to (6-\sh,6.5+\sh);
        \draw[line width=1.5pt,blue] (0,0) to (1,0) to (1,1) to (2,1) to (2,2) to (4,2) to (4,4) to (5,4) to (5,5) to (6,5) to (6,6) to (7,6) to (7,7);
        \draw[rounded corners,fill=blue,opacity=.2] (6-\sh,5.5-\sh) to (6-\sh,6+\sh) to (7+\sh,6+\sh) to (7+\sh,6-\sh) to (6+\sh,6-\sh) to (6+\sh,5-\sh) to (6-\sh,5-\sh)to (6-\sh,5.5+\sh);
        \node[right=5pt,inner sep=0pt,fill=white] at (4,2.3) {\footnotesize $\mathrm{tr}(Q_{2,4})$};
        \draw[rounded corners,fill=blue,opacity=.2] (3.5,2-\sh) to (4+\sh,2-\sh) to (4+\sh,3+\sh) to (4-\sh,3+\sh) to (4-\sh,2+\sh) to (3-\sh,2+\sh) to (3-\sh,2-\sh) to (3.5,2-\sh);
        \node[right=5pt,inner sep=0pt,fill=white] at (6,5.3) {\footnotesize $\mathrm{tr}(Q_{5,7})$};
        \draw (-.5,-.5) to (7.5,7.5);
        \node[left] at (-.5,-.5) {\footnotesize $d$};
    \end{tikzpicture}
    \qquad\mapsto\qquad
    \begin{tikzpicture}[scale=.6,baseline={(0,2)}]
        \draw[line width=.3pt,gray] (0,0) grid (7,7);
        \draw[line width=1.5pt] (0,1) to (1,1) to (1,3) to (3,3);
        \draw[line width=1.5pt] (4,4) to (4,6) to (6,6);
        \draw[line width=1.5pt,blue] (3,3) to (4,3) to (4,4);
        \draw[line width=1.5pt,blue] (6,6) to (6,7) to (7,7);
        \begin{scope}[yshift=1cm]  
        \draw[rounded corners,fill=blue,opacity=.2] (6-\sh,5.5-\sh) to (6-\sh,6+\sh) to (7+\sh,6+\sh) to (7+\sh,6-\sh) to (6+\sh,6-\sh) to (6+\sh,5-\sh) to (6-\sh,5-\sh)to (6-\sh,5.5+\sh);
        \node[right=5pt,inner sep=0pt,fill=white] at (4,2.3) {\footnotesize $\varphi(Q_{2,4})$};
        \draw[rounded corners,fill=blue,opacity=.2] (3.5,2-\sh) to (4+\sh,2-\sh) to (4+\sh,3+\sh) to (4-\sh,3+\sh) to (4-\sh,2+\sh) to (3-\sh,2+\sh) to (3-\sh,2-\sh) to (3.5,2-\sh);
        \node[right=5pt,inner sep=0pt,fill=white] at (6,5.3) {\footnotesize $\varphi(Q_{5,7})$};
        \end{scope}
        \draw[red] (0-.5,1-.5) to (6+.5,7+.5);
        \node[left] at (-.5,1-.5) {\footnotesize \textcolor{red}{$l$}};
    \end{tikzpicture}
\end{equation*}
    \caption{%
        Definition of $\varphi(P,Q)$ for some pair of Dyck paths $(P,Q)\in\dbDyck_7$, with $P=uruurruruurrur$ depicted in black above the diagonal $d$ and $Q=rururruurururu$ depicted in blue below the diagonal.
        The two maximal squiggly $P$-lines of $P$ are $P_{3,4}$ and $P_{6,7}$, shaded in black; the associated sub-Dyck paths of $Q$ are $Q_{2,4}$ and $Q_{5,7}$, respectively. Their truncations $\mathrm{tr}(Q_{2,4})$ and $\mathrm{tr}(Q_{5,7})$ are shaded in blue. To obtain $\varphi(P,Q)$, remove the first step of $P$, remove $P_{3,4}$ and $P_{6,7}$ and add $\varphi(Q_{2,4})$ and $\varphi(Q_{5,7})$, obtained by shifting $\mathrm{tr}(Q_{2,4})$ and $\mathrm{tr}(Q_{5,7})$ up by one step.%
    }
    \label{fig:bijection-dyckpaths}
\end{figure}
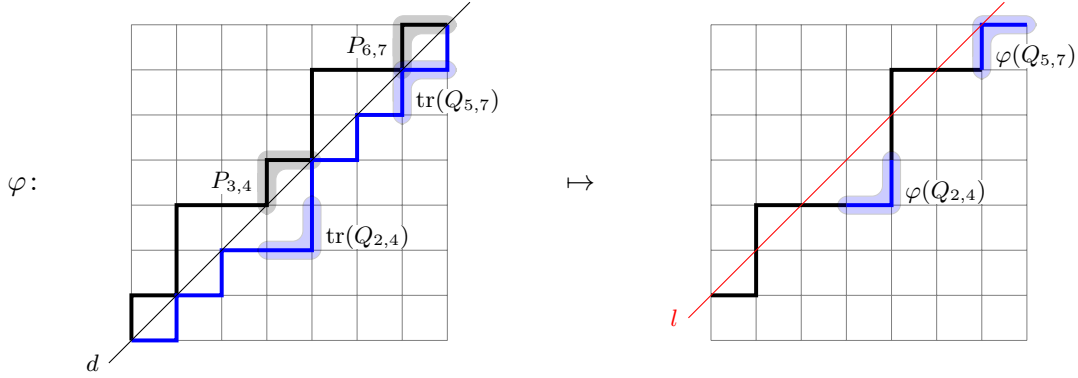
An example is given in \autoref{fig:bijection-dyckpaths}.

To recover $(P,Q)$ from $\varphi(P,Q)$, it suffices to recover which parts of $\varphi(P,Q)$ come from $P$. Let $l=\{(x,y)\mid y=x+1\}$ be the upward-shifted diagonal (in red in \autoref{fig:bijection-dyckpaths}).
One checks that a step in $\varphi(P,Q)$ is a part of $P$ if and only if it belongs to a sub-path of the following form:
\begin{itemize}
    \item the first step of $\varphi(P,Q)$, if this first step is an $r$-step;
    \item a sub-path of the form $uTr$, where $T$ is a sub-path that lies entirely above $l$.
\end{itemize}
This shows that $\varphi$ admits an inverse and concludes.
\end{proof}

\begin{proof}[Proof of \autoref{thm:cardinality_reduced_words}]
    It is clear that the cardinality of $\mathrm{Path}(n,n-1)$ is $\binom{2n-1}{n}$. Combining \autoref{lem:MDD_bijection_property} with \autoref{lem:bijection_double_Dyck} concludes.
\end{proof}

\section{Background on quantum \texorpdfstring{$\glone$}{gl(11)} and its representations}
\label{sec:background_uglone}

For a comprehensive exposition of the Hopf superalgebra $\uglone$ and its representations, see Sections 2 and 3 of~\cite{Sartori_AlexanderPolynomialQuantum_2015}.

\subsection{Quantum \texorpdfstring{$\glone$}{gl(1|1)}}
Fix a basis $\{\varepsilon_1,\varepsilon_2\}$ of the weight lattice $P=\bZ^2$ with a pairing given by
\begin{equation*}
\brak{\varepsilon_i,\varepsilon_j} =
\begin{cases}
0 & \text{ if } i \neq j,
\\
1 & \text{ if } i=j=1,
\\
-1 & \text{ if } i=j=2 .
\end{cases}
\end{equation*}
This allows us to define by duality the coweight lattice $P^*=\bZ^2$ with basis  $\{h_1,h_2\}$. We will work over the field $\bQ(q)$, but one can also work over any field of characteristic zero with an element $q\neq 0$ which is not a root of unity. 

\begin{defn}
The \emph{quantum superalgebra} $\Uq$ is the associative unital $\bQ(q)$-superalgebra with odd generators
$E,F$ and even generators $K_1^{\pm 1},K_2^{\pm 1}$, subject to the relations
\begin{align}
K_1K_2 &= K_2K_1, & K_i^{\pm 1}K_i^{\mp 1} &= 1,
\\
K_1E &= qEK_1, & K_2E &= q^{-1}EK_2,
\\
K_1F &= q^{-1}FK_1, & K_2F &= qFK_2,
\\
EF+FE &= \frac{K-K^{-1}}{q-q^{-1}}, & E^2 =& F^2 = 0,
\end{align}
where $K=K_1K_2$.
\end{defn}

For $h=n_1 h_1 + n_2 h_2\in P^*$ we set $K_h=K_1^{n_1}K_2^{n_2}$, so that $K_1=K_{h_1}$, $K_2=K_{h_2}$ and $K=K_{h_1+h_2}$ (note that $K$ is central). Note also that $K_h^{-1}=K_{-h}$. The superalgebra $\Uq$ is actually a Hopf superalgebra, with comultiplication $\Delta$, counit $\varepsilon$ and antipode $S$ given below.
\begin{align}
    \Delta(K_h) &= K_h\otimes K_h, &  \Delta(E) &= E\otimes K + 1\otimes E, & \Delta(F) &= F\otimes 1 + K^{-1}\otimes F ,
    \\
    \epsilon(K_h) &= 1 , & \epsilon(E) &= 0 , & \epsilon(F) &= 0 ,
    \\
    S(K_h) &= K_h^{-1} , & S(E) &= -EK^{-1} , & S(F) &= -KF .
\end{align}
Here, the conventions for the coproduct differ slightly from~\cite[(2.9)]{Sartori_AlexanderPolynomialQuantum_2015} and are rather in line with \cite{Zhang_StructureRepresentationsQuantum_1998}. 
\smallskip

Define a bar involution on $\Uq$ by
$\overline{E}=E$, $\overline{F}=F$, $\overline{K}_h=K_h^{-1}$  and $\overline{q}=q^{-1}$. 
Then $\overline{\Delta} := \overline{(\cdot)}\otimes \overline{(\cdot)} \circ\Delta\circ \overline{(\cdot)}$ is also a coproduct with
\begin{align*}
 \overline{\Delta}(E) &= E\otimes K^{-1} + 1\otimes E ,
\\
 \overline{\Delta}(F) &= F\otimes 1 + K\otimes F ,
\\
 \overline{\Delta}(K) &= K\otimes K .
\end{align*}

Denote $\vert x \vert$ the parity of $x\in\Uq$ and set $\Delta^{\mathrm{op}}(x)=\sum (-1)^{\vert x_1 \vert . \vert x_2 \vert}x_{(2)}\otimes x_{(1)}$ in the Sweedler notation. 

\subsection{Representations}

Set $\vert\varepsilon_1\vert=0$ and $\vert\varepsilon_2\vert=1$ and extend linearly to a map $\vert\bullet\vert: P \to \bZ/2\bZ$.
We only consider finite dimensional weight representations: these consist of finite dimensional $\Uq$-supermodules $M=\bigoplus\limits_{\lambda\in P}M_\lambda$ with 
\[M_\lambda=\{v\in M \colon K_h \cdot v=q^{\brak{h,\lambda}}v\}\]
and $M_\lambda$ in parity $\vert\lambda\vert$.

All simple representations of $\Uq$ are one- or two-dimensional and are indexed by their highest weight $\lambda\in P$.

\begin{itemize}

\item If $\brak{h_1+h_2, \lambda}=0$ then the simple representation with highest weight $\lambda$ is one-dimensional: $\bQ(q)_\lambda=\bQ(q)v_\lambda$, with $\vert v_\lambda\vert =\vert\lambda\vert$, and 
\begin{equation*}
E.v_\lambda=F.v_\lambda=0, \mspace{60mu} K_h.v_\lambda=q^{\brak{h,\lambda}}v_\lambda .
\end{equation*}

\item  If $\brak{h_1+h_2, \lambda}\neq 0$ then the simple representation with highest weight $\lambda$ is two-dimensional: 
$L(\lambda) = \bQ(q)v^0_\lambda \oplus \bQ(q)v^1_\lambda$ with $\vert v^0_\lambda\vert=\vert\lambda\vert$, $\vert v^1_\lambda\vert=\vert\lambda\vert+1$ and 
\begin{align}\label{action on 1st basis vector}
    E.v^0_\lambda &=0,   &   F.v^0_\lambda &= [\brak{h_1+h_2,\lambda}] v^1_\lambda , & K_h.v^0_\lambda &= q^{\brak{h,\lambda}} v^0_\lambda ,
    \\\label{action on 2nd basis vector}
    E.v^1_\lambda &= v^0_\lambda,   &   F.v^1_\lambda &= 0 , & K_h.v^1_\lambda &= q^{\brak{h,\lambda-\varepsilon_1+\varepsilon_2}} v^1_\lambda .
\end{align}
\end{itemize}
where $[k] := \frac{q^k-q^{-k}}{q-q^{-1}}$ is the quantum number.

\begin{exe}
    The representation $V:=L(\varepsilon_1)$ is called the {\it vector representation.} In this case $\langle h_1 + h_2, \varepsilon_1 \rangle = 1$ and hence also $[\brak{h_1+h_2,\varepsilon_1}] = 1$. Furthermore $q^{\langle h , \lambda - \varepsilon_1 + \varepsilon_2 \rangle} = q^{\langle h, \varepsilon_2 \rangle}$. To ease notations, we will drop the subscript $\varepsilon_1$ in the vectors $v^0_{\varepsilon_1}$ and $v^1_{\varepsilon_1}$ of the vector representation $V$.
\end{exe}

The representations $L((n-\ell)\varepsilon_1 + \ell \varepsilon_2))$, for $n\in\bN$ and $0\leq \ell \leq n$ will often appear in the sequel. For ease of notation, with $n$ clear within the context, we will denote this representation by $L(\ell)$.

The tensor product of two-dimensional simple representations follows an easy rule.
If $\lambda,\mu\in P$ are such that $\brak{h_1+h_2,\lambda}$, $\brak{h_1+h_2,\mu}$ and $\brak{h_1+h_2,\lambda+\mu}$ are nonzero 
then
\begin{equation}\label{otimes of highest weight}
    L(\lambda)\otimes L(\mu) \cong L(\lambda+\mu) \oplus L(\lambda+\mu-\varepsilon_1+\varepsilon_2).
\end{equation}
If $\brak{h_1+h_2,\lambda+\mu}=0$, then the representation $L(\lambda)\otimes L(\mu)$ is indecomposable.

\subsection{Braiding and Schur--Weyl duality}

\subsubsection{The quasi-$R$-matrix and braiding} The monoidal category of finite dimensional weight representations can be endowed with a braided structure. For this, we introduce the quasi-$R$-matrix $\Theta$ defined by
\begin{equation*}
\label{eq:quasi-R-matrix}
\Theta = 1\otimes 1 - (q-q^{-1}) E\otimes F.
\end{equation*}
Since our coproduct is not the one of \cite{Sartori_AlexanderPolynomialQuantum_2015}, we also have a different quasi-$R$-matrix. We also define a morphism of superalgebras $\Psi \colon \Uq \otimes \Uq \to \Uq \otimes \Uq$ by
\begin{align*}
    \Psi(K_h \otimes 1) &= K_h\otimes 1, &\Psi(E\otimes 1) & = E\otimes K^{-1}, & \Psi(F\otimes 1) &= F\otimes K,\\
    \Psi(1\otimes K_h) &= 1\otimes K_h, & \Psi(1\otimes E) &= K^{-1}\otimes E, & \Psi(1\otimes F) &= K\otimes F.
\end{align*}

From here and onward, given an element $x = \sum_i a_i\otimes b_i \in \Uq^{\otimes 2}$, we denote by $x_{12}$ the element $\sum_i a_i\otimes b_i \otimes 1$, by $x_{13}$ the element $\sum_i a_i\otimes 1 \otimes b_i$ and by $x_{23}$ the element $\sum_i 1\otimes a_i \otimes b_i$. We use a similar notation for morphisms. The following are straightforward calculations. 

\begin{prop}
\label{prop:quasi-R-matrix}
    Let $x\in \Uq$. We have
    \begin{enumerate}
        \item \label{it:intertwining1}$\Theta \Delta(x) = \overline{\Delta}(x)\Theta$,
        \item \label{it:intertwining2} $\overline{\Delta}(x) = \Psi(\Delta^{\mathrm{op}}(x))$,
        \item \label{it:hexagon1} $\Delta\otimes\id(\Theta)=\Psi_{23}(\Theta_{13})\Theta_{23}$,
        \item \label{it:hexagon2} $\id\otimes\Delta(\Theta) = \Psi_{12}(\Theta_{13})\Theta_{12}$.
    \end{enumerate}
\end{prop}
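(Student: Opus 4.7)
The plan is to verify each of the four identities by direct computation on the generators $K_h, E, F$ and then extend multiplicatively. Since the two sides of (1) are equalities in $\Uq\otimes \Uq$ of the form $L(x)=R(x)$ where $x\mapsto L(x)$ and $x\mapsto R(x)$ are both $\bQ(q)$-linear maps satisfying $L(xy)=L(x)\Delta(y)+\text{(correction)}$\dots{} more cleanly: both $\Delta$ and $\overline\Delta$ are algebra morphisms, so it suffices to check on generators, and similarly for $\Delta^{\mathrm{op}}$ and $\Psi\circ\Delta^{\mathrm{op}}$ in (2). For (3) and (4), $\Theta$ is a specific two-term element so the identities can be verified by a direct expansion.

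For (2), the computation on each generator is very short. On $K_h$, both sides equal $K_h\otimes K_h$. On $E$, using $|K|=0$ and $|E|=1$, one finds
\[
\Delta^{\mathrm{op}}(E) = (-1)^{|E||K|}K\otimes E+E\otimes 1 = K\otimes E+E\otimes 1,
\]
and applying $\Psi$ to each summand gives $(K\otimes 1)(K^{-1}\otimes E)+E\otimes K^{-1}=1\otimes E+E\otimes K^{-1}=\overline\Delta(E)$. The case of $F$ is analogous. For (1), on $K_h$ the identity follows from the fact that the scalars arising from pushing $K_h$ through $E$ and $F$ in the two tensor slots cancel. On $E$, after expanding $\Theta\Delta(E)$ and $\overline\Delta(E)\Theta$, using $E^2=0$, and comparing, the identity collapses to
\[
E\otimes (K-K^{-1})=(q-q^{-1})\,E\otimes (EF+FE),
\]
which is exactly the defining relation of $\Uq$. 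The case of $F$ is symmetric.

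For (3), expand: the left hand side is
\[
\Delta\otimes\id(\Theta)=1^{\otimes 3}-(q-q^{-1})(E\otimes K+1\otimes E)\otimes F.
\]
On the right hand side, $\Psi_{23}$ acts on $1\otimes F$ inside the slots $(2,3)$ of $\Theta_{13}=1-(q-q^{-1})E\otimes 1\otimes F$, giving $\Psi_{23}(\Theta_{13})=1-(q-q^{-1})E\otimes K\otimes F$. Multiplying by $\Theta_{23}=1-(q-q^{-1})\,1\otimes E\otimes F$, the cross term contains a factor of $F^2=0$ in the third slot and thus vanishes, and the two linear terms precisely sum to $-(q-q^{-1})(E\otimes K+1\otimes E)\otimes F$ as required. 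Identity (4) works identically, using $E^2=0$ to kill the cross term instead.

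The only real pitfall is bookkeeping: one must be careful with Koszul signs when computing $\Delta^{\mathrm{op}}$, when multiplying in $\Uq^{\otimes 2}$ elements such as $(E\otimes F)(1\otimes E)=-E\otimes FE$ (odd–odd swap), and when verifying that $\Psi$ is indeed a superalgebra morphism on mixed terms such as $K\otimes E$. Once the sign conventions are fixed consistently with those given in the excerpt, every step is a short explicit computation that fits on one line, and the nilpotency relations $E^2=F^2=0$ together with the $EF+FE$ relation provide the only nontrivial input.
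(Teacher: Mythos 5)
Your proof is correct and takes the only sensible route, which is the one the paper implicitly uses: the paper simply asserts that these are ``straightforward calculations'' and gives no details, so there is nothing to compare against beyond verifying your computations. Your reduction to generators is justified — for (2) because both sides are algebra morphisms, and for (1) because $\Theta$ is invertible (as $(E\otimes F)^2 = 0$, so $\Theta^{-1} = 1\otimes 1 + (q-q^{-1})E\otimes F$), hence $x\mapsto\Theta\Delta(x)\Theta^{-1}$ is an algebra morphism just like $\overline{\Delta}$; alternatively, the one-step induction $L(xy)=L(x)\Delta(y)=\overline{\Delta}(x)\Theta\Delta(y)=\overline{\Delta}(x)L(y)$ does it directly. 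The generator computations you quote are all accurate, including the Koszul signs $(1\otimes E)(E\otimes F)=-E\otimes EF$ and $(E\otimes F)(1\otimes E)=E\otimes FE$, and the collapse to the relation $E\otimes(K-K^{-1})=(q-q^{-1})E\otimes(EF+FE)$, so no gap.
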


\subsubsection{Braided structure on the category of representations}

The above quasi-$R$-matrix $\Theta$ is used to endow the category of representations of $\Uq$ with a braided structure. For $M$ and $N$ two weight modules we define $\Theta_{M,N}\colon M\otimes N\to M\otimes N$ and $f_{M,N}\colon M\otimes N\to M\otimes N$ by
\[
\Theta_{M,N}(m\otimes n) = m\otimes n - (-1)^{\vert m\vert}(q-q^{-1})E.m\otimes F.n ,
\]
for $m\in M$ and $n\in N$, and 
\[
f_{M,N}(m\otimes n)= q^{(\mu,\nu)}m\otimes n 
\]
if in addition $m\in M_\mu$ and $n\in N_\nu$.

\begin{prop}
    \label{prop:intertwining}
    We have that $f_{M,N}\circ \Theta_{M,N}$ intertwines $\Delta$ and $\Delta^{\mathrm{op}}$:
    \[
    \forall x\in\Uq, f_{M,N}\circ \Theta_{M,N}\circ\Delta(x)=\Delta^{\mathrm{op}}(x)\circ f_{M,N}\circ \Theta_{M,N}.
    \]
\end{prop}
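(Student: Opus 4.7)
The plan is to reduce the stated intertwining property to an identity between $f_{M,N}$ and the auxiliary superalgebra morphism $\Psi$, and then check that identity on generators.

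First, I would combine items \eqref{it:intertwining1} and \eqref{it:intertwining2} of \autoref{prop:quasi-R-matrix}: specializing to the action on $M\otimes N$ gives
\[
\Theta_{M,N}\circ\Delta(x) \;=\; \overline{\Delta}(x)_{M,N}\circ\Theta_{M,N} \;=\; \Psi(\Delta^{\mathrm{op}}(x))_{M,N}\circ \Theta_{M,N}
\]
for every $x\in\Uq$. Substituting this into the left-hand side of the sought equation, the proposition reduces to the claim that
\[
f_{M,N}\circ \Psi(y)_{M,N} \;=\; y_{M,N}\circ f_{M,N}
\]
as operators on $M\otimes N$, for every $y$ in the image of $\Delta^{\mathrm{op}}$; in particular it suffices to prove this for all $y\in \Uq\otimes\Uq$.

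Next, since $\Psi$ is a morphism of superalgebras and both sides of the above identity are (graded) multiplicative in $y$, it is enough to verify the identity on the generating set $\{K_h\otimes 1, 1\otimes K_h, E\otimes 1, 1\otimes E, F\otimes 1, 1\otimes F\}$. The cases $K_h\otimes 1$ and $1\otimes K_h$ are immediate, because $\Psi$ fixes these generators and they act diagonally in the weight decomposition, so they commute with $f_{M,N}$. For the remaining four generators one computes on a weight vector $m\otimes n$ with $m\in M_\mu$, $n\in N_\nu$: for instance on $E\otimes 1$ one compares
\[
f_{M,N}\bigl(Em\otimes K^{-1}n\bigr) \;=\; q^{-\brak{h_1+h_2,\nu}}\,q^{(\mu+\varepsilon_1-\varepsilon_2,\,\nu)}\; Em\otimes n
\]
with
\[
(E\otimes 1)\circ f_{M,N}(m\otimes n) \;=\; q^{(\mu,\nu)}\; Em\otimes n.
\]
Equality amounts to the bilinear-form identity $(\varepsilon_1-\varepsilon_2,\nu)=\brak{h_1+h_2,\nu}$, which holds by the definition of $(\cdot,\cdot)$ used in the braided structure. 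Analogous one-line checks handle $1\otimes E$, $F\otimes 1$ and $1\otimes F$, where the extra factor of $K^{\pm 1}$ introduced by $\Psi$ is precisely what compensates for the weight shift produced by $E$ or $F$.

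The main obstacle is simply keeping track of conventions: the super signs in $\Delta^{\mathrm{op}}$ and in the action on tensor products, together with the precise form of the bilinear pairing $(\cdot,\cdot)$ defining $f_{M,N}$, must all be aligned so that the shifts in weight produced by $E$ and $F$ are exactly cancelled by the $K^{\pm 1}$ in $\Psi$. Once these conventions are fixed consistently, the verification on generators is a routine computation and the proposition follows.
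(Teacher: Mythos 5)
Your proposal is correct and follows the same route as the paper: reduce to items \eqref{it:intertwining1}--\eqref{it:intertwining2} of \autoref{prop:quasi-R-matrix}. The paper's proof is literally just the citation to those two items, whereas you make explicit (and verify on generators, using multiplicativity of $\Psi$) the remaining fact it silently uses, namely that $f_{M,N}\circ\Psi(y)_{M,N}=y_{M,N}\circ f_{M,N}$ on $M\otimes N$; this is a genuine ingredient and your check that the $K^{\pm1}$ factors introduced by $\Psi$ exactly compensate the weight shifts from $E$ and $F$ is the right content.
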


\begin{proof}
This follows from \autoref{prop:quasi-R-matrix}.\eqref{it:intertwining1} and \autoref{prop:quasi-R-matrix}.\eqref{it:intertwining2}.
\end{proof}

Now set $\check{R}_{M,N}=\tau\circ f_{M,N}\circ\Theta_{M,N}$, where $\tau$ is the super-twist, which is defined by $\tau(m\otimes n)=(-1)^{\vert m\vert .\vert n\vert}n\otimes m$. 

\begin{thm}\label{thm: R is braiding}
    The map $\check{R}$ is a braiding in the category of $\Uq$ weight representations.
\end{thm}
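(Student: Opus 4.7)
The plan is to verify the three conditions defining a braiding: (i) $\check{R}_{M,N}$ is $U_q$-equivariant, (ii) $\check{R}$ is natural in both variables and invertible, and (iii) the two hexagon axioms hold. Each ingredient corresponds to one of the statements of \autoref{prop:quasi-R-matrix}.

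First I would handle equivariance. \autoref{prop:intertwining} gives
\[
f_{M,N}\circ\Theta_{M,N}\circ\Delta(x) \;=\; \Delta^{\mathrm{op}}(x)\circ f_{M,N}\circ\Theta_{M,N},
\]
so it remains to show that the super-twist $\tau$ intertwines $\Delta^{\mathrm{op}}$ with $\Delta$, i.e.\ $\tau\circ\Delta^{\mathrm{op}}(x)=\Delta(x)\circ\tau$. This is a direct (signed) Sweedler computation: evaluating both sides on a homogeneous $m\otimes n$, the Koszul signs $(-1)^{|x_{(1)}||x_{(2)}|}$ coming from the definition of $\Delta^{\mathrm{op}}$ and $(-1)^{|x_{(2)}m||x_{(1)}n|}$ coming from $\tau$ combine to exactly cancel against the sign $(-1)^{|m||n|+|x_{(2)}||n|}$ on the right. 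Composing gives $\check{R}_{M,N}\circ\Delta(x)=\Delta(x)\circ\check{R}_{M,N}$.

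Next, naturality in each variable follows because each of the three factors $\Theta_{M,N}$, $f_{M,N}$, and $\tau$ is manifestly natural (the first is defined from the universal element $\Theta\in U_q\otimes U_q$, the second only depends on weights, and the third is the graded symmetry). For invertibility, observe that $(E\otimes F)^2 = -E^2\otimes F^2 = 0$ in the superalgebra, so $\Theta_{M,N}$ is unipotent with inverse $1+(q-q^{-1})E\otimes F$ acting on $M\otimes N$; meanwhile $f_{M,N}$ is diagonal on weight vectors with nonzero scalars, and $\tau$ is an involution up to sign.

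For the hexagon axioms, I would expand both sides on weight vectors. Using $\Delta\otimes\id(\Theta)=\Psi_{23}(\Theta_{13})\Theta_{23}$ from \autoref{prop:quasi-R-matrix}.\eqref{it:hexagon1}, the quasi-$R$-matrix piece of $\check{R}_{M\otimes N,P}$ splits into a product acting on positions $(1,3)$ and $(2,3)$, with the twist $\Psi$ producing exactly the powers of $K$ that, after composing with the weight operator $f$, coincide with the scalars arising from $f_{M,P}$ and $f_{N,P}$ acting in sequence; after reorganising with $\tau$, this matches $(\check{R}_{M,P}\otimes\id_N)\circ(\id_M\otimes\check{R}_{N,P})$. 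The mirror hexagon is handled identically using \autoref{prop:quasi-R-matrix}.\eqref{it:hexagon2}.

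The main obstacle will be bookkeeping of super-signs in the hexagon check: one must verify that the $\Psi$-twist on $\Theta$ combines correctly with the Koszul signs picked up when $\tau$ is inserted in the middle of two $R$-matrices, and that the resulting powers of $K$ acting on weight vectors give exactly the scalars produced by the composite of two $f$-operators. Once this is done for one hexagon, the other follows by a symmetric argument (alternatively, by appealing to the fact that $\Theta$ is the inverse of the barred quasi-$R$-matrix, which yields the second hexagon from the first).
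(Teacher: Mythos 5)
Your proposal follows essentially the same route as the paper: equivariance via \autoref{prop:intertwining} and the hexagons via \autoref{prop:quasi-R-matrix}.\eqref{it:hexagon1}--\eqref{it:hexagon2}. You additionally spell out details the paper leaves implicit — that $\tau$ intertwines $\Delta^{\mathrm{op}}$ with $\Delta$ (needed to pass from the intertwining property of $f\circ\Theta$ to genuine $\Uq$-linearity of $\check{R}$), and the naturality and invertibility of $\check{R}$ — all of which are correct and worth recording.
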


\begin{proof}
    The map $\check{R}_{M,N}$ is $\Uq$-linear thanks to \autoref{prop:intertwining}. The hexagon axioms follow from \autoref{prop:quasi-R-matrix}.\eqref{it:hexagon1} and \autoref{prop:quasi-R-matrix}.\eqref{it:hexagon2}.
\end{proof}

\subsubsection{The vector representation and a Schur--Weyl duality}

Recall the vector representation $V=L(\varepsilon_1)$. An iterated use of \eqref{otimes of highest weight} yields
\begin{equation}\label{eq: decomp tensor power V}
    V^{\otimes n} \cong \bigoplus\limits_{\ell=0}^{n-1} L(\ell)^{\oplus\binom{n-1}{\ell}},
\end{equation}
where we recall the notation $L(\ell)\coloneqq L((n-\ell)\varepsilon_1 + \ell \varepsilon_2))$.

Thanks to the braiding, the representation $V^{\otimes n}$ is acted upon by the braid group on $n$ strands, and this action factors through the Hecke algebra $\Hecke_n\otimes_{\bZ[t]}\bQ(q)$, where $\bQ(q)$ is seen as a $\bZ[t]$-algebra via $t\mapsto q^{-2}$. To be more precise, the map
\[
\sigma_i\mapsto q^{-1}\id^{\otimes i}\otimes \check{R}_{V,V} \otimes \id^{\otimes(n-i-2)}
\]
defines an action of the braid group which factors through the quadratic relation of $\sigma_i^2 = (1-q^{-2})\sigma_i+q^{-2}\id$, which is the quadratic relation for the Hecke algebra with $q^{-2}$ instead of $t$.

A Schur--Weyl duality between $\Uq$ and the Hecke algebra $\Hecke_n$ has been shown independently by Moon \cite{Moon_HighestWeightVectors_2003} and Mitsuhashi \cite{Mitsuhashi_SchurWeylReciprocityQuantum_2006}.

\begin{thm}\label{Thm Schur--Weyl hecke algebra}
The algebra $\Uq$ and $\Hecke_n\otimes_{\bZ[t]}\bQ(q)$ centralize each other in $\End(V^{\otimes n})$; that is, the action of $\Hecke_n\otimes_{\bZ[t]}\bQ(q)$ generates $\End_{\Uq}(V^{\otimes n})$ and the action of $\Uq$ generates $\End_{\Hecke_n\otimes_{\bZ[t]}\bQ(q)}(V^{\otimes n})$.

More precisely, there is a decomposition
\begin{equation}
    V^{\otimes n}\cong\bigoplus\limits_{\ell=1}^{n-1}  L(l)\otimes \cS_{hk(n-\ell,\ell+1)}
\end{equation}
as $\Uq\otimes \Hecke_n$-representation, where $\cS_{hk(n-\ell,\ell+1)}$ is the Specht module for the hook partition with length $n-\ell$ and height $\ell+1$. 
\end{thm}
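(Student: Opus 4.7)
The plan is to follow the classical double centralizer strategy, adapted to the quantum super setting. First, I would verify that the assignment $\sigma_i\mapsto q^{-1}\id^{\otimes i}\otimes\check{R}_{V,V}\otimes\id^{\otimes (n-i-2)}$ indeed descends to an action of the Hecke algebra $\Hecke_n\otimes_{\bZ[t]}\bQ(q)$: the braid relations follow from the hexagon axiom of \autoref{thm: R is braiding}, and the quadratic relation $\sigma_i^2=(1-q^{-2})\sigma_i+q^{-2}\id$ reduces to a $4\times 4$ computation of $\check{R}_{V,V}^2$ on the explicit basis $\{v^0\otimes v^0,v^0\otimes v^1,v^1\otimes v^0,v^1\otimes v^1\}$ of $V\otimes V$ given in \eqref{action on 1st basis vector}--\eqref{action on 2nd basis vector}. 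Commutation of the $\Hecke_n$-action with $\Uq$ is then immediate from \autoref{prop:intertwining}, which gives the $\Uq$-linearity of each $\check{R}_{V,V}$.

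Next, I would exploit that by \eqref{eq: decomp tensor power V} the module $V^{\otimes n}$ is semisimple as a $\Uq$-representation, with pairwise non-isomorphic simple summands $L(\ell)$ occurring with multiplicity $\binom{n-1}{\ell}$. The classical double centralizer theorem then identifies
\[
\End_{\Uq}(V^{\otimes n})\cong\bigoplus_{\ell=0}^{n-1}\mathrm{Mat}_{\binom{n-1}{\ell}}(\bQ(q)),
\]
so centrality of one action inside the other is equivalent to surjectivity of the map $\Hecke_n\otimes\bQ(q)\to\End_{\Uq}(V^{\otimes n})$.

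To obtain this surjectivity, I would use the Wedderburn decomposition of the generic Hecke algebra, $\Hecke_n\otimes\bQ(q)\cong\bigoplus_{\lambda\vdash n}\End(\cS_\lambda)$, and show that the kernel of the map to $\End_{\Uq}(V^{\otimes n})$ is exactly the sum of the blocks indexed by non-hook partitions. One concrete way is to construct, for each hook partition $hk(n-\ell,\ell+1)$, an explicit $\Hecke_n$-stable copy of $\cS_{hk(n-\ell,\ell+1)}$ inside $\Hom_{\Uq}(L(\ell),V^{\otimes n})$, for instance by producing standard-tableau-indexed highest weight vectors of weight $(n-\ell)\varepsilon_1+\ell\varepsilon_2$ and checking that Jucys--Murphy elements act with the expected eigenvalues; dimensions then force the match, since $\dim\cS_{hk(n-\ell,\ell+1)}=\binom{n-1}{\ell}$ coincides with the multiplicity in \eqref{eq: decomp tensor power V}, and $\sum_\ell\binom{n-1}{\ell}\cdot 2=2^n=\dim V^{\otimes n}$.

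The main obstacle is Step 3: cleanly pinpointing that only hook-shaped Specht modules survive. In the non-super $U_q(\mathfrak{sl}_2)$ case this is automatic from $\Hom_{U_q(\mathfrak{sl}_2)}$ being nonzero only for two-row partitions, but here the super sign in the braiding \emph{opens up the hook direction}, so the argument really needs either an explicit tableau basis adapted to the $(1|1)$-grading, or an appeal to the general Schur--Sergeev theory for $U_q(\mathfrak{gl}_{m|n})$ specialized to $m=n=1$; in the present paper one may simply cite \cite{Moon_HighestWeightVectors_2003,Mitsuhashi_SchurWeylReciprocityQuantum_2006}, which is the route the authors appear to take.
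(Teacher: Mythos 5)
You have correctly anticipated the situation: this theorem is not proved in the paper. The sentence immediately preceding the statement says that the duality ``has been shown independently by Moon and Mitsuhashi,'' and the theorem is then recorded as known, with no proof supplied. Your sketch is therefore a reconstruction of a proof from the literature rather than something to be compared against an in-paper argument, and you noted this yourself in your final sentence.

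As a reconstruction, your plan is sound and matches the general strategy of the cited references: verify the $\Hecke_n\otimes_{\bZ[t]}\bQ(q)$-action (hexagon axiom for the braid relations, an explicit $4\times4$ computation for the quadratic relation), observe commutation with $\Uq$ from $\Uq$-linearity of $\check{R}_{V,V}$, use semisimplicity of $V^{\otimes n}$ as a $\Uq$-module together with the abstract double centralizer theorem to reduce both centralizing claims to a single surjectivity, and then match the Wedderburn blocks of $\Hecke_n\otimes\bQ(q)$ with hook Specht modules. Two small remarks. First, a wording slip: semisimplicity of $V^{\otimes n}$ over $\Uq$ means that surjectivity of $\Hecke_n\otimes\bQ(q)\to\End_{\Uq}(V^{\otimes n})$ \emph{implies} (via the classical double centralizer theorem) that $\Uq$ surjects onto $\End_{\Hecke_n}(V^{\otimes n})$; it is this one surjectivity that suffices, and calling the two surjectivities ``equivalent'' overstates what the abstract theorem gives, though the plan is unaffected. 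Second, you are right that the heart of the matter is isolating the hook shapes, and that a dimension count alone does not do it---one needs the Schur--Sergeev/hook-content analysis for $U_q(\glone)$ from the references. Incidentally, the displayed decomposition in the statement starts its sum at $\ell=1$, which appears to be a typo for $\ell=0$: your own count $\sum_{\ell=0}^{n-1}2\binom{n-1}{\ell}=2^n$ shows the $\ell=0$ summand is needed.
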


To complete the picture, we exhibit the matrix of $q^{-1}\check{R}_{V,V}$ on the basis 
\[\{ v^0\otimes v^0, {v^1\otimes v^0} , {q^{-1}v^0\otimes v^1} , q^{-1}v^1\otimes v^1\}.\]
We recover the matrix for the Burau representation of the braid group:
\begin{equation}\label{matrix of R in basis}
\begin{pmatrix}
    1 & 0 & 0 & 0
    \\
    0 & (1-q^{-2}) & t & 0 
    \\
    0 & 1 & 0 & 0
    \\
    0 & 0 & 0 & -q^{-2}
\end{pmatrix}.
\end{equation}
Note that the chosen basis is the tensor product of two different bases of $V$: $\{v^{0},v^{1}\}$ and $\{v^{0},q^{-1}v^{1}\}$.

The above surjective map $\Hecke_n\otimes_{\bZ[t]}\bQ(q)\to\End_{\Uq}(V^{\otimes n})$ is not injective for $n\geq 4$, since 
\[
\dim(\Hecke_n\otimes_{\bZ[t]}\bQ(q)) = n! > \binom{2(n-1)}{n-1} = \sum_{l=0}^{n-1} \binom{n-1}{l}^2 = \dim(\End_{\Uq}(V^{\otimes n})).
\]
The quotient of $\Hecke_n\otimes_{\bZ[t]}\bQ(q)$ by the kernel of this map, which is the isomorphic to $\End_{\Uq}(V^{\otimes n})$ is the \emph{super Temperley--Lieb algebra}. In \cite[Theorem III.5.4]{samson_phd}, it is shown that this kernel is generated by the element
\[
(\sigma_1-q)(\sigma_3-q)\sigma_2(\sigma_1+q^{-1})(\sigma_3+q^{-1}),
\]
or equivalently by
\[
(\sigma_1-1)(\sigma_3-1)\sigma_2(\sigma_1+t)(\sigma_3+t)
\]
since $t$ and $q^{-2}$ are identified. We also refer to \cite[Definition 4.9]{Sartori_CategorificationTensorPowers_2016} for definition of the super Temperley--Lieb algebra in the Kazhdan--Lusztig generators of the Hecke algebra.

\section{A Schur--Weyl duality with \texorpdfstring{$\neguglone$}{half gl(11)}}
\label{sec:Schur--Weyl}

The goal is now to enhance the Schur--Weyl duality between the Hecke algebra and $\Uq$ to a Schur--Weyl duality involving the loop Hecke algebra. We denote by $\Uqle$ the subalgebra of $\Uq$ generated by $K_1$, $K_2$ and $F$. We still work over the field $\bQ(q)$ which is a $\bZ[t]$-algebra via $t\mapsto q^{-2}$.

\subsection{An \texorpdfstring{$\LBr_n$}{LBrn}-representation via an \texorpdfstring{$R$}{R}-matrix for \texorpdfstring{$\Uqle$}{Uq<}}
Inspired by the notion of a twist in a quasi-triangular Hopf algebra, as introduced in \cite{Reshetikhin_MultiparameterQuantumGroups_1990}, we define, for any two $\Uq$ weight modules $M$ and $N$, a map $S_{M,N}\colon M\otimes N \to M\otimes N$ by
\[
S_{M,N}(m\otimes n) = q^{\mu_1\nu_2-\mu_2\nu_1}m\otimes n,
\]
where $m\in M_{\mu}$ and $N\in N_{\nu}$.

\begin{prop}\label{prop:intertwining-neg}
    We have that $S_{M,N}$ intertwines $\Delta$ and $\Delta^{\mathrm{op}}$ on $\Uqle$:
    \[
    \forall x\in \Uqle, S_{M,N}\circ\Delta(x) = \Delta^{\mathrm{op}}(x)\circ S_{M,N}.
    \]
\end{prop}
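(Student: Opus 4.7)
The plan is to reduce to checking the identity on the generators $K_1, K_2, F$ of $\Uqle$, since the intertwining condition is multiplicative: if $S_{M,N}\circ\Delta(x) = \Delta^{\mathrm{op}}(x)\circ S_{M,N}$ and similarly for $y$, then the same holds for $xy$. For the Cartan generators, $\Delta(K_h) = K_h\otimes K_h = \Delta^{\mathrm{op}}(K_h)$, since $K_h$ is even and group-like so the super twist has no effect. Moreover, $K_h\otimes K_h$ preserves weight components, so it commutes with $S_{M,N}$, which acts diagonally in the weight basis. The intertwining for $K_1$ and $K_2$ is therefore immediate.

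The substantial case is $F$. First, I would compute
\[
\Delta^{\mathrm{op}}(F) \;=\; 1\otimes F + F\otimes K^{-1},
\]
obtained by applying the super twist to $\Delta(F) = F\otimes 1 + K^{-1}\otimes F$ and using $|F|=1$ and $|K^{-1}|=0$. Next I would fix weight vectors $m\in M_\mu$ and $n\in N_\nu$ and use two key inputs: first, the commutation relations $K_1 F = q^{-1}FK_1$ and $K_2 F = qFK_2$ imply that $F$ shifts weights by $-\varepsilon_1+\varepsilon_2$, i.e.\ $Fm\in M_{\mu-\varepsilon_1+\varepsilon_2}$; second, $K^{-1}=K_1^{-1}K_2^{-1}$ acts on $M_\mu$ by the scalar $q^{-\mu_1-\mu_2}$.

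Expanding both $S_{M,N}\circ\Delta(F)(m\otimes n)$ and $\Delta^{\mathrm{op}}(F)\circ S_{M,N}(m\otimes n)$ yields linear combinations of $Fm\otimes n$ and $m\otimes Fn$. The coefficient of $m\otimes Fn$ on the left is $q^{-\mu_1-\mu_2}\cdot q^{\mu_1(\nu_2+1)-\mu_2(\nu_1-1)}$, while on the right it is $q^{\mu_1\nu_2-\mu_2\nu_1}$; a direct simplification shows these agree. The coefficient of $Fm\otimes n$ on the left is $q^{(\mu_1-1)\nu_2-(\mu_2+1)\nu_1}$, and on the right it is $q^{\mu_1\nu_2-\mu_2\nu_1}\cdot q^{-\nu_1-\nu_2}$; again these coincide after cancellation. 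This verifies the identity on $F$.

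The main obstacle, if any, is really just the bookkeeping: one must track correctly the weight shift produced by $F$, the scalar produced by $K^{-1}$, and the antisymmetric form $(\mu,\nu)\mapsto \mu_1\nu_2-\mu_2\nu_1$ defining $S_{M,N}$. Once these three ingredients are lined up, the proof is a short computation, with no representation-theoretic input beyond the weight calculus already recalled in \autoref{sec:background_uglone}. Note that the argument does not use $\Theta$ at all, which is expected since $E\notin\Uqle$: the quasi-$R$-matrix can be dropped precisely because we have restricted to the negative half, and the diagonal twist $S_{M,N}$ alone suffices to turn $\Delta$ into $\Delta^{\mathrm{op}}$ there.
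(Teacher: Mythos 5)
Your proof is correct and follows essentially the same route as the paper: reduce to the generators $K_1$, $K_2$, $F$ of $\Uqle$, dispose of the Cartan generators immediately, and verify the identity on $F$ by a direct weight-by-weight computation of the coefficients of $Fm\otimes n$ and $m\otimes Fn$ (the paper keeps the Koszul sign $(-1)^{|m|}$ explicitly throughout; you drop it from the displayed coefficients, which is harmless since it appears identically on both sides). The closing observation about why $\Theta$ is not needed is a useful remark, but is not part of the argument.
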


\begin{proof}
    The calculations for $x=K_1$ and $x=K_2$ are trivial. For $x=F$, $m\in M_\mu$ and $n\in N_\nu$ , we have:
    \begin{align*}
        S_{M,N}(\Delta(F)\cdot m\otimes n) &= S_{M,N}(Fm\otimes n + (-1)^{\lvert m \rvert}q^{-\mu_1-\mu_2}m\otimes Fn)\\
        &= q^{(\mu_1-1)\nu_2-(\mu_2+1)\nu_1}Fm\otimes n + (-1)^{\lvert m \rvert}q^{\mu_1(\nu_2+1)-\mu_2(\nu_1-1)-\mu_1-\mu_2}m\otimes Fn\\
        &= q^{\mu_1\nu_2-\mu_2\nu_1}(q^{-\nu_1-\nu_2}Fm\otimes n + (-1)^{\lvert m \rvert}m\otimes Fn)\\
        &= \Delta^{\mathrm{op}}(F)(S_{M,N}(m\otimes n)),
    \end{align*}
    the equalities being obtained from the definition of $S_{M,N}$ and of $\Delta$.
\end{proof}

\begin{rem}
    The map $S_{M,N}$ does not intertwine $\Delta$ and $\Delta^{\mathrm{op}}$ on $\Uq$. One may check that $S_{M,N}\circ\Delta(E) \neq \Delta^{\mathrm{op}}(E)\circ S_{M,N}$ but $S_{M,N}\circ\Delta^{\mathrm{op}}(E) = \Delta(E)\circ S_{M,N}$.
\end{rem}

We now set $\check{S}_{M,N} = \tau\circ S_{M,N}$, where $\tau$ is the super-twist, as usual.

\begin{prop}\label{prop: symmetric braiding}
    The map $\check{S}$ is a symmetric braiding on the category of $\Uqle$ weight representations.
\end{prop}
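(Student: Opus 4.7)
The plan is to verify four properties: (i) the map $\check{S}_{M,N}$ is $\Uqle$-linear, (ii) it satisfies the two hexagon axioms, (iii) it is natural in both arguments, and (iv) it is symmetric, i.e.\ $\check{S}_{N,M}\circ\check{S}_{M,N}=\id_{M\otimes N}$. This follows the same template as the proof of \autoref{thm: R is braiding}, but with the simpler ingredient $S$ in place of $f\circ\Theta$.

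For (i), I would argue formally as in \autoref{thm: R is braiding}: \autoref{prop:intertwining-neg} says $S_{M,N}$ intertwines $\Delta$ and $\Delta^{\mathrm{op}}$ on $\Uqle$, and composing with the super-twist $\tau$ (which always intertwines $\Delta^{\mathrm{op}}$ with $\Delta$ on tensor products of super weight modules) yields $\Uqle$-linearity of $\check{S}_{M,N}$. Naturality (iii) is immediate: any $\Uqle$-linear map is weight-preserving, so it commutes with the diagonal scalar action of $S_{M,N}$; composing with the natural map $\tau$ then gives naturality of $\check{S}$.

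For (ii), the key point is that $S$ behaves multiplicatively on weight vectors. On a weight vector $m\otimes n\otimes p\in M_\mu\otimes N_\nu\otimes P_\pi$, the bilinearity of $(\mu,\nu)\mapsto \mu_1\nu_2-\mu_2\nu_1$ gives the two identities
\begin{align*}
S_{M\otimes N,P} &= (S_{M,P})_{13}\circ(S_{N,P})_{23}, \\
S_{M,N\otimes P} &= (S_{M,P})_{13}\circ(S_{M,N})_{12},
\end{align*}
which are the analogues of \autoref{prop:quasi-R-matrix}.\eqref{it:hexagon1}-\eqref{it:hexagon2} (note that since $S$ is diagonal there is no need for a conjugation by $\Psi$). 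From here, the hexagon axioms for $\check{S}$ follow by the standard calculation combining these two identities with the naturality and the obvious hexagons for $\tau$; a few signs coming from $\tau$ are produced and cancelled among factors.

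Finally, for (iv), I would compute directly on weight vectors. For $m\in M_\mu$ and $n\in N_\nu$,
\[
\check{S}_{N,M}\bigl(\check{S}_{M,N}(m\otimes n)\bigr)
= (-1)^{|m||n|}q^{\mu_1\nu_2-\mu_2\nu_1}\,\check{S}_{N,M}(n\otimes m)
= (-1)^{2|m||n|}q^{(\mu_1\nu_2-\mu_2\nu_1)+(\nu_1\mu_2-\nu_2\mu_1)}\,m\otimes n
= m\otimes n,
\]
as the two exponents cancel. This gives symmetry.

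None of these steps is really an obstacle; the only mildly delicate point is bookkeeping the signs from the super-twist $\tau$ in the hexagon computation. Conceptually, the entire proposition boils down to the observation that the exponent $\mu_1\nu_2-\mu_2\nu_1$ is a skew-symmetric bilinear form on weights, which automatically yields a symmetric braiding on any category of graded vector spaces by the same formula; \autoref{prop:intertwining-neg} is what upgrades this to $\Uqle$-linearity.
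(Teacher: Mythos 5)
Your proof is correct and takes essentially the same route as the paper's: both reduce the hexagon axiom to a direct computation on weight vectors driven by the bilinearity of the exponent $(\mu,\nu)\mapsto\mu_1\nu_2-\mu_2\nu_1$, with your explicit multiplicativity identities for $S$ being a clean abstraction of the same calculation the paper inlines. Your symmetry verification spells out what the paper dismisses as ``easy and omitted,'' and the observation that no $\Psi$-conjugation is needed because $S$ is diagonal is exactly why the argument is simpler than in \autoref{prop:quasi-R-matrix}.
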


\begin{proof}
    It remains to check the hexagon axioms and that $\check{S}_{N,M}\circ \check{S}_{M,N}$ is the identity. Given three weight modules $M,N$ and $L$ and $m\in M_{\mu},n\in N_{\nu}$ and $\ell\in L_{\lambda}$, we have
    \[
        \check{S}_{M\otimes N,L}(m\otimes n \otimes \ell) = q^{(\mu_1+\nu_1)\lambda_{2}-(\mu_2+\nu_2)\lambda_1}(-1)^{(\lvert m \rvert + \lvert n \rvert)\lvert \ell \rvert}\ell\otimes m \otimes n,
    \]
    and
    \begin{align*}
        (\check{S}_{M,L}\otimes \id_{N})\circ (\id_{M}\otimes \check{S}_{N,L})(m\otimes n \otimes \ell) &= q^{\nu_{1}\lambda_{2}-\nu_{2}\lambda_1}(-1)^{\lvert n\rvert\lvert \ell\rvert}\check{S}_{M,L}(m\otimes \ell)\otimes n\\
        & = q^{\nu_1\lambda_2-\nu_2\lambda_1+\mu_1\lambda_2-\mu_2\lambda_1}(-1)^{\lvert n\rvert \lvert \ell \rvert + \lvert m \rvert \lvert \ell \rvert}\ell \otimes m \otimes n,
    \end{align*}
    which shows that the hexagon axiom $\check{S}_{M\otimes N,L}=(\check{S}_{M,L}\otimes \id_{N})\circ (\id_{M}\otimes \check{S}_{N,L})$ holds. The proof for the second hexagon axiom is similar. The axiom of symmetry is easy and omitted.
\end{proof}

The goal is now to show some mixed relations satisfied by $\check{R}$ and $\check{S}$. 

\begin{prop}\label{prop: mixed relations R and S}
    Given three $\Uqle$ weight modules $M,N$ and $L$, we have
    \begin{enumerate}
        \item $(\check{S}_{N,L}\otimes\id_M)\circ(\id_{N}\otimes \check{R}_{M,L})\circ(\check{R}_{M,N}\otimes \id_L) = (\id_{L}\otimes \check{R}_{M,N})\circ(\check{R}_{M,L}\otimes \id_{N})\circ(\id_{M}\otimes\check{S}_{N,L})$, 
        \item $(\check{R}_{N,L}\otimes\id_M)\circ(\id_{N}\otimes \check{S}_{M,L})\circ(\check{S}_{M,N}\otimes \id_L) = (\id_{L}\otimes \check{S}_{M,N})\circ(\check{S}_{M,L}\otimes \id_{N})\circ(\id_{M}\otimes\check{R}_{N,L})$.
    \end{enumerate}
\end{prop}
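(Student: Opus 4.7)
The key observation enabling a clean proof is that, although $\check{S}$ is only $\Uqle$-linear and not $\Uq$-linear, the braiding $\check{R}$ is nevertheless natural in its \emph{second} slot with respect to every $\Uqle$-linear morphism. Concretely, for any $\Uqle$-linear map $g\colon A\to A'$ one has $\Theta_{M,A'}\circ (\id_M\otimes g)=(\id_M\otimes g)\circ \Theta_{M,A}$ because the right tensor factor of $\Theta=1\otimes 1-(q-q^{-1})E\otimes F$ involves only $F$, which $g$ commutes with; the weight-preservation of $g$ (which follows from $K_1,K_2$-linearity) gives $f_{M,A'}\circ (\id_M\otimes g)=(\id_M\otimes g)\circ f_{M,A}$; and evenness of $g$ gives $\tau\circ (\id_M\otimes g)=(g\otimes \id_M)\circ\tau$. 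Composing these three identities yields
\[
\check{R}_{M,A'}\circ(\id_M\otimes g)=(g\otimes\id_M)\circ\check{R}_{M,A}.
\]
Symmetrically, since $\check{S}$ is a symmetric braiding on the full $\Uqle$-weight module category by \autoref{prop: symmetric braiding}, it is natural in both slots with respect to arbitrary $\Uqle$-linear maps.

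For the first equation, apply the extended naturality above to the $\Uqle$-linear map $\check{S}_{N,L}\colon N\otimes L\to L\otimes N$ (\autoref{prop:intertwining-neg}), obtaining
\[
\check{R}_{M,L\otimes N}\circ(\id_M\otimes \check{S}_{N,L})=(\check{S}_{N,L}\otimes\id_M)\circ\check{R}_{M,N\otimes L}.
\]
Using the hexagon axiom for $\check{R}$ from \autoref{thm: R is braiding} in the form $\check{R}_{M,X\otimes Y}=(\id_X\otimes\check{R}_{M,Y})\circ(\check{R}_{M,X}\otimes\id_Y)$, one expands both sides and recovers exactly the first equation.

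For the second equation, the roles of $\check{R}$ and $\check{S}$ are swapped: $\check{R}_{N,L}$ is $\Uq$-linear and hence in particular $\Uqle$-linear, so the naturality of the $\Uqle$-braiding $\check{S}$ in its second slot applies and gives
\[
\check{S}_{M,L\otimes N}\circ(\id_M\otimes \check{R}_{N,L})=(\check{R}_{N,L}\otimes\id_M)\circ\check{S}_{M,N\otimes L}.
\]
Substituting the hexagon for $\check{S}$ of the form $\check{S}_{M,X\otimes Y}=(\id_X\otimes\check{S}_{M,Y})\circ(\check{S}_{M,X}\otimes\id_Y)$, established in \autoref{prop: symmetric braiding}, yields the second equation.

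The only step that is not completely formal is the verification of the extended naturality of $\check{R}$, and this is essentially the reason one must restrict to the \emph{negative half} $\Uqle$ rather than all of $\Uq$: had $\check{S}$ only been required to commute with $E$, the identity $\Theta\circ(\id\otimes g)=(\id\otimes g)\circ\Theta$ would fail. Thus this lemma provides a clean conceptual explanation of why $\Uqle$ (and not some other subalgebra) is the correct object for Schur--Weyl duality with the loop Hecke algebra.
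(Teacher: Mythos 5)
Your proof is correct and follows essentially the same route as the paper: the paper establishes the intermediate identity $(\check{S}_{N,L}\otimes\id_M)\circ\check{R}_{M,N\otimes L}=\check{R}_{M,L\otimes N}\circ(\id_M\otimes\check{S}_{N,L})$ by a direct computation showing $\id_M\otimes\check{S}_{N,L}$ commutes with $\Theta$ and $f$, and then applies the hexagon for $\check{R}$; your formulation simply packages the same computation as a naturality statement of $\check{R}$ in its second slot with respect to $\Uqle$-linear maps. The one small added value in your write-up is that you make the argument for part (2) explicit (using naturality of $\check{S}$ with respect to the $\Uqle$-linear map $\check{R}_{N,L}$), where the paper merely states it is similar and omits it.
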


\begin{proof}
    We start by noticing that
    \begin{equation}
    \label{eq:checkS-theta}
    (\id_M\otimes \check{S}_{N,L}) \circ \Theta_{M,N\otimes L} = \Theta_{M,L\otimes N}\circ (\id_{M}\otimes\check{S}_{N,L}).
    \end{equation}
    Indeed, the maps $\Theta_{M,N\otimes L}$ and $\Theta_{M,L\otimes N}$ are induced by the action of $1\otimes \Delta(1) - (q-q^{-1})E\otimes \Delta(F)$, and \autoref{prop:intertwining-neg} implies that, for any $m\in M, n\in N$ and $\ell \in L$,
    \begin{multline*}
    (\id_{M}\otimes S_{N,L})\circ \Theta_{M,N\otimes L}(m\otimes n \otimes \ell) \\= m\otimes S_{N,L}(n\otimes \ell)-(q-q^{-1})E\otimes \Delta^{\mathrm{op}}(F)\cdot (m \otimes S_{N,L}(n\otimes \ell)).
    \end{multline*}
    It remains to apply the super-twist $\tau_{23}$ in order to obtain \eqref{eq:checkS-theta}. 

    Now, we have
    \begin{align*}
        (\check{S}_{N,L}\otimes\id_M)\circ \check{R}_{M,N\otimes L} &= \tau_{23}\circ \tau_{12} \circ (\id_M\otimes \check{S}_{N,L})\circ f_{M,N\otimes L} \circ \Theta_{M,N\otimes L} \\
        &= \tau_{23}\circ \tau_{12}\circ f_{M,N\otimes L} \circ (\id_M\otimes \check{S}_{N,L}) \circ \Theta_{M,N\otimes L} \\
        &= \tau_{23}\circ \tau_{12}\circ f_{M,L\otimes N} \circ \Theta_{M,L\otimes N} \circ (\id_M\otimes \check{S}_{N,L}) \\
        &= \check{R}_{M,N\otimes L} \circ (\id_M\otimes \check{S}_{N,L}),
    \end{align*}
    the second equality is due to the fact that $\id_M\otimes S_{N\otimes L}$ and $f_{M,N\otimes L}$ commutes and the third equality is a consequence of \eqref{eq:checkS-theta}. We finally obtain (1) using the hexagon axiom for the braiding $\check{R}$. The proof of (2) is similar and ommited.
\end{proof}

Now consider the assignment 
\begin{equation}\label{Rep from braiding}
    \Psi_n: \LBr_n \rightarrow \End_{\Uqle}(V^{\otimes n}): \left\lbrace \begin{array}{l}
        \sigma_{i}\mapsto q^{-1}\id_{V}^{\otimes i-1}\otimes \check{R}_{V,V} \otimes \id_{V}^{\otimes(n-i-1)} \\
        \rho_{i}\mapsto \id_{V}^{\otimes i-1}\otimes \check{S}_{V,V}\otimes\id_{V}^{\otimes(n-i-1)}
    \end{array} \right.
\end{equation}

\begin{thm}\label{DMR rep is local}
The triple $(V, \check{R}, \check{S})$ is a loop braided vector space, that is, the map $\Psi_n$ constructed in \eqref{Rep from braiding} endows the vector space $V$ with a well-defined action of $\LBr_n$. Moreover, $\Psi_n$ factors through $\LH_n\otimes_{\bZ[t]}\bQ(q)$.
\end{thm}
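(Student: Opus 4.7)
The plan is to verify, relation by relation, the defining presentation of $\LBr_n$ in \eqref{eq:defn_LB}, and then the three extra quadratic relations of $\LH_n$ beyond $\rho_i^2=1$.

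First I would handle the $\LBr_n$-relations. The braid relations among the $\sigma_i$'s on $V^{\otimes n}$ are automatic from \autoref{thm: R is braiding}, which guarantees Yang--Baxter for $\check{R}$ on $V\otimes V\otimes V$. The braid relations for the $\rho_i$'s, together with $\rho_i^2=1$, follow from \autoref{prop: symmetric braiding}, since $\check{S}$ is a symmetric braiding. The two mixed loop relations $\rho_i\sigma_{i+1}\sigma_i=\sigma_{i+1}\sigma_i\rho_{i+1}$ and $\sigma_i\rho_{i+1}\rho_i=\rho_{i+1}\rho_i\sigma_{i+1}$ are exactly the specialization $M=N=L=V$ of \autoref{prop: mixed relations R and S}: the factors $q^{-1}$ coming from the normalization of $\Psi_n(\sigma_i)$ appear the same number of times on both sides and cancel. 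Distant commutations are immediate from the interchange law. Since $\check{R}_{V,V}$ is $\Uq$-equivariant and $\check{S}_{V,V}$ is $\Uqle$-equivariant by \autoref{prop:intertwining-neg}, the images indeed land in $\End_{\Uqle}(V^{\otimes n})$.

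Next I would verify the three additional quadratic relations that cut $\LBr_n$ down to $\LH_n\otimes_{\bZ[t]}\bQ(q)$, with $t$ identified with $q^{-2}$. The classical Hecke relation $(\sigma_i-1)(\sigma_i+t)=0$ is already established: by \eqref{otimes of highest weight} we have the $\Uq$-decomposition $V\otimes V\cong L(2\varepsilon_1)\oplus L(\varepsilon_1+\varepsilon_2)$, and $q^{-1}\check{R}_{V,V}$ acts by $1$ on the first summand and by $-q^{-2}$ on the second, as witnessed by the explicit matrix \eqref{matrix of R in basis}. For the two new mixed quadratic relations $(\rho_i-1)(\sigma_i+t)=0$ and $(\sigma_i-1)(\rho_i+1)=0$, the key structural observation I would prove is that the $+1$-eigenspace of $\check{S}_{V,V}$ on $V\otimes V$ coincides with the $+1$-eigenspace of $q^{-1}\check{R}_{V,V}$, namely the $\Uq$-submodule $L(2\varepsilon_1)$, spanned by $v^0\otimes v^0$ and $v^1\otimes v^0+q^{-1}v^0\otimes v^1$. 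This is a short verification: the matrix of $\check{S}_{V,V}$ in the basis of \eqref{matrix of R in basis} is computed from the formula $\check{S}=\tau\circ S$ together with the weights of $v^0,v^1$ and the super-sign from $\tau$, and the $+1$-eigenvectors read off directly. Granting this, the image of $\sigma_i+t$ is contained in this common eigenspace (since on the other eigenspace of $q^{-1}\check{R}$ the operator $\sigma_i+t$ vanishes), and $\rho_i-1$ is zero there; the second relation is symmetric.

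The main obstacle is not conceptual but bookkeeping: most of the real content has already been packaged into \autoref{thm: R is braiding}, \autoref{prop: symmetric braiding} and especially \autoref{prop: mixed relations R and S}. What remains is tracking the super-sign from $\tau$, the normalization $q^{-1}$ of $\Psi_n(\sigma_i)$, and the identification $t=q^{-2}$, so as to translate the abstract categorical identities into the precise group and algebra relations of \autoref{Definition loop heck}.
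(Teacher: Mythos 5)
Your proof is correct and follows the same overall skeleton as the paper: dispatch the loop-braid relations using \autoref{thm: R is braiding}, \autoref{prop: symmetric braiding}, and \autoref{prop: mixed relations R and S}, then verify the three extra quadratic relations on $V\otimes V$. Where you differ is in the final verification. The paper writes both matrices $q^{-1}\check{R}_{V,V}$ and $\check{S}_{V,V}$ in the decomposition basis $\{v^0\otimes v^0,\; v^1\otimes v^0+q^{-1}v^0\otimes v^1,\; v^1\otimes v^0-qv^0\otimes v^1,\; v^1\otimes v^1\}$ and checks the three quadratics by matrix multiplication. You instead make the conceptual observation that the $+1$-eigenspace of $\check{S}_{V,V}$ equals the $+1$-eigenspace of $q^{-1}\check{R}_{V,V}$, namely $L(2\varepsilon_1)$, and then deduce both mixed quadratics from $\im(\sigma_i+t)\subseteq L(2\varepsilon_1)\subseteq\ker(\rho_i-1)$ and its companion. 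This is a cleaner packaging of the same calculation and makes visible \emph{why} the relations hold, but it still rests on computing $\check{S}$ in a suitable basis.

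One small gloss: you dismiss the relation $(\sigma_i-1)(\rho_i+1)=0$ as ``symmetric.'' It is not literally so, because the matrix of $\check{S}_{V,V}$ in the decomposition basis has a nondiagonal entry while $q^{-1}\check{R}_{V,V}$ is diagonal. What makes the argument go through is the extra observation that $\check{S}_{V,V}^2=1$ (which is the symmetry axiom from \autoref{prop: symmetric braiding}), whence $\im(\check{S}_{V,V}+1)=\ker(\check{S}_{V,V}-1)=L(2\varepsilon_1)$, on which $q^{-1}\check{R}_{V,V}-1$ vanishes. With that sentence spelled out, your argument is complete.
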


\begin{proof}
We first show that $\Psi_n$ is a well-defined map on $\LBr_n$, by checking the relations of the loop braid group. \autoref{thm: R is braiding} and \autoref{prop: symmetric braiding}, applied to the vector representation $V$, show that the non-mixed braid relations \eqref{eq:sigma_rho_braid_same} are satisfied. \autoref{prop: mixed relations R and S}, still applied to the vector representation $V$, shows that the mixed braid relations \eqref{eq:sigma_rho_braid_mixed} are satisfied. 

To see that $\Psi_n$ factors through $\LH_n\otimes_{\bZ[t]}\bQ(q)$, we need to check the quadratic relations \eqref{eq:sigma_rho_same_label_same} and \eqref{eq:sigma_rho_same_label_mixed}, which we compute directly on $V\otimes V$. In the basis
\[
\{v^0\otimes v^0, v^1\otimes v^0+q^{-1} v^0 \otimes v^1, v^1\otimes v^0 -q v^0 \otimes v^1, v^1\otimes v^1 \}
\]
of $V\otimes V$, which is a basis realizing the decomposition $V\otimes V \simeq L(2\varepsilon_1)\oplus L(\varepsilon_1+\varepsilon_2)$, we find that the matrices of $q^{-1}\check{R}_{V,V}$ and of $\check{S}_{V,V}$ are respectively
\[
\begin{pmatrix}
    1 & 0 & 0 & 0\\
    0 & 1 & 0 & 0\\
    0 & 0 & -q^{-2} & 0\\
    0 & 0 & 0 & -q^{-2}
\end{pmatrix}
\quad\text{ and }\quad
\begin{pmatrix}
    1 & 0 & 0 & 0\\
    0 & 1 & -q(q-q^{-1}) & 0\\
    0 & 0 & -1 & 0\\
    0 & 0 & 0 & -1
\end{pmatrix}
\]
The quadratic relations then follow immediately from a matrix calculation.
\end{proof}

\begin{rem}
\label{rem:LBurau_is_DMR}
    The representation $\LBurau_n$ defined in \eqref{DMR repr} (by \cite{DMR_GeneralisationsHeckeAlgebras_2023}) coincides with $\Psi_n$. Indeed, the basis $\{ v_{\varepsilon_1}^0\otimes v_{\varepsilon_1}^0, v_{\varepsilon_1}^1\otimes v_{\varepsilon_1}^0 , q^{-1}v_{\varepsilon_1}^0\otimes v_{\varepsilon_1}^1, q^{-1}v_{\varepsilon_1}^1\otimes v_{\varepsilon_1}^1\}$ of $V\otimes V$, the respective matrices of $q^{-1}\check{R}_{V,V}$ and $\check{S}_{V,V}$ are given by
    \[
    \begin{pmatrix}
    1 & 0 & 0 & 0
    \\
    0 & (1-q^{-2}) & q^{-2} & 0 
    \\
    0 & 1 & 0 & 0
    \\
    0 & 0 & 0 & -q^{-2}
    \end{pmatrix}
    \quad\text{ and }\quad
    \begin{pmatrix}
        1 & 0 & 0 & 0\\
        0 & 0 & 1 & 0\\
        0 & 1 & 0 & 0\\
        0 & 0 & 0 & -1
    \end{pmatrix}.
    \]
    These matrices correspond to the matrices in \cite{DMR_GeneralisationsHeckeAlgebras_2023} for $q^{-2}$ playing the role of $t$. 
\end{rem}

\subsection{Schur--Weyl duality for the loop Hecke algebra}
Recall that $V$ denotes the vector representation $L(\varepsilon_1)$  of $\Uq$. The aim of the remainder of this section is to obtain the following Schur--Weyl type statement.
\begin{thm}\label{the rep is iso}
The morphism  $\Psi_n$ defined in \eqref{Rep from braiding} yields a $\bQ(q)$-algebra isomorphism from $\LH_n\otimes_{\bZ[t]}\bQ(q)$ to $\End_{\Uqle}(V^{\otimes n}).$
\end{thm}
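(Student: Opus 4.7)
The plan is to establish the theorem by combining dimension counts with the main results of the first part of the paper. The map $\Psi_n$ has already been constructed and shown to factor through $\LH_n \otimes_{\bZ[t]} \bQ(q)$ in \autoref{DMR rep is local}, so what remains is to verify that this factored map is an isomorphism of $\bQ(q)$-algebras. Since the dimension $\dim_{\bQ(q)} \LH_n \otimes_{\bZ[t]} \bQ(q) = \tfrac{1}{2}\binom{2n}{n}$ is given by \autoref{main coro: dim formula}, the strategy reduces to computing $\dim_{\bQ(q)} \End_{\Uqle}(V^{\otimes n})$ and proving either injectivity or surjectivity of $\Psi_n$.

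For the dimension of the target, I would start from the $\Uq$-decomposition $V^{\otimes n} \cong \bigoplus_{\ell=0}^{n-1} L(\ell)^{\oplus \binom{n-1}{\ell}}$ given in \eqref{eq: decomp tensor power V}, which is a fortiori a direct sum decomposition of $\Uqle$-modules. The key observation is that each two-dimensional simple $\Uq$-module $L(\ell)$ remains indecomposable (but no longer simple) as a $\Uqle$-module, since $F \cdot v^{0}_{L(\ell)} = [n]\, v^{1}_{L(\ell)} \neq 0$ makes $\langle v^{1}_{L(\ell)}\rangle$ a non-split submodule. By Krull--Schmidt over the field $\bQ(q)$,
\[
\dim \End_{\Uqle}(V^{\otimes n}) = \sum_{\ell, \ell'} \binom{n-1}{\ell}\binom{n-1}{\ell'} \dim \Hom_{\Uqle}(L(\ell'), L(\ell)).
\]
A direct weight-space analysis together with a check of $F$-equivariance (using $[n]\neq 0$) shows that $\Hom_{\Uqle}(L(\ell'),L(\ell))$ is one-dimensional when $\ell'=\ell$ (spanned by the identity) or $\ell'=\ell+1$ (spanned by the map $v^{0}_{L(\ell+1)} \mapsto v^{1}_{L(\ell)}$ realising the unique shared weight $\mu_{\ell+1}$), and zero otherwise. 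A Vandermonde-type identity then gives
\[
\sum_\ell \binom{n-1}{\ell}^2 + \sum_\ell \binom{n-1}{\ell}\binom{n-1}{\ell+1} = \binom{2n-2}{n-1} + \binom{2n-2}{n-2} = \binom{2n-1}{n-1} = \tfrac{1}{2}\binom{2n}{n},
\]
matching the dimension of $\LH_n\otimes_{\bZ[t]}\bQ(q)$.

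With the dimensions matched, the isomorphism will follow from injectivity (equivalently, surjectivity) of $\Psi_n$. My plan is to prove injectivity by showing that the images $\{\Psi_n(w) : w \in \Red(\varLH_n)\}$ of the reduced words of \autoref{defn:reduced_words} are linearly independent in $\End_{\Uqle}(V^{\otimes n})$. Coupled with the cardinality count \autoref{main thm: cardinality}, this simultaneously gives injectivity, surjectivity, and the alternative proof of linear independence announced in \autoref{Remark other lin.ind proof}. Concretely, I would evaluate each $\Psi_n(w)$ on weight basis vectors of $V^{\otimes n}$ of the form $v^{i_1}\otimes\cdots\otimes v^{i_n}$ and aim for a triangular pattern with respect to an ordering on reduced words. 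The structure $w = \underline{D}\,\underline{U}$ suggests the right strategy: the $\underline{U}$-part, being controlled by $\sigma$'s, determines which $\Uq$-isotypic summand the image lives in (via classical Schur--Weyl, \autoref{Thm Schur--Weyl hecke algebra}), while the $\underline{D}$-part records which extra $\Hom_{\Uqle}(L(\ell+1),L(\ell))$ component is activated.

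The main obstacle will be executing the triangularity cleanly: since $U_i$ and $D_i$ are expressed in terms of $\sigma_i$ and $\rho_i$ with denominator $(1-t)$, and the explicit matrices of $q^{-1}\check{R}_{V,V}$ and $\check{S}_{V,V}$ mix the $\Uq$-isotypic components of $V\otimes V$ in subtle ways, tracking how a reduced word acts through its ``diagonal'' leading term requires a careful choice of ordering and of test vectors. A cleaner alternative I would pursue in parallel is to argue surjectivity directly: the classical Schur--Weyl duality already shows that the subalgebra generated by the $\sigma_i$'s surjects onto $\End_{\Uq}(V^{\otimes n})$ of dimension $\binom{2n-2}{n-1}$, so one needs only to produce the remaining $\binom{2n-2}{n-2}$ dimensions of ``downgrading'' maps $L(\ell+1)\to L(\ell)$ using well-chosen products involving at least one $\rho_i$. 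Either route yields the desired isomorphism.
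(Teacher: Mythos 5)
Your computation of $\dim \End_{\Uqle}(V^{\otimes n}) = \binom{2n-1}{n}$ via the decomposition of $V^{\otimes n}$ and the one-dimensionality of the relevant Hom-spaces is correct and coincides with the paper's Proposition~\ref{decomp End over positive part}. However, the proposal has two genuine problems.

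First, the crucial step is not actually carried out. You reduce the theorem to proving injectivity or surjectivity of $\Psi_n$, and then sketch a triangularity argument for linear independence of the $\Psi_n(w)$ on weight vectors, while openly acknowledging the ``main obstacle'' and proposing a parallel alternative you also do not execute. This is exactly the hard computational work, and without it the proof does not go through. The paper sidesteps it entirely: by Remark~\ref{rem:LBurau_is_DMR}, $\Psi_n$ coincides with the Burau--Rittenberg representation $\LBurau_n$ of \cite{DMR_GeneralisationsHeckeAlgebras_2023}, whose image was shown in Theorem~5.8 of that paper to have dimension $\binom{2n-1}{n}$. That external result immediately gives surjectivity; combined with the upper bound on $\dim(\LH_n\otimes_{\bZ[t]}\bQ(q))$ coming from the fact that $\Red(\varLH_n)$ \emph{spans} (Lemma~\ref{lem:reduced_words_generate}) and has $\binom{2n-1}{n}$ elements (Theorem~\ref{thm:cardinality_reduced_words}), the isomorphism follows by a dimension comparison.

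Second, there is a latent circularity. You invoke Corollary~\ref{main coro: dim formula} for $\dim(\LH_n\otimes_{\bZ[t]}\bQ(q))$, which depends on the full basis theorem (Theorem~\ref{main thm: basis}), i.e.\ on \emph{linear independence} of $\Red(\varLH_n)$, while simultaneously claiming your argument yields the alternative linear-independence proof of Remark~\ref{Remark other lin.ind proof}. These are inconsistent. The paper is careful to avoid this: its proof of Theorem~\ref{the rep is iso} uses \emph{only} that $\Red(\varLH_n)$ generates, never that it is a basis, precisely so that Remark~\ref{Remark other lin.ind proof} is non-circular. If you want your route to deliver the alternative linear-independence proof, you must likewise drop the appeal to Corollary~\ref{main coro: dim formula} and rely only on spanning plus cardinality -- but then you are back to needing the uncompleted triangularity (or surjectivity) argument.
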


To obtain \autoref{the rep is iso} we need to compute $\dim \End_{\Uqle}(V^{\otimes n})$. To do so, we will decompose $\End_{\Uqle}(V^{\otimes n})$ into smaller pieces which will be proven in \autoref{subsec: W-M decomp} to be meaningful pieces of the Wedderburn--Mal'cev decomposition of the endomorphism ring. 

First we note that the decomposition in \eqref{eq: decomp tensor power V} is not one of simple $\Uqle$-modules as each $L(\ell) = L((n- \ell)\varepsilon_1+\ell \varepsilon_2)$ has a $\Uqle$-submodule. Since $F^2=0$, it is easily seen that the simple weight $\Uqle$-representations are one-dimensional. We denote by $L'(\lambda)$ the one-dimensional representation of weight $\lambda$. The following lemma describes the structure of the restriction to $\Uqle$ of the two-dimensional representations of $\Uq$. 

\begin{lem}\label{lem:restriction_neg}
    Let $\lambda\in P$ such that $\langle h_1+h_2,\lambda \rangle \neq 0$. The restriction to $\Uqle$ of the $\Uq$-representation $L(\lambda)$ is indecomposable but non-semisimple and has a composition series
    \[
    \{0 \} \subsetneq \bQ (q) v_{\lambda}^1 \subsetneq L(\lambda),
    \]
    where $\bQ (q) v_{\lambda}^1 \simeq L'(\lambda - \varepsilon_1+\varepsilon_2)$ and $L(\lambda)/\bQ (q) v_{\lambda}^1 \simeq L'(\lambda)$ as $\Uqle$-representations.
\end{lem}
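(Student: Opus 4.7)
The plan is essentially to unwind the definitions of the $\Uqle$-action and track weight spaces carefully, since $\Uqle$ is obtained from $\Uq$ by forgetting only the $E$ action while keeping $F$ and all $K_h$. I will fix the basis $\{v_\lambda^0, v_\lambda^1\}$ of $L(\lambda)$ from \eqref{action on 1st basis vector}--\eqref{action on 2nd basis vector} and use the fact that $\varepsilon_1 \neq \varepsilon_2$, so the two weights appearing in $L(\lambda)$, namely $\lambda$ (for $v_\lambda^0$) and $\lambda - \varepsilon_1 + \varepsilon_2$ (for $v_\lambda^1$), are distinct. Hence each weight space is one-dimensional, and any $\Uqle$-submodule of $L(\lambda)$ is the direct sum of its weight pieces, so it is spanned by a subset of $\{v_\lambda^0, v_\lambda^1\}$.

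Next I would locate the submodules. The line $\bQ(q) v_\lambda^1$ is stable under $K_1,K_2$ by the weight property and under $F$ because $F\cdot v_\lambda^1 = 0$, so it is a $\Uqle$-submodule isomorphic to $L'(\lambda-\varepsilon_1+\varepsilon_2)$. In contrast, $\bQ(q) v_\lambda^0$ is \emph{not} $F$-stable: $F\cdot v_\lambda^0 = [\langle h_1+h_2,\lambda\rangle]\,v_\lambda^1$, which is nonzero since by hypothesis $\langle h_1+h_2,\lambda\rangle \neq 0$ and $q$ is not a root of unity (so the quantum integer does not vanish). Thus $\bQ(q) v_\lambda^1$ is the unique proper nonzero $\Uqle$-submodule of $L(\lambda)$, giving the composition series displayed in the lemma. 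The quotient $L(\lambda)/\bQ(q) v_\lambda^1$ is one-dimensional, generated by the image of $v_\lambda^0$ of weight $\lambda$, so it is isomorphic to $L'(\lambda)$.

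Indecomposability and non-semisimplicity follow immediately from uniqueness of the proper submodule: if $L(\lambda)$ were a direct sum of two nonzero $\Uqle$-submodules, each summand would be a one-dimensional (hence simple) weight module, producing two distinct proper submodules. Equivalently, the two composition factors $L'(\lambda)$ and $L'(\lambda-\varepsilon_1+\varepsilon_2)$ are non-isomorphic, so semisimplicity would force a splitting, contradicting the fact that the only weight-$\lambda$ line $\bQ(q) v_\lambda^0$ is not $F$-stable.

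I do not anticipate a main obstacle: everything reduces to the single computation $F\cdot v_\lambda^0 = [\langle h_1+h_2,\lambda\rangle]\,v_\lambda^1 \neq 0$, together with the observation that the two weights of $L(\lambda)$ are distinct so weight-space decomposition controls all submodules. The only mild subtlety is remembering why $[\langle h_1+h_2,\lambda\rangle]\neq 0$, which uses the standing assumption that $q$ is not a root of unity.
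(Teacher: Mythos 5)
Your proof is correct and takes essentially the same approach as the paper; you merely make explicit the "direct computation" that the paper leaves to the reader, namely the weight-space decomposition argument combined with the fact that $F\cdot v_\lambda^0 = [\langle h_1+h_2,\lambda\rangle]v_\lambda^1 \neq 0$ rules out $\bQ(q)v_\lambda^0$ as a submodule.
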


\begin{proof}
The weights of $v_{\lambda}^0$ and $v_{\lambda}^1$ are recorded in \eqref{action on 1st basis vector} and \eqref{action on 2nd basis vector}. From these equations we also see that $\bQ (q) v_{\lambda}^1$ is indeed a $\Uqle$-submodule. As it is one-dimensional it is simple and isomorphic to $L'(\lambda - \varepsilon_1+\varepsilon_2)$ since $v_{\lambda}^{1}$ is of weight $\lambda-\varepsilon_1+\varepsilon_2$. The quotient $L(\lambda)/\bQ (q) v_{\lambda}^1$ is also one-dimensional and isomorphic to $L'(\lambda)$ since $v_{\lambda}^{0}$ is of weight $\lambda$.

A direct computation shows that $\bQ (q) v_{\lambda}^1$ is the only non-trivial $\Uqle$ subrepresentation of $L(\lambda)$. Since $L(\lambda)$ is of dimension $2$, $L(\lambda)$ is therefore indecomposable as an $\Uqle$ representation.
\end{proof}

Using \eqref{eq: decomp tensor power V} we obtain that 
\begin{equation}\label{First decomp End}
\End_{\Uqle}(V^{\otimes n})
=\bigoplus_{0 \leq \ell,k \leq n-1} \Hom_{\Uqle} \left( L(\ell), L(k) \right)^{\oplus \binom{n-1}{\ell} \binom{n-1}{k}}.
\end{equation}

We will now determine the dimension of each summand in \eqref{First decomp End}.

\begin{lem}\label{L:intertwiner_negative}
    Let $\lambda,\mu\in P$ such that $\langle h_1+h_1,\lambda\rangle \neq 0$ and $\langle h_1+h_1,\mu\rangle \neq 0$. Then
    \[
    \dim(\Hom_{\Uqle}(L(\lambda),L(\mu))) = 
    \begin{cases}
        1 & \text{if }\lambda=\mu\text{ or } \mu-\lambda=\varepsilon_{1}-\varepsilon_{2}\\
        0 & \text{otherwise}.
    \end{cases}
    \]
\end{lem}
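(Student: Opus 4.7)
The plan is to exploit that any $\Uqle$-morphism must preserve the $K_h$-weight decomposition, since $K_1, K_2 \in \Uqle$. By the descriptions in \eqref{action on 1st basis vector} and \eqref{action on 2nd basis vector}, the weight spaces of $L(\lambda)$ are $\bQ(q) v^0_\lambda$ of weight $\lambda$ and $\bQ(q) v^1_\lambda$ of weight $\lambda - \varepsilon_1 + \varepsilon_2$ (and similarly for $L(\mu)$). So the first move is: any $f \in \Hom_{\Uqle}(L(\lambda), L(\mu))$ is determined by $f(v^0_\lambda)$ and $f(v^1_\lambda)$, and each image is confined to the weight space of $L(\mu)$ of matching weight. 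Since these weight spaces are either $0$ or $1$-dimensional, this already reduces the Hom-space to at most two scalars, which are then further constrained by $F$-equivariance.

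Then I would enumerate the possible coincidences between the two-element multisets $\{\lambda, \lambda - \varepsilon_1 + \varepsilon_2\}$ and $\{\mu, \mu - \varepsilon_1 + \varepsilon_2\}$, giving four cases. If $\lambda = \mu$, both matches occur, so $f(v^0_\lambda) = a v^0_\mu$ and $f(v^1_\lambda) = b v^1_\mu$; imposing $f(F v^0_\lambda) = F f(v^0_\lambda)$ and using that $[\langle h_1 + h_2, \lambda\rangle] \neq 0$ (as $q$ is not a root of unity and the quantum integer is nonzero by assumption) forces $a = b$, giving a $1$-dimensional Hom space. If $\mu - \lambda = \varepsilon_1 - \varepsilon_2$, only the weight $\lambda$ in $L(\lambda)$ matches a weight in $L(\mu)$, namely $\mu - \varepsilon_1 + \varepsilon_2 = \lambda$, so $f(v^0_\lambda) = a v^1_\mu$ while $f(v^1_\lambda) = 0$; the $F$-equivariance conditions reduce to $a F v^1_\mu = 0$ and $0 = 0$, both automatic, so Hom is again $1$-dimensional.

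The subtlety—and the one place some care is needed—is the \emph{opposite} orientation $\lambda - \mu = \varepsilon_1 - \varepsilon_2$. Here only $v^1_\lambda$ has a weight partner ($v^0_\mu$), forcing $f(v^0_\lambda) = 0$ and $f(v^1_\lambda) = c v^0_\mu$; but then
\begin{equation*}
0 = f(F v^0_\lambda)\cdot [\langle h_1+h_2,\lambda\rangle]^{-1} = f(v^1_\lambda) = c v^0_\mu,
\end{equation*}
hence $c = 0$ and the Hom is zero. This is the asymmetry distinguishing $\mu - \lambda = \varepsilon_1 - \varepsilon_2$ from the reversed case, reflecting that $v^1_\lambda$ is the socle and $v^0_\mu$ the head of the indecomposable module described in \autoref{lem:restriction_neg}. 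The remaining case, with no weight coincidence, gives $f = 0$ trivially. Combining the four cases yields the claimed formula.
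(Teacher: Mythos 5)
Your proof is correct and takes essentially the same approach as the paper: decompose both modules into their one-dimensional weight spaces (preserved because $K_1,K_2\in\Uqle$), match up weight coincidences to constrain the images of $v^0_\lambda, v^1_\lambda$, and then use $F$-equivariance on the relation $F\cdot v^0_\lambda=[\langle h_1+h_2,\lambda\rangle]v^1_\lambda$ to determine which coefficients survive. The one small difference is that you spell out the case $\lambda=\mu$ (and correctly deduce $a=b$ from $F$-equivariance), which the paper passes over silently; your case analysis is also written out a bit more systematically. Incidentally, your case $\mu-\lambda=\varepsilon_1-\varepsilon_2$ with $f(v^0_\lambda)=av^1_\mu$, $f(v^1_\lambda)=0$ is the correct weight bookkeeping, whereas the paper's proof of this case has a harmless typo (it writes ``$\varphi(v^1_\lambda)=zv^0_\mu$'' again, copied from the previous case); your version is the accurate one.
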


\begin{proof}
    Let $\varphi\in\Hom_{\Uqle}(L(\lambda),L(\mu))$. The representation $L(\lambda)$ has basis vectors $v_{\lambda}^{0}$ and $v_{\lambda_{1}}$ of respective weights $\lambda$ and $\lambda-\varepsilon_{1}+\varepsilon_{2}$. Similarly, the representation $L(\mu)$ has basis vectors $v_{\mu}^{0}$ and $v_{\mu}^{1}$ of respective weights $\mu$ and $\mu-\varepsilon_{1}+\varepsilon_{2}$.

    First, if $\lambda\neq \mu$ and $\lambda\neq\mu \pm (\varepsilon_{1}-\varepsilon_{2})$, then $L(\lambda)$ and $L(\mu)$ have no weights in common and $\varphi$ must be $0$.

    Now, suppose that $\mu = \lambda - \varepsilon_{1} +\varepsilon_{2}$. Therefore, for weight reasons, there exists $z\in \bQ(q)$ such that $\varphi(v_{\lambda}^{1})=z v_{\mu}^{0}$ and $\varphi(v_{\lambda}^{0})=0$. But $[2]v_{\lambda}^{1}=F\cdot v_{\lambda}^{1}$. Since $\varphi$ commutes with the action of $F$, we have 
    \[
    [2]zv_{\mu}^{0} = \varphi([2]v_{\lambda}^{1}) = \varphi(F\cdot v_{\lambda}^{0}) = F\cdot \varphi(v_{\lambda}^{0}) = 0.
    \]
    Since $[2]$ is invertible in $\bQ(q)$, we obtain that $z=0$ and therefore $\varphi=0$.

    Finally, suppose that $\mu = \lambda + \varepsilon_{1} -\varepsilon_{2}$. As before, for weight reasons, there exists $z\in \bQ(q)$ such that $\varphi(v_{\lambda}^{1})=z v_{\mu}^{0}$ and $\varphi(v_{\lambda}^{0})=0$. We check that such a map is well defined for any $z\in\bQ(q)$ and commutes with the action of $\Uqle$.
\end{proof}

As a corollary, we obtain the following proposition:

\begin{prop}\label{decomp End over positive part}
   For any $n \in \bN_{\geq 1}$ one has that 
   \[
    \End_{\Uqle}(V^{\otimes n}) =  \bigoplus_{\ell =0}^{n-1} \End_{\Uqle} (L (\ell))^{\oplus \binom{n-1}{\ell}^{2}} \oplus
  \bigoplus_{\ell=1}^{n-1} \Hom_{\Uqle} \left(L (\ell),L (\ell-1) \right)^{\oplus \binom{n-1}{\ell}\binom{n-1}{\ell-1}}
   \]
with every summand a $1$-dimensional $\bQ(q)$-vector space. Furthermore:
\[\dim \End_{\Uqle}(V^{\otimes n}) = \binom{2n-1}{n}.\]
\end{prop}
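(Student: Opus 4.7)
The plan is to directly substitute the dimension computation of \autoref{L:intertwiner_negative} into the decomposition \eqref{First decomp End}, and then apply a Vandermonde identity for the dimension count.

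First, I would recall that $L(\ell)$ is defined as $L((n-\ell)\varepsilon_{1} + \ell\varepsilon_{2})$, so that the difference between the highest weights of $L(k)$ and $L(\ell)$ is $(\ell-k)(\varepsilon_{1}-\varepsilon_{2})$. In view of \autoref{L:intertwiner_negative}, the space $\Hom_{\Uqle}(L(\ell),L(k))$ is non-zero (and then $1$-dimensional) exactly when either $\ell = k$, or $\ell - k = 1$. Plugging this into the decomposition \eqref{First decomp End} immediately yields the claimed direct sum decomposition of $\End_{\Uqle}(V^{\otimes n})$, with each summand one-dimensional.

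For the dimension count, I would compute
\[
\dim \End_{\Uqle}(V^{\otimes n}) = \sum_{\ell=0}^{n-1}\binom{n-1}{\ell}^{2} + \sum_{\ell=1}^{n-1}\binom{n-1}{\ell}\binom{n-1}{\ell-1}.
\]
The first sum equals $\binom{2n-2}{n-1}$ by the classical Vandermonde identity $\sum_{\ell}\binom{n-1}{\ell}^{2} = \binom{2(n-1)}{n-1}$. For the second sum, reindexing with $j = \ell-1$ and using $\binom{n-1}{j+1} = \binom{n-1}{n-2-j}$, Vandermonde again gives $\sum_{j=0}^{n-2}\binom{n-1}{j}\binom{n-1}{n-2-j} = \binom{2n-2}{n-2}$. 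Finally, the Pascal identity $\binom{2n-2}{n-1} + \binom{2n-2}{n-2} = \binom{2n-1}{n-1} = \binom{2n-1}{n}$ closes the computation.

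There is essentially no obstacle here: the statement is a bookkeeping consequence of \eqref{First decomp End} and \autoref{L:intertwiner_negative}, together with two standard binomial identities. The only minor care needed is to correctly match weight differences between $L(\ell)$ and $L(k)$ with the two cases of \autoref{L:intertwiner_negative}, so that the ``off-diagonal'' contribution is indexed by $k = \ell-1$ (and not $k = \ell+1$), which is what produces the binomial product $\binom{n-1}{\ell}\binom{n-1}{\ell-1}$ appearing in the statement.
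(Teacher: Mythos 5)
Your proof is correct and follows essentially the same route as the paper: substitute the dimension count from \autoref{L:intertwiner_negative} into \eqref{First decomp End}, then use Vandermonde for each sum and Pascal's rule to combine. One small remark: your intermediate computation is actually more careful than the paper's, which records the second sum as $\binom{2(n-1)}{n-1}$ (an apparent typo) rather than the correct $\binom{2(n-1)}{n-2} = \binom{2n-2}{n}$ that you obtain; the final value $\binom{2n-1}{n}$ is of course the same.
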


\begin{proof}
Everything but the value of the dimension follows from \autoref{L:intertwiner_negative}. By taking dimensions, we get:
\[
\dim \End_{\Uqle}(V^{\otimes n})  = \sum\limits_{\ell=0}^{n-1} \binom{n-1}{\ell}^2 + \sum\limits_{\ell = 1}^{n-1} \binom{n-1}{\ell}\binom{n-1}{\ell -1}
= \binom{2(n-1)}{n-1} + \binom{2(n-1)}{n-1}
\]
where in the second equation we have used Vandermonde's identity. Pascal's rule shows that the last expression equals $\binom{2n-1}{n}$, finishing the proof.
\end{proof}

\begin{exe}\label{E:2-fold-tensor}
    Take $n=2$. Then $L(0) = L(2\varepsilon_1)$, $L(1) = L(\varepsilon_1+\varepsilon_2)$ and
    \[
    V\otimes V = L(2\varepsilon_1)\oplus L(\varepsilon_1+\varepsilon_2),
    \]
    where the direct sum is only a direct sum of $\Uq$-modules.
    For every element $z\in\bQ(q)$, the map $\varphi_z\colon v_{\varepsilon_1+\varepsilon_2}^0\mapsto z v_{2\varepsilon_1}^1$ and zero elsewhere defines a map $\varphi_z \colon L(\varepsilon_1+\varepsilon_2)\to L(2\varepsilon_1)$. It permutes with the $\Uqle$-action, but not with the $\Uq$-action.
    This can be pictured as, where rightward (resp.\ leftward dashed) arrows denote the action of $F$ (resp.\ $E$), and the vertical arrow is $\varphi_z$:
    \[
    \begin{tikzcd}
        L(0)= & v_{2\varepsilon_1}^0 \ar[r,bend left=10pt,"{[2]_q}"] 
        & v_{2\varepsilon_1}^1 \ar[l,bend left=10pt,"1",dashed] &
        \\
        &
        & v_{\varepsilon_1+\varepsilon_2}^0 \ar[r,bend left=10pt,"{[2]_q}"] \ar[u,"z"]  & v_{\varepsilon_1+\varepsilon_2}^1
        \ar[l,bend left=10pt,"1",dashed] 
        & =L(1)
    \end{tikzcd}
    \]
    Write $\pi_{L(l)}$ the projection onto $L(l)$.
    Direct computation shows that $\Psi(D) = \pi_{L(1)}+\varphi_{q^2}$ and $\Psi(U) = \pi_{L(0)}+\varphi_{1}$, where we use the presentation of the loop Hecke algebra given in \autoref{defn:var_loop_hecke}, following \autoref{main thm: eq rel}. In particular, we have $\Psi(DU) = \pi_{L(1)}\varphi_{1} + \varphi_{q^2}\pi_{L(0)} = 0 $ and $\Psi(UD)=\pi_{L(0)}\varphi_{q^2}+\varphi_{1}\pi_{L(1)} = \varphi_{q^2+1} = \Psi(U+D-1)$, as expected.

    Note that if we were to work with $U_q^{\geq 0}(\glone)$ instead, then the map $\varphi_z$ would have to go in the opposite direction, and we would have the opposite relations between $U$ and $D$.
    This suggests that in order to get the analogue story for $U_q^{\geq 0}(\glone)$, one needs to switch the roles of $U$ and $D$.
\end{exe}

We have now the necessary tools to prove \autoref{the rep is iso}.

\begin{proof}[Proof of \autoref{the rep is iso}]
    By \autoref{rem:LBurau_is_DMR} the representation $\Psi_n$ coincides with $\LBurau_n$ defined in \eqref{DMR repr}. It was proven in \cite[Theorem 5.8]{DMR_GeneralisationsHeckeAlgebras_2023} that $\dim \im(\LBurau_n)  = \binom{2n-1}{n}$ which equals $\dim \End_{\Uqle}(V^{\otimes n})$ by \autoref{decomp End over positive part}. Hence $\Psi_n = \LBurau_n$ is surjective. Now, by \autoref{thm:equivalence_presentations}, $\LH_n\otimes_{\bZ[t]}\bQ(q)$ is isomorphic to $\varLH_n\otimes_{\bZ}\bQ(q)$, where $\bQ(q)$ is a $\bZ[t]$-algebra via $t\mapsto q^{-2}$. Solely using that the $\varLH_n$-reduced words $\Red(\varLH_n)$ generate $\varLH_n$ (cf.\ \autoref{lem:reduced_words_generate}), we obtain that $\dim(\LH_n\otimes_{\bZ[t]}\bQ(q)) \leq |\Red(\varLH_n) |$ with the upper bound equal to ${2n-1 \choose n}$ by \autoref{thm:cardinality_reduced_words}. Thus, comparing dimensions with the co-domain, we see that $\Psi_n$ must be an isomorphism.
\end{proof}

\begin{rem}\label{Remark other lin.ind proof}
Note that the proof \autoref{the rep is iso} yields an alternative way to obtain that the $\varLH_n$-reduced words are linearly independent over $\mathbb{Q}(q).$
\end{rem}

\section{A ring and representation theoretical perspective on loop Hecke algebra}
\label{sec:ring_approach_loop_Hecke}

In this section we make use of \autoref{the rep is iso} to study the loop Hecke algebra over the field $\bQ(q)$ in more detail through the isomorphic algebra $\End_{\Uqle}(V^{\otimes n})$. We still equip $\bQ(q)$ with the $\bZ[t]$-algebra structure obtained via $t\mapsto q^{-2}$.

\subsection{Description of the semisimple part and Jacobson radical}\label{subsec: W-M decomp}

If $A$ is a finite-dimensional algebra over a perfect field $\Bbbk$, then the theorem of Wedderburn--Mal'cev yields that there exists a maximal semisimple subalgebra $B_{ss}$ in $A$ such that 
\begin{equation}\label{M-W decomposition}
A \cong B_{ss} \oplus Jac(A)
\end{equation}
as a $\Bbbk$-vector space. In \eqref{M-W decomposition} $Jac(A)$ denotes the Jacobson radical of $A$ and $B_{ss}$ is unique up to an algebra automorphism of $A$. The aim of this section is to describe the constituents of \eqref{M-W decomposition} for the loop Hecke algebra. More precisely, we will use \autoref{the rep is iso} to instead describe the Wedderburn--Mal'cev decomposition of the isomorphic algebra $\End_{\Uqle}(V^{\otimes n})$. 

\subsubsection{The Wedderburn--Mal'cev decomposition and application}

We consider the following constituents of the decomposition obtained in \autoref{decomp End over positive part}:
\[N_{\ell} := \End_{\Uqle}(L(\ell))^{\oplus\binom{n-1}{\ell}^2} \text{ and } R_{\ell} := \Hom_{\Uqle}(L(\ell),L(\ell-1))^{\oplus \binom{n-1}{\ell}\binom{n-1}{\ell-1}}.\]

\begin{thm}\label{W-M decompisition End}
For any $n \in \bN_{\geq 1}$ and with notations as above, we have that:
\begin{enumerate}
    \item $\bigoplus_{\ell=0}^{n-1} N_{\ell}$ is a maximal semisimple subalgebra with $N_{\ell}$ a simple factor,
    \item $\bigoplus_{\ell=0}^{n-1} N_{\ell}= \End_{\Uq}(V^{\otimes n})$ is isomorphic to the super Temperley--Lieb algebra, 
    \item $J\big(\End_{\Uqle}(V^{\otimes n})\big) = \bigoplus_{\ell=1}^{n-1}R_{\ell}$ and has square-zero. 
\end{enumerate}
Moreover, for $0 \leq i ,j \leq n-1$, denoting by $e_{i}$ the identity of $N_{i}$, we have:
\[ e_{j} \End_{\Uqle}(V^{\otimes n}) e_i = \left\lbrace \begin{array}{ll}
N_i& \text{ if } j =  i,\\
R_i&  \text{ if } j =  i-1,\\
0 & \text{ else.}\\
\end{array}\right.\]
\end{thm}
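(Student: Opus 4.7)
The plan is to exploit the $\Uq$-isotypic decomposition $V^{\otimes n} \cong \bigoplus_{\ell=0}^{n-1} L(\ell)^{\oplus \binom{n-1}{\ell}}$ from \eqref{eq: decomp tensor power V}. The associated projections $e_\ell$ onto the isotypic components are $\Uq$-equivariant, hence a fortiori $\Uqle$-equivariant, so they form a complete family of orthogonal idempotents in $\End_{\Uqle}(V^{\otimes n})$. The entire proof revolves around the resulting Peirce decomposition.

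First I would establish the final bullet. For $f \in \End_{\Uqle}(V^{\otimes n})$, each Peirce piece $e_j f e_i$ lies in $\Hom_{\Uqle}(L(i)^{\oplus \binom{n-1}{i}}, L(j)^{\oplus \binom{n-1}{j}})$. Applying \autoref{L:intertwiner_negative} to the highest weights $\lambda_i = (n-i)\varepsilon_1 + i\varepsilon_2$, this Hom-space is non-zero exactly when $j = i$ or $j = i-1$, and is identified with $N_i$ or $R_i$ respectively. This proves the block-structure claim and simultaneously yields the vector-space decomposition $\End_{\Uqle}(V^{\otimes n}) = \bigoplus_\ell N_\ell \oplus \bigoplus_\ell R_\ell$.

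Next I would handle the semisimple part. Since $\End_{\Uqle}(L(\ell))$ is one-dimensional (a direct weight computation, or \autoref{L:intertwiner_negative} with $\lambda = \mu$), each $N_\ell$ is isomorphic to $M_{\binom{n-1}{\ell}}(\bQ(q))$, hence simple. Closure under multiplication and orthogonality of the blocks is immediate from $e_\ell e_m = \delta_{\ell m} e_\ell$, so $\bigoplus_\ell N_\ell$ is a semisimple subalgebra with the claimed simple factors. For the identification $\bigoplus_\ell N_\ell = \End_{\Uq}(V^{\otimes n})$, one applies Schur's lemma to the simple $\Uq$-modules $L(\ell)$ to decompose $\End_{\Uq}(V^{\otimes n})$ into the same matrix blocks, then invokes the description of this algebra as the super Temperley--Lieb algebra recalled at the end of \autoref{sec:background_uglone}.

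The last step is to identify $\bigoplus_\ell R_\ell$ with the Jacobson radical, and this is where the main bookkeeping hurdle lies. Two-sidedness of $\bigoplus_\ell R_\ell$ again follows from the block structure together with $e_\ell e_m = \delta_{\ell m} e_\ell$. The key computation is that the product of $e_{\ell-1} \alpha e_\ell \in R_\ell$ and $e_{m-1} \beta e_m \in R_m$ vanishes: it is zero unless $m = \ell + 1$, and in that case lands in $e_{\ell-1} \End_{\Uqle}(V^{\otimes n}) e_{\ell+1}$, which is zero by the block structure since the indices differ by two. Hence $\bigoplus_\ell R_\ell$ is square-zero, thus nilpotent, thus contained in the Jacobson radical. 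Since the quotient $\End_{\Uqle}(V^{\otimes n})/\bigoplus_\ell R_\ell \cong \bigoplus_\ell N_\ell$ is already semisimple, equality follows, and maximal semisimplicity of $\bigoplus_\ell N_\ell$ is automatic. The hard part throughout is tracking the index shifts between $N_\ell$, $R_\ell$, and $e_\ell$ consistently, which the theorem's formulation already anticipates.
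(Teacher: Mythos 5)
Your proof is correct and follows essentially the same route as the paper: decompose via the $\Uq$-isotypic components, invoke \autoref{L:intertwiner_negative} to pin down the Peirce blocks, read off simplicity of each $N_\ell$ and square-zero nilpotency of $\bigoplus R_\ell$ from the block adjacency structure, and conclude via the standard characterization of the radical as the smallest ideal with semisimple quotient. The only organizational difference is that you derive the Peirce block description first and deduce everything else from it, whereas the paper first sets up multiplication rules on the refined summands $\Hom_{\Uqle}(L_{\ell,k},L_{\ell',k'})$ and then verifies each bullet; your observation that $N_\ell\cong M_{\binom{n-1}{\ell}}(\bQ(q))$ is a slightly cleaner way to see simplicity than the paper's direct ``every nonzero element generates'' argument, but the content is the same.
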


Before proving \autoref{W-M decompisition End} we discuss some consequences.

\begin{rem}
A consequence of \autoref{W-M decompisition End} and \autoref{Thm Schur--Weyl hecke algebra} is that the Hecke algebra $\Hecke_n\otimes_{\bZ[t]}\bQ(q)$ surjects onto $\End_{\Uqle}(V^{\otimes n})/J\big( \End_{\Uqle}(V^{\otimes n})\big)$. Now recalling that the Jacobson radical is preserved under morphisms and using \autoref{the rep is iso} we obtain that
\[
\Hecke_n\otimes_{\bZ[t]}\bQ(q) \twoheadrightarrow \LH_n\otimes_{\bZ[t]}\bQ(q)/ J(\LH_n\otimes_{\bZ[t]}\bQ(q)).
\]
It would be interesting to have an explicit description of a basis for the pieces $\Psi^{-1}(N_i)$ and $\Psi^{-1}(R_i)$.
Note that it follows from \autoref{thm: R is braiding} that 
\[\Psi(\sigma_{i}) \in \bigoplus_{\ell=0}^{n-1}N_{\ell} = \End_{\Uq}(V^{\otimes n})\]
for every $0 \leq i \leq n-1$.
\end{rem}

The elements $\{e_0, \ldots, e_{n-1}\}$ form a set of orthogonal idempotents that add up to the identity: $\id_{V^{\otimes n}} = e_0 + \cdots +e_{n-1}.$ Thanks to this one can consider the associated Peirce decomposition 
\[\End_{\Uqle}(V^{\otimes n}) = \bigoplus_{0\leq i,j \leq n-1}e_j \End_{\Uqle}(V^{\otimes n}) e_i\]
which allows one to consider $\End_{\Uqle}(V^{\otimes n})$ as a $n\times n$ matrix algebra with $e_j \End_{\Uqle}(V^{\otimes n}) e_i$ as entry $(i,j)$. Associated with this one can consider the $n\times n$-matrix $M=(M_{i,j})_{0\leq i,j \leq n-1}$ with
\[M_{i,j} := \dim e_j \End_{\Uqle}(V^{\otimes n}) e_i.\]
In \cite{DMR_GeneralisationsHeckeAlgebras_2023} the matrix $M$ is referred to as encoding the ``structure of the algebra''. In \cite[Section 6 \& Conjecture 6.4]{DMR_GeneralisationsHeckeAlgebras_2023} a conjecture was formulated for the form of $M$. As a corollary of the above we now confirm their conjecture.

\begin{cor}\label{value peirce matrix}
Denote, for $0 \leq  \ell \leq n-1$, by $w_{\ell} := \binom{n-1}{\ell}$ the $\ell^{th}$ entry of the $n^{th}$ row of Pascal's triangle. With notations as above, we have that 
\[M = \begin{pmatrix}
   w_0^2 & 0 &  0 &\cdots & 0 & 0\\
   w_0 w_1& w_1^2 & 0 & \cdots & 0 & 0 \\
   0 & w_1w_2 & w_2^2 &  \ddots & \vdots & 0 \\
   0 & 0 & \ddots & \ddots & 0 & \vdots \\
   \vdots &  &\ddots &\ddots & w_{n-2}^2 & 0\\
   0 & \cdots & & 0&w_{n-2}w_{n-1} & w_{n-1}^2
\end{pmatrix} = D. \begin{pmatrix}
    1 & &&&& \\
    1 & 1 &&&&\\
    & 1 & 1 & &&\\
    & & 1 & 1 &&\\
    & & & \ddots & \ddots&\\
    & & & & 1 & 1
\end{pmatrix} .D\]
where $D := \text{diag}(w_0, \cdots, w_{n-1})$.
\end{cor}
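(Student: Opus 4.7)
The plan is to extract \autoref{value peirce matrix} directly from the last part of \autoref{W-M decompisition End}, which already identifies every Peirce block: we have $e_j \End_{\Uqle}(V^{\otimes n}) e_i$ equal to $N_i$ when $j = i$, to $R_i$ when $j = i-1$, and to $0$ otherwise. Hence the only remaining work is to compute the dimensions of $N_i$ and $R_i$, and then to verify the displayed matrix factorization.

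First I would invoke \autoref{L:intertwiner_negative} twice. The case $\lambda = \mu$ yields $\dim \End_{\Uqle}(L(\ell)) = 1$, while the case $\mu - \lambda = \varepsilon_1 - \varepsilon_2$ applied to $\lambda = (n-\ell)\varepsilon_1 + \ell\varepsilon_2$ and $\mu = (n-\ell+1)\varepsilon_1 + (\ell-1)\varepsilon_2$ gives $\dim \Hom_{\Uqle}(L(\ell), L(\ell-1)) = 1$. Plugging these one-dimensional $\Hom$-spaces into the definitions $N_\ell = \End_{\Uqle}(L(\ell))^{\oplus w_\ell^2}$ and $R_\ell = \Hom_{\Uqle}(L(\ell), L(\ell-1))^{\oplus w_\ell w_{\ell-1}}$ immediately yields $\dim N_\ell = w_\ell^2$ and $\dim R_\ell = w_\ell w_{\ell-1}$. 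Reading these values along the Peirce decomposition then produces exactly the lower bidiagonal matrix $M$ displayed in the statement: the diagonal entries are $w_i^2$, the sub-diagonal entries are $w_i w_{i-1}$, and every other entry vanishes.

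The factorization $M = D \cdot T \cdot D$ is then a one-line verification. Since $D$ is diagonal, $(DTD)_{ij} = w_i\, T_{ij}\, w_j$, and the matrix $T$ in the statement has entries $1$ precisely on the diagonal and on the sub-diagonal. Evaluating yields $(DTD)_{ii} = w_i^2$ and $(DTD)_{i,i-1} = w_i w_{i-1}$, with all other entries zero, matching $M$.

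There is no genuine obstacle in this corollary: it is a book-keeping consequence of the structural description provided by \autoref{W-M decompisition End} together with the dimension count of \autoref{L:intertwiner_negative}, and the factorization is purely formal. If anything, the only subtlety to flag is that $\End_{\Uqle}(L(\ell))$ is still one-dimensional even though $L(\ell)$ is non-simple as a $\Uqle$-representation, so that one really needs to appeal to \autoref{L:intertwiner_negative} rather than Schur's lemma.
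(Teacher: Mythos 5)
Your argument is correct and follows essentially the same route as the paper: identify the Peirce blocks via the last part of \autoref{W-M decompisition End}, plug in the one-dimensionality of the relevant $\Hom$-spaces (the paper cites \autoref{decomp End over positive part} for this, which itself is just \autoref{L:intertwiner_negative} repackaged, so the substance is identical), and then the factorization $M = DTD$ is a direct matrix multiplication. Your closing remark that one must invoke \autoref{L:intertwiner_negative} rather than Schur's lemma, since $L(\ell)$ is not simple over $\Uqle$, is a sensible clarification that the paper leaves implicit.
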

The second equality in \autoref{value peirce matrix} was added as it is in this form that Damiani--Martin--Rowell formulated their conjecture, see matrix $\mathcal{M}^p_n$ in \cite[Section 6]{DMR_GeneralisationsHeckeAlgebras_2023}.
\begin{proof}[Proof of \autoref{value peirce matrix}]
    In \autoref{W-M decompisition End} the form of the corners $e_{j} \End_{\Uqle}(V^{\otimes n}) e_i$ has been described. Now recall that by \autoref{decomp End over positive part} 
    \[\dim  \End_{\Uqle}(L(\ell)) = \dim \Hom_{\Uqle}(L(\ell^{\prime}),L(\ell^{\prime}-1)) = 1 \]
for all $0 \leq \ell \leq n-1$ and $ 1 \leq \ell^{\prime} \leq n-1.$ Hence $\dim N_{\ell} = \binom{n-1}{\ell}^2$ and $\dim R_{\ell^{\prime}} =\binom{n-1}{\ell^{\prime}}\binom{n-1}{\ell^{\prime}-1}$, which proves the first equality. The second equality is a direct matrix multiplication.
\end{proof}

\begin{rem}
In \cite[Conjecture 6.4]{DMR_GeneralisationsHeckeAlgebras_2023} the form of the matrix $M$ was only implicitly formulated for the loop Hecke algebra $\LH_n$ itself. Instead, they conjectured that the structure of certain quotients of $\LH_n$ was given by some truncations of the matrix $M$. In \autoref{section: special quot} we come back to that conjecture.
\end{rem}

\subsubsection{Towards the proof of \autoref{W-M decompisition End}}\label{subsection proof W-M}

To start, recall \eqref{eq: decomp tensor power V} saying that 
\[
 V^{\otimes n} \cong \bigoplus\limits_{\ell=0}^{n-1} L(\ell)^{\oplus\binom{n-1}{\ell}} .
\]
For the remaining of the section we order the copies of $V^{\otimes n}$ from $1$ till $\binom{n-1}{\ell}$. Accordingly denote 
\[L_{\ell,k} := k^{th} \text{ copy of } L(\ell) \text{ in } V^{\otimes n}\]
with $1 \leq k \leq \binom{n-1}{\ell}$. Note that \autoref{decomp End over positive part} now in fact tells that 
\begin{equation}\label{End decomp refined not}
\End_{\Uqle}(V^{\otimes n}) = 
\bigoplus_{\ell=0}^{n-1} \bigoplus\limits_{1 \leq k,k^{\prime}\leq \binom{n-1}{\ell}} \Hom_{\Uqle}(L_{\ell, k},L_{\ell, k^{\prime}}) 
\oplus 
\bigoplus_{\ell^{\prime}=1}^{n-1} \bigoplus_{r=1}^{\binom{n-1}{\ell^{\prime}}} \bigoplus_{r^{\prime}=1}^{\binom{n-1}{\ell^{\prime}-1}} \Hom_{\Uqle}(L_{\ell^{\prime}, r},L_{\ell^{\prime}-1, r^{\prime}})
\end{equation}

Now for $\varphi \in \Hom_{\Uqle}(L_{\ell, k},L_{\ell, k^{\prime}}) $ and $\psi \in \Hom_{\Uqle}(L_{\ell^{\prime}, r},L_{\ell^{\prime}-1, r^{\prime}})$ we directly see from \eqref{End decomp refined not} that 
\begin{equation}\label{left multiplication}
    \varphi \circ \psi \neq 0 \Leftrightarrow \ell = \ell^{\prime}-1 \text{ and } k = r^{\prime}.
\end{equation}
In that case $\varphi \circ \psi \in  \Hom_{\Uqle}(L_{\ell^{\prime}, r},L_{\ell^{\prime}-1, k^{\prime}}).$
Similarly,
\begin{equation}\label{right multiplication}
    \psi \circ  \varphi \neq 0 \Leftrightarrow \ell^{\prime} = \ell \text{ and } r = k^{\prime}
\end{equation}
in which case $\psi \circ \varphi \in  \Hom_{\Uqle}(L_{\ell, k},L_{\ell-1, r^{\prime}}).$

\begin{proof}[Proof of \autoref{W-M decompisition End}]
In terms of the notations above we have by definition that
\[N_{\ell} = \bigoplus\limits_{1 \leq k,k^{\prime}\leq \binom{n-1}{\ell}} \Hom_{\Uqle}(L_{\ell, k},L_{\ell, k^{\prime}}) \text{ and } R_{\ell^{\prime}}= \bigoplus_{r=1}^{\binom{n-1}{\ell^{\prime}}} \bigoplus_{r^{\prime}=1}^{\binom{n-1}{\ell^{\prime}-1}} \Hom_{\Uqle}(L_{\ell^{\prime}, r},L_{\ell^{\prime}-1, r^{\prime}}).\]
From \eqref{left multiplication} and \eqref{right multiplication} it directly follows that $\bigoplus_{\ell^{\prime}=1}^{n-1}R_{\ell^{\prime}}$ is a two-sided ideal of $\End_{\Uqle}(V^{\otimes n})$ whose square is zero (i.e.\ the product of any two elements is zero). Those equations also imply that $N_{\ell_1}.N_{\ell_2}=0$ for $0 \leq \ell_1 \neq \ell_2 \leq n-1.$ Hence the vector space decomposition $\bigoplus_{\ell=0}^{n-1} N_{\ell}$ is also one of rings and in particular $N_{\ell}$ is a two-sided ideal of $\bigoplus_{\ell=0}^{n-1} N_{\ell}$. Since all spaces $\Hom_{\Uqle}(L_{\ell, k},L_{\ell, k^{\prime}})$ are $1$-dimensional, by \autoref{decomp End over positive part}, \eqref{left multiplication} and \eqref{right multiplication} also imply that the two-sided ideal (inside $\bigoplus_{\ell=0}^{n-1} N_{\ell}$) generated by any $0 \neq \varphi \in \Hom_{\Uqle}(L_{\ell, k},L_{\ell, k^{\prime}})$ is the full of $N_{\ell}.$ From this we infer that $N_{\ell}$ is a simple ring, finishing the proof of part (1) of the statement, except the maximality. The latter will follow if we prove that the vector space complement $\bigoplus_{\ell^{\prime}=1}^{n-1}R_{\ell^{\prime}}$, see \eqref{End decomp refined not}, equals the Jacobson radical. 

Next note that since $\bigoplus_{\ell^{\prime}=1}^{n-1}R_{\ell^{\prime}}$ is a nil-ideal, being of square-zero, it is contained in the Jacobson radical. Recall that for Artinian algebras the Jacobson radical is also characterized as the smallest (two-sided) ideal such that the corresponding quotient is semisimple. Now as
\[\End_{\Uqle}(V^{\otimes n}) / \bigoplus_{\ell^{\prime}=1}^{n-1}R_{\ell^{\prime}} \cong  \bigoplus_{\ell=0}^{n-1} N_{\ell}\]
is semisimple, by the statement obtained earlier, we obtain the other inclusion and finishing both statements (3) and (1).

Concerning the moreover part, note that 
\[
e_i = \sum\limits_{1 \leq k,k^{\prime}\leq \binom{n-1}{\ell}} \id_{k,k^{\prime}}
\]
where $\id_{k,k^{\prime}}$ is the canonical identification of $L_{\ell,k}$ and $L_{\ell, k^{\prime}}.$ With this description and using \eqref{left multiplication} and \eqref{right multiplication} the moreover part follows directly.

Finally, for statement (2) note that \autoref{L:intertwiner_negative} shows that $\End_{\Uqle}(L(\ell)) = \End_{\Uq}(L(\ell))$. Since the modules $L(\ell)$ are simple $\Uq$-modules, the latter and \eqref{eq: decomp tensor power V} implies that
\[
\bigoplus_{\ell=0}^{n-1} N_{\ell} = \bigoplus_{\ell=0}^{n-1} \End_{\Uq}(L(\ell)^{\oplus\binom{n-1}{\ell}}) = \End_{\Uq}(V^{\otimes n}).
\]
Now it was proven in \cite[Section 4]{Sartori_CategorificationTensorPowers_2016} that $\End_{\Uq}(V^{\otimes n})$ is isomorphic to the super Temperley--Lieb algebra.
\end{proof}

\subsection{A description of the Ext-quiver and Cartan matrix}

Consider the primitive central idempotents $\{ e_0, \ldots, e_{n-1} \}$ described in \autoref{W-M decompisition End}. As the idempotents are orthogonal and $1 = \sum_{i=0}^{n-1} e_i$ one has the associated decomposition in blocks:
\[
\End_{\Uqle}(V^{\otimes n}) = \bigoplus_{i=0}^{n-1} \End_{\Uqle}(V^{\otimes n}) . e_i
\]
A block $\End_{\Uqle}(V^{\otimes n}) . e_i$ can be further decomposed into
\[\End_{\Uqle}(V^{\otimes n}) . e_i = P_i^{\oplus m_i}\]
where $\{P_0, \ldots, P_{n-1} \}$ are representatives of the isomorphism classses of indecomposable projective (left) modules. Their head $P_i/J(P_i)$ is simple which we denote by $S_i$. Below, in \autoref{ind proj and simples}, we will describe these modules explicitly and also the multiplicities $m_i$.

For a finite dimensional algebra $A$, the {\it Cartan matrix $C(A)$} is an $n\times n$-matrix whose $i^{th}$ row encode the multiplicity of each simple module in the composition series of $P_i$:
\[C(A)_{i,j} := \dim \Hom_A(P_j,P_i).\]
The Ext-quiver $Q_A$ encode a complementary piece of information: its vertices are given by the simple modules $S_i$ and 
\[\# S_i \rightarrow S_j :=  \dim \Ext^1_A(S_i,S_j).\]
In \cite[Theorem 5.7 \& Corollary 6.1]{DMR_GeneralisationsHeckeAlgebras_2023} both were described for the algebra $SP_n := \LBurau_n(\Bbbk[\LBr_n])$ with $\LBurau_n$ defined in \eqref{DMR repr}. Over the field $\Bbbk=\bQ(q)$, it follows from \autoref{DMR rep is local} and \autoref{the rep is iso} that $SP_n \cong \End_{\Uqle}(V^{\otimes n})$ and hence their description also hold for the latter. We now give a direct proof for the Cartan matrix and Ext-quiver of $\End_{\Uqle}(V^{\otimes n})$.

\begin{thm}\label{Cartan and Ext quiver}
Let $A := \End_{\Uqle}(V^{\otimes n})$ for $n \in \bN_{\geq 1}$. Then the Cartan matrix is
\[C(A) = \begin{pmatrix}
    1 & && & \\
    1 & 1 &&&\\
    & 1 & 1  &&\\
    &  & \ddots & \ddots&\\
    & & & 1 & 1
\end{pmatrix}\]
and the Ext-quiver $Q_A$ with relations is the $A_n$-quiver with the composition of two arrows zero:
\begin{center}
\begin{tikzpicture}
  \node (0) at (0,0) {$n-1$};
  \node (1) at (2,0) {$n-2$};
  \node (2) at (4,0) {$n-3$};
  \node (3) at (5,0) {$\cdots$};
  \node (4) at (6,0) {$2$};
  \node (5) at (8,0) {$1$};
  \node (6) at (10,0) {$0$};
  
  \draw[->] (0) -- (1);
  \draw[->] (1) -- (2);
   \draw[->] (4) -- (5);
     \draw[->] (5) -- (6);

  \draw[dashed] (0) to[bend left=20] (2);
  \draw[dashed] (4) to[bend left=20] (6);
\end{tikzpicture}
\end{center}
\end{thm}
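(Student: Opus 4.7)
The plan is to deduce both statements from \autoref{W-M decompisition End}, which provides the full Wedderburn--Mal'cev decomposition and a description of the Peirce corners of $A := \End_{\Uqle}(V^{\otimes n})$. Setting $m_\ell := \binom{n-1}{\ell}$, the theorem says: (i) each simple factor $N_\ell$ of $A/J(A)$ is simple of dimension $m_\ell^2$ over $\bQ(q)$, hence isomorphic to the matrix algebra $M_{m_\ell}(\bQ(q))$ (in particular $\bQ(q)$ is a splitting field for $A$); (ii) $J(A) = \bigoplus_{\ell \geq 1} R_\ell$ with $R_\ell = e_{\ell-1}Ae_\ell$; (iii) $J(A)^2 = 0$, because the non-vanishing corners $e_jAe_i$ only occur for $j \in \{i, i-1\}$. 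These three facts will carry the entire argument.

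The first step is to identify the indecomposable projectives. Lifting a primitive idempotent of $N_i$ to an $f_i \in e_i A e_i$ produces, up to isomorphism, a complete set $\{P_i := Af_i\}_{0 \leq i \leq n-1}$ of pairwise non-isomorphic indecomposable projective $A$-modules. From the Peirce structure,
\[
P_i = e_iAf_i \oplus e_{i-1}Af_i = N_if_i \oplus R_if_i,
\]
the second summand being absent for $i=0$. Since $J(A)^2=0$ we have $J(A)\cdot P_i = J(A)\cdot f_i = R_if_i$, so $\mathrm{rad}(P_i) = R_if_i$ and $S_i := P_i/\mathrm{rad}(P_i) = N_if_i$, the ``column'' of $N_i \cong M_{m_i}(\bQ(q))$ picked out by $f_i$, which is simple of dimension $m_i$. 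For $i \geq 1$, the key identification is $R_if_i \cong S_{i-1}$ as $A$-modules: viewing $R_i$ as an $(N_{i-1}, N_i)$-bimodule of dimension $m_{i-1}m_i$ (a matrix space built out of the one-dimensional Hom-space $\Hom_{\Uqle}(L(i),L(i-1))$), multiplication on the right by the rank-one idempotent $f_i$ picks out a single column, which as a left $N_{i-1}$-module is free of rank one, hence isomorphic to $S_{i-1}$.

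Everything else is now essentially a formality. For the Cartan matrix, the splitting field property yields $C(A)_{i,j} = \dim \Hom_A(P_j, P_i) = [P_i : S_j]$, and by the previous paragraph $P_i$ has exactly the two composition factors $S_i$ (top) and $S_{i-1}$ (bottom) for $i \geq 1$, while $P_0 = S_0$, producing the stated lower-bidiagonal form. For the Ext-quiver, the standard identification $\dim \Ext^1(S_i, S_j) = [\mathrm{rad}(P_i)/\mathrm{rad}^2(P_i) : S_j]$ combined with $J(A)^2=0$ reduces this to $[R_if_i : S_j] = \delta_{j,i-1}$, giving exactly one arrow $i \to i-1$ for each $i \geq 1$ and no others. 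The zero-composition relations encoded by the dashed lines in the quiver picture come from the same vanishing $J(A)^2=0$. The main obstacle will be the concrete bimodule identification $R_if_i \cong S_{i-1}$, which requires careful bookkeeping of how the primitive idempotents of the matrix algebras $N_i$ interact with the matrix-space incarnation $R_i = \Hom_{\Uqle}(L(i),L(i-1))^{\oplus m_im_{i-1}}$; once this point is made precise, the Cartan matrix and the Ext-quiver are simply read off from the Peirce decomposition.
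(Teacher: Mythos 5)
Your proposal is correct. It reaches the same conclusion from the same structural input, Theorem~\ref{W-M decompisition End} (the Wedderburn--Mal'cev decomposition and the Peirce corners $e_jAe_i$, in particular $J(A)^2=0$), but the mechanism differs slightly from the paper's. The paper first builds explicit representatives of the indecomposable projectives inside $A$ as sums of one-dimensional Hom-spaces $\Hom_{\Uqle}(L_{\ell,k},L_{\ell',k'})$ (Proposition~\ref{ind proj and simples}), verifies their composition series and cyclicity by direct computation with the multiplication rules~\eqref{left multiplication}--\eqref{right multiplication}, and only then reads off $C(A)$ and $Q_A$. You instead stay abstract: pick a primitive idempotent $f_i$ of the matrix factor $N_i\cong M_{m_i}(\bQ(q))$, observe it stays primitive in $A$ because $f_iAf_i = f_iN_if_i\cong\bQ(q)$, set $P_i := Af_i$, and extract everything from the Peirce picture together with Morita theory for $(N_{i-1},N_i)$-bimodules. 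This cleanly bypasses Proposition~\ref{ind proj and simples}. What you pay for is the bimodule identification $R_if_i\cong S_{i-1}$, which you correctly flag as the one nontrivial point; it is in fact automatic once one notes that $R_i$ is an $(M_{m_{i-1}},M_{m_i})$-bimodule of dimension $m_{i-1}m_i$ with $e_{i-1}R_ie_i = R_i$, hence the unique simple such bimodule (matrix space), so right multiplication by a rank-one idempotent extracts the simple left $N_{i-1}$-module. Two small terminology nits: there is no lifting of idempotents happening (they already live in $A$), and $R_if_i$ is the simple, not the free, left $N_{i-1}$-module; neither affects the argument. Both routes require $\bQ(q)$ to be a splitting field of $A$, which you note and the paper uses implicitly.
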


\begin{rem}
Recall that any finite-dimensional algebra $A$ is Morita-equivalent to a path algebra $kQ_A/I$ with relations where $Q_A$ is the Ext-quiver and $I$ some (admissible) ideal $I$. Thus \autoref{Cartan and Ext quiver} describes the Morita equivalence class of $ \End_{\Uqle}(V^{\otimes n})$. It is well-known that the path algebra with relations given in the statement is of finite representation type. Consequently, although the Loop Hecke algebra $\LH_n\otimes_{\bZ[t]}\bQ(q)$ is not semisimple, in contrast to the Hecke algebra $\Hecke_n\otimes_{\bZ[t]}\bQ(q)$, we obtained via \autoref{the rep is iso} and \autoref{Cartan and Ext quiver} that it is still of finite representation type.
\end{rem}

The proof of \autoref{Cartan and Ext quiver} will quickly follow once we construct explicitly the simple and indecomposable projective modules. To do so, we use the notations from \autoref{subsection proof W-M} and the statements obtained there.

Define for $1 \leq \ell \leq n-1$ and $1 \leq k \leq \binom{n-1}{\ell}$:
\begin{equation}\label{Description left simple and ind proj}
S_{\ell, k} := \bigoplus_{r =1}^{\binom{n-1}{\ell-1}} \Hom_{\Uqle}(L_{\ell, k}, L_{\ell-1,r}) \text{ and } P_{\ell, k} := S_{\ell,k} \oplus \bigoplus_{k^{\prime}=1}^{\binom{n-1}{\ell}} \Hom_{\Uqle}(L_{\ell, k}, L_{\ell, k^{\prime}})
\end{equation}
If $\ell = 0$, then $\binom{n-1}{\ell} = 1$ and we define $P_{0,1}  := \Hom_{\Uqle}(L_{0, 1}, L_{0, 1})$ and $S_{0,1} = \{ 0 \}$.
Note that, in terms of \autoref{W-M decompisition End}, $S_{\ell,k}$ corresponds to all endomorphisms in the radical which are zero outside a fixed $L_{\ell, k}$, and $P_{\ell,k}$ to all morphisms in $\End_{\Uqle}(V^{\otimes n})$ outside a fixed $L_{\ell, k}$.

\begin{prop}\label{ind proj and simples}
With notations as above, we have:
\begin{enumerate}
\item $P_{\ell,k}$ is an indecomposable projective module and it is cyclic,
\item $\{0 \} \subseteq S_{\ell,k} \subsetneq P_{\ell, k}$ is a composition series of $P_{\ell ,k}$,
\item $Jac(P_{\ell, k}) = Soc(P_{\ell, k}) =S_{\ell,k}$,
\item $P_{\ell_1, k_1} \cong P_{\ell_2,k_2}$ if and only if $ \ell_1 = \ell_2$,
\item $\End_{\Uqle}(V^{\otimes n}) . e_{\ell} \cong P_{\ell,1}^{\oplus m_{\ell}}$ with $ m_{\ell} = \binom{n-1}{\ell}.$
\end{enumerate}
\end{prop}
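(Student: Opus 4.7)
The plan is to realize each $P_{\ell,k}$ as a cyclic summand of the regular module via an explicit idempotent, and then read off all the structural properties from the multiplication rules \eqref{left multiplication}--\eqref{right multiplication} together with $\mathrm{Jac}(A)^{2}=0$ (from \autoref{W-M decompisition End}.(3)). Write $e_{\ell,k}\in N_\ell\subset A$ for the identity endomorphism of $L_{\ell,k}$; this is a primitive idempotent and $e_\ell=\sum_k e_{\ell,k}$. First I would check directly from \eqref{End decomp refined not} that left-multiplication on $e_{\ell,k}$ picks out exactly the morphisms with source $L_{\ell,k}$, so that
\[
A\cdot e_{\ell,k} \;=\; \bigoplus_{k'} \Hom_{\Uqle}(L_{\ell,k},L_{\ell,k'}) \;\oplus\; \bigoplus_{r'} \Hom_{\Uqle}(L_{\ell,k},L_{\ell-1,r'}) \;=\; P_{\ell,k}.
\]
This already shows $P_{\ell,k}$ is cyclic and projective. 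To finish (1), compute $\End_A(P_{\ell,k})=e_{\ell,k}Ae_{\ell,k}$: this is the set of morphisms with source and target both $L_{\ell,k}$, which by \autoref{L:intertwiner_negative} is one-dimensional. A cyclic projective module with local (here: field) endomorphism ring is indecomposable.

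Next I would establish (2) and (3) simultaneously by analysing the action of $\mathrm{Jac}(A)=\bigoplus_{\ell'}R_{\ell'}$ on $P_{\ell,k}$. A direct application of \eqref{left multiplication}--\eqref{right multiplication} shows that $S_{\ell,k}$ is an $A$-submodule, and that $\mathrm{Jac}(A)\cdot P_{\ell,k}=S_{\ell,k}$; this uses crucially that compositions of the form $L_{\ell,k}\to L_{\ell-1,r}\to L_{\ell-2,s}$ vanish because $\Hom_{\Uqle}(L(\ell),L(\ell-2))=0$ by \autoref{L:intertwiner_negative}. Since $\mathrm{Jac}(A)^2=0$, $S_{\ell,k}$ is a semisimple submodule. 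To see $S_{\ell,k}$ is actually simple, observe that the residual $A/\mathrm{Jac}(A)$-action factors through $N_{\ell-1}\cong M_{\binom{n-1}{\ell-1}}(\bQ(q))$, with respect to which $S_{\ell,k}$ is the natural (column) module; the same argument applied to $P_{\ell,k}/S_{\ell,k}$, viewed as a module over $N_\ell$, shows the quotient is simple as well. This yields a length-$2$ composition series, so $S_{\ell,k}=\mathrm{rad}(P_{\ell,k})=\mathrm{Soc}(P_{\ell,k})$ (the equality of radical and socle is forced by indecomposability of a uniserial length-$2$ module), establishing (2) and (3) for $\ell\geq 1$; the case $\ell=0$ is immediate since $P_{0,1}$ is already simple.

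For (4), the top of $P_{\ell,k}$ is the simple $N_\ell$-module just identified, which depends only on $\ell$ (and not on $k$) up to isomorphism; since an indecomposable projective module is determined by its top, we obtain $P_{\ell,k_1}\cong P_{\ell,k_2}$, while different $\ell$'s produce non-isomorphic tops (they lie in different blocks $N_\ell$). Finally, (5) is a one-line consequence of orthogonality: decomposing the identity of $N_\ell$ as $e_\ell=\sum_{k=1}^{\binom{n-1}{\ell}} e_{\ell,k}$ gives $A\cdot e_\ell=\bigoplus_k A\cdot e_{\ell,k}=\bigoplus_k P_{\ell,k}\cong P_{\ell,1}^{\oplus\binom{n-1}{\ell}}$ by (4).

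The main obstacle, should any arise, is the bookkeeping in step involving the Jacobson radical action: one must be careful that $\psi\circ\varphi$ and $\varphi\circ\psi$ live in the claimed summands of \eqref{End decomp refined not} and that the ``forbidden'' compositions (those landing in $\Hom_{\Uqle}(L(\ell),L(\ell-2))$) truly vanish. Everything else is formal once $P_{\ell,k}=A\cdot e_{\ell,k}$ has been identified.
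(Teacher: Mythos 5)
Your proof is correct and draws on exactly the same ingredients as the paper's: the decomposition \eqref{End decomp refined not}, the multiplication rules \eqref{left multiplication}--\eqref{right multiplication}, and the one-dimensionality of the hom-spaces from \autoref{L:intertwiner_negative}. The packaging differs somewhat: you realize $P_{\ell,k}=A\cdot e_{\ell,k}$ for the primitive idempotent $e_{\ell,k}=\id_{L_{\ell,k}}$, from which cyclicity and projectivity are immediate and indecomposability follows from the local endomorphism ring $e_{\ell,k}Ae_{\ell,k}\cong\bQ(q)$; you then compute $\mathrm{rad}(P_{\ell,k})=\mathrm{Jac}(A)e_{\ell,k}=S_{\ell,k}$ (using $\mathrm{Jac}(A)^2=0$ from \autoref{W-M decompisition End}), identify the two composition factors as the natural modules over the matrix algebras $N_{\ell-1}$ and $N_{\ell}$, and derive (4) from the bijection between indecomposable projectives and their simple tops. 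The paper instead exhibits $P_{\ell,k}$ as a direct summand of the regular module, verifies by hand from \eqref{left multiplication}--\eqref{right multiplication} that $S_{\ell,k}$ is simple and is the unique proper nonzero submodule (which gives indecomposability and parts (2)--(3) in one stroke), and handles (4) more tersely. Both routes establish the same facts; yours is slightly more systematic on (1) and (4), the paper's more self-contained in its direct submodule computation.
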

\begin{proof}
Using \eqref{left multiplication} and \eqref{right multiplication} it is easily seen that $S_{\ell,k}$ and $P_{\ell,k}$ are left modules. Furthermore by construction, see \eqref{End decomp refined not}, 
\begin{equation}\label{decomp in indec proj}
\End_{\Uqle}(V^{\otimes n}) = \bigoplus_{\ell = 1}^{n-1} \bigoplus_{k=1}^{\binom{n-1}{\ell}} P_{\ell,k}
\end{equation}

Thus the complement of $P_{\ell, k}$ in $\End_{\Uqle}(V^{\otimes n})$ is also a left module and hence $P_{\ell, k}$ is projective. Next, again using \eqref{left multiplication} and \eqref{right multiplication} is readily verified that $S_{\ell, k}$ is simple and the only submodule of $P_{\ell, k}$. In particular $P_{\ell, k}$ is indecomposable. Furthermore, via analogue computations one sees that  $P_{\ell, k}/S_{\ell,k}$ is simple and that $P_{\ell,k}$ is generated as left module by any $0 \neq \varphi \in \Hom_{\Uqle}(L_{\ell,k}, L_{\ell, k^{\prime}})$ (for any $k^{\prime}$), finishing the proof of statements (1) and (2). Statement (3) follows from the second. Next, assertion (4) holds by definition and the explicit description of the hom-spaces obtained in \autoref{L:intertwiner_negative}. Finally, statement (5) follows from the fourth and that by construction $\End_{\Uqle}(V^{\otimes n}) e_{\ell} = \bigoplus_{k=1}^{\binom{n-1}{\ell}} P_{\ell,k}$.
\end{proof}

We now have the necessary tools to prove the main result of this section.

\begin{proof}[Proof of \autoref{Cartan and Ext quiver}]
As representatives for the indecomposable projective modules we take the modules $P_{\ell,1}$ which we denote by $P_{\ell}$ for ease of notation.  

Consider $0 \leq i,j \leq n-1$. We want to compute $\Hom_{\Uqle}(P_{i},P_{j})$. Since the modules $P_{\ell}$ are cyclically generated by any $0 \neq \varphi_{\ell} \in \Hom_{\Uqle}(L_{\ell,1}, L_{\ell, k^{\prime}})$, it is enough to determine the image of $\varphi_{i}$. From \eqref{left multiplication}, \eqref{right multiplication} and the composition series in \autoref{ind proj and simples} it now follows that $\Hom_{\Uqle}(P_{i},P_{j})= 0$ if $i \neq j$ or $j-1$. It also implies that if $i = j$, then $\End(P_{i})$ consists of scalar endomorphisms. Finally if $i = j-1$, then any morphism is of the form $\psi^* = (-) \circ \psi$ with $\psi \in \Hom_{\Uqle}(L_{\ell,1}, L_{\ell-1,1})$. Note that 
\begin{equation}\label{image pullback}
\psi^*(Soc(P_{j-1})) =0 \text{ and } \psi^*(P_{j-1}) = Jac(P_i).
\end{equation}
Altogether we obtain the stated form of the Cartan matrix.

For the Ext-quiver we need to compute the values $\dim \Ext^1(S_{i,1}, S_{j,1})$. Now recall that 
\[\dim \Ext^1(S_{i,1}, S_{j,1}) = \dim \Hom_{\Uqle}(P_j, Jac(P_i)/Jac(P_i)^2).\]
From \autoref{ind proj and simples} we know that $Jac(P_i)= S_{i,1}$ and consequently $Jac(P_i)^2= \{ 0 \}$. In particular we need to count morphisms from $P_j$ to $P_i$. From the computations earlier, switching the role of $i$ and $j$, we know that this forces $j = i$ or $i-1$. If $j = i$, the endomorphisms are scalar multiples of the identity and hence the image is not in $Jac(P_i)$. If $j = i-1$, then \eqref{image pullback} yields that $ \dim \Hom_{\Uqle}(P_j, Jac(P_i)) =1$, finishing the proof that the Ext-quiver is of the claimed $A_n$-type.

Over $\overline{\Q(q)}$, the algebra $\End_{\Uqle}(V^{\otimes n})$ is Morita equivalent to an algebra $kQ_A/I$ where $Q_A$ is the Ext-quiver and $I$ an ideal in the radical. It is a general fact that Morita equivalent algebras have isomorphic Jacobson radical, hence \autoref{W-M decompisition End} yields the stated statement on the relations.
\end{proof}

\begin{rem}
One could also have worked with right modules. In that case one has the analogue of \autoref{ind proj and simples} but for the following right modules:
\begin{equation}\label{Description right simple and ind proj}
S_{\ell-1, k^{\prime}}^{(r)} := \bigoplus_{r =1}^{\binom{n-1}{\ell}} \Hom_{\Uqle}(L_{\ell, r}, L_{\ell-1, k^{\prime}}) \text{ and } P_{\ell-1, k^{\prime}}^{(r)} := S_{\ell-1,k^{\prime}}^{(r)} \oplus \bigoplus_{k=1}^{\binom{n-1}{\ell-1}} \Hom_{\Uqle}(L_{\ell-1, k}, L_{\ell-1, k^{\prime}})
\end{equation}
with $0 \leq \ell-1 \leq n-2$ and $1 \leq k^{\prime} \leq \binom{n-1}{\ell-1}$. For $\ell-1 =n-1$ one defines $P_{n-1,1} := \Hom_{\Uqle}(L_{n-1, 1}, L_{n-1, 1})$.
\end{rem}

\section{Equivalence between the presentations and consequences}
\label{sec:equivalence_presentations}

The main aim of this section is to prove the following crucial result.

\begin{thm}
\label{thm:equivalence_presentations}
    The loop Hecke algebra $\LH_n$ (\autoref{Definition loop heck}) and the algebra $\varLH_n$ (\autoref{defn:var_loop_hecke}) are isomorphic under the following change of coefficients:
    \begin{enumerate}[(1)]
        \item when we localize at $t,t\pm 1$:
        \[\LH_n \otimes_{\bZ[t]} \bZ[t^{\pm 1}]\left[\frac{1}{t\pm 1}\right]
        \cong 
        \varLH_n \otimes_{\bZ} \bZ[t^{\pm 1}]\left[\frac{1}{t\pm 1}\right],\]
        \item when we specialize at $t=0$:
        \[\LH_n \otimes_{\bZ[t]} \bZ
        \cong 
        \varLH_n,\]
        where $\bZ$ is viewed as a $\bZ[t]$-algebra with $t$ acting as $0$.
    \end{enumerate}
    In both cases, the isomorphism is explicitly given by the following mutually inverse maps:
    \begin{gather*}
        \begin{cases}
            D_i &\mapsto (\sigma_i-\rho_i)/(1-t)\\
            U_i &\mapsto (\sigma_i-t\rho_i)/(1-t)
        \end{cases}
        \quad\leftrightarrows\quad
        \phi\colon
        \begin{cases}
            \sigma_i &\mapsto U_i-tD_i\\
            \rho_i &\mapsto U_i-D_i
        \end{cases}
        \;.
    \end{gather*}
\end{thm}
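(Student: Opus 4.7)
The plan is to exhibit $\phi$ and its formal inverse as mutually inverse algebra homomorphisms after the indicated change of coefficients. Since the two substitutions---$\sigma_i, \rho_i$ in terms of $U_i, D_i$ and vice versa---are mutually inverse on generators by elementary algebra, everything reduces to checking that each map respects the defining relations. Let $R$ denote the ambient coefficient ring: $R = \bZ[t^{\pm 1}][(t\pm 1)^{-1}]$ in case (1), so that $1-t$ is invertible; and $R = \bZ$ with $t = 0$ in case (2), so the denominators in $\phi^{-1}$ disappear and the formulas degenerate to $D_i = \sigma_i - \rho_i$ and $U_i = \sigma_i$.

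I would first verify that $\phi$ is well-defined, i.e.\ that $U_i - tD_i$ and $U_i - D_i$ in $\varLH_n \otimes_\bZ R$ satisfy the defining relations of $\LH_n$. The quadratic relations \eqref{eq:sigma_rho_same_label_same}--\eqref{eq:sigma_rho_same_label_mixed} and the distant-label relations \eqref{eq:sigma_rho_distant_label} follow immediately by expansion, using the same-label relations \eqref{eq:D_and_U_same_label} and the interchange \eqref{eq:D_and_U_distant_label} of $\varLH_n$. The braid relations \eqref{eq:sigma_rho_braid_same} and mixed braid relations \eqref{eq:sigma_rho_braid_mixed} expand into degree-three identities in $D$'s and $U$'s that, after collecting by powers of $t$ and using \eqref{eq:D_and_U_same_label} to simplify products like $D_iU_i$, reduce cleanly to \eqref{eq:D, U length 2}--\eqref{eq: braid U to length 2}.

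The main obstacle, as flagged in the introduction, is the opposite direction: checking that the elements $D_i = (\sigma_i - \rho_i)/(1-t)$ and $U_i = (\sigma_i - t\rho_i)/(1-t)$ satisfy every relation of $\varLH_n$ inside $\LH_n \otimes_{\bZ[t]} R$. The same-label relations \eqref{eq:D_and_U_same_label} follow from the quadratic relations of $\LH_n$ by direct computation: for instance, expanding $(1-t)^2 D_i^2 = (\sigma_i - \rho_i)^2$ and applying $\rho_i^2 = 1$, $\sigma_i^2 = (1-t)\sigma_i + t$, $\sigma_i\rho_i = \rho_i - \sigma_i + 1$ and $\rho_i\sigma_i = \sigma_i - t\rho_i + t$ yields $(1-t)(\sigma_i - \rho_i)$, hence $D_i^2 = D_i$; the remaining three identities are analogous. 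The distant-label interchange \eqref{eq:D_and_U_distant_label} is immediate from \eqref{eq:sigma_rho_distant_label}. The real work lies in the length-two relations \eqref{eq:D, U length 2}--\eqref{eq: braid U to length 2}, each of which expands into a polynomial identity in $\{\sigma_i, \rho_i, \sigma_{i+1}, \rho_{i+1}\}$. My approach is to derive them from the mixed loop braid relations \eqref{eq:sigma_rho_braid_mixed}, using the quadratic relations to rewrite products of same-subscript generators as linear combinations of single generators and, whenever a degree reduction from three to two is required, invoking the invertibility of $t$ in $R$ via $\sigma_j^{-1} = (\sigma_j + t - 1)/t$. These lengthy but algorithmic computations constitute the content of \autoref{section proof pres are equivalent}. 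Case (2) is then handled by specializing $t = 0$ throughout: the quadratic relation $(\sigma_i-1)(\sigma_i+t) = 0$ asserts that $\sigma_i$ is idempotent, all computations collapse considerably, and no localization is required.
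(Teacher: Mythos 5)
Your structural outline is sound and aligns with the paper's: verify mutual inverses on generators, check well-definedness of $\phi$ (the easy direction), then derive the $\varLH_n$ relations \eqref{eq:D, U length 2}--\eqref{eq: braid U to length 2} from the $\LH_n$ relations (the hard direction); you correctly identify that the same-label and distant-label checks are routine. But on the hard direction you only gesture at the computation, and the mechanism you propose is not the one that works. The paper never inverts $\sigma_j$. It expands all four braid relations --- both \eqref{eq:sigma_rho_braid_same} and \eqref{eq:sigma_rho_braid_mixed}, not only the mixed ones you cite --- into the family $r_{(a,b)}$, $(a,b)\in\{(1,1),(1,t),(t,1),(t,t)\}$, and then forms carefully chosen $R$-linear combinations of the $r_{(a,b)}$'s, pre- and post-multiplied by $U$'s and $D$'s, with denominators built from $t$, $t-1$, \emph{and} $t+1$ (seven such combinations in case (1)). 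Your $\sigma_j^{-1}=(\sigma_j+t-1)/t$ trick accounts only for inverting $t$ and gives no explanation for why $t+1$ must be invertible. That omission is not cosmetic: the dimension statement fails at $t=-1$, so any correct argument must invoke invertibility of $t+1$ at a decisive step, and your sketch has no mechanism that does.

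Your case (2) treatment is also incorrect as stated. You cannot ``specialize $t=0$ throughout'' in an argument that divides by $t$. The paper gives a separate derivation at $t=0$: it specializes the $r_{(a,b)}$ to relations $v_1,\ldots,v_4$ and runs a fresh chain of multiplications by $U$'s and $D$'s with no division at all. That case is simpler than case (1), as you anticipate, but it is a new argument, not a degeneration of the generic one.
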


\begin{cor}
\label{cor:equivalence_presentation_over_field}
    Let $\Bbbk$ be a field and $z\in \Bbbk$ with $z\neq \pm 1$. We view $\Bbbk$ as a $\bZ[t]$-algebra via the map $t\mapsto z$. Then the algebras $\LH_n\otimes_{\bZ[t]}\Bbbk$ and $\varLH_n\otimes_{\bZ}\Bbbk$ are isomorphic. 
\end{cor}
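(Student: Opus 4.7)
The plan is to deduce Corollary~\ref{cor:equivalence_presentation_over_field} from Theorem~\ref{thm:equivalence_presentations} by a case split on whether the specialization value $z\in\Bbbk$ is zero or invertible, applying the appropriate part of the theorem in each case and then base-changing to $\Bbbk$.

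First, suppose $z\neq 0$. Since we also assume $z\neq\pm 1$, all three elements $z$, $z-1$, $z+1$ are units in $\Bbbk$. Hence the ring map $\bZ[t]\to\Bbbk$, $t\mapsto z$, factors through the localization
\[
\bZ[t]\longrightarrow\bZ[t^{\pm 1}]\Bigl[\tfrac{1}{t\pm 1}\Bigr]\longrightarrow\Bbbk.
\]
By Theorem~\ref{thm:equivalence_presentations}(1), the map $\phi$ induces an isomorphism of $\bZ[t^{\pm 1}][\tfrac{1}{t\pm 1}]$-algebras
\[
\LH_n\otimes_{\bZ[t]}\bZ[t^{\pm 1}]\Bigl[\tfrac{1}{t\pm 1}\Bigr]\;\xrightarrow{\ \sim\ }\;\varLH_n\otimes_{\bZ}\bZ[t^{\pm 1}]\Bigl[\tfrac{1}{t\pm 1}\Bigr].
\]
Tensoring both sides with $\Bbbk$ over $\bZ[t^{\pm 1}][\tfrac{1}{t\pm 1}]$ and using associativity of tensor products yields $\LH_n\otimes_{\bZ[t]}\Bbbk\cong\varLH_n\otimes_{\bZ}\Bbbk$.

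Next, suppose $z=0$. Then the structure map $\bZ[t]\to\Bbbk$ factors as $\bZ[t]\twoheadrightarrow\bZ\hookrightarrow\Bbbk$, where the first arrow sends $t\mapsto 0$. By Theorem~\ref{thm:equivalence_presentations}(2), $\phi$ induces an isomorphism $\LH_n\otimes_{\bZ[t]}\bZ\cong\varLH_n$ of $\bZ$-algebras. Tensoring with $\Bbbk$ over $\bZ$ gives the desired isomorphism $\LH_n\otimes_{\bZ[t]}\Bbbk\cong\varLH_n\otimes_{\bZ}\Bbbk$.

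In both cases, the isomorphism is realized by the explicit formulas for $\phi$ given in Theorem~\ref{thm:equivalence_presentations}, which make sense over $\Bbbk$ precisely because $1-z$ is invertible (when $z\neq 0$, already guaranteed by $z\neq 1$; when $z=0$, the formulas reduce to $D_i\mapsto \sigma_i-\rho_i$, $U_i\mapsto \sigma_i$, which are integral). There is no real obstacle here: the only subtle point is that Theorem~\ref{thm:equivalence_presentations} treats the cases ``$t$ invertible along with $t\pm 1$'' and ``$t=0$'' separately, so the proof of the corollary is simply a matter of observing that $z\neq\pm 1$ places us in the domain of one of these two statements, after which standard base change concludes.
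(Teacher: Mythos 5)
Your proof is correct and fills in exactly the argument the paper leaves implicit (the corollary is stated without proof in the paper, as an immediate consequence of \autoref{thm:equivalence_presentations}). The case split on $z=0$ versus $z\neq 0$, the observation that $z\neq 0,\pm1$ makes $z$, $z-1$, $z+1$ units so that $\bZ[t]\to\Bbbk$ factors through the stated localization, and the base-change step are precisely what is needed.
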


In what follows, whenever we work over a field $\Bbbk$ as in the corollary above, we write $t$ instead of $z$.

Combining with \autoref{thm:reduced_words_basis}, \autoref{thm:cardinality_reduced_words} and \autoref{cor:varLH-canonical-map-is-inclusion}, we get:

\begin{cor}
\label{cor:dimension_LH}
    Under the change of coefficients (1) and (2) as in \autoref{thm:equivalence_presentations}, the loop Hecke algebra $\LH_n$ is free of rank $\frac{1}{2}\binom{2n}{n}$ and the natural map $\LH_{n}\rightarrow \LH_{n+1}$ is injective.
\end{cor}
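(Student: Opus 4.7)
The plan is to assemble the three preceding ingredients---\autoref{thm:equivalence_presentations}, \autoref{thm:reduced_words_basis} combined with \autoref{thm:cardinality_reduced_words}, and \autoref{cor:varLH-canonical-map-is-inclusion}---in a short, formal argument. By \autoref{thm:equivalence_presentations}, under either change of coefficients (1) or (2), the loop Hecke algebra $\LH_n$ is isomorphic to $\varLH_n\otimes_\bZ R$, where $R$ is the respective coefficient ring (i.e.\ $R=\bZ[t^{\pm1}][\tfrac{1}{t\pm 1}]$ in case (1) and $R=\bZ$ in case (2)). This reduces both assertions to the analogous statements for $\varLH_n\otimes_\bZ R$ and the base-change of the natural map $\varLH_n\to\varLH_{n+1}$.

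For the freeness and the rank count, I would invoke \autoref{thm:reduced_words_basis}, which exhibits $\Red(\varLH_n)$ as a $\bZ$-basis of $\varLH_n$, together with \autoref{thm:cardinality_reduced_words}, which gives $|\Red(\varLH_n)|=\tfrac{1}{2}\binom{2n}{n}$. Since base change along $\bZ\to R$ sends a free $\bZ$-module of rank $N$ to a free $R$-module of rank $N$, we conclude that $\varLH_n\otimes_\bZ R$ is a free $R$-module of rank $\tfrac{1}{2}\binom{2n}{n}$, which is the first claim.

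For the injectivity of the natural map $\LH_n\to\LH_{n+1}$, I would start from \autoref{cor:varLH-canonical-map-is-inclusion}. Its proof proceeds by observing that the canonical map $\varLH_n\to\varLH_{n+1}$ sends $\varLH_n$-reduced words to $\varLH_{n+1}$-reduced words, so the inclusion of bases $\Red(\varLH_n)\hookrightarrow\Red(\varLH_{n+1})$ realises $\varLH_n$ as a $\bZ$-module direct summand of $\varLH_{n+1}$. Being a split inclusion of free $\bZ$-modules, it remains injective after tensoring with any coefficient ring $R$. Finally, the explicit mutually inverse formulas in \autoref{thm:equivalence_presentations} do not involve $n$, so they intertwine the canonical map on the $\LH_\bullet$-side with the canonical map on the $\varLH_\bullet$-side; transporting the injectivity across the isomorphism yields the inclusion $\LH_n\hookrightarrow\LH_{n+1}$ under either change of coefficients.

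No serious obstacle arises: the argument is essentially bookkeeping. The only point that deserves a line of justification is the compatibility of the isomorphism of \autoref{thm:equivalence_presentations} with the canonical inclusions in $n$, which is immediate from the uniform form of the defining formulas $\sigma_i\mapsto U_i-tD_i$ and $\rho_i\mapsto U_i-D_i$.
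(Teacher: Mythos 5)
Your proposal is correct and takes essentially the same approach as the paper, which states the corollary as an immediate consequence of combining \autoref{thm:equivalence_presentations}, \autoref{thm:reduced_words_basis}, \autoref{thm:cardinality_reduced_words} and \autoref{cor:varLH-canonical-map-is-inclusion}. Your extra bookkeeping (the split inclusion surviving base change, and the compatibility of the isomorphism with the maps $\LH_n\to\LH_{n+1}$ and $\varLH_n\to\varLH_{n+1}$) is exactly the right way to make the implicit argument explicit.
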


Thereafter we consider two applications of the presentation yielded by \Cref{thm:equivalence_presentations}. Firstly, we consider in \autoref{section: special quot}  the quotient of $\LH_n$ by the two-sided ideal generated by 
\[\chi^{(j+1)} := (\sigma_1 - \rho_1) \cdots (\sigma_{j}- \rho_{j}) \]
for $1 \leq j \leq n.$ Concretely we obtain that if $t \neq \pm 1$, then $(\LH_n\otimes_{\bZ[t]}\Bbbk) / (\chi^{(j)}) \cong \Bbbk$. This yields an interesting counit, i.e.\ structure of augmented algebra. It was also proposed in \cite[Section 6]{DMR_GeneralisationsHeckeAlgebras_2023} to consider this quotient, see also \Cref{discussion on conj 6.4}.

Finally, the presentation of the loop Hecke algebra in the generators $\{ \rho_i ,D_i\}$ has quite some similarities with the Hecke-Hopf algebra ${\bf H}(\Sym_n)$ introduced by Berenstein--Kazhdan \cite{BK_HeckeHopfAlgebras_2019}. In \autoref{Hopf-Hecke section}, as a second application, we show that ${\bf H}(\Sym_n)$ canonically surjects as an augmented $\bZ$-algebra onto $\varLH_n$, but the kernel is not a Hopf ideal. In fact, we check that for small $n$ the loop Hecke algebra is {\it not} a Hopf algebra, raising the question of whether there exists a variant of the Hecke-Hopf algebra for the loop Hecke algebra.

\subsection{Proof that both presentations are equivalent}\label{section proof pres are equivalent}

In this section we prove \autoref{thm:equivalence_presentations}.
It essentially consists in finding the right computations; in that process, we heavily used \textsc{Magma} \cite{BSM+_Magma_}. However, the final proof does not require computer assistance.

It is readily verified that the maps in \autoref{thm:equivalence_presentations} are each other's inverses. Hence, it remains to prove that the maps are well-defined.  
Consider the application $\phi$ from (changes of coefficients) from $\LH_n$ to $\varLH_n$, be it in case (1) or (2) in \autoref{thm:equivalence_presentations}. We must show that the image under $\phi$ of the defining relations in \autoref{Definition loop heck} imply the defining relations in \autoref{defn:var_loop_hecke}, and vice-versa.
It is easy to check that the image of the distant-label quadratic relations \eqref{eq:sigma_rho_distant_label} imply the distant-label quadratic relations \eqref{eq:D_and_U_distant_label}, and vice-versa.

As in \autoref{subsubsec:abuse_notation_U+}, we abuse notation and write 
\[U\coloneqq U_i \text{ and } U_+\coloneqq U_{i+1},\] 
and similarly for $D$.

\begin{rem}
\label{rem:some-braid-rels-follow-from-quadratic-rels}
    The relations $D_+DD_+ = D_+D$ and $UU_+U = U_+U$ in the presentation of $\LH_n$ are consequences of the same-label relations and the relations $UD_+=0$ and $U_+D=DU_+$.
    One can see that by simplifying $D_+U_+DD_+$ (resp.\ $UU_+DU$) in two different ways.
\end{rem}

\subsubsection{A first look at the relations}

It is readily seen that the quadratic relations in the $D$'s and $U$'s \eqref{eq:D_and_U_same_label} imply the quadratic relations in the $\sigma$'s and $\rho$'s \eqref{eq:sigma_rho_same_label_same}-\eqref{eq:sigma_rho_same_label_mixed}.
Conversely, the image under $\phi$ of the relations \eqref{eq:sigma_rho_same_label_same}-\eqref{eq:sigma_rho_same_label_mixed} is:
\begin{align*}
    (U-tD-1)&(U-tD+t) = 0, & (U-D)^2 -1  &= 0,\\
    (U-D-1)&(U-tD+t)=0, & (U-tD-1)(U-D+1)&=0.
\end{align*}
When $t - 1$ is invertible (which applies to both case (1) and (2)), these relations are equivalent to the same-label relations in the $U$'s and $D$'s \eqref{eq:D_and_U_same_label}.
Indeed, if we write $q_1$, $q_2$, $q_3$ and $q_4$ the left-hand side of these relations, we have that:
\begin{gather*}
    \frac{1}{(t-1)^2}(q_2 - q_4 - q_3 + q_1)
    =
    D^2 - D,
    \\
    \frac{1}{t-1}(q_3 - q_1)
    =
    DU - tD^2 + tD
    \quad\text{ and }\quad
    \frac{1}{t-1}(q_2 - q_3)
    =
    UD - D^2 - U + 1.
\end{gather*}
This gives the relations $D^2=D$, $DU=0$ and $UD=D+U-1$. Looking at $q_2$ gives the last relation $U^2=U$.




\medbreak

On the other hand, the braid relations in the $\rho$'s and $\sigma$'s \eqref{eq:sigma_rho_braid_same} and \eqref{eq:sigma_rho_braid_mixed} give:
\[
(U-aD)(U_{+}-bD_{+})(U-bD)
=(U_{+}-bD_{+})(U-bD)(U_{+}-aD_{+}),
\]
where $(a,b)=(1,1)$, $(1,t)$, $(t,1)$ or $(t,t)$.
Expending gives the family of relations:
\begin{IEEEeqnarray*}{rCl}
    r_{(a,b)} &\coloneqq& 
    UU_{+}U
    -U_{+}UU_{+}
    \\
    &&\quad{}-a(
    DU_{+}U
    -U_{+}UD_{+}
    )
    \\
    &&\quad{}-b(
    UD_{+}U
    +UU_{+}D
    -U_{+}DU_{+}
    -D_{+}UU_{+}
    )
    \\
    &&\quad{}+ab(
    DD_{+}U
    +DU_{+}D
    -U_{+}DD_{+}
    -D_{+}UD_{+}
    )
    \\
    &&\quad{}+b^2(UD_{+}D-D_+DU_+)
    \\
    &&\quad{}-ab^2(
    DD_{+}D
    -D_{+}DD_{+}
    )
\end{IEEEeqnarray*}
Here we say ``the relation $r_{(a,b)}$'' to refer to the relation $r_{(a,b)}=0$. 
One checks that the defining relations of $\varLH_n$ imply the above (family of) relation(s). 
It remains to show that this (family of) relation(s), together with the same-label relations, implies the remaining defining relations of $\varLH_n$.

\subsubsection{Case (1)}\leavevmode
\label{subsubsec:proof_equivalence_t_not_zero}
\medbreak
\noindent\textbf{Step 1.}
We compute the following relation, simplifying with the same-label relations:
%
\begin{IEEEeqnarray*}{rC.r.lr.ll.ll.ll.ll.r}
    s_1 &\coloneqq& \frac{1}{(t-1)^2}\Big[\;
        &&
        &(t^3&r_{(1,1)} &{}-{} t^2&r_{(1,t)} &{}-{} t   &r_{(t,1)} &{}+{}   &r_{(t,t)})
        \;U
    \\ &&
        &{}-{}& 
        &(t^3&r_{(1,1)} &{}-{} t^2&r_{(1,t)} &{}-{}{} t^2 &r_{(t,1)} &{}+{} t &r_{(t,t)})
        \;D
    \\ &&
        &{}+{}& U_+
        &(t^3&r_{(1,1)} &{}-{} t&r_{(1,t)}   &{}-{} t^2 &r_{(t,1)} &{}+{}   &r_{(t,t)})
    \\ &&
        &{}-{}& D_+
        &(t^3&r_{(1,1)} &{}-{} t&r_{(1,t)}   &{}-{} t^3 &r_{(t,1)} &{}+{} t &r_{(t,t)})
    \\ &&
        &{}-{}& (t-1) 
        &(&             &{}\;\;\;\;t&r_{(1,t)}   &{}-{}&&                   &r_{(t,t)})  &\;\Big]
    \\
    &=& \IEEEeqnarraymulticol{12}{l}{
        tUD_+U - tDU_+U - t^2D_+UD_+ + t^2D_+DU_+.
    }
\end{IEEEeqnarray*}
This gives a new relation between words of length three.

Recall that by assumption of the case at hand, $t$ is invertible.
We compute the following, again using the same-label relations to simplify:
\begin{IEEEeqnarray*}{rCl}
    s_2 &\coloneqq& 
    \frac{-1}{t^2(t-1)} ( D_+ s_1 + s_1 U - s_1 )
    = D_{+} U D_{+} - D_{+} D U_{+}.
\end{IEEEeqnarray*}
This gives a relation between two words of length three.

\medbreak

\noindent\textbf{Step 2.} We derive further relations from $s_1$ and $s_2$.
Multiplying $s_2$ on the right with $U_+$ gives
\[D_{+} U D_{+} = D_{+} D U_{+}=0.\]
Combining with $s_1$ gives $UD_+U = DU_+U$, using the assumption $t$ invertible.
Multiplying this relation on the left by $D$ gives
\[UD_+U = DU_+U=0.\] 

In turn, we can derive further relations from the relation above by multiplying on the left and on right in such a way that the pattern $UD$ (or $U_+D_+$) appears, and simplifying with the relevant same-label relation.
This gives:
\begin{gather*}
    U_+UD_+ = UD_+,\quad D_+DD_+ = D_+D,\quad U_{+} D U_{+} = D U_{+}\\
    \text{and}\quad U_+DD_+=U_+D+DD_+-D.
    \\[1ex]
    UD_+D = UD_+,\quad UU_+U = U_+U, \quad DU_+D = DU_+\\
    \text{ and }\quad UU_+D = UU_+ + U_+D - U_+.
\end{gather*}

\medbreak

\noindent\textbf{Step 3.}
Using $UD_+U = 0$, $UD_+D = UD_+$, $UU_+D = UU_+ + U_+D - U_+$ and $U_+DU_+ = DU_+$ from Step 2, we compute:
\begin{IEEEeqnarray*}{rCl}
    s_3 &\coloneqq&
    \frac{1}{(t+1)(t-1)^2}U \big[
        t r_{(1,1)} - t r_{(1,t)} - r_{(t,1)} + r_{(t,t)}
    \big]
    =
    U_+D - DU_+.
\end{IEEEeqnarray*}
This gives the relation
\[ U_+D = DU_+.\]

\medbreak

\noindent\textbf{Step 4.}
Using the relations $D_+UD_+ = 0$ and $D_+UU_+ = 0$ from Step 2 and the relation $U_+D = DU_+$ from Step 3, we compute:
\begin{IEEEeqnarray*}{rCl}
    s_4
    &\coloneqq& 
    \frac{1}{(t+1)(t-1)^2}\Big[
    t^2r_{(1,1)} - r_{(1,t)} - t^2r_{(t,1)} + r_{(t,t)}
    \Big]D_+
    =
    - UD_{+}.
\end{IEEEeqnarray*}
This gives the relation
\[UD_+ = 0.\]

\medbreak

\noindent\textbf{Step 5.} Using the relations $UU_+U = U_+U$ from Step 2, the relation $U_+D = DU_+$ from Step 3 and the relation $UD_+ = 0$ from Step 4, we compute:
\begin{IEEEeqnarray*}{rCl}
    s_5
    &\coloneqq& 
    \frac{1}{(t-1)^2}\Big[
    t^2r_{(1,1)} - tr_{(1,t)} - tr_{(t,1)} + r_{(t,t)}
    \Big]D_+
    =
    U_{+}U - U_{+}UU_{+}
\end{IEEEeqnarray*}
This gives the relation
\[U_{+}UU_{+} = U_{+}U.\]
Furthermore, using the relation $D_+DD_+ = D_+D$ from Step 2, the relation $U_+D = DU_+$ from Step 3 and the relation $UD_+ = 0$ from Step 4, we compute:
\begin{IEEEeqnarray*}{rCl}
    s_6
    &\coloneqq& 
    \frac{1}{(t-1)^2}\Big[
    tr_{(1,1)} - r_{(1,t)} - tr_{(t,1)} + r_{(t,t)}
    \Big]D_+
    =
    -t(DD_{+}D - D_{+}D).
\end{IEEEeqnarray*}
Under the assumption $t$ invertible, this gives
\[DD_{+}D = D_{+}D.\]
\medbreak

\noindent\textbf{Step 6.}
Using the relations $DD_{+}D = D_+DD_+$ and $UU_+U = U_+UU_+$ following from Step 2 and Step 5, the relation $U_+D = DU_+$ from Step 3, and the relation $UD_+ = 0$ from Step 4, we see that that the first, second, fifth and sixth summands in $r_{(a,b)}$ are zero.
Moreover, using again $U_+D = DU_+$ and $UD_+ = 0$:
\begin{IEEEeqnarray*}{rCl}
    r_{(t,1)}
    &=&
    tDD_{+}U + D_{+}UU_{+} - tDD_{+} - UU_{+} + tD + U_{+}.
\end{IEEEeqnarray*}
We compute, using the assumptions that $t+1$ and $t$ are invertible and the quadratic relations found in previous steps:
\begin{IEEEeqnarray*}{rCl}
    s_7
    &\coloneqq& 
    -\frac{1}{t(t+1)} 
    \Big[r_{(t,1)}U_+ - tDr_{(t,1)} - U_+r_{(t,1)} + (t+1)Ur_{(t,1)}\Big]
    =
    D_+U - D_+ - U + 1.
\end{IEEEeqnarray*}
This gives the last relation $D_+U = D_+ + U -1$, which concludes the proof in the case $t\neq 0$.

\subsubsection{Case (2)}
\label{subsubsec:proof_equivalence_t_zero}

When $t=0$, the relations $r_{(a,b)}$ reduce as follows:
\begin{IEEEeqnarray*}{LrCl}
    v_1\coloneqq 
    &
    r_{(t,t)}
    &=&
    UU_{+}U - U_{+}UU_{+}
    \\
    v_2\coloneqq 
    &
    r_{(t,t)} - r_{(1,t)}
    &=& 
    DU_{+}U - U_{+}UD_{+}
    \\
    v_3\coloneqq 
    &
    r_{(t,t)} - r_{(t,1)}
    &=& 
    UD_{+}U + UU_{+}D - U_{+}DU_{+} - D_{+}UU_{+} 
    \\
    &&&\quad{}+ UD_{+}D - D_+DU_+
    \\
    v_4\coloneqq 
    &
    r_{(1,1)} + r_{(1,t)} + r_{(t,1)} - r_{(t,t)}
    &=&
    DD_{+}U + DU_{+}D - U_{+}DD_{+} - D_{+}UD_{+}
    \\
    &&&\quad{}+ UD_{+}D - D_+DU_+ - DD_{+}D + D_{+}DD_{+}
\end{IEEEeqnarray*}

\medbreak

\noindent\textbf{Step 1.} We compute:
\begin{IEEEeqnarray*}{rCl}
    s_1 &\coloneqq & U_+v_3 + v_1D + v_2U + U_+v_2D + v_3U + v_3D + D_+v_1D + D_+v_2D - v_1 -2v_3
    \\
    &=&
    DU_{+}U - U_+DU_{+} + DU_+ - UD_{+}.
\end{IEEEeqnarray*}
This gives a relation between words of length at most three.

\medbreak

\noindent\textbf{Step 2.}
We compute:
\begin{IEEEeqnarray*}{rCl}
    s_2 &\coloneqq&
    Ds_1U_+ - s_1U_+ - Ds_1 + s_1
    =
    - UD_{+}  .
\end{IEEEeqnarray*}
This gives the relation
\[UD_+ = 0.\]

\medbreak

\noindent\textbf{Step 3.} With the relation from Step 2, we have that $v_2 = DU_+U$.
Multiplying by $U$ on the left gives $UU_+U - U_+U$. Together with $v_1$, it gives the relations
\[ UU_+U = U_+UU_+ = U_+U.\]

\medbreak

\noindent\textbf{Step 4.}
It follows from Step 3 that $UU_+D = U_+D + UU_+ - U_+$, computing $(UU_+U - U_+U)D$. With this and Step 2, $v_3$ simplifies as:
\begin{IEEEeqnarray*}{rCl}
    v_3 &=& - U_{+}DU_{+} - D_{+}UU_{+} - D_+DU_+ + U_+D + UU_+ - U_+
\end{IEEEeqnarray*}
Computing $U_+v_3 - v_3 - Uv_3$ leads to the relation
\[ DU_+ = U_+D.\]
Note that thanks to \autoref{rem:some-braid-rels-follow-from-quadratic-rels}, it also follows that $D_+DD_+ = D_+D$.

\medbreak

\noindent\textbf{Step 5.}
With the relation $DU_+ = U_+D$ from Step 4, $v_3$ further simplifies as:
\begin{IEEEeqnarray*}{rCl}
    v_3 &=& - D_{+}UU_{+} + UU_+ - U_+
\end{IEEEeqnarray*}
Using $UD_+ = 0$, computing $v_3D_+ - v_3$ leads to the relation
\[D_+U = D_+ + U - 1.\]

\medbreak

\noindent\textbf{Step 6.}
Using the relations found in previous steps, the relation $v_4$ simplifies as follows:
\begin{IEEEeqnarray*}{rCl}
    v_4 
    &=&
    - DD_{+}D + D_{+}D .
\end{IEEEeqnarray*}
This gives the remaining relation $DD_{+}D = D_{+}D$, and concludes.

\subsection{The structure as augmented algebra}\label{section: special quot}

In \cite[Section 6]{DMR_GeneralisationsHeckeAlgebras_2023} it is proposed to study the quotient of $\LH_n\otimes_{\bZ[t]}\Bbbk$, for some appropriate field $\Bbbk$, by the two-sided ideal generated by 
\begin{equation}\label{the conj word}
\chi^{(j+1)} := (\sigma_1 - \rho_1) \cdots  (\sigma_{j}- \rho_{j})
\end{equation}
for $1 \leq j \leq n.$ For a field $\Bbbk$ as in \autoref{cor:equivalence_presentation_over_field}, this is equivalent to describing the quotient of $\varLH_n\otimes_{\bZ}\Bbbk$ by the element $D_1  \cdots  D_{j}.$ 

As a first application of this integral form presentation we describe the proposed quotient:

\begin{prop}\label{description quotient}
The map
\[\pi_n : \varLH_n \rightarrow \bZ : \left\lbrace \begin{array}{l}
U_i \mapsto 1\\
D_i \mapsto 0
\end{array} \right.\]
is a $\bZ$-algebra morphism with $\ker(\pi_n) = (D_1  \cdots  D_{j})$ for any $1 \leq j \leq n$.
\end{prop}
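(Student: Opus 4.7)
The plan is to break the statement into three pieces: (i) well-definedness of $\pi_n$, (ii) the special case $(D_1) = \ker(\pi_n)$, and (iii) showing that all the ideals $(D_1\cdots D_j)$ coincide with $(D_1)$. Step (i) is immediate: one checks each defining relation from \autoref{defn:var_loop_hecke} under the assignment $U_i\mapsto 1$, $D_i\mapsto 0$. For example, $U_iD_i = U_i + D_i - 1$ becomes $0 = 1 + 0 - 1$, while $D_iU_i = 0$ and the interchange relations are obvious, and the braid relations $D_iD_{i+1}D_i = D_{i+1}D_i$ and $U_iU_{i+1}U_i = U_{i+1}U_i$ reduce to $0=0$ and $1=1$. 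So $\pi_n$ is a well-defined $\bZ$-algebra morphism, and obviously surjective.

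For (ii), I would show $\varLH_n/(D_1)\cong \bZ$ by an induction on the index. The argument is: $D_1 = 0$ in the quotient, hence $U_1 D_1 = U_1 + D_1 - 1$ forces $U_1 = 1$; then $U_1 D_2 = 0$ together with $U_1 = 1$ forces $D_2 = 0$; then $U_2 D_2 = U_2 + D_2 - 1$ forces $U_2 = 1$; continuing this up-down cascade (each step using only the same-label relation $U_iD_i = U_i + D_i - 1$ and the mixed relation $U_iD_{i+1} = 0$), all $D_i$ vanish and all $U_i$ become $1$, so the quotient is generated as a $\bZ$-module by $1$. Combined with surjectivity of $\pi_n$, this gives $\ker(\pi_n) = (D_1)$.

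For (iii), the key is the chain of ideal equalities $(D_1) = (D_1D_2) = \cdots = (D_1D_2\cdots D_{n-1})$. One inclusion $(D_1\cdots D_{j+1})\subseteq (D_1\cdots D_j)$ is trivial. The other direction follows from the identity
\[
D_1\cdots D_j \;=\; D_1\cdots D_{j+1}(1-U_j),
\]
which is obtained by computing
\[
D_1\cdots D_{j+1}U_j \;=\; D_1\cdots D_j(D_{j+1}+U_j-1) \;=\; D_1\cdots D_{j+1} + D_1\cdots D_{j-1}(D_jU_j) - D_1\cdots D_j,
\]
using $D_{j+1}U_j = D_{j+1}+U_j-1$ and $D_jU_j=0$. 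Since the middle term vanishes, rearranging yields the claimed identity, and therefore $D_1\cdots D_j\in (D_1\cdots D_{j+1})$. Chaining these gives $(D_1\cdots D_j) = (D_1)$ for every admissible $j$, and combined with (ii) we conclude $\ker(\pi_n) = (D_1\cdots D_j)$.

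The only mildly delicate step is the computation in (iii); (i) and (ii) are essentially direct from the defining relations. I expect no real obstacle once one notices the crucial algebraic identity $D_1\cdots D_j = D_1\cdots D_{j+1}(1-U_j)$, which is the pivot of the argument.
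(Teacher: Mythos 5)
Your proof is correct, and its organization is genuinely different from the paper's. You first pin down $\ker(\pi_n)=(D_1)$ via the up-down cascade $D_1=0\Rightarrow U_1=1\Rightarrow D_2=0\Rightarrow\cdots$, and then establish the chain of ideal equalities $(D_1)=(D_1D_2)=\cdots=(D_1\cdots D_j)$ through the single identity $D_1\cdots D_j=D_1\cdots D_{j+1}(1-U_j)$. The paper instead begins with $\ker(\pi_n)=(D_1,\ldots,D_{n-1})$ and then shows that \emph{every} generator $D_i$ vanishes modulo $(D_1\cdots D_j)$, handling the cases $i\geq j$ and $i<j$ separately: for $i\geq j$ it multiplies $D_1\cdots D_m$ on the \emph{left} by $U_1$ and iterates using $U_iD_{i+1}=0$, while for $i<j$ it multiplies $D_i\cdots D_j$ on the \emph{right} by $U_{j-1}$ and iterates, which amounts to your identity with shifted indices. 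Your route buys a cleaner decomposition: you never need to verify $D_i\equiv 0$ for each index individually, you only prove two ideal inclusions per step of the chain, and the upward sweep (the paper's $i\geq j$ case) is entirely subsumed by the cascade in part (ii). The algebraic core---right-multiplication by $U_j$ together with $D_jU_j=0$ and $D_{j+1}U_j=D_{j+1}+U_j-1$---is the same in both; the difference is that you isolate it as a reusable identity $D_1\cdots D_j=D_1\cdots D_{j+1}(1-U_j)$ that directly yields ideal membership, rather than working modulo the ideal and chasing congruences.
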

\begin{proof}
Using a direct verification it is easily verified that $\pi_n$ is well-defined, i.e.\ that the defining relations in \autoref{defn:var_loop_hecke} of $\varLH_n$ are satisfied under $\pi_n$. 

Note that $\ker(\pi_n) = (D_1, \ldots, D_n)$ as the quotient map $\varLH_n \twoheadrightarrow \varLH_n / (D_1, \ldots, D_n) $ maps $U_i$ to $1$ by the relation $U_iD_i = U_i+D_i -1$, see \eqref{eq:D_and_U_same_label}. Therefore, this quotient map agrees with $\pi_n$ and it remains to prove that $(D_1, \ldots, D_n) = (D_1. \cdots.D_j)$ for any $1 \leq j \leq n$. Equivalently, we prove that $D_i \equiv 0$ in  $\varLH_n/ (D_1 \cdots D_j)$ for any $1 \leq i \leq n-1$.

We will use $\equiv$ to emphasize that we are working with the quotient $\varLH_n/ (D_1 \cdots D_j)$. As $ 0 \equiv D_1 \cdots D_j $, also $U_1 D_1 \cdots D_m \equiv 0$ for $j \leq m$. Consequently, if $m> 1$ and using the relations for $U_1D_1$ and $U_1D_2$:
\[
\underbrace{U_1 D_2}_{=0}D_3 \cdots D_{m} +  \underbrace{D_1 \cdots D_{m}}_{= 0} - D_2 \cdots D_{m} \equiv 0.
\]
Thus we obtained that $D_2 \cdots  D_m \equiv 0$. Continuing iteratively, we obtain that $D_m \equiv 0$ and hence $D_j \equiv \cdots \equiv D_{n} \equiv 0$.

For the variables $D_i$ with $i < j$ we consider $D_i \cdots D_j 	U_{j-1}\equiv 0$. For that word, the defining relation \eqref{eq:D, U length 2} yields
\[\underbrace{D_i\cdots D_j}_{=0} + D_i \cdots D_{j-2}\underbrace{D_{j-1}U_{j-1}}_{=0} - D_i\cdots D_{j-1} \equiv 0.\]
This time, continuing iteratively with $D_i\cdots D_{j-1} \equiv 0$, we obtain that $D_i \equiv 0$, as desired. 
\end{proof}

\begin{rem}\label{discussion on conj 6.4}
In \cite[Conjecture 6.4]{DMR_GeneralisationsHeckeAlgebras_2023} it was conjectured that the quotient would be non-trivial for $j \neq 1$ and that furthermore the values $\dim e_j \eL \eH_n/(\chi^{(j+1)}) e_i$, with the $e_j$ a system of orthogonal idempotents of $\eH_n/(\chi^{(j+1)})$ adding up to the identity, would be given by the $j \times j$-truncation of the matrix in \autoref{value peirce matrix}. Now, \autoref{description quotient} shows wrong, but this is due to a typo. Indeed, if one replaces in \cite[Conjecture 6.4]{DMR_GeneralisationsHeckeAlgebras_2023} the word from \eqref{the conj word} by its opposite word
\[
\chi^{(j+1)}_{-} := (\sigma_{j} - \rho_j)\cdots(\sigma_1 - \rho_1)
\]
then there is empirical evidence that their conjecture holds. The word $\chi^{(j+1)}_{-}$ was also considered explicitly in \cite[Section 6]{DMR_GeneralisationsHeckeAlgebras_2023}. 
\end{rem}

\subsection{Comparison with the Hecke-Hopf algebra}\label{Hopf-Hecke section}

In \cite{BK_HeckeHopfAlgebras_2019} the Hecke-Hopf algebra ${\bf H}(\Sym_n)$ was introduced. 
This Hopf $\bZ$-algebra \cite[Theorem 1.3]{BK_HeckeHopfAlgebras_2019} has the interesting property that the classical Hecke algebra embeds in it \cite[Theorem 1.9]{BK_HeckeHopfAlgebras_2019}: $\Hecke_n \hookrightarrow {\bf H}(\Sym_n) \otimes_{\bZ} \bZ[q, q^{-1}]$ as a left coideal subalgebra.
The algebra is defined as follows.

\begin{defn}
Let $n \geq 2$. The Hecke-Hopf algebra, denoted $\mathbf{H}(\Sym_n)$, is the $\mathbb{Z}$-algebra generated by $s_i$ and $D_i$ for $i = 1, \dots, n-1$ and subject to the following relations:
\begin{itemize}
    \item $s_i^2 = 1,\quad s_i D_i + D_i s_i = s_i - 1,\quad D_i^2 = D_i$ for $1 \leq i \leq n-1$,
    
    \item $s_j s_i = s_i s_j$,\; $D_j s_i = s_i D_j$,\; $D_j D_i = D_i D_j$ for $|i - j| > 1$,
    
    \item $s_j s_i s_j = s_i s_j s_i$,\; $D_i s_j s_i = s_j s_i D_j$,\; $D_j s_i D_j = s_i D_j D_i + D_i D_j s_i + s_i D_j s_i$ for $|i - j| = 1$.
\end{itemize}
\end{defn}

Note that, when $t - 1$ is invertible, the presentation of the loop Hecke algebra $\LH_n$ in the generators $\{ \rho_i ,D_i := \frac{(\sigma_i - \rho_i)}{1-t}\}$ has quite some similarities with the Hopf-Hecke algebra. Using the presentation obtained via \autoref{thm:equivalence_presentations} we will rather compare with $\varLH_n$ which is also a $\bZ$-algebra.

\begin{prop}\label{Hecke-Hopf on loop Hecke}
Let $n \geq 2$. Then the map
\[\psi_n\colon {\bf H}(\Sym_n) 
\rightarrow \varLH_n \colon
\begin{cases}
s_i &\mapsto U_i - D_i \\
D_i &\mapsto D_i
\end{cases}\]
is an epimorphism of augmented $\bZ$-algebras. However, $\ker(\psi_n)$ is \emph{not} a Hopf ideal of ${\bf H}(\Sym_n)$.
\end{prop}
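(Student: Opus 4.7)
My plan is to prove the four sub-assertions in turn: well-definedness of $\psi_n$, surjectivity, compatibility with augmentations, and failure of the Hopf-ideal property.

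\textbf{Well-definedness and surjectivity.} I would verify that each defining relation of $\mathbf{H}(\Sym_n)$ holds in $\varLH_n$ after the substitution $s_i\mapsto U_i-D_i$, $D_i\mapsto D_i$. The distant-label commutations are exactly \eqref{eq:D_and_U_distant_label}, and the same-label quadratic relations ($s_i^2=1$, $D_i^2=D_i$, $s_iD_i+D_is_i=s_i-1$) follow from short expansions using \eqref{eq:D_and_U_same_label}. For the mixed $|i-j|=1$ relations, the braid relation $s_is_{i+1}s_i=s_{i+1}s_is_{i+1}$ and the first mixed relation $D_is_js_i=s_js_iD_j$ are reinterpretations of loop braid relations: via \autoref{thm:equivalence_presentations}(2), the substitutions correspond to $\rho_i$ and $\sigma_i-\rho_i$ in $\LH_n/(t)$, and these identities reduce to the $\rho$-braid relation combined with \eqref{eq:sigma_rho_braid_mixed}. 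The remaining mixed relation $D_js_iD_j=s_iD_jD_i+D_iD_js_i+s_iD_js_i$ requires a direct but somewhat lengthy computation in $\varLH_n$ using the same-label rules, the interchange relations \eqref{eq:D, U length 2}, and the braid relations \eqref{eq: braid D to length 2}; this is the main computational obstacle. Surjectivity then follows since $U_i=\psi_n(s_i+D_i)$ and $D_i=\psi_n(D_i)$ cover the generators of $\varLH_n$.

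\textbf{Augmentations.} The Hopf counit $\epsilon$ of $\mathbf{H}(\Sym_n)$ sends $s_i\mapsto 1$ and $D_i\mapsto 0$, forced by $\Delta(D_i)=D_i\otimes 1+s_i\otimes D_i$ and the counit axiom. The augmentation $\pi_n\colon\varLH_n\to\bZ$ from \autoref{description quotient} sends $U_i\mapsto 1$, $D_i\mapsto 0$, so $\pi_n\circ\psi_n(s_i)=1=\epsilon(s_i)$ and $\pi_n\circ\psi_n(D_i)=0=\epsilon(D_i)$, giving compatibility.

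\textbf{The kernel is not a Hopf ideal.} It suffices to exhibit $k\in\Ker(\psi_n)$ with $(\psi_n\otimes\psi_n)\Delta(k)\neq 0$ in $\varLH_n\otimes_\bZ\varLH_n$, since this forbids $\Delta(k)\in\Ker(\psi_n)\otimes\mathbf{H}(\Sym_n)+\mathbf{H}(\Sym_n)\otimes\Ker(\psi_n)$. I would take $k\coloneqq D_1s_1+D_1$, for which $\psi_n(k)=D_1(U_1-D_1)+D_1=0$ using $D_1U_1=0$ and $D_1^2=D_1$. The Hopf coproduct $\Delta(s_1)=s_1\otimes s_1$ and $\Delta(D_1)=D_1\otimes 1+s_1\otimes D_1$, together with $s_1^2=1$, yield
\[
\Delta(k)=D_1s_1\otimes s_1+1\otimes D_1s_1+D_1\otimes 1+s_1\otimes D_1.
\]
Since $\psi_n(D_1s_1)=-D_1$ and $\psi_n(s_1)=U_1-D_1$, applying $\psi_n\otimes\psi_n$ produces
\[
-D_1\otimes U_1-1\otimes D_1+D_1\otimes 1+U_1\otimes D_1.
\]
The four displayed tensors are linearly independent in $\varLH_n\otimes_\bZ\varLH_n$ because $1,D_1,U_1\in\Red(\varLH_n)$ belong to the $\bZ$-basis of $\varLH_n$ from \autoref{thm:reduced_words_basis}; hence this element is nonzero and the conclusion follows.
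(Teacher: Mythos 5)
Your proposal is correct and reaches the same conclusions as the paper, but via a somewhat different route; a few comparisons are worth making.

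For well-definedness, the paper does all relation checks directly in $\varLH_n$; your suggestion to transport the braid relation $s_is_{i+1}s_i = s_{i+1}s_is_{i+1}$ and the relation $D_is_js_i = s_js_iD_j$ to $\LH_n\otimes_{\bZ[t]}\bZ$ via $s_i\leftrightarrow\rho_i$, $D_i\leftrightarrow\sigma_i-\rho_i$ is a valid and arguably more conceptual observation. Note, however, that for $j=i-1$ the required identity $\sigma_{i+1}\rho_i\rho_{i+1}=\rho_i\rho_{i+1}\sigma_i$ is not literally one of the listed generators in \eqref{eq:sigma_rho_braid_mixed}; it is equivalent to it only after conjugating by $\rho_i\rho_{i+1}$ using $\rho_i^2=1$, so this deserves a sentence. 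You are also right that the relation $D_js_iD_j=s_iD_jD_i+D_iD_js_i+s_iD_js_i$ does not obviously translate to a loop braid relation; the paper handles it by expanding both sides inside $\varLH_n$ and reducing to $D_jU_iD_j = U_iD_jU_i$, which then follows from $U_iD_{i+1}=0$ (case $j=i+1$) and $D_{i-1}U_i=U_iD_{i-1}$, $D_{i-1}^2=D_{i-1}$, $U_i^2=U_i$ (case $j=i-1$). The computation is shorter than your phrase "somewhat lengthy" suggests, and to complete your outline you would want to carry it out.

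For the Hopf-ideal part you use the coideal criterion while the paper uses the antipode criterion; both work, and in fact the paper's Remark~\ref{loop hecke not hopf} footnote explicitly mentions your route as an alternative. Your specific element $k=D_1s_1+D_1$ is the same one the paper notes lies in $\ker(\psi_n)$, and your calculation of $(\psi_n\otimes\psi_n)\Delta(k)$ is correct: using $\Delta(D_1)=D_1\otimes 1+s_1\otimes D_1$, $\Delta(s_1)=s_1\otimes s_1$ and $s_1^2=1$, one gets
\[
(\psi_n\otimes\psi_n)\Delta(k)=-D_1\otimes U_1-1\otimes D_1+D_1\otimes 1+U_1\otimes D_1,
\]
which is nonzero since $1$, $D_1$, $U_1$ are distinct $\varLH_n$-reduced words, hence part of the $\bZ$-basis of \autoref{thm:reduced_words_basis} (this freeness is what ensures $\varLH_n\otimes_\bZ\varLH_n$ behaves as expected over $\bZ$). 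The implication "$(\psi_n\otimes\psi_n)\Delta(k)\neq 0$ implies $\Delta(k)\notin\Ker(\psi_n)\otimes\mathbf{H}(\Sym_n)+\mathbf{H}(\Sym_n)\otimes\Ker(\psi_n)$" is elementary and your argument is complete. In summary: your approach is correct, slightly more conceptual in the well-definedness step (modulo filling in the $D_js_iD_j$ computation), and uses the coideal criterion in place of the paper's antipode criterion for the Hopf-ideal obstruction.
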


\begin{rem}\label{loop hecke not hopf}
Unfortunately, as $\ker(\psi_n)$ is not a Hopf ideal, the Hopf structure of the Hecke-Hopf algebra cannot be transported to the loop Hecke algebra. In fact for $n= 2,3,4$ the loop Hecke algebra is {\it not a Hopf algebra}. 

Indeed, in those cases $\dim_{\Bbbk} (\varLH_n\otimes_{\bZ[t]} \Bbbk) =\binom{2n-1}{n}= 3$, $10$ and $35$ when $n= 2$, $3$ and $4$ respectively. For these dimensions it is known that all Hopf algebras over an algebraically closed field $\Bbbk$ of characteristic $0$ are semisimple, see \cite{Ng_HopfAlgebrasDimension_2004, Ng_HopfAlgebrasDimension_2005}. However, it follows from the combination of \autoref{the rep is iso} and \autoref{W-M decompisition End} that the loop Hecke algebra is not semisimple. 
\end{rem}

Thus it seems that the loop Hecke algebra shares with the Hecke algebra the fact of not being a Hopf algebra. Hence it is reasonable to ask the following. 

\begin{quest}\label{qust: loop hecke-hopf}
Does there exist a ``loop Hecke-Hopf algebra'', i.e.\ a Hopf algebra in which the loop Hecke algebra canonically embeds as a coideal subalgebra? 
\end{quest}

We now proceed to the proof.

\begin{proof}[Proof of \autoref{Hecke-Hopf on loop Hecke}]
By construction $\psi_n$ will be an epimorphism of $\bZ$-algebras if it is well-defined. That it is one of augmented algebras means that $\epsilon_{{\bf H}(\Sym_n)} = \pi_n \circ \psi_n$ where $\epsilon_{{\bf H}(\Sym_n)}$ is the counit ${\bf H}(\Sym_n)$ and $\pi_n$ is defined in \autoref{description quotient}. Recall that by definition $\epsilon_{{\bf H}(\Sym_n)}(s_i)=1$ and $\epsilon_{{\bf H}(\Sym_n)}(D_i) =0$ and so we indeed see that $\psi_n$ commutes with the augmentation.

\medbreak

Next, we verify the well-definedness.

The relations $\psi_n(s_i^2) =1$ and $\psi_n(s_iD_i + D_is_i) = \psi_n(s_i -1)$ follow via a direct computation using the same-label relations \eqref{eq:D_and_U_same_label}.

The relations for $|i-j|>1$ follow from the interchange relations \eqref{eq:D_and_U_distant_label}.

The relation $\psi_n(s_j s_i s_j) = \psi_n(s_i s_j s_i)$ follows from direct computation, analogous to how the relation $\rho_i\rho_{i+1}\rho_i = \rho_{i+1}\rho_i\rho_{i+1}$ in $\LH_n$ (see \eqref{eq:sigma_rho_braid_same}) followed from relations in $\varLH_n$ in the proof of \autoref{thm:equivalence_presentations}. 
Using quadratic relations \eqref{eq:D_and_U_same_label} and \eqref{eq:D, U length 2} and considering the cases $j=i\pm 1$ separately, the relation $\psi_n(D_is_js_i) = \psi_n(s_js_iD_j)$ reduces to:
\begin{align*}
    - D_iU_{i+1} - D_iD_{i+1} + D_i + D_iD_{i+1}D_i
    &=
    - D_iU_{i+1}- D_iD_{i+1} + D_i + D_{i+1}D_iD_{i+1}
    \\
    \text{and}\quad
    U_{i}U_{i+1} - U_{i+1} + D_{i+1}D_{i}D_{i+1}
    &=
    U_{i}U_{i+1} + D_{i}U_{i+1} - U_{i+1} - D_{i}U_{i+1} + D_{i}D_{i+1}D_{i}.
\end{align*}
Both hold thanks to $D_{i}D_{i+1}D_{i} = D_{i+1}D_{i}D_{i+1}$.
Finally, we check the relation $\psi_n (D_j s_i D_j) =\psi_n( s_i D_j D_i + D_i D_j s_i + s_i D_j s_i)$. On the one hand $\psi_n (D_j s_i D_j) = D_j U_iD_j - D_j D_i D_j$, and on the other hand a direct computation gives 
\[\psi_n( s_i D_j D_i + D_i D_j s_i + s_i D_j s_i) =U_iD_jU_i - D_iD_jD_i.\]
Considering the cases $j = i \pm 1$ separately, it is a direct computation that $U_iD_jU_i = D_j U_iD_j$. 

\medbreak

Finally, we prove that $\ker(\psi_n)$ is not a Hopf-ideal.
Note that $D_i s_i + D_i \in \ker(\psi_n)$.
To show that $\ker(\psi_n)$ is not a Hopf ideal it suffices to show that\footnote{Alternatively one can verify that it is not a coideal, hence $\ker(\psi_n)$ is also not a bialgebra ideal.} $S((s_i+D_i)D_{i+1}) \notin \ker(\psi_n)$. To do so, recall that the antipode is defined by $S(s_i) = s_i$ and $S(D_i)= - s_i D_i$. Hence
\[S((s_i+D_i)D_{i+1}) = s_i(1-D_i)D_{i+1}.\]
Now, using \eqref{eq:D_and_U_same_label} we find:
\[\Psi_n(s_i(1-D_i)D_{i+1}) = (U_i-D_i)(1-D_i)D_{i+1} =U_iD_{i+1} - U_iD_iD_{i+1} = -D_iD_{i+1} + D_{i+1}.\]
The latter is non-zero as the monomials are $\varLH_n$-reduced words and hence linearly independent by \autoref{thm:reduced_words_basis}. 
\end{proof}

\appendix
\section{Confluence of critical branchings}
\label{app:confluence_critical_branchings}
In this appendix, we finish the proof of \autoref{thm:reduced_words_basis} by showing that the higher linear rewriting system described in \autoref{fig:higher-Grobner-basis-LH} (see \autoref{fig:higher-Grobner-basis-LH-diagrammatic} for the diagrammatic notation) critically confluate.

We enumerate critical branchings by first considering critical branchings involving a same-label rewriting step (\autoref{subsec:critical_branching_same_label}), and then all the remaining critical branchings (\autoref{subsec:critical_branching_distinct-label}).

\subsection{Same-label rewriting steps and others}\addtocontents{toc}{\protect\setcounter{tocdepth}{1}}
\label{subsec:critical_branching_same_label}

\subsubsection{Same-label rewriting steps and same-label rewriting steps}
We consider branchings whose branches are of type one of the same-label rewriting steps:
\begin{gather*}
\def\intsp{30mu}
    \diagLH{\diagD[0][2]\diagD[0][1]\diagD}
    \mspace{\intsp}
    \diagLH{\diagD[0][2]\diagD[0][1]\diagU}
    \mspace{\intsp}
    \diagLH{\diagD[0][2]\diagU[0][1]\diagU}
    \mspace{\intsp}
    \diagLH{\diagU[0][2]\diagU[0][1]\diagU}
    \mspace{\intsp}
    \diagLH{\diagU[0][2]\diagD[0][1]\diagD}
    \mspace{\intsp}
    \diagLH{\diagU[0][2]\diagD[0][1]\diagU}
    \mspace{\intsp}
    \diagLH{\diagD[0][2]\diagU[0][1]\diagD}
    \mspace{\intsp}
    \diagLH{\diagU[0][2]\diagU[0][1]\diagD}
\end{gather*}
The first four branchings are straightforward. The last four come down to the fact that
\[U\cdot (U+D-1)\overset{*}{\to}_\sR UD
\text{ and }
D\cdot (U+D-1)\overset{*}{\to}_\sR 0,
\]
and vice-versa for the multiplication on the right.

\subsubsection{\texorpdfstring{$DD\to\ldots$}{DD} and others}
\label{subsubsec:critical_branching_DD}
We consider branchings with one branch of type $DD\to\ldots$ and the other branch of type one of the distinct-label or additional rewriting steps:
\begin{gather*}
\def\intsp{30mu}
    \diagLH{\diagIU[0][2]\diagDI[0][1]\diagDI}
    \mspace{\intsp}
    \diagLH{\diagUI[0][2]\diagID[0][1]\diagID}
    \mspace{\intsp}
    \diagLH{\diagID[0][2]\diagID[0][1]\diagUI}
    \mspace{\intsp}
    \diagLH{\diagDI[0][3]\diagDI[0][2]\diagID[0][1]\diagDI}
    \mspace{\intsp}
    \diagLH{\diagDI[0][3]\diagID[0][2]\diagDI[0][1]\diagDI}
    \mspace{\intsp}
    \diagLH{\diagID[0][3]\diagID[0][2]\diagDI[0][1]\diagID}
    \mspace{\intsp}
    \diagLH{\diagID[0][3]\diagDI[0][2]\diagID[0][1]\diagID}
    \mspace{\intsp}
    \diagLH{\diagID[0][3]\diagID[0][2]\diagDI[0][1]\diagIU}
    \mspace{\intsp}
    \diagLH{\diagDI[0][3]\diagDI[0][2]\diagIU[0][1]\diagUI}
    \mspace{\intsp}
    \diagLH{\diagIDI[0][3]\diagIDI[0][2]\diagD[0][1]\diagU[2][1]\diagIUI}
\end{gather*}
Their confluence is relatively straightforward.
The confluence of the third branching comes down to the fact that
\[D_+(D_+ + U - 1) \overset{*}{\to}_\sR D_+ + U - 1.\]

\subsubsection{\texorpdfstring{$DU\to\ldots$}{DU} and others}
We consider branchings with one branch of type $DU\to\ldots$ and the other branch of type one of the distinct-label or additional rewriting steps:
\begin{gather*}
\def\intsp{30mu}
    \diagLH{\diagID[0][2]\diagIU[0][1]\diagDI}
    \mspace{\intsp}
    \diagLH{\diagIU[0][2]\diagDI[0][1]\diagUI}
    \mspace{\intsp}
    \diagLH{\diagDI[0][2]\diagUI[0][1]\diagID}
    \mspace{\intsp}
    \diagLH{\diagUI[0][2]\diagID[0][1]\diagIU}
    \mspace{\intsp}
    \diagLH{\diagDI[0][3]\diagID[0][2]\diagDI[0][1]\diagUI}
    \mspace{\intsp}
    \diagLH{\diagID[0][3]\diagDI[0][2]\diagID[0][1]\diagIU}
    \mspace{\intsp}
    \diagLH{\diagDI[0][3]\diagUI[0][2]\diagIU[0][1]\diagUI}
    \mspace{\intsp}
    \diagLH{\diagID[0][3]\diagIU[0][2]\diagUI[0][1]\diagIU}
\end{gather*}
Since $DU\to\ldots$ rewrites to zero, it suffices to check that the other branch rewrites to zero.

Confluence of the first (resp.\ second) branching uses the first (resp.\ second) additional rewriting step. The same holds for the sixth and seventh branchings.

Confluence of the third and fourth branchings is immediate, as the other branch rewrites to zero.

Confluence of the fifth and eighth branchings is straightforward.

\subsubsection{\texorpdfstring{$UU\to\ldots$}{UU} and others}
We consider branchings with one branch of type $UU\to\ldots$ and the other branch of type one of the distinct-label or additional rewriting steps:
\begin{gather*}
\def\intsp{30mu}
    \diagLH{\diagIU[0][2]\diagIU[0][1]\diagDI}
    \mspace{\intsp}
    \diagLH{\diagUI[0][2]\diagUI[0][1]\diagID}
    \mspace{\intsp}
    \diagLH{\diagID[0][2]\diagUI[0][1]\diagUI}
    \mspace{\intsp}
    \diagLH{\diagUI[0][3]\diagUI[0][2]\diagIU[0][1]\diagUI}
    \mspace{\intsp}
    \diagLH{\diagUI[0][3]\diagIU[0][2]\diagUI[0][1]\diagUI}
    \mspace{\intsp}
    \diagLH{\diagIU[0][3]\diagIU[0][2]\diagUI[0][1]\diagIU}
    \mspace{\intsp}
    \diagLH{\diagIU[0][3]\diagUI[0][2]\diagIU[0][1]\diagIU}
    \mspace{\intsp}
    \diagLH{\diagID[0][3]\diagDI[0][2]\diagIU[0][1]\diagIU}
    \mspace{\intsp}
    \diagLH{\diagDI[0][3]\diagIU[0][2]\diagUI[0][1]\diagUI}
    \mspace{\intsp}
    \diagLH{\diagIDI[0][3]\diagD[0][2]\diagU[2][2]\diagIUI[0][1]\diagIUI}
\end{gather*}
Their confluence is relatively straightforward and similar to \autoref{subsubsec:critical_branching_DD}.
The confluence of the third branching comes down to the fact that
\[(D_+ + U - 1)U \overset{*}{\to}_\sR D_+ + U - 1.\]

\subsubsection{\texorpdfstring{$UD\to\ldots$}{UD} and others}
We consider branchings with one branch of type $UD\to\ldots$ and the other branch of type one of the distinct-label rewriting steps:
\begin{gather*}
\def\intsp{30mu}
    \diagLH{\diagID[0][2]\diagUI[0][1]\diagDI}
    \mspace{\intsp}
    \diagLH{\diagIU[0][2]\diagID[0][1]\diagUI}
    \mspace{\intsp}
    \diagLH{\diagUI[0][3]\diagDI[0][2]\diagID[0][1]\diagDI}
    \mspace{\intsp}
    \diagLH{\diagIU[0][3]\diagID[0][2]\diagDI[0][1]\diagID}
    \mspace{\intsp}
    \diagLH{\diagUI[0][3]\diagIU[0][2]\diagUI[0][1]\diagDI}
    \mspace{\intsp}
    \diagLH{\diagIU[0][3]\diagUI[0][2]\diagIU[0][1]\diagID}
\end{gather*}

The confluence of the first branching is given below:
\[
\begin{tikzcd}[row sep=-2em]
    &
    \diagLH{\diagID[0][1]\diagUI}
    \;+\;
    \diagLH{\diagID[0][1]\diagDI}
    \;-\;
    \diagLH{\diagID}
    \ar[dr,bend left=10pt]
    \\
    \diagLH{\diagID[0][2]\diagUI[0][1]\diagDI}
    \ar[ur,bend left=10pt] \ar[dr,bend right=10pt]
    &&
    \diagLH{\diagID[0][1]\diagDI}
    \;+\;
    \diagLH{\diagUI}
    \;-\;
    \diagLH{\diagIII}
    \\
    &
    \diagLH{\diagID[0][1]\diagDI}
    \;+\;
    \diagLH{\diagUI[0][1]\diagDI}
    \;-\;
    \diagLH{\diagDI}
    \ar[ur,bend right=10pt]
\end{tikzcd}
\]
The confluence of the second branching is similar.

The confluence of the third branching is given below:
\[
\begin{tikzcd}[row sep=-3em]
    &
    \diagLH{\diagUI[0][2]\diagID[0][1]\diagDI}
    \;+\;
    \diagLH{\diagDI[0][2]\diagID[0][1]\diagDI}
    \;-\;
    \diagLH{\diagID[0][1]\diagDI}
    \ar[dr,bend left=15pt]
    \\
    \diagLH{\diagUI[0][3]\diagDI[0][2]\diagID[0][1]\diagDI}
    \ar[ur,bend left=15pt] \ar[dr,bend right=15pt]
    && 0
    \\
    &
    \diagLH{\diagUI[0][2]\diagID[0][1]\diagDI}
    \ar[ur,bend right=15pt]
\end{tikzcd}
\]
The confluence of the fourth branching is given below:
\[
\begin{tikzcd}[row sep=-2em,column sep=.8em]
    &
    \diagLH{\diagIU[0][2]\diagDI[0][1]\diagID}
    \;+\;
    \diagLH{\diagID[0][2]\diagDI[0][1]\diagID}
    \;-\;
    \diagLH{\diagDI[0][1]\diagID}
    \rar
    &
    \diagLH{\diagDI[0][2]\diagIU[0][1]\diagID}
    \;+\;
    \diagLH{\diagID[0][1]\diagDI}
    \;-\;
    \diagLH{\diagDI[0][1]\diagID}
    \ar[dr,bend left=15pt]
    \\
    \diagLH{\diagIU[0][3]\diagID[0][2]\diagDI[0][1]\diagID}
    \ar[ur,bend left=15pt] \ar[dr,bend right=15pt]
    &&&
    \diagLH{\diagDI[0][1]\diagIU}
    \;+\;
    \diagLH{\diagID[0][1]\diagDI}
    \;-\;
    \diagLH{\diagDI}
    \\
    &
    \diagLH{\diagIU[0][2]\diagID[0][1]\diagDI}
    \rar
    &
    \diagLH{\diagIU[0][1]\diagDI}
    \;+\;
    \diagLH{\diagID[0][1]\diagDI}
    \;-\;
    \diagLH{\diagDI}
    \ar[ur,bend right=15pt]
\end{tikzcd}
\]
The confluence of the fifth and sixth branchings is obtained similarly.

\medbreak

Next, we consider branchings with one branch of type $UD\to\ldots$ and the other branch of type one of the additional rewriting steps:
\begin{gather*}
\def\intsp{30mu}
    \diagLH{\diagIU[0][3]\diagID[0][2]\diagDI[0][1]\diagIU}
    \mspace{\intsp}
    \diagLH{\diagID[0][3]\diagDI[0][2]\diagIU[0][1]\diagID}
    \mspace{\intsp}
    \diagLH{\diagUI[0][3]\diagDI[0][2]\diagIU[0][1]\diagUI}
    \mspace{\intsp}
    \diagLH{\diagDI[0][3]\diagIU[0][2]\diagUI[0][1]\diagDI}
    \mspace{\intsp}
    \diagLH{\diagIUI[0][3]\diagIDI[0][2]\diagD[0][1]\diagU[2][1]\diagIUI}
    \mspace{\intsp}
    \diagLH{\diagIDI[0][3]\diagD[0][2]\diagU[2][2]\diagIUI[0][1]\diagIDI}
\end{gather*}
The other branch always rewrite to zero.
We leave it to the reader to check that the branch $UD\to\ldots$ also rewrites to zero.

\subsection{Distinct-label rewriting steps and others}
\label{subsec:critical_branching_distinct-label}

\subsubsection{\texorpdfstring{$U_+D\to\ldots$}{U+D} and others}

This was done in \autoref{subsec:proof_basis_theorem}.

\subsubsection{\texorpdfstring{$UD_+\to\ldots$}{UD+} and others}
We find the following list of critical branchings with the remaining distinct-label rewriting steps:
\begin{gather*}
\def\intsp{30mu}
    \diagLH{\diagUI[0][2]\diagRBOX[-1][1]\diagD[1][1]\diagUI}
    \mspace{\intsp}
    \diagLH{\diagID[0][2]\diagLBOX[2][1]\diagU[0][1]\diagID}
    \mspace{\intsp}
    \diagLH{\diagUII[0][3]\diagIDI[0][2]\diagIID[0][1]\diagIDI}
    \mspace{\intsp}
    \diagLH{\diagUI[0][3]\diagRBOX[-1][2]\diagD[1][2]\diagDI[0][1]\diagID}
    \mspace{\intsp}
    \diagLH{\diagU[0][2]\diagD[2][2]\diagIDI[0][1]\diagIID}
    \mspace{\intsp}    
    \diagLH{\diagUI[0][3]\diagIU[0][2]\diagU[0][1]\diagLBOX[2][1]\diagID}
    \mspace{\intsp}
    \diagLH{\diagUII[0][2]\diagIUI[0][1]\diagU\diagD[2][0]}
    \mspace{\intsp}
    \diagLH{\diagIUI[0][3]\diagUII[0][2]\diagIUI[0][1]\diagIID}
\end{gather*}
The branch $UD_+\to\ldots$ rewrites into zero: to show confluence, we must check that the other branch rewrites to zero.
For the first branching, we have:
\begin{gather*}
    \diagLH{\diagUI[0][2]\diagRBOX[-1][1]\diagD[1][1]\diagUI}
    \quad\to\quad
    \diagLH{\diagUI[0][2]\diagRBOX[-1][1]\diagD[1][1]\diagIII}
    \;+\;
    \diagLH{\diagUI[0][2]\diagRBOX[-1][1]\diagII[1][1]\diagUI}
    \;-\;
    \diagLH{\diagUI[0][2]\diagRBOX[-1][1]\diagII[1][1]\diagIII}
    \quad\to\quad
    \diagLH{\diagUI[0][2]\diagRBOX[-1][1]\diagII[1][1]\diagUI}
    \;-\;
    \diagLH{\diagUI[0][2]\diagRBOX[-1][1]\diagII[1][1]\diagIII}
    \quad\to\quad
    0
\end{gather*}
The last step requires a case by case analysis, depending on whether $\diagLH{\diagLBOX}$ is $\diagLH{\diagI}$, $\diagLH{\diagD}$ or $\diagLH{\diagU}$.
The second branching is analogous.
A similar case by case analysis is necessary for the fourth and sixth branchings.
Note that confluence of the fourth branching in the case $\diagLH{\diagLBOX} = \diagLH{\diagU}$ requires one the additional rewriting step, and similarly for the sixth branching in the case $\diagLH{\diagLBOX} = \diagLH{\diagD}$.
The remaining branchings are straightforward.

\medbreak

Since both $UD_+\to\ldots$ and the additional rewriting steps rewrite to zero, any branching between them is automatically confluent; hence we don't bother classifying these critical branchings.


\subsubsection{\texorpdfstring{$D_+U\to\ldots$}{D+U} and others}

We consider branchings involving a branch of type $D_+U\to\ldots$ and branch of type one of the remaining distinct-label rewriting steps.
\begin{gather*}
\def\intsp{30mu}
    \diagLH{\diagIDI[0][3]\diagIID[0][2]\diagIDI[0][1]\diagUII}
    \mspace{\intsp}
    \diagLH{\diagID[0][3]\diagDI[0][2]\diagRBOX[-1][1]\diagD[1][1]\diagUI}
    \mspace{\intsp}
    \diagLH{\diagIID[0][2]\diagIDI[0][1]\diagD[2][0]\diagU}
    \mspace{\intsp}
    \diagLH{\diagID[0][3]\diagLBOX[2][2]\diagU[0][2]\diagIU[0][1]\diagUI}
    \mspace{\intsp}
    \diagLH{\diagD[2][2]\diagU[0][2]\diagIUI[0][1]\diagUII}
    \mspace{\intsp}
    \diagLH{\diagIID[0][3]\diagIUI[0][2]\diagUII[0][1]\diagIUI}
\end{gather*}
We compute the $D_+U\to\ldots$ branch of the first three branchings:
\begin{align*}
\allowdisplaybreaks
    \diagLH{\diagIDI[0][3]\diagIID[0][2]\diagIDI[0][1]\diagUII}
    \quad&\to\quad
    \diagLH{\diagIDI[0][3]\diagIID[0][2]\diagIDI[0][1]}
    \;+\;
    \diagLH{\diagIDI[0][3]\diagD[2][2]\diagU[0][2]}
    \;-\;
    \diagLH{\diagIDI[0][3]\diagIID[0][2]}
    \quad\to\quad
    \diagLH{\diagIID[0][2]\diagIDI[0][1]}
    \;+\;
    \diagLH{\diagD[2][2]\diagU[0][2]}
    \;-\;
    \diagLH{\diagD[2][2]\diagII[0][2]}
    \\
    \diagLH{\diagID[0][3]\diagDI[0][2]\diagRBOX[-1][1]\diagD[1][1]\diagUI}
    \quad&\to\quad
    \diagLH{\diagID[0][3]\diagDI[0][2]\diagRBOX[-1][1]\diagD[1][1]}
    \;+\;
    \diagLH{\diagID[0][3]\diagDI[0][2]\diagRBOX[-1][1]\diagII[1][1]\diagUI}
    \;-\;
    \diagLH{\diagID[0][3]\diagDI[0][2]\diagRBOX[-1][1]\diagII[1][1]}
    \quad\to\quad
    \diagLH{\diagID[0][3]\diagDI[0][2]\diagRBOX[-1][1]\diagII[1][1]\diagUI}
    \\
    \diagLH{\diagIID[0][2]\diagIDI[0][1]\diagD[2][0]\diagU}
    \quad&\to\quad
    \diagLH{\diagIID[0][2]\diagIDI[0][1]\diagD[2][0]\diagII}
    \;+\;
    \diagLH{\diagIID[0][2]\diagII[0][1]\diagII[2][1]\diagD[2][0]\diagU}
    \;-\;
    \diagLH{\diagIID[0][2]\diagII[0][1]\diagII[2][1]\diagD[2][0]\diagII}
    \quad\to\quad
    \diagLH{\diagIID[0][2]\diagIDI[0][1]}
    \;+\;
    \diagLH{\diagD[2][0]\diagU}
    \;-\;
    \diagLH{\diagD[2][0]\diagII}
\end{align*}
One checks that this confluates with the other branch.
The last three branchings are analogous.

\medbreak

We find the following list of critical branchings with the additional rewriting steps:
\begin{gather*}
\def\intsp{30mu}
    \diagLH{\diagIID[0][2]\diagIDI[0][1]\diagU[2][0]\diagU}
    \mspace{\intsp}
    \diagLH{\diagD[2][2]\diagD[0][2]\diagIUI[0][1]\diagUII}
    \mspace{\intsp}
    \diagLH{\diagD[3][2]\diagID[0][2]\diagUI[2][1]\diagD[0][1]\diagI[4][0]\diagIUI}
    \mspace{\intsp}
    \diagLH{\diagI[4][2]\diagIID[0][2]\diagU[3][1]\diagID[0][1]\diagUI[2][0]\diagU}
\end{gather*}
One checks that both branches rewrite to zero.

\subsubsection{Three-term rewriting steps and others}

We consider branchings involving one the three-term rewriting steps in the set of distinct-label rewriting steps.
First, consider the case where both branches are of this type.
The situation is symmetric in the $D$'s and $U$'s, so we only consider branchings in $D$'s:
\begin{gather*}
\def\intsp{30mu}
    \diagLH{\diagDI[0][4]\diagID[0][3]\diagLBOX[2][2]\diagD[0][2]\diagID[0][1]\diagDI}
    \mspace{\intsp}
    \diagLH{\diagDI[0][3]\diagID[0][2]\diagDI[0][1]\diagID}
    \mspace{\intsp}
    \diagLH{\diagID[0][3]\diagDI[0][2]\diagID[0][1]\diagDI}
    \mspace{\intsp}
    \diagLH{\diagIDI[0][4]\diagIID[0][3]\diagIDI[0][2]\diagDII[0][1]\diagIDI}
    \mspace{\intsp}
    \diagLH{\diagIDI[0][4]\diagDII[0][3]\diagIDI[0][2]\diagIID[0][1]\diagIDI}    \mspace{\intsp}
    \diagLH{\diagID[0][4]\diagDI[0][3]\diagRBOX[-1][2]\diagD[1][2]\diagDI[0][1]\diagID}
\end{gather*}
Consider the first branching.
If $\diagLH{\diagLBOX} = \diagLH{\diagU}$ , we can use the rewriting step $U_+D \to DU_+$ to slide this $U$ out of the diagram, and we recover the case $\diagLH{\diagLBOX} = \diagLH{\diagI}$ .
Moreover, the last branching rewrites into zero when $\diagLH{\diagLBOX} = \diagLH{\diagU}$ , irrespective of the branch.

When $\diagLH{\diagLBOX} = \diagLH{\diagD}$ , the first and last branchings rewrite into $D_{++}D_+D$, irrespective of the branch. The same holds for the fourth branching.

The fifth branching rewrites into $D_+DD_{++}D_+$, irrespective of the branch.

Finally, when $\diagLH{\diagLBOX} = \diagLH{\diagI}$ , the first and last branchings rewrite into $D_+D$, irrespective of the branch. The same holds for the second and third branchings.

\medbreak

Consider then branchings where one branch is a three-term rewriting step and the other branching is one the additional rewriting steps:
\begin{gather*}
\def\intsp{30mu}
    \diagLH{\diagDI[0][3]\diagID[0][2]\diagDI[0][1]\diagIU}
    \mspace{\intsp}
    \diagLH{\diagIDI[0][4]\diagIID[0][3]\diagIDI[0][2]\diagDII[0][1]\diagIUI}
    \mspace{\intsp}
    \diagLH{\diagD[0][2]\diagD[2][2]\diagIDI[0][1]\diagD\diagU[2][0]}
    \mspace{\intsp}
    \diagLH{\diagDI[0][4]\diagID[0][3]\diagLBOX[2][2]\diagD[0][2]\diagIU[0][1]\diagUI}
    \mspace{\intsp}
    \diagLH{\diagDII[0][3]\diagIDI[0][2]\diagD[0][1]\diagU[2][1]\diagIUI}
    \mspace{\intsp}
    \diagLH{\diagIDI[0][4]\diagIID[0][3]\diagLBOX[3][2]\diagID[0][2]\diagD[0][1]\diagU[2][1]\diagIUI}
    \mspace{\intsp}
    \diagLH{\diagD[0][2]\diagDI[2][2]\diagU[3][1]\diagID[0][1]\diagD\diagUI[2][0]}
    \\[1ex]
\def\intsp{30mu}
    \diagLH{\diagID[0][4]\diagDI[0][3]\diagRBOX[-1][2]\diagD[1][2]\diagDI[0][1]\diagIU}
    \mspace{\intsp}
    \diagLH{\diagIDI[0][4]\diagDII[0][3]\diagIDI[0][2]\diagIIU[0][1]\diagIUI}
    \mspace{\intsp}
    \diagLH{\diagIDI[0][4]\diagDII[0][3]\diagRBOX[-1][2]\diagDI[1][2]\diagD[0][1]\diagU[2][1]\diagIUI}
    \mspace{80mu}
    \diagLH{\diagIDI[0][4]\diagDII[0][3]\diagIUI[0][2]\diagIIU[0][1]\diagIUI}
    \mspace{\intsp}
    \diagLH{\diagDI[0][4]\diagIU[0][3]\diagLBOX[2][2]\diagU[0][2]\diagIU[0][1]\diagUI}
    \mspace{\intsp}
    \diagLH{\diagIDI[0][4]\diagU[2][3]\diagD[0][3]\diagLBOX[3][2]\diagIU[0][2]\diagIIU[0][1]\diagIUI}
    \\[1ex]
\def\intsp{30mu}
    \diagLH{\diagID[0][4]\diagDI[0][3]\diagRBOX[-1][2]\diagU[1][2]\diagUI[0][1]\diagIU}
    \mspace{\intsp}
    \diagLH{\diagDI[0][3]\diagIU[0][2]\diagUI[0][1]\diagIU}
    \mspace{\intsp}
    \diagLH{\diagIDI[0][4]\diagIIU[0][3]\diagIUI[0][2]\diagUII[0][1]\diagIUI}
    \mspace{\intsp}
    \diagLH{\diagD[0][2]\diagU[2][2]\diagIUI[0][1]\diagU\diagU[2][0]}
    \mspace{\intsp}
    \diagLH{\diagIDI[0][3]\diagU[2][2]\diagD[0][2]\diagIUI[0][1]\diagIIU}
    \mspace{\intsp}
    \diagLH{\diagIDI[0][4]\diagU[2][3]\diagD[0][3]\diagRBOX[-1][2]\diagUI[1][2]\diagUII[0][1]\diagIUI}
    \mspace{\intsp}
    \diagLH{\diagID[0][2]\diagU[3][2]\diagD[0][1]\diagUI[2][1]\diagIU\diagU[3][0]}
\end{gather*}
Since additional rewriting steps rewrite to zero, it suffices to check that the other branch rewrites to zero.
This is straightforward for branchings in the first row, as the additional rewriting step can still be applied after applying the three-term rewriting step.
The same is true for branchings in the third row.

Consider the first branching of the second row.
The case $\diagLH{\diagLBOX} = \diagLH{\diagU}$ rewrites to zero,
and when $\diagLH{\diagLBOX} = \diagLH{\diagD}$ or $\diagLH{\diagLBOX} = \diagLH{\diagI}$, we can rewrite until an additional rewriting step can be applied.
Similar arguments apply to the remaining branchings in the second row.

\subsubsection{additional rewriting steps and additional rewriting steps}

Both branches rewrite to zero, so any such branching is trivially confluent.


\vspace*{1cm}



   

\printbibliography

\end{document}